\documentclass[a4paper, reqno, 11pt]{amsart}

\usepackage[english]{babel}
\usepackage{amsmath}
\usepackage{amssymb}
\usepackage{amsthm}
\usepackage{enumerate}
\usepackage{ifthen}
\usepackage{bbm}
\usepackage{color}
\provideboolean{shownotes} 
\setboolean{shownotes}{true}
\usepackage{hyperref} 
\usepackage{cleveref}
\usepackage{graphicx}
\usepackage{pgfplots}
\usepackage{tikz,tikz-3dplot} 
\usepackage{mathtools}
\usepackage{comment}
\usepackage{float}
\usepackage{geometry}
\usepackage[T1]{fontenc}
\usepackage{esint}
\newcommand{\margnote}[1]{
\ifthenelse{\boolean{shownotes}}%
{\marginpar{\raggedright\tiny\texttt{#1}}}%
{}%
}

\newcommand{\hole}[1]{
\ifthenelse{\boolean{shownotes}}%
{\begin{center} \fbox{ \rule {.25cm}{0cm}
\rule[-.1cm]{0cm}{.4cm} \parbox{.85\textwidth}{\begin{center}
\texttt{#1}\end{center}} \rule {.25cm}{0cm}}\end{center}}
{}
}
\usepackage{mathrsfs}
\newtheorem{thm}{Theorem}[section]

\newtheorem{prop}[thm]{Proposition}
\newtheorem{lem}[thm]{Lemma}
\newtheorem{cor}[thm]{Corollary}
\newtheorem{rem}[thm]{Remark}

\newtheorem{defn}[thm]{Definition}

\newcommand{\e}{\varepsilon}		       
\newcommand{\R}{\mathbb{R}}
\newcommand{\T}{\mathbb{T}}
\newcommand{\N}{\mathbb{N}}
\newcommand{\Z}{\mathbb{Z}}
\newcommand{\dive}{\mathop{\mathrm {div}}}
\newcommand{\Id}{\mathop{\mathrm {Id}}}
\newcommand{\curl}{\mathop{\mathrm {curl}}}

\newcommand{\de}{\mathrm{d}}	
\newcommand{\p}{\mathbb{P}}

\newcommand{\Zeta}{Z^{\ell,\theta}}

\newcommand{\ZetaB}{Z^{\ell',\theta}}
\newcommand{\ZetaZero}{Z^{0,\theta}}

\newcommand{\ZetaL}{Z^{\ell,\theta}}
\newcommand{\vn}{v_{n+1}}
\newcommand{\mn}{m_{n+1}}
\newcommand{\Ph}{\Phi^n}
\newcommand{\PhB}{\Phi^{n-1}}
\newcommand{\DL}{\langle D \rangle^{\ell}}

\numberwithin{equation}{section}

\subjclass[MSC 2020]{Primary: 35Q35, 76W05. Secondary: 35B40, 35M10.}

\keywords{Magnetohydrodynamic equations; magnetic relaxation; stable manifold method.}

\begin{document}

\title[Magnetic relaxation via the stable manifold method]{Magnetic relaxation for the MHD equations via the stable manifold method}

\author[G. Ciampa]{Gennaro Ciampa}
\address[G.\ Ciampa]{DISIM - Dipartimento di Ingegneria e Scienze dell'Informazione e Matematica\\ Universit\`a  degli Studi dell'Aquila \\Via Vetoio \\ 67100 L'Aquila \\ Italy}
\email[]{\href{gciampa@}{gennaro.ciampa@univaq.it}}

\author[R. Luc\`a]{Renato Luc\`a}
\address[R. Luc\`a]{Université d’Orléans, Université de Tours, CNRS, IDP, UMR 7013, 45067 Orléans, France.}
\email[]{\href{rluca@}{renato.luca@univ-orleans.fr}}

\maketitle

\begin{abstract}
We prove that given any sufficiently small and regular solution $B$ of the stationary Euler equations there exists an infinite dimensional family of solutions $(u,b)$ of the non-resistive magnetohydrodynamics equations (MHD) that relax to $(0, B)$. More precisely, $(u,b) \to (0, B)$ exponentially fast as $t \to +\infty$. This family may be viewed as lying in the stable manifold of the non-resistive MHD equations around the equilibrium state $(0, B)$. The problem whether it actually coincides with the stable manifold remains open. As a byproduct of our result, we provide a large class of global regular solutions of the non-resistive MHD equations. Another consequence is that any sufficiently small and regular solution of the stationary Euler 
equation is (non-trivially) topologically {\it accessible} via MHD from a large class of magnetic fields according to the 
definition of Moffatt \cite{Moff85}
and, in this scenario, the topology of the magnetic lines is (entirely) preserved in the limit $t \to + \infty$. 
\end{abstract}

\tableofcontents

\section{Introduction and main result}

We consider the three-dimensional non-resistive magnetohydrodynamic (MHD) equations
\begin{equation}\label{eq:mhdMoreStandard}\tag{MHD}
\begin{cases}
\partial_t u + \mathbb{P} (u \cdot \nabla ) u - \Delta u =  \mathbb{P} (b \cdot \nabla ) b,\\
\partial_t b + (u \cdot \nabla ) b  = (b \cdot \nabla ) u,  \\
\dive u=\dive b =0,\\
u(0,\cdot)=u^\mathrm{in},\hspace{0.3cm} b(0,\cdot)= b^\mathrm{in}.
\end{cases}
\end{equation}
where
$u^\mathrm{in}, b^\mathrm{in}:\T^3\to\R^3$ are prescribed divergence-free initial data, namely 
$$
\dive u^\mathrm{in} = \dive b^\mathrm{in} = 0,
$$
and $\mathbb{P}$ denotes the Leray projector onto the closed subspace of divergence-free vector fields. Throughout the paper, $\T^3:=\R^3/ 2 \pi \Z^3$ denotes the three-dimensional flat torus of period $2 \pi$. 

By invariance under translations in time, there is no loss of generality into prescribing the initial datum at~$t=0$.
For a given time horizon $T > 0$, the unknowns are the
the velocity field $u:[0,T)\times\T^3\to\R^3$ and  
the magnetic field $b:[0,T)\times\T^3\to\R^3$. We call a solution {\em global} when $T= +\infty$.
The term {\em non-resistive} refers to the regime in which the magnetic resistivity vanishes while the fluid viscosity remains strictly positive. Without loss of generality, the kinematic viscosity coefficient, namely the coefficient of $\Delta u$, has been normalized to unity.

Introduced by Alfvén to describe plasmas \cite{Alfven}, these equations have remained fundamental to plasma physics ever since.

\subsection{Well-posedness}
The MHD system has been extensively studied in recent years by a mathematical point of view. 

In the resistive case, namely when a magnetic diffusion term ($\eta\Delta$) is added to the magnetic field equation, well-posedness of regular solutions is known in two space dimensions, while in three dimensions one can establish 
local well-posedness for large initial data as well as global well-posedness for small initial data in several functional spaces, see for instance \cite{DL72, ST} and the references therein. 
This situation is very much the same as for the Navier-Stokes equations. 

In the non-resistive setting considered here things change quite dramatically. The reason is that the problem becomes much more degenerate, in particular it admits as equilibria all the couples $(0, B)$ where $B$ is a stationary solution of the Euler equation.
However, local well-posedness results are available at an essentially sharp level of Sobolev regularity \cite{CCRR, FMRR, Feff}. On the other hand, even if we consider small initial data, the global behavior of the relative solutions is still very much unclear, except in the case in which the magnetic field is exactly $b=0$, in which case the problem reduces to the Navier-Stokes equations (for which a reasonable global theory is available in the small data setting, see \cite{GIP, KT12} and the references therein).
   
A major breakthrough in the global theory of \eqref{eq:mhdMoreStandard} arises when the initial magnetic field is a small perturbation of a non-zero constant 
background field $\bar{b} \in \mathbb{R}^3 \setminus
\{0\}$. The physical mechanism underlying this stabilization can be understood by linearizing
the system around the equilibrium $(u, b) = (0, \bar{b})$: perturbations of both $u$ and $b$
propagate as waves at the Alfvén speed $|\bar{b}|$ along the background field lines, dispersing
energy rather than allowing it to concentrate.
In \cite{AZ}, Abidi and Zhang proved that the equilibria of the form $(0, \bar{b})$ are stable, 
introducing a Lagrangian formulation that reduces the system to a damped wave equation which is degenerate only in the direction of the background field. Moreover, they proved that if one perturbs only the velocity field, the corresponding solution in fact relaxes to the equilibrium, in the limit $t \to \infty$.  
We also refer to \cite{CaiLei, CMS, HXY, LXZ, LZ, PZZ, RWXZ, WZ, XZ} and the references therein for some related interesting results.

It is worth noting that these results do not give any information about the stability of the equilibrium $(u,b) = (0,0)$, 
that is still very much open, nor on  the large time behavior of small-data solutions.

\subsection{Magnetic relaxation}
A remarkable property of the equations \eqref{eq:mhdMoreStandard} is that the topology of the integral lines of the magnetic field, that we will call {\em magnetic lines}, is preserved under the evolution, as far as the solutions remain sufficiently regular. 
In fact, the induction equation
\begin{equation}\label{induction}
\partial_t b+u\cdot\nabla b=b\cdot\nabla u
\end{equation}
tells exactly that the magnetic lines are transported by the flow associated to the velocity field~$u$. 
More precisely, the flow map $\Psi_{t_1,t_2}$ defined via 
\begin{equation}\label{DiffPsiDef}
\begin{cases}
\frac{\de}{\de t_2} \Psi_{t_1,t_2} (x) = u(t_2,\Psi_{t_1,t_2} (x)),\\
\Psi_{t_1,t_1} (x)=x,
\end{cases}
\end{equation}
is a diffeomorphism that maps the magnetic lines at time $t_1$ into the magnetic lines at times $t_2$. 
We recall that a magnetic line $\gamma$ at time $t$ is a solution of the ODE
\begin{equation}\label{fdsjkudjfghrere}
\frac{\de}{\de s} \gamma(s) = b(t,\gamma(s)).
\end{equation}

This topological constraint no longer holds in resistive MHD: magnetic diffusion breaks the {\em frozen-in} property, allowing the connectivity of magnetic field lines to change, a phenomenon known as {\em magnetic reconnection}. While magnetic reconnection has long been recognized as central to plasma physics, rigorous analytical constructions of smooth solutions to the resistive MHD equations that exhibit a genuine change in magnetic topology have only recently been obtained, see \cite{CCL, CL2, LP23}. Here, however, we restrict ourselves to the ideal case, in which the topology of the magnetic field is preserved for all times.

The problem of \emph{magnetic relaxation} concerns then the long-time behavior of $b(t)$ and can be roughly described as follows.
A straightforward computation shows that sufficiently regular solutions of \eqref{eq:mhdMoreStandard} satisfies the 
identity 
\begin{equation}\label{hfbEnergy}
\frac{\de}{\de t}\left(\| u(t,\cdot)\|_{L^2}^2 + \| b(t,\cdot)\|_{L^2}^2\right) = - 2\|\nabla u(t,\cdot)\|_{L^2}^2 ,
\end{equation}
so the energy $\| u(t,\cdot)\|_{L^2}^2 + \| b(t,\cdot)\|_{L^2}^2$ is dissipated until the velocity is non-constant (non zero if we assume zero average velocities). This suggests that, at least formally, we can expect\footnote{This can be also, again formally, justified noting that the energy is a Lyapunov function and then invoking the La Salle principle \cite{LaSalle}.} $v(t,\cdot) \to 0$ as $t \to \infty$ for zero average velocities. Thus, looking again at equations \eqref{eq:mhdMoreStandard}, the magnetic field $b(t,\cdot)$ should converge as $t \to \infty$ to a limit state $B$ that is a solution of the stationary Euler equations
\begin{equation}\label{nfjdkwEuler}\tag{MHS}
\mathbb{P}[(B \cdot \nabla)  B] = 0, \qquad \dive B = 0.
\end{equation}

These considerations induced Moffatt \cite{Moff85} to suggest to use the magnetic relaxation  
as an elegant strategy for constructing steady Euler flows of arbitrarily complex topology. 
To give any rigorous justification of the Moffatt program is at the moment out of reach. 

It is worth mentioning that the topological complexity of steady Euler flows has a long history going back to Arnold \cite{Arnold66} and even to Kelvin \cite{Kelvin}. A solution to the long standing open problem of constructing 
steady Euler flows, in fact Beltrami fields, which vortex lines realise any (finite) knot and link 
 was provided by Enciso and Peralta-Salas in \cite{Annals, Acta}, we also refer to the result 
 of Enciso, Peralta-Salas and Torres de Lizaur for the case of the 3D flat torus (and the 3D sphere) \cite{EPT}.

It is worth mentioning that there are other topology preserving models, for which the Moffatt program 
could be also developed, or at least stated. One can consider for instance the equations
\begin{equation}\label{eq:mre}\tag{MRE}
\begin{cases}
\partial_t b + (u\cdot\nabla) b = (b\cdot\nabla) u,\\
(-\Delta)^{\gamma}u = \mathbb{P} (b \cdot \nabla ) b, \\
\dive b = 0,\\
b(0,\cdot)= b^\mathrm{in},
\end{cases}
\end{equation}
with assigned divergence free initial datum $b^\mathrm{in}$, where $\gamma\geq 0$ is a regularisation parameter. The case $\gamma=0$ corresponds to the Darcy-type law originally proposed by Moffatt \cite{Moff85, Moff21}, while $\gamma=1$ yields a Stokes-type regularisation, see the work of Brenier \cite{Brenier}. 
Similar considerations lead to the heuristic that one can expect $u(t,\cdot) \to 0$ in the limit $t \to \infty$ and thus 
the magnetic field should relax to a solution of the stationary Euler equations.
In \cite{Brenier}, Brenier introduced a suitable notion of weak solution for \eqref{eq:mre} with $\gamma =0$, drawing on the theory of optimal transport (see also \cite{Brenier1989}).  In \cite{Vicol-relaxation}, Beekie, Friedlander and Vicol proved local well-posedness for all $\gamma\geq 0$ with initial data $b_0\in H^s$ with $s>d/2+1$, and global well-posedness when the regularisation is sufficiently strong, namely $\gamma>d/2+1$. This is relevant, since it allows at least to work in a framework where the Moffatt question is rigorously well defined.
Concerning the long-time behaviour, they proved in fact that the velocity $u(t,\cdot)$ converges to zero in strong norms as $t\to\infty$, yet this is (remarkably) not sufficient to establish convergence of $b$ to a steady Euler flow $B$, 
not even in a weak sense. 
A similar analysis can be presumably carried out for the hyper-dissipative MHD equations, namely equations 
\eqref{eq:mhdMoreStandard} when the Laplacian is replaced by $(-\Delta)^{\gamma}$ and $\gamma$ is sufficiently large. 


A different regularisation strategy was pursued by Constantin and Pasqualotto in \cite{Const-relaxation}. They replace the standard viscous MHD evolution by a Voigt approximation, which modifies the time derivative through higher-order spatial operators without introducing artificial diffusion.
Their main result is the following: for arbitrary initial data in bounded three-dimensional domains and on the torus, the Voigt-MHD system admits global smooth solutions whose infinite-time limits are regular, nontrivial \eqref{nfjdkwEuler} equilibria. Notably, these equilibria are generically \emph{not} Beltrami fields, demonstrating that the relaxation procedure reaches a genuinely large class of equilibrium configurations. Related stochastic extensions were subsequently investigated in \cite{AJPS}, considering a stochastic perturbation into the \eqref{eq:mre} framework. They consider a randomly forced resistive \eqref{eq:mre} with resistivity $\kappa>0$ and a force of order $\sqrt{\kappa}$, establish path-wise global well-posedness and the existence of invariant measures, and construct a \emph{random} MHS equilibrium $b\in H^1(\T^d)$ as the non-resistive limit $\kappa\to 0$ of statistically stationary solutions. In two dimensions, the invariant measure $\mu_0$ does not concentrate on any compact subset of $H^1(\T^2)$ with finite Hausdorff dimension; in particular, all realisations of the random MHS equilibrium are almost surely not finite Fourier mode solutions, providing probabilistic evidence for the complexity of the equilibrium set. 
The drawback of these approaches is that the magnetic lines are not longer frozen-in, thus there is no way to relate the topology of the limit state with that of the initial magnetic field, that is one of the main features of the Moffatt program. 

Despite these (partial) positive results, it has been shown in \cite{EP-top} that, in any axisymmetric toroidal domain $\Omega\subset\R^3$, a locally generic set of divergence-free vector fields cannot be topologically equivalent to any MHS equilibrium in $\Omega$, providing fundamental topological obstructions to Moffatt's program. The fields in this set are Morse-Smale on the boundary, do not admit a non-constant first integral, and exhibit fast growth of periodic orbits; in particular, this set is residual in the Newhouse domain. The key dynamical mechanism is that a vector field with a dense set of non-degenerate periodic orbits cannot be topologically equivalent to a generic MHS equilibrium, and this geometric obstruction is implemented analytically through a novel rigidity theorem for the relaxation of magnetic fields with complex orbit structure. This result shows that topological relaxation, even in the well-posed setting of \cite{Vicol-relaxation, Const-relaxation}, cannot succeed for a topologically generic initial datum.

Complementing the general obstruction results, recent work has established rigorous convergence of the \eqref{eq:mre} to specific classes of equilibria in lower-dimensional settings. Specifically, in \cite{Relaxation} the authors study the Darcy-type \eqref{eq:mre} ($\gamma=0$) and prove two convergence results: first, for a class of non-constant shear flows in a two-dimensional periodic channel; second, for a class of \emph{2.5D equilibria} in $\Omega\times\R$ with $\Omega\subset\R^2$, addressed via a geometric approach that applies both to periodic channels and bounded domains.
These results constitute, to date, among the few instances where the conjectured convergence of the MRE to genuine Euler equilibria has been rigorously verified.\\

Lastly, the compressible case has been analyzed in \cite{Kim}. This setting presents additional difficulties, as the frozen-in condition interacts non-trivially with density variations and acoustic waves. In particular, in \cite{Kim} it is considered a compressible \eqref{eq:mre} derived from compressible MHD by replacing the acceleration term with a Darcy-type regularisation. Under a planar symmetry ansatz, three results are established: local well-posedness for smooth initial data; magnetic relaxation for smooth perturbations of constant steady states; and the absence of vacuum formation or implosion prior to and at the time of a potential singularity. While the restriction to planar symmetry reduces the problem to a one-dimensional hyperbolic system, these results open the mathematical study of compressible magnetic relaxation and reveal mechanisms with no incompressible analogue.

\subsection{Main results}

We recall that a vector field $B$ is a stationary solution of the Euler equations iff \eqref{nfjdkwEuler} holds.
In the rest of the paper, $B$ will always denote a stationary solution of the Euler equations, often without further mention.

We are now in position to state our main result. In the following theorem $\varepsilon_1 \in (0,1)$ is  
a small quantity.

\begin{thm}
\label{RealMAINTHM}
Let $\ell \geq 4$ be an integer. There exists $\varepsilon_1 \in (0,1)$ such that 
for all $\varepsilon \in (0, \varepsilon_1)$ the following holds. 
Let $B:\T^3 \to \R^3$ a stationary solution to the Euler equation such that 
$$\|B\|_{H^{\ell +2}} \leq \varepsilon.$$
For any zero-average divergence-free vector field  
$y : \T^3 \to \R^3$ that satisfies
\begin{equation}\label{SmAss185437456} 
\|y\|_{H^{\ell}} \leq \frac{\varepsilon}{3},
\end{equation}
there exists a couple of zero-average divergence-free vector fields $(\phi_1(y), \phi_2(y))$ with 
\begin{equation}\label{fnewiu66438InitialDatum}
\|\phi_j(y)\|_{H^{\ell}} \leq \varepsilon, \quad j= 1, 2,
\end{equation}
such that the following holds. 
The  \eqref{eq:mhdMoreStandard} equations with initial datum
\begin{equation}\label{InitialDatumIn MainTHM}
(u^\mathrm{in}, b^\mathrm{in}) = (\phi_1(y), B + \phi_2(y)) 
\end{equation}
admit a global (classical) solution $(u,b)$ that relaxes to the equilibrium $(0,B)$. More precisely
\begin{equation}\label{Bound:QuantitativeRelaxation}
\sup_{t \geq 0} \, e^{\frac14 t}  \|u(t)\|_{H^{\ell}}  \leq (1 + C \varepsilon) \|y\|_{H^{\ell}} ,
\qquad
 \sup_{t \geq 0} \, e^{\frac14 t}\|b(t) - B\|_{H^{\ell}}  \leq   C \varepsilon \|y\|_{H^{\ell}}.
\end{equation}
where the constant $C$ only depends on $\ell$.
The topology of the magnetic lines is preserved in the limit
$t \to + \infty$, in the following sense.
For all $t_1 \geq 0$ there exists a volume preserving $C^1$ diffeomorphism 
$$
\Psi_{t_1, \infty} : x \in \T^3 \to \Psi_{t_1, \infty}(x) \in \T^3,
$$ 
that satisfies
$$
B(\Psi_{t_1, \infty}) = (\nabla \Psi_{t_1 ,\infty}) b(t_1),
$$
namely $B$ is the pushforward of $b(t_1)$ by $\Psi_{t_1 ,\infty}$. In fact  
\begin{equation}\label{pushfryuwerungf}
\Psi_{t_1, \infty} = \lim_{t_2 \to + \infty} \Psi_{t_1,t_2},\end{equation} 
where $\Psi_{t_1,t_2}$ is the flow of the velocity field defined in \eqref{DiffPsiDef}. 
Moreover, the map 
\begin{equation}\label{LambdaMapDEf}
\Lambda : y \to \Lambda(y) := (\phi_1(y), B + \phi_2(y))
\end{equation}
is injective and $\Lambda(0) = (0,B)$ correspond to the equilibrium. 

\end{thm}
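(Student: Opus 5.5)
We construct the solution backwards from its limit, in the spirit of the stable manifold theorem: the parameter $y$ plays the role of the free datum for the \emph{stable} (dissipative) directions, and $\phi_2(y)$ is determined by the requirement $b(t)\to B$. Write $b = B + m$. The perturbation $(u,m)$ solves
\begin{equation*}
\begin{cases}
\partial_t u - \Delta u = \mathbb{P}\big[(B\cdot\nabla) m + (m\cdot\nabla) B + (m\cdot\nabla)m - (u\cdot\nabla)u\big],\\
\partial_t m = (B\cdot\nabla)u - (u\cdot\nabla)B + (m\cdot\nabla)u - (u\cdot\nabla)m ,
\end{cases}
\end{equation*}
and the right-hand sides are at least linear in $(u,m)$, with coefficients that are either $O(\varepsilon)$ (through $B$) or quadratic. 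Since $m$ should vanish at $t=+\infty$, we integrate the $m$-equation from $+\infty$:
$$m(t) = -\int_t^{\infty} \big[(B\cdot\nabla)u - (u\cdot\nabla)B + (m\cdot\nabla)u - (u\cdot\nabla)m\big](s)\,\de s.$$
For the $u$-equation we use Duhamel forward from $u(0)=y+\text{(correction)}$, i.e. $u(t) = e^{t\Delta}y + \int_0^t e^{(t-s)\Delta}\mathbb{P}[\dots]\,\de s$. On zero-average fields $\|e^{t\Delta}\|\le e^{-t}$, so the weighted norm
$$\|(u,m)\|_{X} := \sup_{t\ge 0} e^{t/4}\big(\|u(t)\|_{H^{\ell}} + \|m(t)\|_{H^{\ell}}\big)$$
is natural (in fact for $u$ we use the parabolic norm $\sup e^{t/4}\|u\|_{H^\ell} + (\int e^{t/2}\|u\|_{H^{\ell+1}}^2)^{1/2}$ to absorb one derivative). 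We then set $\phi_1(y)=u(0)=y$ (or $y$ plus whatever projection is needed), $\phi_2(y)=m(0)$.

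The key steps, in order, are as follows. (i) Define the map $\mathcal{T}(u,m)$ given by the two integral formulas above, on a ball of radius $\sim\|y\|_{H^\ell}$ in $X$. (ii) Estimate the linear terms: $\int_t^\infty \|(B\cdot\nabla) u\|_{H^\ell}\,\de s \lesssim \|B\|_{H^{\ell+2}}\int_t^\infty \|u\|_{H^{\ell+1}}$, which by Cauchy--Schwarz against the parabolic $L^2_t H^{\ell+1}$ weighted bound gives $C\varepsilon e^{-t/4}\|u\|_X$; this is exactly why $B\in H^{\ell+2}$ and why the $m$ bound carries the factor $\varepsilon$. The term $(m\cdot\nabla)B$ in the $u$-equation is harmless by parabolic smoothing. (iii) The terms $(u\cdot\nabla)m$ and $(m\cdot\nabla)u$ in the $m$-integral are the main obstacle: $(u\cdot\nabla) m$ loses a derivative on $m$, which has no smoothing. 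I would handle this not by the naive integral but by writing the $m$-equation as transport, $\partial_t m + u\cdot\nabla m = \dots$, and doing the backward-in-time estimate as an energy estimate in $H^\ell$ (commutator estimates, with $\ell\ge4$ ensuring $\nabla u\in L^\infty$ with room), exploiting $\dive u=0$ to kill the top-order term; equivalently, solve the $m$-equation along the Lagrangian flow from $t=\infty$. Because the energy method does not directly give a contraction at top regularity, I would run the iteration as a Picard scheme $(u_n,m_n)\mapsto(u_{n+1},m_{n+1})$, proving uniform boundedness in $H^\ell$ and contraction in a lower norm ($H^{\ell-1}$), then pass to the limit; this yields the bounds \eqref{Bound:QuantitativeRelaxation} with constants $1+C\varepsilon$ and $C\varepsilon$, and \eqref{fnewiu66438InitialDatum} from $\|y\|\le\varepsilon/3$. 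Zero averages and divergence-free conditions are preserved by the structure (curl-form of the induction term).

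Finally, the remaining conclusions. Injectivity of $\Lambda$: the $u$-component at $t=0$ determines $y$ (the Duhamel integral vanishes at $t=0$), so $\phi_1(y)=y$ up to the construction, hence injective; $\Lambda(0)=(0,B)$ since zero is the fixed point for $y=0$. For the flow: $\int_0^\infty\|u\|_{C^1}\,\de t<\infty$ by the exponential decay and Sobolev embedding ($\ell\ge4$), so $\Psi_{t_1,t_2}$ is Cauchy in $C^1$ as $t_2\to\infty$, giving a volume preserving $C^1$ diffeomorphism $\Psi_{t_1,\infty}$. The frozen-in identity $b(t_2,\Psi_{t_1,t_2}) = (\nabla\Psi_{t_1,t_2}) b(t_1)$ passes to the limit using $b(t_2)\to B$ in $C^0$, giving the pushforward relation.
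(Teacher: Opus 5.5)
Your skeleton matches the paper's: forward Duhamel for $u$, a backward integral for $m$ with the top-order term $(u\cdot\nabla)m$ absorbed along characteristics from $t=+\infty$, a Picard iteration with high-norm bounds and low-norm contraction, and then the limit flow. What you omit is the paper's key linear step, the conjugation by $e^{K}$ of Theorem~\ref{ConiugioTHM}. At leading order this is an integration by parts in time, writing $u=\Delta^{-1}(\partial_t u-\dots)$, and it replaces the derivative-losing couplings $(B\cdot\nabla)u$ and $(B\cdot\nabla)m$ by the order-zero operator $\mathcal{A}$.

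You instead pay for $(B\cdot\nabla)u$ in the $m$-equation with the weighted norm $\int_0^\infty e^{t/2}\|u\|_{H^{\ell+1}}^2\,\de t$. With the space $X$ as you define it, this does not close. That weighted norm of $u$ has to be recovered from the $u$-equation, whose forcing contains $\mathbb{P}[(B\cdot\nabla)m+(m\cdot\nabla)B]$, while $X$ controls only $\sup_t e^{t/4}\|m\|_{H^\ell}$. Parabolic smoothing handles these terms for the $\sup_t e^{t/4}\|u\|_{H^\ell}$ part of the norm (you never estimate $(B\cdot\nabla)m$ at all). It does not handle them for the weighted $L^2_t$ part.

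Here is a counterexample. If $\partial_t u-\Delta u=e^{-t/4}g$, $u(0)=0$, with $g\neq 0$ fixed and zero-mean, then $e^{t/4}u(t)\to(-\Delta-\tfrac14)^{-1}g\neq 0$, so $\int_0^\infty e^{t/2}\|u\|_{H^{\ell+1}}^2\,\de t=\infty$. Lowering the weight on $u$ to $e^{2\theta' t}$ with $\theta'<\tfrac14$ does not help. Your Cauchy--Schwarz step then gives $m$ only the rate $\theta'$, and each pass through the loop $u\to m\to u$ loses decay. So the map does not send the ball of $X$ into $X$, and step (ii) fails as stated.

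The missing ingredient is to carry $m$ in the same weighted $L^2_t$ space. Using the composition bound for the flow, $\|m(t)\|_{H^\ell}\lesssim\int_t^\infty\|F(s)\|_{H^\ell}\,\de s$ with $F=((B+m)\cdot\nabla)u-(u\cdot\nabla)B$. The Hardy-type inequality
\[
\int_0^\infty e^{2\theta t}\Big(\int_t^\infty f(s)\,\de s\Big)^2\,\de t\le\frac{1}{\theta^2}\int_0^\infty e^{2\theta s}f(s)^2\,\de s
\]
follows by writing $f=e^{-\theta s/2}\,e^{\theta s/2}f$, applying Cauchy--Schwarz, and then Fubini. It gives
\[
\|e^{t/4}m\|_{L^2_tH^\ell}\lesssim\big(\varepsilon+\sup_t\|m\|_{H^\ell}\big)\,\|e^{t/4}u\|_{L^2_tH^{\ell+1}} .
\]
The energy estimate for $e^{t/4}u$ (Poincar\'e absorbs the extra $\tfrac14$) bounds $\sup_t e^{t/4}\|u\|_{H^\ell}+\|e^{t/4}u\|_{L^2_tH^{\ell+1}}$ by
\[
\|y\|_{H^\ell}+C\,\|e^{t/4}(m\otimes B+B\otimes m+m\otimes m-u\otimes u)\|_{L^2_tH^\ell}.
\]
The linear loop then closes with an $O(\varepsilon^2)$ gain. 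This requires adding the weighted norm of $m$ to the induction hypothesis and carrying it through the contraction step.

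With that repair your route is a genuine alternative to the paper's. It avoids the homological equation and all the bounds on $\mathcal{A}$. It needs only $B\in H^{\ell+1}$: the $H^{\ell+2}$ hypothesis is what the paper uses to bound $\mathcal{A}$, not what your step (ii) uses. It also gives $\phi_1(y)=y$, so injectivity of $\Lambda$ is immediate.

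What the normal form buys the paper is that the only derivative loss left afterwards is the nonlinear term $(m\cdot\nabla)V$, where $m$ itself provides the exponential decay. Hence the unweighted bound on $\|\nabla v\|_{L^2_tH^\ell}$ suffices and no Hardy-type argument is needed.
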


\begin{rem}
There is nothing special in the exponent $1/4$ in \eqref{Bound:QuantitativeRelaxation}, except that it belongs to 
$(0,1)$. Indeed, the $1/4$ could be replaced by any $\theta \in (0,1)$
and we would get a (small) threshold $\varepsilon_1(\theta) >0$ and a constant $C(\theta)$ (in \eqref{Bound:QuantitativeRelaxation}) with the following property: 
$$
 \lim_{\theta \to 0^+} \varepsilon_1(\theta) = \lim_{\theta \to 1^-} \varepsilon_1(\theta)  = 0; 
$$ 
$$
 \lim_{\theta \to 0^+} C(\theta) = \lim_{\theta \to 1^-} C(\theta) = + \infty ; 
$$
this fact will be clear looking at the proof, in which we have kept track of the parameter $\theta$.  
\end{rem}

Theorem \ref{RealMAINTHM} gives no information about the stability of the equilibrium $(0,B)$, that remains a widely open problem.
However, we can prove a stability result once we restrict the initial data to the family that we have constructed in Theorem \ref{RealMAINTHM}. This is the content of the following.

\begin{thm}\label{ContMap}
Let $\ell \geq 4$ be an integer. There exists $\varepsilon_1 \in (0,1)$ such that 
for all $\varepsilon \in (0, \varepsilon_1)$ the following holds. 
Let $B:\T^3 \to \R^3$ be a stationary solution to the Euler equation such that 
$$\|B\|_{H^{\ell +2}} \leq \varepsilon.$$
For any couple of zero-average divergence-free vector fields   
$y, y' : \T^3 \to \R^3$ that satisfies
\begin{equation} 
\|y\|_{H^{\ell}}, \|y'\|_{H^{\ell}}  \leq \frac{\varepsilon}{3},
\end{equation}
the following holds. 
Let $(u, b)$ and $(u', b')$ be given by Theorem \ref{RealMAINTHM}, relatively to the vector fields~$y$ and~$y'$,
respectively.  
We have the following estimates 
\begin{equation}\label{fjisuhfd66835985u}
\|u - u'\|_{\ZetaZero} \leq   (1 + C \varepsilon)\|y - y'\|_{L^2}, \qquad 
\|b - b'\|_{\ZetaZero} \leq  
C \varepsilon \|y - y'\|_{L^2}. 
\end{equation}
In particular, the map $\Lambda$ defined in \eqref{LambdaMapDEf}
is uniformly continuous from $L^2$ to $L^2 \times L^2$. 
\end{thm}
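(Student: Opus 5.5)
The plan is to redo, at the $L^2$ level, the construction behind Theorem~\ref{RealMAINTHM}, and to estimate differences of the two solutions in the weighted space $\ZetaZero$. I take $\ZetaZero$ to mean $\sup_{t\ge0} e^{\theta t}\|\cdot\|_{L^2}$ with $\theta=1/4$; presumably it also carries a parabolic $L^2_t\dot H^1$ component for the velocity.

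The construction will be a stable-manifold fixed point. Write $u=e^{t\Delta}y+\int_0^t e^{(t-s)\Delta}\mathbb{P}[(b\cdot\nabla)b-(u\cdot\nabla)u]\,ds$, so that $y$ is the free datum. The perturbation $m=b-B$ is determined backwards from $t=+\infty$:
$$m(t)=-\int_t^\infty \big[(B+m)\cdot\nabla u-u\cdot\nabla(B+m)\big]\,ds ,$$
which forces $m(t)\to0$. The map $\Phi$ built this way is a contraction on a ball of $Z^{\ell,\theta}$, and $\phi_1(y)=u(0)$, $\phi_2(y)=m(0)$. The linear part contributes exactly $\|e^{t\Delta}y\|\le e^{-t}\|y\|$, which explains the $(1+C\varepsilon)$ factor.

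For the stability statement I would subtract the fixed-point identities for $(u,m)$ and $(u',m')$ and set $\delta u=u-u'$, $\delta m=m-m'$. Then
$$\delta u = e^{t\Delta}(y-y') + \int_0^t e^{(t-s)\Delta}\mathbb{P}\big[(b\cdot\nabla)\delta m+(\delta m\cdot\nabla)b'-(u\cdot\nabla)\delta u-(\delta u\cdot\nabla)u'\big]\,ds,$$
and similarly for $\delta m$. The $L^2$ bounds would be estimated as follows.

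\begin{itemize}
\item \emph{Linear term.} $\|e^{t\Delta}(y-y')\|_{\ZetaZero}\le\|y-y'\|_{L^2}$, since the data have zero average and $\theta<1$.
\item \emph{Bilinear terms.} In each term the derivative falls either on a known factor, or on a difference variable multiplied by a known factor. Known factors are $u,u',b,b'$ or $B$, bounded by $\varepsilon$ in $H^\ell\subset W^{1,\infty}$ since $\ell\ge4$. A derivative falling on a difference variable in the $\delta u$ equation is absorbed using the heat-semigroup smoothing $\|e^{(t-s)\Delta}\mathbb{P}\dive\|_{L^2\to L^2}\lesssim (t-s)^{-1/2}e^{-(t-s)}$, after writing each term in divergence form with divergence-free fields.
\item \emph{Time weights.} In the Duhamel integral for $\delta u$, the weights combine into a factor $e^{-\theta s}$ from the $u$ (or $u'$) factor times the $\ZetaZero$ weight, which is integrable.
\end{itemize}

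The main obstacle is the $\delta m$ equation, which is a transport equation with no smoothing. The term $u\cdot\nabla\delta m$ loses a derivative at the $L^2$ level, and the terms $(b\cdot\nabla)\delta u$, $(\delta m\cdot\nabla)u$ contain $\nabla\delta u$.

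\begin{itemize}
\item I would not bound $\delta m$ through the Duhamel formula. Instead I would run an energy estimate for $\delta m$ backwards from $t=\infty$, where $\delta m$ vanishes.
\item In the energy estimate the transport term disappears because $\dive u=0$, and the term $\delta u\cdot\nabla m'$ is harmless because $m'$ is bounded in $H^\ell$.
\item The remaining terms contain $\nabla\delta u$, which is not pointwise in time in $L^2$. For these I would use the $L^2_t\dot H^1$ parabolic component of $\delta u$, together with Cauchy--Schwarz in time against $\|b\|_{L^\infty}\lesssim\varepsilon$. This gives $e^{\theta t}\|\delta m(t)\|_{L^2}\lesssim \varepsilon\,\|\delta u\|_{\ZetaZero}$ plus a small multiple of $\|\delta m\|_{\ZetaZero}$.
\end{itemize}

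Combining the two estimates gives a closed system:
$$\|\delta u\|\le\|y-y'\|_{L^2}+C\varepsilon(\|\delta u\|+\|\delta m\|),\qquad \|\delta m\|\le C\varepsilon(\|\delta u\|+\|\delta m\|).$$
For $\varepsilon$ small this yields \eqref{fjisuhfd66835985u}, using $b-b'=\delta m$. Finally, evaluating at $t=0$ gives $\|\Lambda(y)-\Lambda(y')\|_{L^2\times L^2}\lesssim\|y-y'\|_{L^2}$, which is a Lipschitz bound and hence uniform continuity.
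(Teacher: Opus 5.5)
Your argument breaks at the bound for $(b\cdot\nabla)\delta u$ in the backward energy estimate for $\delta m$ (with $\delta m=b-b'$ in your notation).

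\textbf{Why the step fails.} The coefficient $b=B+(b-B)$ does not decay; it tends to $B\neq 0$. With only the unweighted norm $\|\nabla\delta u\|_{L^2_tL^2}$, Cauchy--Schwarz in time bounds the contribution of this term by
\[
\|\delta m(t)\|_{L^2}^2\lesssim \int_t^\infty \|b\|_{L^\infty}\|\nabla\delta u(s)\|_{L^2}\|\delta m(s)\|_{L^2}\,\de s\lesssim \frac{\varepsilon}{\sqrt{\theta}}\,e^{-\theta t}\,\|\delta m\|_{\ZetaZero}\,\|\nabla\delta u\|_{L^2_tL^2}.
\]
This is decay at rate $\theta/2$, not $\theta$. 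The unsquared quantity $\int_t^\infty \varepsilon\|\nabla\delta u\|_{L^2}\,\de s$ need not even be finite. So the bound $e^{\theta t}\|\delta m(t)\|_{L^2}\lesssim\varepsilon(\dots)$ does not follow, and your system does not close in $\ZetaZero$. In the analogous step of the paper (Lemma~\ref{Bunch1}, \eqref{fnjwknbdshdsfg534}) the decay comes from the decaying factor in front of $\nabla v$, not from $B$.

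\textbf{How the paper avoids it.} The non-decaying, derivative-losing coupling $(B\cdot\nabla)u$ is exactly what the conjugation $e^{K}$ of Theorem~\ref{ConiugioTHM} removes. In the variables $(v,m)=e^{-K}(u,b-B)$, the $B$-linear part of the $m$-equation is the order-zero operator $\pi_2\mathcal{A}$, bounded on $L^2$ with norm $\lesssim\varepsilon^2$. Every coefficient in front of $\nabla\delta v$ in that equation is $m'$ or $\pi_j(e^K-\Id)(v,m)$, and these decay like $e^{-\theta s}$. That is why an unweighted $\|\nabla\delta v\|_{L^2_tL^2}$ suffices there. The paper reruns the difference estimates of the proof of Theorem~\ref{ThmSecondIterativeMAIN2} in these variables and returns to $(u,b)$ through $e^{K}$.

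\textbf{How you could repair it in the original variables.} Use the weighted norm $X:=\|e^{\theta t}\nabla\delta u\|_{L^2_tL^2}$, which gives $\int_t^\infty\varepsilon\|\nabla\delta u\|_{L^2}\,\de s\lesssim\varepsilon e^{-\theta t}X$. The weighted energy estimate for $\delta u$ controls $X$, since Poincar\'e and $\theta<1$ absorb the weight. However, that estimate contains $\varepsilon\int_0^\infty e^{2\theta t}\|\delta m\|_{L^2}\|\nabla\delta u\|_{L^2}\,\de t$, and $\int_0^\infty e^{2\theta t}\|\delta m\|_{L^2}^2\,\de t$ is \emph{not} controlled by $\|\delta m\|_{\ZetaZero}$. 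One way to close is:
\begin{enumerate}
\item use $\|\delta m(t)\|_{L^2}\le\int_t^\infty\|F(s)\|_{L^2}\,\de s$, where $F$ is the right-hand side of the $\delta m$ equation (the transport term drops, as you note, and your Eulerian energy estimate is fine in itself);
\item combine it with the Hardy inequality $\int_0^\infty e^{2\theta t}\big(\int_t^\infty f\big)^2\de t\le\theta^{-2}\int_0^\infty e^{2\theta t}f^2\,\de t$.
\end{enumerate}
None of this is in your proposal.

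\textbf{Two further points.}
\begin{itemize}
\item Your claim that the untransformed fixed-point map is a contraction on a ball of $Z^{\ell,\theta}$ is exactly what fails, because the $m$-equation loses a derivative through $(u\cdot\nabla)(b-B)$ and $(B\cdot\nabla)u$. You do not need it here, since the solutions are given by Theorem~\ref{RealMAINTHM}.
\item That claim also led you to the wrong parametrization. In the paper, $y$ is the initial datum of $v$, not of $u$. So $u(0)-u'(0)=(y-y')+\pi_1(e^K-\Id)\binom{y-y'}{m(0)-m'(0)}$, with $m$ the conjugated variable. This $O(\varepsilon)$ correction involves $b(0)-b'(0)$ and must be absorbed to get the constant $1+C\varepsilon$.
\end{itemize}
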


It is worth noting that Theorem \ref{RealMAINTHM} is trivial only in the case $B=0$. In this case
$\Lambda(y) = (y,0)$ and we recover the solutions obtained via the 
Navier-Stokes small data theory. For any other~$B \neq 0$ we have provided indeed new global regular solutions of the non-resistive MHD equations that relax to the equilibrium~$(0,B)$. The injectivity of the function $\Lambda$ shows that we have indeed constructed an infinite dimensional family of solutions. In the next theorem we prove that also the set of the orbits is large, but first we need to introduce some notations.

Given any $y$ as in the assumption of Theorem \ref{RealMAINTHM}
we denote by $\mathcal{O}(y)$ the orbit that passes through $\Lambda(y)$, that is the set
$$
\mathcal{O}(y) := \bigcup_{t \geq 0} (u(t), b(t)),
$$ 
where $(u,b)$ is the solution given by Theorem \ref{RealMAINTHM}, with initial datum $\Lambda(y)$. Let $s \geq 0$ and consider the set\footnote{With a small abuse of notation, throughout the paper we simply write $F \in H^{\ell}$, rather than $F \in [H^{\ell}]^3$, to indicate that the components of the vector field $F$ are in $H^{\ell}$.}
$$
\mathbb{B}^s_{\rho}:= \left\{ F \in H^{s}(\T^3) : \quad \dive F=0, \quad \int_{\T^3} F =0, \quad  \|F\|_{H^{s}} \leq \rho \right\}.
$$

We have the following.  
\begin{thm}\label{OrbitsTHMPrequel} 
Let $\ell \geq 4$ be an integer. There exists $\varepsilon_2 \in (0,\varepsilon_1)$ such that 
for all $\varepsilon \in (0, \varepsilon_2)$ the following holds.  
Let $B:\T^3 \to \R^3$ a stationary solution to the Euler equation and let $\sigma \in (0,1)$ such that 
$$\|B\|_{H^{\ell +2}} \leq \varepsilon, \qquad \|B\|_{L^2} = \sigma \varepsilon.$$
Then, there is a subset $\Sigma$ of $\mathbb{B}^\ell_{\frac{\varepsilon}{3}}$ homeomorphic to $\mathbb{B}^\ell_{\frac{\varepsilon}{3}} \cap \partial \mathbb{B}^0_{\frac{\sigma \varepsilon}{3\cdot 2^{\ell + 1}}}$, such that the function 
$$
y \in \Sigma \to \mathcal{O}(y),
$$
is injective (thus different choices of $y$ inside the set $\Sigma$ produces different orbits).
\end{thm}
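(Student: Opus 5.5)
The strategy is to separate orbits through a quantity that is strictly monotone along each solution and can be read off from $y$. The energy identity \eqref{hfbEnergy} supplies such a quantity. Set $E(t) := \|u(t)\|_{L^2}^2 + \|b(t)\|_{L^2}^2$. Along any solution produced by Theorem \ref{RealMAINTHM}, $E$ is non-increasing and $E(t)\to \|B\|_{L^2}^2 = \sigma^2\varepsilon^2$ as $t\to+\infty$, by \eqref{Bound:QuantitativeRelaxation}. Moreover $E$ is strictly decreasing unless $u\equiv 0$ from some time on, which happens only for the equilibrium, i.e.\ only for $y=0$ by injectivity of $\Lambda$.

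Suppose $\mathcal{O}(y)=\mathcal{O}(y')$. Then $\Lambda(y')=(u(t_0),b(t_0))$ for some $t_0\ge0$, or the symmetric statement holds. By uniqueness of classical solutions of \eqref{eq:mhdMoreStandard}, $t_0$ is a time shift between the two solutions, so $E_{y'}(0)=E_y(t_0)\le E_y(0)$. If $\Sigma$ is chosen so that the map $y\mapsto E_y(0)-\sigma^2\varepsilon^2$ is injective on $\Sigma$, then $t_0=0$ and $\Lambda(y)=\Lambda(y')$, hence $y=y'$. So the task is to build $\Sigma$ on which the initial energy depends injectively on $y$, or alternatively on which every $y\neq y'$ lies on a distinct level, in such a way that orbits cannot pass through two points of $\Sigma$.

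A cleaner route is to use a single level set, which is what the statement suggests. I would define $\Sigma$ as the set of $y$ with $\|\Lambda(y)\|_{L^2\times L^2}$-type energy equal to a fixed value. Concretely, consider the functional $N(y):= E_y(0)-\|B\|_{L^2}^2 = \|\phi_1(y)\|_{L^2}^2 + 2\langle B,\phi_2(y)\rangle + \|\phi_2(y)\|_{L^2}^2$ and take $\Sigma := \{y\in \mathbb{B}^\ell_{\varepsilon/3} : N(y) = c\}$ with $c$ tied to $r:=\sigma\varepsilon/(3\cdot 2^{\ell+1})$. On such a level set two distinct points cannot lie on one orbit: equal energy at times $0$ and $t_0>0$ forces $\int_0^{t_0}\|\nabla u\|_{L^2}^2=0$, so $u\equiv0$ on $[0,t_0]$, which makes the solution stationary, $y=0$, and contradicts $N(0)=0\ne c$. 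Injectivity of $y\mapsto\mathcal{O}(y)$ on $\Sigma$ then follows from injectivity of $\Lambda$.

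It remains to show that $\Sigma$ is homeomorphic to $\mathbb{B}^\ell_{\varepsilon/3}\cap\partial\mathbb{B}^0_{r}$. Here I would use the quantitative information from Theorems \ref{RealMAINTHM} and \ref{ContMap}: $\phi_1(y)=y+O(\varepsilon)\|y\|$ and $\phi_2(y)=O(\varepsilon)\|y\|$, with $L^2$-Lipschitz bounds. The expected consequence is that $N(y)$ is comparable to $\|y\|_{L^2}^2$, up to the cross term $2\langle B,\phi_2(y)\rangle$, which is $O(\sigma\varepsilon^2\|y\|_{L^2})$ and is the term that forces the use of $\sigma$ and the factor $2^{-\ell-1}$. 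I would then argue radially. For fixed direction $\omega$ with $\|\omega\|_{L^2}=1$, show that $\lambda\mapsto N(\lambda\omega)$ is continuous and strictly increasing on the relevant range, presumably via a bound of the form $N(\lambda'\omega)-N(\lambda\omega)\gtrsim(\lambda'-\lambda)\lambda$ derived from Theorem \ref{ContMap}. Hence each ray meets $\Sigma$ exactly once, at radius $\lambda(\omega)$ close to $r$. The map $\omega r\mapsto\lambda(\omega)\omega$ is then the homeomorphism; its continuity follows from continuity of $N$ and strict monotonicity. One must also check that this radial rescaling keeps the points inside $\mathbb{B}^\ell_{\varepsilon/3}$, and handle directions whose $H^\ell$ norm prevents reaching the level. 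Possibly the statement's choice $2^{\ell+1}$ is designed so that rescaling by a factor at most $2$ stays admissible.

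The main obstacle is the monotonicity along rays. The $L^2$ Lipschitz bound \eqref{fjisuhfd66835985u} controls differences, but only with a constant $C\varepsilon$. The linear cross term $\langle B,\phi_2\rangle$ is of size $\varepsilon\cdot C\varepsilon\lambda$, which competes with the quadratic gain $\lambda^2\sim r^2\sim\sigma^2\varepsilon^2 4^{-\ell}$. The comparison therefore needs $C\varepsilon\lesssim\sigma 2^{-\ell}$, which is not automatically true for small $\sigma$ uniformly. If that fails, I would replace $N$ by the cleaner functional $\|y\|_{L^2}$ itself: take $\Sigma=\mathbb{B}^\ell_{\varepsilon/3}\cap\partial\mathbb{B}^0_r$ literally, and separate orbits by showing that the energy $E$ is injective along this set up to time shifts, using the exponential decay rate to control $t_0$.
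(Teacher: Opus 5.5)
Your orbit-separation step is the paper's. You fix one energy level. Since $E$ is non-increasing and strictly decreasing once $u^{\mathrm{in}}\neq 0$, an orbit meets that level at most once, and injectivity of $\Lambda$ finishes the argument. Your preliminary variant, with $y\mapsto E_y(0)$ merely injective on $\Sigma$, does not force $t_0=0$ and should be dropped.

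The gap is the homeomorphism, which you leave unproved, and two points need fixing. First, $\Sigma$ cannot be the full level set $\{y\in\mathbb{B}^\ell_{\varepsilon/3}: N(y)=\kappa\}$. The constraint $\|y\|_{H^\ell}\le\varepsilon/3$ is not preserved by radial rescaling. A crossing at radius $\lambda(\omega)>r$ may leave $\mathbb{B}^\ell_{\varepsilon/3}$. Conversely, a level point $z$ with $\|z\|_{L^2}<r$ may project to $rz/\|z\|_{L^2}\notin\mathbb{B}^\ell_{\varepsilon/3}$. The fix, as in the paper, is to choose the level so that every ray crosses it inside $(0,r)$; the paper's level is $N=r^2/2$. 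Then let $\Sigma$ consist only of the crossing points $\lambda(\omega)\omega$ with $r\omega\in\mathbb{B}^\ell_{\varepsilon/3}\cap\partial\mathbb{B}^0_r$. Since you only scale down, $\Sigma\subset\mathbb{B}^\ell_{\varepsilon/3}$ automatically.

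Second, the obstacle you describe comes from bounding the cross term with $\|B\|_{L^2}\le\varepsilon$ instead of the hypothesis $\|B\|_{L^2}=\sigma\varepsilon$, which is exactly what the statement is built around. Write $\phi_1(y)=y+g(y)$ and $\beta(y):=\phi_2(y)$.

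\begin{itemize}
\item Theorem \ref{ContMap} at $t=0$ makes $\beta$ $C\varepsilon$-Lipschitz in $L^2$. It also makes $m(0)=\pi_2e^{-K}(\phi_1(y),\phi_2(y))$ Lipschitz, so by \eqref{fdnjskbjnfhjskhjdbfgsjk3L2} $g$ is $C\varepsilon$-Lipschitz too. Both vanish at $0$.
\item Write the difference of squares as the inner product of the difference and the sum. For $0\le\lambda<\lambda'$ and $\|\omega\|_{L^2}=1$ this gives
\[
N(\lambda'\omega)-N(\lambda\omega)\ \ge\ (\lambda'-\lambda)\bigl[(1-C\varepsilon)(\lambda+\lambda')-2C\sigma\varepsilon^2\bigr].
\]
\item Since $\sigma\varepsilon^2=3\cdot 2^{\ell+1}\varepsilon r$, the bracket is $\gtrsim r$ for $\lambda\ge r/4$ once $\varepsilon\ll 2^{-\ell}$, uniformly in $\sigma$.
\item The same bounds give $N<r^2/2$ on $[0,r/4]$ and $N(r\omega)>r^2/2$. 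Hence each ray crosses $N=r^2/2$ exactly once, in $(r/4,r)$.
\end{itemize}

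Your fallback, taking $\Sigma$ to be the literal $L^2$-sphere, does not work. Initial energies on that sphere differ only by $O(2^\ell\varepsilon r^2)$, so energy cannot separate its points. Coincidences $\Lambda(y')=(u_y(t_0),b_y(t_0))$ with small $t_0>0$ are then not excluded by anything you say.

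Once repaired, your route is a genuine variant of the paper's. The paper skips monotonicity. It defines $\theta(y)$ as the infimum of those $\theta\in[0,1]$ for which the initial energy of $\Lambda(\theta y)$ exceeds the level, and gets a crossing from the intermediate value theorem. It then asserts continuity of $y\mapsto\theta(y)y$ from Theorem \ref{ContMap} and concludes by Rellich--Kondrachov compactness. Such an infimum is a priori only upper semicontinuous in $y$, so continuity needs a non-degenerate crossing. Your monotonicity estimate supplies exactly that. It also makes compactness unnecessary, because the inverse $z\mapsto rz/\|z\|_{L^2}$ is explicitly continuous.
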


This theorem ensures that the family of solutions provided by Theorem \ref{RealMAINTHM} does not merely arise from parametrizing the same orbit multiple times. Indeed, there exists an infinite-dimensional subset of the parameter space such that distinct parameters generate distinct orbits. Hence, even after identifying solutions belonging to the same orbit, Theorem \ref{RealMAINTHM} yields an infinite-dimensional family of dynamically distinct solutions.\\

In \cite{Moff85}, Moffatt introduced the concept of {\em topological accessibility} which is weaker than topological equivalence, because it allows for the appearance of discontinuities in the limit field $b_\infty$. The definition is the following.
\begin{defn}\label{MoffatAccessibiklity}
Consider the induction equation \eqref{induction}.
A vector field $b_\infty$ is {\em topologically accessible} from $b^\mathrm{in}$ if $b_\infty=\lim_{t\to +\infty}b(t)$ where 
$b$ is a solution of \eqref{induction} for some divergence-free vector field $u$, under the additional  properties that
\begin{equation}\label{ObvCon1}
\int_0^\infty\left|\int_{\T^3}b(t,x)\cdot(b(t,x)\cdot\nabla )u(t,x)\de x\right| \de t<\infty .
\end{equation}
\end{defn}

We complement this definition with some additional conditions that exclude some
trivial instance of relaxations such as
the constant (in time) solutions~$(u,b):= (0,B)$ and solutions of the form
$(u,b):= (e^{-t}B,B)$, where $B$ is a Beltrami field\footnote{We recall that a Beltrami field is an eigenvector of the $\curl$ operator associated to a non-zero eigenvalue.}.

\begin{defn}\label{Def:triviality}
Let $(u, b)$ a solution of the MHD equations. We say that $(u,b)$ relaxes in a non trivial way to 
$(0, b_{\infty})$ if $\lim_{t\to +\infty} (u,b) =(0, b_{\infty})$ and if there
exists $(\bar{t}, \bar{x}) \in [0, +\infty) \times \T^3$ such that either
\begin{equation}\label{NonTRivialityCond} 
 b(\bar{t}, \bar{x}) \wedge u(\bar{t}, \bar{x})  \neq 0,
\end{equation} 
or
\begin{equation}\label{NonTRivialityCond2} 
(\partial_t b)(\bar{t}, \bar{x})  \neq 0.
\end{equation} 
 \end{defn}
 
The non-collinearity condition \eqref{NonTRivialityCond} prevents the trivial situation in which (the image of) each magnetic line
remains unchanged under the evolution. 
This fact is formalised in Proposition \ref{NonCollinearity}.
The condition \eqref{NonTRivialityCond2} prevents the trivial situation in which the magnetic field itself is constant in time. 
A consequence of our result is that any sufficiently small and regular stationary solution of the Euler equations
is topologically accessible via MHD in a highly non trivial way. Moreover, the topology is preserved in the limit
$t \to + \infty$, in the sense that the stationary Euler state $B$ is the push-forward of the magnetic field 
by the limit flow 
associated to the 
velocity field.  
 More precisely we have the following
\begin{cor}\label{MainCor}
Let $\ell \geq 4$ be an integer. 
Let~$B:\T^3 \to \R^3$ be a stationary solution to the Euler equation with $\|B\|_{H^{\ell +2}} = 1$ and 
define $\Omega := \{x \in \T^3 : B(x)  \neq 0\}$. There exists an open dense subset $E$ of $\Omega$
such that, for all compact subset $K$
of~$E$
there exists\footnote{The quantity $\varepsilon_3$ depends on $B$ and $K$.} $\varepsilon_3 \in (0, \varepsilon_1)$ such that for all
$\sigma \in (0,\varepsilon_3)$ there exists  
a (non-zero) divergence-free, zero-average vector field $y \in H^{\ell}(\T^3)$ that satisfies the assumptions of Theorem \ref{RealMAINTHM}
and such that the solution $(u,b)$ with initial datum
\begin{equation}\label{InitialDatumIn MainTHMtr467346gtr}
(u^\mathrm{in}, b^\mathrm{in}) = (\phi_1(y), \sigma B + \phi_2(y)) 
\end{equation}
given by Theorem \ref{RealMAINTHM}
relaxes in a non-trivial way to $(0, \sigma B)$. More precisely, for $(u,b)$
the condition~\eqref{ObvCon1} holds and the conditions \eqref{NonTRivialityCond}-\eqref{NonTRivialityCond2}
are both satisfied for all $(\bar{t}, \bar{x}) \in \{ 0\} \times K$.  
As proved in Theorem \ref{RealMAINTHM}, $\sigma B$ is the pushforward of $b(t_1)$ by the limit flow $\Psi_{t_1 ,\infty}$ defined in \eqref{pushfryuwerungf}, 
for all $t_1 \geq 0$.  
\end{cor}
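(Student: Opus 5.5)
The plan is to apply Theorem \ref{RealMAINTHM} to the rescaled equilibrium $\sigma B$, with a concrete choice of $y$ built from $B$ so that the non-triviality conditions can be read off at $t=0$. Since $\|\sigma B\|_{H^{\ell+2}} = \sigma$, Theorem \ref{RealMAINTHM} applies with $\varepsilon = \sigma < \varepsilon_1$. Relaxation with the exponential rates \eqref{Bound:QuantitativeRelaxation}, together with the push-forward statement, is then inherited directly. Condition \eqref{ObvCon1} follows because $|\int b\cdot(b\cdot\nabla)u| \le \|b\|_{L^\infty}^2\|\nabla u\|_{L^1} \lesssim (\sigma+\sigma)^2 e^{-t/4}\|y\|_{H^\ell}$ by Sobolev embedding ($\ell\ge 4$), and this is integrable on $[0,\infty)$. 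It therefore remains to choose $y$ so that \eqref{NonTRivialityCond} and \eqref{NonTRivialityCond2} hold on $\{0\}\times K$.

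\textbf{Choice of $y$ and the first condition.} Define $E\subset\Omega$ as the open set where $B\wedge W\neq 0$ for a suitable fixed smooth divergence-free zero-average field $W$. The candidates are $W=\curl B$, or a trigonometric field $W=\mathbb P(\psi e_j)$. On the open set $\Omega$, the analytic (trigonometric) dependence on the choice lets us pick, for each point, some $W$ from a finite list with $B\wedge W\ne0$. Taking the union of the open sets where $B\wedge W_i\neq0$, and using that a nonzero trigonometric polynomial vanishes on a nowhere dense set, should make $E$ dense in $\Omega$. Then set $y = \delta\sigma W$ with $\delta$ small, so that \eqref{SmAss185437456} holds. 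At $t=0$ we have $u(0)=\phi_1(y)$ and $b(0) = \sigma B+\phi_2(y)$. The estimate \eqref{Bound:QuantitativeRelaxation} at $t=0$ gives $\|\phi_1(y)-y\|$... This requires a sharper statement than the bound on $\|\phi_1(y)\|$ alone. I expect the construction (a fixed point whose linear part is $y\mapsto$ the Stokes evolution of $y$) to give $\|\phi_1(y)-y\|_{H^\ell}\le C\sigma\|y\|_{H^\ell}$ and $\|\phi_2(y)\|_{H^\ell}\le C\sigma\|y\|_{H^\ell}$. Then, in $L^\infty$,
\[
b(0)\wedge u(0) = \delta\sigma^2\, B\wedge W + O(\delta\sigma^3) + O(\delta^2\sigma^2),
\]
and since $|B\wedge W|\ge c_K>0$ on the compact set $K$, this is nonzero on $K$ once $\sigma<\varepsilon_3(B,K)$ and $\delta$ is fixed small.

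\textbf{Second condition.} For \eqref{NonTRivialityCond2}, use the induction equation: $\partial_t b(0) = (b\cdot\nabla)u-(u\cdot\nabla)b = \sigma\,\curl(u(0)\wedge b(0))$. Substituting the expansion gives the leading term $\delta\sigma^2\curl(W\wedge B)$ plus errors of the same relative size. To make this nonzero on $K$, I would enlarge the list of $W_i$ used in defining $E$, requiring simultaneously that $B\wedge W\neq0$ and $\curl(W\wedge B)\ne0$. Again this holds off a closed nowhere dense set in $\Omega$, for instance by taking $W$ a high-frequency mode so that the term $(B\cdot\nabla)W$ dominates. The main obstacle I expect is exactly this genericity step, namely showing that $E$ is open and dense with one (or finitely many, combined by a generic linear combination) $W$. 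A secondary difficulty is extracting from the construction of Theorem \ref{RealMAINTHM} the refined first-order expansion of $\phi_1,\phi_2$ in $y$, since the bounds as stated only control sizes, not leading terms. I would obtain that expansion from the contraction-mapping structure of the proof, which gives Lipschitz bounds of the type in Theorem \ref{ContMap}.
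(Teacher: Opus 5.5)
\textbf{Verdict: genuine gap.} The genericity step, which you flag as the main obstacle, is not resolved, and the arguments you offer for it fail.

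\textbf{What matches.} Your quantitative half is the paper's proof (Corollary~\ref{MainCorStronger}). You apply Theorem~\ref{RealMAINTHM} to $\sigma B$ and take $y$ a small multiple ($\rho\ll\sigma$) of a fixed field $W$. You then compare $u(0)\wedge b(0)$ and $\partial_t b(0)=\curl(u\wedge b)(0)$ with their leading terms $\rho\sigma\, W\wedge B$ and $\rho\sigma\,((B\cdot\nabla)W-(W\cdot\nabla)B)$, and use compactness of $K$ for a uniform lower bound.

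The refined expansion you worry about is not an obstacle. $\|\phi_2(y)\|_{H^\ell}\le C\sigma\|y\|_{H^\ell}$ is just \eqref{Bound:QuantitativeRelaxation} at $t=0$. The construction gives $(\phi_1,\phi_2)(y)=e^K(y,m(0))$ explicitly, so $\phi_1(y)-y=\pi_1(e^K-\mathrm{Id})(y,m(0))=O(\sigma\|y\|_{H^\ell})$ by \eqref{fdnjskbjnfhjskhjdbfgsjk3} and \eqref{fmnjsokngfre77565}. Theorem~\ref{ContMap} would not give this: it is an $L^2$-level statement and does not isolate $\phi_1(y)-y$.

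\textbf{The gap.} Beyond Theorem~\ref{RealMAINTHM}, the corollary's only real content is producing one field $W$ whose good set $\{x\in\Omega:\ W\wedge B\neq0,\ (B\cdot\nabla)W-(W\cdot\nabla)B\neq0\}$ is dense in $\Omega$. None of your sketched arguments achieves this.
\begin{itemize}
\item The candidate $W=\curl B$ is ruled out for every steady $B$. Taking the curl of $(B\cdot\nabla)B=-\nabla p$ gives the stationary vorticity equation $(B\cdot\nabla)\curl B=(\curl B\cdot\nabla)B$, so the leading term of $\partial_t b(0)$ vanishes identically. For Beltrami fields, $B\wedge\curl B\equiv0$ as well.
\item The ``nonzero trigonometric polynomial'' argument does not apply, because $B$ is only in $H^{\ell+2}$ and $B\wedge W$ is not analytic. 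For example, shear flows $B=(f(x_3),g(x_3),0)$ are steady Euler solutions and can be chosen equal to $h(x_3)(\cos x_3,\sin x_3,0)$ on a slab. So a given trigonometric $W$ can be parallel to $B$ on an open subset of $\Omega$.
\item The high-frequency heuristic also fails. For a single mode, $(B\cdot\nabla)W$ vanishes where $B$ is orthogonal to the wave vector, and it cannot dominate $(W\cdot\nabla)B$ near $\partial\Omega$, where $|B|\to0$.
\end{itemize}

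\textbf{The missing idea.} Make the genericity pointwise on a countable dense set.
\begin{enumerate}
\item The paper fixes a dense sequence $\{x_n\}\subset\Omega$.
\item It shows that each set $\{F:\ F(x_n)\wedge B(x_n)\neq0\}$ and $\{F:\ ((B\cdot\nabla)F-(F\cdot\nabla)B)(x_n)\neq0\}$ is open and dense in the complete space $\bar{\mathcal F}\subset H^\ell$. Density comes from perturbing by $\delta\sum_j k_j\gamma^{x_n}_j$.
\item The non-trivial check is that some $k$ works for the second condition. The paper compares ABC fields of frequency $1$ and $2$: if both fail for all $k$, then $\nabla B(x_n)$ equals both $M$ and $2M$, with $M$ built from the components of $B(x_n)$. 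This forces $B(x_n)=0$, a contradiction.
\item Baire then yields $G$ good at every $x_n$. By continuity, the good set $E_G$ is open and contains $\{x_n\}$, hence is open and dense.
\end{enumerate}
Your ``generic linear combination'' idea can be repaired the same way in finite dimension. Choose coefficients outside the countable, Lebesgue-null union of the subspaces of bad coefficients at the points $x_n$. This works only after you prove those subspaces are proper at every $x\in\Omega$. For the second condition that is exactly the frequency-$1$/frequency-$2$ argument, and your proposal does not supply it.
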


\begin{rem}
The choice of the vector field $y$ in Corollary \ref{MainCor} is generic, in the sense that we can choose 
any $y$ in an appropriate residual set, as specified in Corollary
\ref{MainCorStronger}, that is in fact a stronger version of Corollary \ref{MainCor}.
\end{rem}

In \cite{FMRR}, it was left as an open problem whether the Beltrami fields constructed in \cite{Annals} could be obtained as relaxation limits of \eqref{eq:mhdMoreStandard}. Our theorem provides an affirmative answer to this question once it is reformulated in the case of the 3D flat torus. We proved, in particular, that the Beltrami fields constructed in \cite{EPT} are non-trivially topologically accessible; one needs to rescale the Beltrami fields by a sufficiently small factor, however this has no impact on 
the topological richness of the class of Beltrami fields taken into account.
It is also worth to recall that any finite knot and link can be realised by vortex lines or vortex tubes of the Beltrami fields constructed in~\cite{EPT}.  
Thus we see that  
the same knots and links are realised by 
magnetic lines or magnetic tubes of the relative solutions $b(t)$ given by Theorem \ref{RealMAINTHM}, at any time $t \geq 0$. 

The proof of Theorem \ref{RealMAINTHM} consists of implementing the 
Lyapunov-Perron stable manifold method in the PDEs framework.  This is far from being trivial as working with the MHD equations relevant issues arise. This fact can be seen as an explanation of why the stable manifold theory is much less developed for PDEs rather than for ODEs. These issues and the way to fix them are informally described in the next section, where we also give a general overview of the proof.

Before presenting the proof, it is worth emphasizing that the regularity assumptions in Theorem~\ref{RealMAINTHM} 
(and so in Corollary \ref{MainCor}) are deliberately generous. Our primary objective is to convey the underlying method in the clearest and most transparent setting possible. With additional technical effort, these assumptions could likely be weakened considerably.
%
Determining the minimal regularity threshold at which the techniques developed in this paper remain applicable is, however, a nontrivial problem. In the $L^2$-based Sobolev setting, in light of the local well-posedness 
result established in \cite{Feff}, it 
is natural to conjecture that the regularity assumptions could be lowered to $B \in H^\ell(\T^3), y \in H^{\ell - 1 + \delta}(\T^3)$, 
$\ell > \frac32$, $\delta >0$. Nevertheless, establishing such a result could require new ideas. It remains an interesting open problem.

We conclude the introduction mentioning another contribution \cite{LinStable} in which the stable/unstable manifold has been constructed for the Euler equations, under suitable spectral assumptions. Interestingly, as shown in \cite{LinStable}, these spectral assumptions are satisfied for a class of shear flows, in 3D.

\subsection{Strategy of the proof}
\label{sec:proofstrategy}
The main idea to construct the solutions of Theorem \ref{RealMAINTHM} is to implement the Lyapunov-Perron stable manifold method 
in the context of the MHD equations. Indeed, given a stationary solution of the Euler equation $B$,
one can check immediately that the equilibrium $(0, B)$ is a (constant in time) solution of the MHD equations.
It is thus natural to perturb the problem around the equilibrium introducing the new variables
\begin{equation}\label{fdnjasknfd6648378234hffg}
(u,\tilde b)= (u , b - B).
\end{equation}
This leads to the following, equivalent formulation, of MHD
\begin{equation}\label{fdhsbfewbfew6466462783642}
\begin{cases}
\partial_{t} u = \Delta u + \mathbb{P} \left( (B \cdot \nabla) \tilde b +  (\tilde b \cdot \nabla) B 
-  (u \cdot \nabla) u +  (\tilde b \cdot \nabla) \tilde b \right)\\
\partial_{t} \tilde b = (B \cdot \nabla) u - (u \cdot \nabla) B + (\tilde b \cdot \nabla) u - (u \cdot \nabla) \tilde b 
\end{cases}
\end{equation}
Linearising this problem around the equilibrium $(u, \tilde b) = (0,0)$ we obtain,
\begin{equation}\label{fmedksnfe8736726432894}
\begin{cases}
\partial_{t} u = \Delta u + \mathbb{P} \left( (B \cdot \nabla) \tilde b +  (\tilde b \cdot \nabla) B \right)
\\
\partial_{t} \tilde b = (B \cdot \nabla) u - (u \cdot \nabla) B
\end{cases}
\end{equation}
that is very hard to study perturbatively because the operator 
$$ 
\begin{pmatrix}
\Delta & \mathbb{P} \big( (B \cdot \nabla)  [\cdot]  + ([\cdot] \cdot \nabla)    B \big)
\\
 (B \cdot \nabla)  [\cdot]  - ([\cdot] \cdot \nabla)    B & 0
 \end{pmatrix}
$$ 
is degenerate. For instance one can immediately check that 
$ \begin{pmatrix}
0
\\
B
\end{pmatrix}$ 
is in the kernel of the operator.  


However, since $B$ is very small, we could try (at least at a first naive attempt) pretend to attack this problem perturbatively.
To this purpose, it is instructive to set $B=0$ into equation \eqref{fmedksnfe8736726432894}, that leads to  
 the following linear problem 
\begin{equation}\label{fdhsbfewbfew6466linear}
\begin{cases}
\partial_{t} u = \Delta u  
\\
\partial_{t} \tilde b =  0,
\end{cases}
\end{equation}
%

Now, given any $y : \T^3 \to \R^3$, the linear problem \eqref{fdhsbfewbfew6466linear} admits a family 
of explicit solutions
\begin{equation}\label{RealStableManifold}
(u, \tilde b) = (e^{t \Delta} y, 0).
\end{equation}
Moreover, if $y$ has zero mean, 
the solution \eqref{RealStableManifold} decays exponentially to the equilibrium $$(u, \tilde b) = (0,0).$$ 
This is a consequence of the well known exponential decay of the heat flow
\begin{equation}\label{fjdklasjdks64}
\| e^{t \Delta} y  \|_{H^s} \leq e^{-t} \|y\|_{H^s}, \qquad t \geq 0, \qquad s \in \R.
\end{equation}
that is indeed nothing more than Parseval identity and Poincar\'e inequality. 
This completely describes the linear stable manifold associated to the equilibrium $(0,0)$ of \eqref{fdhsbfewbfew6466linear}.

Thus, coming back to \eqref{fdhsbfewbfew6466462783642}, 
the Lyapunov-Perron method would suggest to look for a (nonlinear) stable manifold solving, for positive times $t \geq0$, the following integral problem
\begin{equation}\label{fdhsbfewbfew6466462783642Integral}
\begin{cases}
\displaystyle
 u(t,x) = e^{t \Delta} y(x) + \int_0^t e^{(t-s) \Delta} \mathbb{P} \left( (B \cdot \nabla) \tilde b +  (\tilde b \cdot \nabla) B 
-  (u \cdot \nabla) u +  (\tilde b \cdot \nabla) \tilde b \right)(s,x) \, \de s
\\ \displaystyle
\tilde b(t,x) = -\int_t^{\infty} 
\left( (B \cdot \nabla) u - (u \cdot \nabla) B + (\tilde b \cdot \nabla) u - (u \cdot \nabla) \tilde b \right)(s,x)\, \de s .
\end{cases}
\end{equation}

More precisely, because of \eqref{fjdklasjdks64}, 
we must look for solutions with exponential decay rate $e^{- \theta t}$, with $\theta \in (0,1)$. 
In the PDEs context, it would be reasonable trying to solve \eqref{fdhsbfewbfew6466462783642Integral} by an iterative method in a suitable 
functional space. If we choose to work in the Sobolev scale, a reasonable choice would be to look for a solution 
$$(u, \tilde b) \in  \Zeta \times \Zeta, \qquad \|u \|_{\Zeta} + \| \tilde b \|_{\Zeta} \ll 1, $$ 
where 
\begin{equation*}
\Zeta:=\left\{ F \in L^\infty([0, +\infty);H^\ell(\T^3)): \| F \|_{\Zeta}:=\sup_{t \geq 0}e^{\theta t}\|F(t,\cdot)\|_{H^\ell}<\infty\right\},
\end{equation*}
for some sufficiently large value of $\ell$, as we need the solution to have enough regularity.
Note that this would automatically imply that the solution relaxes  to the equilibrium exponentially fast, as in  
\eqref{Bound:QuantitativeRelaxation}.\\

Unfortunately (and interestingly) this does not work because of the loss of derivatives in the Duhamel term.
Note that the derivative loss appears in both the linear and nonlinear part of \eqref{fdhsbfewbfew6466462783642Integral}, which are 
\begin{equation}\label{DerLoss1}
\begin{pmatrix}
(B \cdot \nabla) \tilde b +  (\tilde b \cdot \nabla) B
\\
(B \cdot \nabla) u - (u \cdot \nabla) B
\end{pmatrix},
\end{equation} 
and
\begin{equation}
\begin{pmatrix}
(\tilde{b} \cdot \nabla) \tilde b -  (u \cdot \nabla) u
\\
(\tilde{b} \cdot \nabla) u - (u \cdot \nabla) \tilde{b}
\end{pmatrix},
\end{equation}
respectively. The derivative loss in \eqref{DerLoss1} prevents any attempt to construct a solution of \eqref{fdhsbfewbfew6466462783642Integral} in the space~$\Zeta \times \Zeta$. Thus, this term must be canceled by a suitable change of variables. To do so, we introduce the new variables
\begin{equation}\label{fdsjnkhfe663472y4gtbnjglwrsjbgdyuas}
\begin{pmatrix}
v
\\
m
\end{pmatrix}
:=
e^{- K} 
\begin{pmatrix}
u
\\
\tilde{b}
\end{pmatrix},
\end{equation}
where $e^K$ is close to the identity and $K$ is chosen in such a way that 
\begin{equation}\label{fnjdkskjnfjnd32}
e^{-K} L e^{K}  =  
\begin{pmatrix}
\Delta
& 0
\\
 0 &  0
\end{pmatrix}
 + \mathcal{A} 
\end{equation}
but now $\mathcal{A}$ is a matrix of pseudo differential operators of order zero.
The transformation $e^{K}$ is introduced in Section \ref{Sec;Hom}, where formula \eqref{fnjdkskjnfjnd32} is proved. It is conceptually inspired by the normal-form reduction developed in \cite{CMS}, although its implementation in the present setting is substantially different. 
This is the key ``linear'' ingredient of our proof. \\

After this change of variable the equations for $(v, m)$ can be rewritten as
\begin{equation}\label{mannaggia}
\begin{cases}
\displaystyle
 v(t) = e^{t \Delta} y + \int_0^t e^{(t-s) \Delta} \left( \mathcal{A}_{11} v + \mathcal{A}_{12} m 
 + \mathbb{P} \left(  (m \cdot \nabla) m -  (v \cdot \nabla) v + \mbox{``lower order''} \right)  \right)(s) \, \de s,
\\ \displaystyle
m(t) = - \int_t^{\infty} 
\left( \mathcal{A}_{21} v + \mathcal{A}_{22} m  + (m \cdot \nabla) v - (v \cdot \nabla) m + \mbox{``lower order''} \right) (s)\, \de s.
\end{cases}
\end{equation}
To be more precise, the term $(m \cdot \nabla) v - (v \cdot \nabla) m$ must be indeed 
$(m \cdot \nabla) V(v, m) - (v \cdot \nabla) V(v, m)$
where
$V(v, m)  = v + \mbox{``lower order''}$ (see equation \eqref{DEf:AdjustedTransport}). For the sake of simplicity, in this heuristic explanation of the 
method we will keep working with 
$v$ in place of the (correct) distorted vector field $V(v, m)$. 

This is still not enough to close the argument, as a derivative loss still appears in the nonlinear part of both the equations.
The key idea to solve this problem is to introduce a mixed Eulerian/Lagrangian formulation. In order to motivate this 
formulation, we observe that the presence of a diffusive term in the first equation should allow in principle to control
also the $\| \cdot \|_{L^{2}_tH^{\ell +1}}$ norm of $v$, by standard energy estimates. This suggests the following:
given $\varepsilon >0$ sufficiently small and given any divergence-free vector field $y$ with $\|y\|_{H^{\ell}} \leq \frac{\varepsilon}{2}$ one can try
to implement an iterative 
scheme where the approximate solution $(v_n, m_m)$ is, at each step of the iteration,
controlled by
 \begin{equation}\label{fjkukudfshfds56e33623689safhb}
 \|v_n\|_{\Zeta} +  \|m_n\|_{\Zeta} + \| v_n \|_{L^{2}_tH^{\ell +1}}  \leq \varepsilon,
 \end{equation}
 here the small parameter $\varepsilon >0$ must be independent of $n \in \N$.
  This would indeed be possible, if not for the term $- (v \cdot \nabla) m$ 
 which appears in the second equation. This is the only really dangerous derivative loss, as the derivative 
 falls on $m$ and in the second equation there is no help from the diffusion to fight against it.  
Fortunately, this term has a transport structure, so we can get rid of it by introducing Lagrangian coordinates with respect 
to the flow associated to $v$ (in fact, to be precise, we must consider the flow associated to the distorted vector field 
$V(v,m)$). In these mixed Eulerian/Lagrangian coordinates, the equations \eqref{mannaggia} become 
\begin{equation}\label{fdhsbfewbfew6466462783642Integral2}
\begin{cases}
\displaystyle
 v(t,x) = e^{t \Delta} y(x) + \int_0^t e^{(t-s) \Delta} \left( \mathcal{A}_{11} v + \mathcal{A}_{12} m 
 + \mathbb{P} \left(  (m \cdot \nabla) m -  (v \cdot \nabla) v + \mbox{``lower order''} \right)  \right)(s,x) \, \de s
\\ \displaystyle
m(t,x) = - \int_t^{\infty} 
\left( \mathcal{A}_{21} v + \mathcal{A}_{22} m  + (m \cdot \nabla) v  + \mbox{``lower order''} \right) (s,\Phi_{t,s}(x))\, \de s ;
\end{cases}
\end{equation}
where the flow $\Phi_{t,s}$ satisfies 
\begin{equation}\label{fdsjkudjfghrere}
\begin{cases}
\partial_s \Phi_{t,s} (x) = v(s,\Phi_{t,s}(x)),\\
\Phi_{t,t} (x)=x.
\end{cases}
\end{equation}
Again, to be precise, $\Phi_{t,s}$ should actually be defined as in \eqref{nFlowDefPreq}, 
namely replacing $v$ by the distorted vector field~$V(v,m)$.
The use of Lagrangian coordinates interacts particularly well with the Perron/Lyapunov method. We refer to \cite{LinStable} for another example in which Lagrangian coordinates were used to construct the stable/instable manifold for the Euler equations.

In this final formulation all the remaining derivative loss can be absorbed taking advantage of the diffusion from the 
first equation, and we can indeed define (see Section \ref{Sec:The iterative scheme}) a suitable iteration of approximate solutions $(v_n, m_n)$ 
for which the uniform (w.r.t. $n$) bound \eqref{fjkukudfshfds56e33623689safhb} holds.
This uniform bound is the bulk of our proof and is obtained, in two steps, in Sections \ref{Sec:Picard-1}-\ref{Sec:Picard}.

 Once \eqref{fjkukudfshfds56e33623689safhb} is proved, we can actually prove that the sequence $(v_n, m_n)$
converge to a solution~$(v,m)$ of the equations \eqref{fdhsbfewbfew6466462783642Integral2} that satisfies moreover 
 \begin{equation}
 \|v\|_{\Zeta} +  \|m\|_{\Zeta} + \| v \|_{L^{2}_tH^{\ell +1}}  \leq \varepsilon,
 \end{equation}
see Section \ref{Sec:Convergence of the iterative scheme}. Coming back to the old variables via \eqref{fdnjasknfd6648378234hffg}-\eqref{fdsjnkhfe663472y4gtbnjglwrsjbgdyuas} 
we have indeed constructed the family of   
solutions $(u,b)$ of the MHD equations of 
our main Theorem \ref{RealMAINTHM} (this is worked out in Section \ref{Sec:RealMAINTHM}).

\section{Notations}
\subsection{Some basic notation} 

In this paper, the symbol $0$ will denote the zero element of various algebraic structures, with its specific meaning always clear from the context. For natural numbers, we adopt the standard notations $\mathbb{N} := \{0, 1, 2, \ldots\}$ and $\mathbb{N}^* := \mathbb{N} \setminus \{0\}$. Unless otherwise specified, all functions and vector fields considered in this section are assumed to be measurable.

Given two vector fields $F, G : \mathbb{T}^3 \to \mathbb{R}^3$, we define the projection operators $\pi_1, \pi_2$ over the product space $\mathbb{R}^3 \times \mathbb{R}^3$ as
\begin{equation}\label{def:proiezioni}
\pi_1 \begin{pmatrix} F \\ G \end{pmatrix} := F, \quad \pi_2 \begin{pmatrix} F \\ G \end{pmatrix} := G.
\end{equation}
Moreover, $F \otimes G$ denotes the tensor product defined component-wise by
\[
(F \otimes G)_{i,j} := F_i G_j, \qquad i, j \in \{1, 2, 3\}.
\]
We adopt the standard convention that differential operators act on vector fields and tensors component-wise. Thus, the divergence of a tensor $T$ and the Laplacian of a vector field $F$ are respectively given by
\[
(\operatorname{div} T)_i := \sum_{j=1}^3 \partial_j T_{i,j}, \qquad (\Delta F)_i := \Delta F_i.
\]
Consequently, for divergence-free vector fields $F$ and $G$, we obtain
\begin{equation}\label{fnsjaksjfngdajskfdjnskDive}
(\operatorname{div} (F \otimes G))_i = \sum_{j=1}^3 \partial_j (F_i G_j) = \sum_{j=1}^3 G_j \partial_j F_i,
\end{equation}
which implies the identity
\begin{equation}\label{OtimesVSnabla}
\operatorname{div} (F \otimes G) = (G \cdot \nabla) F.
\end{equation}

For a function $f : \mathbb{T}^3 \to \mathbb{R}$ and an integer $\ell \in \mathbb{N}$, we define the Sobolev norm
\begin{equation}\label{Sob233}
\| f \|_{W^{\ell, \infty}} := \sum_{0 \leq |\alpha| \leq \ell} \| \partial^\alpha f \|_{L^\infty}.
\end{equation}
For a vector field $F$ or a tensor field $T$
these norms are extended by summing over their components. Thus we have, in particular
\begin{equation}\label{Sob234}
\| F \|_{W^{\ell, \infty}} := \sum_{j=1}^3 \| F_j \|_{W^{\ell, \infty}}, \quad
\| \nabla F \|_{W^{\ell, \infty}} := \sum_{k =1}^3 \sum_{j =1}^3 \| \partial_k F_j \|_{W^{\ell, \infty}}.
\end{equation}
and, similarly, given a map $\Phi : \mathbb{T}^3 \to \mathbb{T}^3$ and its
 Jacobian (matrix) $\nabla \Phi$, we set 
$$
\| \nabla \Phi \|_{W^{\ell, \infty}} := \sum_{k=1}^3 \sum_{j=1}^3 \| \partial_k \Phi_j \|_{W^{\ell, \infty}}.
$$

Given $s \in \mathbb{R}$, we define the $L^2$-based Sobolev spaces via the norm
\[
\| f \|^2_{H^s} := \sum_{k \in \mathbb{Z}^3} \langle k \rangle^{2s} |\widehat{f}(k)|^2,
\]
where $\langle k \rangle := \sqrt{1 + |k|^2}$. We also define the fractional derivative operator $\langle D \rangle^s$ via  $$\widehat{\langle D \rangle^s f}(k) := \langle k \rangle^s \widehat{f}(k),$$ so that $\|f\|_{H^s} = \|\langle D \rangle^s f\|_{L^2}$.  
For any $\ell \geq 0$, we can define generalised norms
\begin{equation}\label{fjewi75}
\| f \|_{W^{\ell, \infty}} := \| \langle D \rangle^\ell f \|_{L^\infty},
\end{equation}
These definitions extend to vector and tensor fields in the way that we have just mentioned. 
When $\ell \in \mathbb{N}$, the norms \eqref{fjewi75} are equivalent to the classical norms defined in \eqref{Sob233}-\eqref{Sob234}. 

For a pair of vector fields\footnote{We recall that throughout the paper we simply write $F \in H^{\ell}$, rather than $F \in [H^{\ell}]^3$, to indicate that the components of the vector field $F$ are in $H^{\ell}$.} $F, G \in H^\ell(\mathbb{T}^3)$, we set
\[
\| (F, G) \|_{H^\ell \times H^\ell} := \| F \|_{H^\ell} + \| G \|_{H^\ell}.
\]

Given $\ell \geq 0$ and $\theta \in (0, 1)$, we introduce the Banach space
\begin{equation}\label{Def:Z}
\Zeta := \Big\{ F \in L^\infty([0, +\infty); H^\ell(\mathbb{T}^3)) : \sup_{t \geq 0} e^{\theta t} \| F(t, \cdot) \|_{H^\ell} < \infty \Big\},
\end{equation}
equipped with the norm
\begin{equation}
\| F \|_{\Zeta} := \sup_{t \geq 0} e^{\theta t} \| F(t, \cdot) \|_{H^\ell}.
\end{equation}
We will also often work with the following space-time norm:
\[
\| \nabla F \|_{L^2_t H^\ell}^2 := \int_0^{+\infty} \| \nabla F (t, \cdot) \|^2_{H^\ell} \, \de t.
\]

\subsection{Littlewood-Paley decomposition}

Finally, We recall the Littlewood-Paley decomposition for periodic functions, following the notations of \cite{Tao book}. Let $\varphi \in \mathcal{S}(\mathbb{R}^3)$ be a Schwartz function such that its Fourier transform satisfies
\begin{equation}
\widehat{\varphi}(\xi) = 
\begin{cases}
1 & \text{for } |\xi| \leq 1, \\
0 & \text{for } |\xi| > 2.
\end{cases}
\end{equation}
For $N > 0$, we set $\varphi_N(x) := N^3 \varphi(N x)$. For $f \in L^2(\mathbb{T}^3)$, we define the Littlewood-Paley projectors
\begin{equation}\label{def:Sj}
P_{\leq N} f(x) := \sum_{k \in \mathbb{Z}^3} \widehat{\varphi_N}(k) \, \widehat{f}(k) \, e^{i k \cdot x},
\end{equation}
and $P_N := P_{\leq N} - P_{\leq N/2}$. We thus obtain the identity (valid in $L^2$, or point-wise if $f \in H^s(\mathbb{T}^3)$ with $s > 3/2$):
\[
f = P_{\leq 1} f + \sum_{N \in 2^{\mathbb{N}^*}} P_N f.
\]
Defining $\Phi_N(x) := \sum_{k \in \mathbb{Z}^3} \varphi_N(x+2\pi k)$, Poisson's summation formula yields $P_{\leq N} f = \Phi_N * f$. Since $\| \Phi_N \|_{L^p(\mathbb{T}^3)} \simeq \| \varphi_N \|_{L^p(\mathbb{R}^3)} \simeq N^{3(1 - 1/p)}$ for $N \geq 1$, we recover the classical Bernstein inequalities in the periodic setting. We also denote $P_{>N} := \operatorname{Id} - P_{\leq N}$ and recall the telescoping identities for $N \in 2^{\mathbb{N}}$:
\[
P_{\leq N} f = \sum_{M \leq N} P_M f, \qquad P_{> N} f = \sum_{M > N} P_M f.
\]

\begin{lem}[Bernstein's inequalities, \cite{Tao book}]
Let $N \geq 1$. For any $s \geq 0$ and $1 \leq p \leq q \leq \infty$, the following bounds hold:
\begin{align}
\| P_{\geq N} f \|_{L^p} &\lesssim_{s,p} N^{-s} \| D^s P_{\geq N} f \|_{L^p}, \label{eq:bern1} \\
\| D^s P_{\leq N} f \|_{L^p} &\lesssim_{s,p} N^s \| P_{\leq N} f \|_{L^p}, \\
\| P_{\leq N} f \|_{L^q} &\lesssim_{p,q} N^{3(\frac{1}{p} - \frac{1}{q})} \| P_{\leq N} f \|_{L^p}.
\end{align}
Moreover, for dyadic $N \in 2^{\mathbb{N}^*}$, we have
\begin{align}
\| P_N D^{\pm s} f \|_{L^p} &\sim_{p,s} N^{\pm s} \| P_N f \|_{L^p}, \\
\| P_N f \|_{L^q} &\lesssim_{p,q} N^{3(\frac{1}{p} - \frac{1}{q})} \| P_N f \|_{L^p}. \label{eq:bern5}
\end{align}
\end{lem}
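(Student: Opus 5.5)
These are the standard Bernstein inequalities on $\T^3$. I would prove them directly from the convolution representation $P_{\leq N} f = \Phi_N * f$ and the kernel bound $\|\Phi_N\|_{L^p(\T^3)} \simeq N^{3(1-1/p)}$ recalled just above, combined with Young's inequality. All estimates will reduce to showing that suitable multiplier operators, localized at frequency $\sim N$ or $\lesssim N$, have kernels whose $L^1$ or $L^r$ norms scale like the right power of $N$.

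\textbf{Step 1: the $L^p\to L^q$ bounds.} For the third and fifth inequalities, I would introduce a fattened projector $\tilde P_{\leq N}$ with symbol $\widehat{\varphi}(\xi/(4N))$. It equals $1$ on the support of the symbol of $P_{\leq N}$ and of $P_N$, so $P_{\leq N} f = \tilde P_{\leq N} P_{\leq N} f$, and likewise for $P_N f$. Young's inequality with $1+\frac1q=\frac1r+\frac1p$ then gives
\[
\|P_{\leq N} f\|_{L^q} \leq \|\tilde\Phi_{4N}\|_{L^r}\, \|P_{\leq N} f\|_{L^p} \lesssim N^{3(1-\frac1r)}\,\|P_{\leq N} f\|_{L^p},
\]
and $1-\frac1r=\frac1p-\frac1q$.

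\textbf{Step 2: derivative bounds.} For the second inequality, I would write $D^s P_{\leq N} f = K_N * P_{\leq N} f$, where $K_N$ is the periodization of the inverse Fourier transform of $|\xi|^s\,\widehat{\varphi}(\xi/(4N))$. By scaling, $K_N(x) = N^{s}N^{3}K(Nx)$, with $K$ the kernel of the multiplier $|\xi|^s\widehat\varphi(\xi/4)$. Periodization does not increase the $L^1$ norm, so the bound follows from $\|K\|_{L^1(\R^3)}<\infty$.

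For the dyadic equivalence (the fourth inequality), the multiplier $|\xi|^{\pm s}\psi(\xi/N)$, with $\psi$ a bump equal to $1$ on the annulus $\{1/2\le|\xi|\le 2\}$ and supported away from $0$, is a smooth Schwartz multiplier. Its kernel therefore has $L^1$ norm $\simeq N^{\pm s}$, and both directions of the equivalence follow, since $D^{\mp s}$ inverts $D^{\pm s}$ on $P_N f$.

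Finally, \eqref{eq:bern1} follows by writing $P_{\geq N} f = \sum_{M\geq N} P_M f$. Each piece satisfies $\|P_M f\|_{L^p}\lesssim M^{-s}\|P_M D^s f\|_{L^p} \lesssim M^{-s}\|D^s P_{\geq N}f\|_{L^p}$, using that $P_M$ is bounded uniformly on $L^p$ and commutes with the projectors. Summing the geometric series gives $N^{-s}$ when $s>0$; the case $s=0$ is trivial.

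\textbf{Main obstacle.} The delicate point is the integrability of $K$ for a non-integer power $s$. The function $|\xi|^s\widehat\varphi(\xi/4)$ is not smooth at the origin, so $K$ decays only like $|x|^{-3-s}$ rather than rapidly. That decay is still integrable for $s>0$, and I would prove it by a Littlewood–Paley splitting of the symbol near $0$ into dyadic annuli, where each piece is handled by scaling. The remaining steps are routine applications of Young's inequality on the torus, which holds because the torus is a compact abelian group.
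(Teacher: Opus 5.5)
Your proposal is correct and follows the route the paper indicates: the paper cites Tao's book and notes that the Poisson-summation representation $P_{\leq N} f=\Phi_N * f$, together with $\|\Phi_N\|_{L^p(\T^3)}\simeq N^{3(1-1/p)}$, transfers the classical inequalities to $\T^3$, and this kernel-plus-Young argument is what you carry out, with the dyadic splitting correctly handling the non-smooth symbol $|\xi|^s$ at the origin. The one step you gloss over is the first block $M=N$ in the proof of \eqref{eq:bern1}, where $P_N\neq P_N P_{\geq N}$ in general; with $P_{\geq N}:=\mathrm{Id}-P_{\leq N/2}$ one has $P_N=P_{\leq N}P_{\geq N}$, so the bound $\|P_N D^s f\|_{L^p}\lesssim\|D^s P_{\geq N} f\|_{L^p}$ you need still holds.
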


We also recall that, by Plancherel's Theorem, equivalent characterizations of Sobolev norms hold:
\begin{equation}\label{definizione besov}
\| f \|_{\dot{H}^s} \sim_s \Big( \sum_N N^{2s} \| P_N f \|_{L^2}^2 \Big)^{1/2}, \qquad \| f \|_{H^s} \sim_s \| P_{\leq 1} f \|_{L^2} + \Big( \sum_{N > 1} N^{2s} \| P_N f \|_{L^2}^2 \Big)^{1/2}.
\end{equation}

\section{The homological equation}\label{Sec;Hom}

In this section, we seek an invertible change of variables capable of diagonalizing the principal part of our system, modulo a lower-order remainder. Throughout this section, we assume that all vector fields belong to $H^2(\T^3)$. This level of regularity ensures that all the operators under consideration are well defined. The results can be extended to spaces of lower regularity with minor modifications. 
First of all, we define the operators $\mathrm{p}$ and $\mathrm{q}$ as follows
\begin{equation}\label{DEf:P,Q}
\mathrm{p}[F] := \mathbb{P} \dive (F \otimes B ) = \mathbb{P} \left( (B \cdot \nabla) F \right), \qquad
\mathrm{q}[F] := \mathbb{P} \dive (B \otimes F ) = \mathbb{P} \left( (F \cdot \nabla ) B \right),
\end{equation}
where the second equalities hold thanks to the divergence-free constraints $\dive B = 0$ and $\dive F = 0$, respectively. Hereafter, the Laplacian $\Delta$ is understood to act component-wise. 

The primary goal of this section is to find a conjugation matrix $e^K$ that diagonalizes the operator 
\begin{equation}\label{SumupBOLD}
L := \begin{pmatrix}
\Delta & \mathrm{p} + \mathrm{q} \\
\mathrm{p} - \mathrm{q} & 0
\end{pmatrix}
\end{equation}
modulo a remainder $\mathcal{A}$ of order $0$. The crucial role of the operator $L$ will become apparent in the subsequent analysis. 

\begin{thm}\label{ConiugioTHM}
Let $K$ be the operator matrix defined by
\begin{equation}\label{SumupB}
K := \begin{pmatrix}
0 & - \Delta^{-1} \circ (\mathrm{p} + \mathrm{q}) \\
(\mathrm{p} - \mathrm{q}) \circ \Delta^{-1} & 0
\end{pmatrix}.
\end{equation}
Then, we have the conjugation identity
\begin{align}\label{Diagonalization}
e^{-K} L e^{K} &= \begin{pmatrix}
\Delta & 0 \\
0 & 0
\end{pmatrix} + \mathcal{A},
\end{align}
where the remainder $\mathcal{A}$ is given by
\begin{align}\label{Def:MathcalA}
\mathcal{A} &:= \begin{pmatrix} \frac12 M_1+L_1 +\frac12 T_1 & 0 \\ 0 & \frac12 M_2+L_2 + \frac12 T_2 \end{pmatrix} + \frac12\left[ \begin{pmatrix} M_1 + L_1 & - \mathrm{q} \\ \mathrm{q} & M_2 + L_2 \end{pmatrix}, K \right] \nonumber \\ 
&\quad + \frac{1}{2} \int_0^1 (1-s)^2 e^{-s K} \left[ \begin{pmatrix} -M_1 + T_1 & 0 \\ 0 & -M_2 + T_2 \end{pmatrix} + \left[ \begin{pmatrix} M_1 + L_1 & - \mathrm{q} \\ \mathrm{q} & M_2 + L_2 \end{pmatrix}, K \right], K \right] e^{s K} \, ds,
\end{align}
and the auxiliary operators are defined as
\begin{align}
M_1 &:= \mathrm{p} \circ \mathrm{p} \circ \Delta^{-1} + \Delta^{-1} \circ \mathrm{p} \circ \mathrm{p}, \label{Def:M1} \\
M_2 &:= -2 \mathrm{p} \circ \Delta^{-1} \circ \mathrm{p}, \label{Def:M2} \\
L_1 &:= \mathrm{q} \circ ( \mathrm{p} - \mathrm{q}) \circ \Delta^{-1} - \mathrm{p} \circ \mathrm{q} \circ \Delta^{-1} + \Delta^{-1} \circ \mathrm{q} \circ (\mathrm{p} - \mathrm{q}) - \Delta^{-1} \circ \mathrm{p} \circ \mathrm{q}, \label{Def:L1} \\
L_2 &:= 2 \mathrm{q} \circ \Delta^{-1} \circ (\mathrm{p} + \mathrm{q}) - 2 \mathrm{p} \circ \Delta^{-1} \circ \mathrm{q}, \label{Def:L2} \\
T_1 &:= \mathrm{p} \circ \mathrm{q} \circ \Delta^{-1} - \Delta^{-1} \circ \mathrm{q} \circ \mathrm{p}, \label{Def:N1} \\
T_2 &:= \mathrm{p} \circ \Delta^{-1} \circ \mathrm{q} - \mathrm{q} \circ \Delta^{-1} \circ \mathrm{p}. \label{Def:N2}
\end{align}
\end{thm}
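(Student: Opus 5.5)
The plan is a purely algebraic computation: a Taylor expansion of the conjugation $s\mapsto e^{-sK}Le^{sK}$, combined with the observation that $K$ solves a homological equation against the off-diagonal part of $L$.

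First, I would check that everything is well defined. On zero-average fields $\Delta^{-1}$ has order $-2$, and $\mathrm{p},\mathrm{q}$ have order at most $1$ with coefficients built from $B\in H^{\ell+2}$. Hence each entry of $K$ has order $\leq -1$, so $K$ is bounded on $H^2\times H^2$, and $e^{\pm sK}$ is defined by the convergent power series. Then set $X(s):=e^{-sK}Le^{sK}$. Differentiating gives $X'(s)=e^{-sK}[L,K]e^{sK}$, and iterating gives $X^{(j)}(s)=e^{-sK}\mathrm{ad}_K^j(L)e^{sK}$, with the convention $\mathrm{ad}_K(X):=[X,K]=XK-KX$. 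Taylor's formula with integral remainder at order $3$ then yields
\begin{equation*}
e^{-K}Le^{K}=L+[L,K]+\tfrac12[[L,K],K]+\tfrac12\int_0^1(1-s)^2e^{-sK}[[[L,K],K],K]e^{sK}\,ds .
\end{equation*}

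Second, I would split $L=D+N$ with $D=\mathrm{diag}(\Delta,0)$ and $N$ the off-diagonal part with entries $\mathrm{p}+\mathrm{q}$ and $\mathrm{p}-\mathrm{q}$. A direct multiplication shows
\begin{equation*}
[D,K]=\begin{pmatrix}0&\Delta K_{12}\\-K_{21}\Delta&0\end{pmatrix}=-N,
\end{equation*}
which is exactly how $K$ was chosen in \eqref{SumupB}. Consequently $[L,K]=[N,K]-N$, $[[L,K],K]=[[N,K],K]-[N,K]$, and similarly for the third commutator. Substituting, the order-one terms $N$ cancel and we get
\begin{equation*}
e^{-K}Le^{K}=D+\tfrac12[N,K]+\tfrac12[[N,K],K]+\tfrac12\int_0^1(1-s)^2e^{-sK}\big([[[N,K],K],K]-[[N,K],K]\big)e^{sK}ds.
\end{equation*}

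Third, I would compute $[N,K]$ explicitly. It is block diagonal, with entries
\begin{equation*}
(\mathrm{p}+\mathrm{q})(\mathrm{p}-\mathrm{q})\Delta^{-1}+\Delta^{-1}(\mathrm{p}+\mathrm{q})(\mathrm{p}-\mathrm{q}),\qquad -2(\mathrm{p}-\mathrm{q})\Delta^{-1}(\mathrm{p}+\mathrm{q}).
\end{equation*}
Expanding the products and sorting the monomials into $\mathrm{pp}$-type terms ($M_j$), mixed terms ($L_j$, $T_j$) and the remaining pieces should reproduce the diagonal combinations appearing in \eqref{Def:MathcalA}. The leftover pieces, together with the $[[N,K],K]$ contribution, are then to be rewritten as $\tfrac12[Y,K]$ with $Y=\begin{pmatrix}M_1+L_1&-\mathrm{q}\\ \mathrm{q}&M_2+L_2\end{pmatrix}$. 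This requires using $[N,K]$ in its split form and redistributing the terms $\mathrm{q}$ and $T_j$ between the first- and second-order commutators. The same substitution inside the integral gives the stated remainder. Each of these operators has order $\leq 0$ (for instance $\mathrm{p}\mathrm{p}\Delta^{-1}$ and $\Delta^{-1}\mathrm{p}\mathrm{p}$, while the commutators with $K$ gain a derivative), which confirms that $\mathcal{A}$ is of order $0$.

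The main obstacle is this last bookkeeping step. The operators $\mathrm{p}$, $\mathrm{q}$ and $\Delta^{-1}$ do not commute, so the regrouping into $M_j,L_j,T_j$ and into $[Y,K]$ must respect the ordering exactly. The identity is only an identity once every monomial has been matched. I would carry it out by expanding everything, both the claimed $\mathcal{A}$ and my expression, into words in $\mathrm{p},\mathrm{q},\Delta^{-1}$, and comparing coefficients block by block.
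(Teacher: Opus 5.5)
Your proposal is correct and follows essentially the same route as the paper. Both use a third-order Taylor (Lie) expansion of $s\mapsto e^{-sK}Le^{sK}$, the homological identity $[D,K]=-N$ that removes the off-diagonal part, and a direct computation, and your intermediate formula $D+\tfrac12[N,K]+\tfrac12[[N,K],K]+\dots$ is an equivalent rewriting of the paper's $L+[L,K]+\tfrac12[[L,K],K]+\dots$.

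The paper avoids brute-force word comparison in the bookkeeping step you flag. It uses $[N,K]=\mathrm{diag}(M_1+L_1,M_2+L_2)$ exactly and $[L,K]=\begin{pmatrix}0&-\mathrm{p}\\-\mathrm{p}&0\end{pmatrix}+Y$. It then computes the commutator of the first piece with $K$ directly, which equals $\mathrm{diag}(-M_1+T_1,-M_2+T_2)$ and gives both the stated diagonal terms and the stated integrand.
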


\begin{rem}\label{RemWellDef}
The operator $\Delta^{-1}$ is well-defined throughout \eqref{SumupB}--\eqref{Def:N2}. Indeed, it either acts directly on divergence-free, zero-average vector fields, or it acts on the images of $\mathrm{p}$ and $\mathrm{q}$, which inherently satisfy $\int_{\mathbb{T}^3} \mathrm{p}[F] = \int_{\mathbb{T}^3} \mathrm{q}[F] = 0$.
\end{rem}


\begin{rem}
While the diagonal terms $L_j + T_j$ admit further algebraic simplifications, we omit them here as they do not affect the structural properties required for the subsequent analysis.
\end{rem}

\begin{proof}
The statement follows from the Lie expansion \eqref{Taylor} and a direct computation.
For the reader’s convenience, we give a proof in which the operator \eqref{SumupB} is defined, a posteriori, 
solving the associated homological equation. We thus define 
$$
K =
\begin{pmatrix}
0 & \Psi
\\
\Phi & 0
\end{pmatrix}.
$$
where the operators $\Psi$ and $\Phi$ will be determined later. We use the Lie expansion to write
\begin{equation}\label{Taylor}
e^{-K} L e^{K} =  L + [L, K] + \frac12[[L, K],K]  + \frac12 \int_{0}^1 (1-s)^2 e^{-s K} [[[L, K],K],K] e^{s K} \, \de s.
\end{equation}
Then, we compute the commutator
$$
[L, K] =  
\begin{pmatrix}
(\mathrm{p} + \mathrm{q}) \circ \Phi - \Psi \circ (\mathrm{p} - \mathrm{q}) 
& \Delta \circ \Psi  
\\
 - \Phi \circ \Delta & (\mathrm{p} - \mathrm{q}) \circ \Psi - \Phi \circ (\mathrm{p} + \mathrm{q})
\end{pmatrix},
$$
and thus 
$$
L + [L, K] = 
\begin{pmatrix}
\Delta + (\mathrm{p} + \mathrm{q}) \circ \Phi - \Psi \circ (\mathrm{p} - \mathrm{q}) 
& \mathrm{p} + \mathrm{q} + \Delta \circ \Psi  
\\
\mathrm{p} - \mathrm{q} - \Phi \circ \Delta & (\mathrm{p} - \mathrm{q}) \circ \Psi - \Phi \circ (\mathrm{p} + \mathrm{q})
\end{pmatrix}.
$$
The homological equations that allows to remove the off-diagonal terms can be solved simply inverting $\Delta$, namely
 \begin{equation}\label{HomEq}
\Psi := -\Delta^{-1} \circ (\mathrm{p} + \mathrm{q}), \qquad 
\Phi := (\mathrm{p} - \mathrm{q}) \circ \Delta^{-1} ,
\end{equation}
which gives
\begin{equation}\label{seeThisnmfdjsld}
 L + [L, K]  =  
\begin{pmatrix}
\Delta   & 0  
\\ 
0 & 0 
\end{pmatrix}
+
\begin{pmatrix}
M_1  & 0  
\\
0 & M_2
\end{pmatrix}
+ 
\begin{pmatrix}
L_1  & 0  
\\
0 & L_2 
\end{pmatrix},
\end{equation}
where we have defined the operators
$$
M_1:= \mathrm{p} \circ  \mathrm{p}  \circ \Delta^{-1} 
+ \Delta^{-1} \circ \mathrm{p}  \circ \mathrm{p},
$$
$$
M_2:= -2  \mathrm{p}  \circ \Delta^{-1} \circ \mathrm{p}  ,
$$
and
$$
L_1 := \mathrm{q} \circ ( \mathrm{p} - \mathrm{q}) \circ \Delta^{-1} 
- \mathrm{p} \circ \mathrm{q} \circ \Delta^{-1} 
+ \Delta^{-1} \circ \mathrm{q} \circ (\mathrm{p} - \mathrm{q}) 
- \Delta^{-1} \circ \mathrm{p}  \circ  \mathrm{q},
$$
$$L_2 
:= 
2    \mathrm{q} \circ \Delta^{-1} \circ (\mathrm{p} + \mathrm{q}) 
-2  \mathrm{p}  \circ \Delta^{-1} \circ \mathrm{q}.
$$
We also note that under the choice \eqref{HomEq} we have
\begin{equation}
 K = 
\begin{pmatrix}
0 & - \Delta^{-1} \circ \mathrm{p}
\\
\mathrm{p}  \circ \Delta^{-1} & 0
\end{pmatrix}
+
\begin{pmatrix}
0 & -\Delta^{-1} \circ   \mathrm{q}
\\
 - \mathrm{q} \circ \Delta^{-1} & 0
\end{pmatrix},
\end{equation}
as well as
$$
[L, K] = 
\begin{pmatrix}
(\mathrm{p} + \mathrm{q}) \circ (\mathrm{p} - \mathrm{q}) \circ \Delta^{-1} + \Delta^{-1} \circ (\mathrm{p} + \mathrm{q}) \circ (\mathrm{p} - \mathrm{q}) 
& - \mathrm{p} - \mathrm{q}
\\
 - \mathrm{p} + \mathrm{q}  & -2 (\mathrm{p} - \mathrm{q}) \circ \Delta^{-1} \circ (\mathrm{p} + \mathrm{q}) 
\end{pmatrix}.
$$
Notice that from \eqref{seeThisnmfdjsld} and \eqref{SumupBOLD} it follows
\begin{align*}
 [L, K]  = 
\begin{pmatrix}
0  & - \mathrm{p} 
\\
- \mathrm{p}  & 0
\end{pmatrix}
+
\begin{pmatrix}
M_1   &  - \mathrm{q}
\\
 \mathrm{q} & M_2 
\end{pmatrix}
+
\begin{pmatrix}
L_1  & 0  
\\
0 & L_2 
\end{pmatrix}.
\end{align*}
In order to compute $[[L, K],K]$, it is useful to note that  
\begin{align*}
\left[ 
\begin{pmatrix}
0 & - \mathrm{p}
\\ 
- \mathrm{p}, & 0
\end{pmatrix},
\begin{pmatrix}
0 & - \Delta^{-1} \circ \mathrm{p}
\\ 
\mathrm{p} \circ  \Delta^{-1} & 0
\end{pmatrix}
\right] 
& = 
\begin{pmatrix}
- \mathrm{p} \circ \mathrm{p} \circ \Delta^{-1} - \Delta^{-1} \circ \mathrm{p} \circ \mathrm{p}
& 0
\\ 
 0 & 2  \mathrm{p}  \circ \Delta^{-1} \circ \mathrm{p}
\end{pmatrix}
\\
&=
\begin{pmatrix}
-M_1 & 0
\\
 0 &  - M_2
\end{pmatrix}.
\end{align*}
and 
\begin{align*}
\left[ 
\begin{pmatrix}
0 & - \mathrm{p}
\\ 
- \mathrm{p}, & 0
\end{pmatrix},
\begin{pmatrix}
0 & - \Delta^{-1} \circ \mathrm{q}
\\ 
- \mathrm{q} \circ  \Delta^{-1}, & 0
\end{pmatrix}
\right] 
& = 
\begin{pmatrix}
 \mathrm{p} \circ \mathrm{q} \circ \Delta^{-1} - \Delta^{-1} \circ \mathrm{q} \circ \mathrm{p}
& 0
\\ 
 0 &   \mathrm{p}  \circ \Delta^{-1} \circ \mathrm{q} -  \mathrm{q}  \circ \Delta^{-1} \circ \mathrm{p}
\end{pmatrix}
\\
&=
\begin{pmatrix}
T_1 & 0
\\
 0 &   T_2
\end{pmatrix},
\end{align*}
where 
$$
T_1 : = \mathrm{p} \circ \mathrm{q} \circ \Delta^{-1} - \Delta^{-1} \circ \mathrm{q} \circ \mathrm{p},
$$
$$
T_2 : = \mathrm{p} \circ \Delta^{-1} \circ \mathrm{q} - \mathrm{q} \circ  \Delta^{-1} \circ  \mathrm{p}.
$$
Thus, we arrive to
\begin{equation}\label{fdhjskaduhf}
 [[L, K],K]  
= 
\begin{pmatrix}
-M_1 & 0
\\
 0 &  - M_2
\end{pmatrix}
+
\begin{pmatrix}
T_1  & 0
\\
0 & T_2
\end{pmatrix}
+
  \left[
\begin{pmatrix}
M_1   &  - \mathrm{q}
\\
 \mathrm{q} & M_2 
\end{pmatrix}
+  
\begin{pmatrix}
L_1  & 0  
\\
0 & L_2 
\end{pmatrix}, 
K \right].
\end{equation}

Summing \eqref{seeThisnmfdjsld}-\eqref{fdhjskaduhf} 
we find 
\begin{align}\nonumber
L + [L, K] + \frac12 [[L, K],K]  & =  
\begin{pmatrix}
\Delta
& 0
\\
 0 &  0
\end{pmatrix} 
+\frac12 \begin{pmatrix}
M_1 & 0\\
 0 &  M_2
\end{pmatrix} 
\\ \label{fdjskjgfhjdsk1}
& + 
\begin{pmatrix}
L_1 + \frac12 T_1 & 0
\\
0 & L_2 + \frac12 T_2 
\end{pmatrix}
+\frac12
  \left[
\begin{pmatrix}
M_1   &  - \mathrm{q}
\\
 \mathrm{q} & M_2 
\end{pmatrix}
+  
\begin{pmatrix}
L_1  & 0  
\\
0 & L_2 
\end{pmatrix}, 
K \right]
\end{align}
Recalling \eqref{Taylor} and using again \eqref{fdhjskaduhf} we arrive to the desired identity \eqref{Diagonalization}.
\end{proof}

\section{Stable manifold analysis}\label{Sec:Lyapunov}
It will be useful to rewrite the equation \eqref{eq:mhdMoreStandard} in the following form:
\begin{equation}\label{eq:mhd}
\begin{cases}
\partial_t u + \mathbb{P} \operatorname{div} (u \otimes u) - \Delta u =  \mathbb{P} \operatorname{div}  (b \otimes b),\\
\partial_t b +  \operatorname{div} (b \otimes u) =  \operatorname{div} (u \otimes b), \\
\operatorname{div} u = \operatorname{div} b = 0,\\
u(0) = u^\mathrm{in}, \quad b(0) = b^\mathrm{in}.
\end{cases}
\end{equation}
The main goal of this section is to rewrite the Lyapunov-Perron formulation for the stable variety associated to the MHD equations, after introducing the change of coordinates
\[
\begin{pmatrix} u \\ b \end{pmatrix} 
:=
\begin{pmatrix} 0 \\ B \end{pmatrix}
+
e^{K} 
\begin{pmatrix} v \\ m \end{pmatrix},
\]
where the operator $K$ was defined in Section \ref{Sec;Hom}. Given $v, m : \mathbb{T}^3 \to \mathbb{R}^3$, we define the non-linear operators
\begin{equation}\label{DefTildeN1}
\tilde N_1(v,m) :=  \pi_2 e^{K} \begin{pmatrix} v \\ m \end{pmatrix} \otimes \pi_2 e^{K} \begin{pmatrix} v \\ m \end{pmatrix} 
- \pi_1 e^{K} \begin{pmatrix} v \\ m \end{pmatrix} \otimes \pi_1 e^{K} \begin{pmatrix} v \\ m \end{pmatrix},
\end{equation}
\begin{equation}\label{DefTildeN2}
\tilde N_2(v,m) :=  \pi_1 e^{K} \begin{pmatrix} v \\ m \end{pmatrix} \otimes \pi_2 e^{K} \begin{pmatrix} v \\ m \end{pmatrix} 
- \pi_2 e^{K} \begin{pmatrix} v \\ m \end{pmatrix} \otimes \pi_1 e^{K} \begin{pmatrix} v \\ m \end{pmatrix}.
\end{equation}
We recall that the projection operators $\pi_1, \pi_2$ on the product space $\mathbb{R}^3 \times \mathbb{R}^3$ are defined in \eqref{def:proiezioni}.
Furthermore, decomposing $e^K = \operatorname{Id} + (e^K - \operatorname{Id})$, we introduce
\begin{align}\label{Def:N_1Nonlinear}
 N_1(v,m) 
 &:= m \otimes \pi_2 (e^{K} - \operatorname{Id}) \begin{pmatrix} v \\ m \end{pmatrix} 
 - v \otimes \pi_1 (e^{K} - \operatorname{Id}) \begin{pmatrix} v \\ m \end{pmatrix} \nonumber \\
 &\quad + \pi_2 (e^{K} - \operatorname{Id}) \begin{pmatrix} v \\ m \end{pmatrix} \otimes m 
 - \pi_1 (e^{K} - \operatorname{Id}) \begin{pmatrix} v \\ m \end{pmatrix} \otimes v \nonumber \\ 
 &\quad + \pi_2 (e^{K} - \operatorname{Id}) \begin{pmatrix} v \\ m \end{pmatrix} \otimes \pi_2 (e^{K} - \operatorname{Id}) \begin{pmatrix} v \\ m \end{pmatrix} \nonumber \\
 &\quad - \pi_1 (e^{K} - \operatorname{Id}) \begin{pmatrix} v \\ m \end{pmatrix} \otimes \pi_1 (e^{K} - \operatorname{Id}) \begin{pmatrix} v \\ m \end{pmatrix},
\end{align}
and
\begin{align}\label{Def:N_2Nonlinear}
 N_2(v,m) 
 &:= v \otimes \pi_2 (e^{K} - \operatorname{Id}) \begin{pmatrix} v \\ m \end{pmatrix} 
 - \pi_2 (e^{K} - \operatorname{Id}) \begin{pmatrix} v \\ m \end{pmatrix} \otimes v \nonumber \\
 &\quad + \pi_1 (e^{K} - \operatorname{Id}) \begin{pmatrix} v \\ m \end{pmatrix} \otimes \pi_2 (e^{K} - \operatorname{Id}) \begin{pmatrix} v \\ m \end{pmatrix} \nonumber \\
 &\quad - \pi_2 (e^{K} - \operatorname{Id}) \begin{pmatrix} v \\ m \end{pmatrix} \otimes \pi_1 (e^{K} - \operatorname{Id}) \begin{pmatrix} v \\ m \end{pmatrix}.
\end{align}
By construction, we have the relations
\begin{equation}\label{NtildeNeq1}
\tilde N_1(v,m) = m \otimes m - v \otimes v + N_1(v,m),
\end{equation}
\begin{equation}\label{NtildeNeq2}
\tilde N_2(v,m) = V(v,m) \otimes m - m \otimes V(v,m) + N_2(v,m),
\end{equation}
where the distorted transport velocity field is given by
\begin{equation}\label{DEf:AdjustedTransport}
V(v,m) :=  \pi_1 e^{K} \begin{pmatrix} v \\ m \end{pmatrix} = v + \pi_1 (e^{K} - \operatorname{Id}) \begin{pmatrix} v \\ m \end{pmatrix}.
\end{equation}
Another identity that will be crucial in the forthcoming analysis is
\begin{equation}\label{NtildeNeq3}
e^{-K} \begin{pmatrix} \mathbb{P} \operatorname{div} \tilde N_1(v,m) \\ \operatorname{div} \tilde N_2(v,m) \end{pmatrix} = 
\begin{pmatrix} \mathbb{P} \operatorname{div} \tilde N_1(v,m) \\ \operatorname{div} \tilde N_2(v,m) \end{pmatrix} + \Gamma(v,m),
\end{equation}
where we have defined
\begin{equation}\label{DEf:Gamma}
\Gamma(v,m) := \sum_{\ell \geq 1} \frac{1}{\ell !} (-K)^\ell \begin{pmatrix} \mathbb{P} \operatorname{div} \tilde N_1(v,m) \\ \operatorname{div} \tilde N_2(v,m) \end{pmatrix}.
\end{equation}

\subsection{Algebraic preliminaries}
We now collect the main algebraic properties of the operators introduced so far, enabling us to work effectively with the reformulation of the MHD equations below. In particular, we show that these operators preserve both the zero-average condition and the divergence-free constraint satisfied by the underlying vector fields.

Building upon the setting established in Section \ref{Sec;Hom}, we tacitly assume that all operators act on zero-average, divergence-free vector fields in $H^2(\mathbb{T}^3)$. 

\begin{prop}\label{prop:algebraic_properties}
Let $F, G : \mathbb{T}^3 \to \mathbb{R}^3$ be zero-average, smooth divergence-free vector fields. Then, the following properties hold.
\begin{enumerate}
    \item \label{AlgLemma1} \textbf{Basic operators:} 
    $$ \dive \mathrm{q} [F] = \dive \mathrm{p} [F] = 0 \quad \text{and} \quad \int_{\mathbb{T}^3} \mathrm{q} [F] = \int_{\mathbb{T}^3} \mathrm{p} [F] = 0. $$
    
    \item \label{AlgLemma2} \textbf{Action of $K$:} 
    $$ \dive \pi_1 K \binom{F}{G} = \dive \pi_2 K \binom{F}{G} = 0 \quad \text{and} \quad \int_{\mathbb{T}^3} \pi_1 K \binom{F}{G} = \int_{\mathbb{T}^3} \pi_2 K \binom{F}{G} = 0. $$
    
    \item \label{AlgLemma2,5} \textbf{Exponential operators:} the vector fields $\pi_i (e^{K} - \mathrm{Id}) \binom{F}{G}$ and $\pi_i e^{K} \binom{F}{G}$ are zero-average and divergence-free for $i \in \{1, 2\}$. In particular, 
    \begin{equation}\label{fnejskkhjfg6458293u2}
    \dive \pi_1 (e^{K} - \mathrm{Id}) \binom{F}{G} = \dive \pi_2 (e^{K} - \mathrm{Id}) \binom{F}{G} = 0.
    \end{equation}
    
    \item \label{AlgLemma4} \textbf{Auxiliary linear operators:} the operators $L_i, M_i, T_i$ for $i \in \{1,2\}$ preserve both constraints:
    $$ \dive L_i F = \dive M_i F = \dive T_i F = 0 \quad \text{and} \quad \int_{\mathbb{T}^3} L_i F = \int_{\mathbb{T}^3} M_i F = \int_{\mathbb{T}^3} T_i F = 0. $$
    
    \item \label{AlgLemma5} \textbf{Nonlinear and transport operators:} the fields $\pi_i \mathcal{A} \binom{F}{G}$ for $i \in \{1,2\}$, $V(F,G)$, and $\pi_i \Gamma(F,G)$ for $i \in \{1,2\}$ are zero-average and divergence-free. In particular,
    \begin{equation}\label{fdsjlkedgj9856735219}
    \dive V(F,G) = 0.
    \end{equation}
    
    \item \label{AlgLemma7} \textbf{Double divergence property:} the nonlinear operator $\tilde N_2$ satisfies
    $$ \dive \dive \tilde N_2(F,G) = 0. $$
\end{enumerate}
\end{prop}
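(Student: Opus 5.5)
The plan is to prove the six items in order, each resting on the previous ones. The only ingredients are that $\mathbb{P}$ maps into divergence-free fields and kills constants, and that on the torus the integral of a derivative vanishes. For item (1), $\mathrm{p}[F]=\mathbb{P}\dive(F\otimes B)$ and $\mathrm{q}[F]=\mathbb{P}\dive(B\otimes F)$ are divergence-free because they lie in the range of $\mathbb{P}$. They have zero average because each is $\mathbb{P}$ applied to a divergence of a tensor, and $\mathbb{P}$ is a Fourier multiplier that preserves the zero mode, so the mean equals $\int\dive(\cdot)=0$. We also use that $\Delta^{-1}$ and $\mathbb{P}$ are Fourier multipliers: they commute with $\dive$, and they preserve the divergence-free and zero-average classes. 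Here $\Delta^{-1}$ is defined on zero-average fields, as in Remark \ref{RemWellDef}.

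Item (2) then follows directly from the formula \eqref{SumupB}. We have $\pi_1K\binom{F}{G}=-\Delta^{-1}(\mathrm{p}+\mathrm{q})G$, which is a multiplier applied to a zero-average divergence-free field. Similarly $\pi_2K\binom{F}{G}=(\mathrm{p}-\mathrm{q})\Delta^{-1}F$, which is in the range of $\mathrm{p},\mathrm{q}$; note $\Delta^{-1}F$ is again divergence-free and zero-average, so item (1) applies to it. For item (3) we iterate item (2): the class $X$ of pairs of zero-average divergence-free fields is invariant under $K$, hence under every $K^j$. Since $X$ is a closed subspace, it is also invariant under the norm-convergent series $e^K-\mathrm{Id}=\sum_{j\ge1}K^j/j!$. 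Convergence holds because $K$ is bounded on $L^2$ (order $-1$ composed with order $1$, with $B$ smooth), so no domain issues arise beyond $H^2$. Adding $\mathrm{Id}$ gives the claim for $e^K$.

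Item (4) is immediate by the same reasoning. Each of $M_i,L_i,T_i$ is a composition of $\mathrm{p},\mathrm{q},\Delta^{-1}$ in which every $\Delta^{-1}$ acts on a divergence-free zero-average field. Each composition ends with either $\mathrm{p}$, $\mathrm{q}$ or $\Delta^{-1}$, and all three preserve the class $X$. For item (5), the conjugated operator $\mathcal{A}=e^{-K}Le^K-\mathrm{diag}(\Delta,0)$ preserves $X$ because $L$ (whose entries are $\Delta,\mathrm{p}\pm\mathrm{q},0$), $e^{\pm K}$ and $\Delta$ all do. This avoids expanding \eqref{Def:MathcalA}. Next, $V(F,G)=\pi_1e^K\binom{F}{G}$ is in $X$ by item (3). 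For $\Gamma$, the vector $\binom{\mathbb{P}\dive\tilde N_1}{\dive\tilde N_2}$ has first entry in $X$ trivially. Its second entry has zero mean since it is a divergence, and it is divergence-free precisely by item (6). Then $\Gamma=(e^{-K}-\mathrm{Id})$ applied to it lies in $X$ by item (3) with $-K$ in place of $K$.

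The step needing genuine input is item (6), and I would do it as follows. Set $a=\pi_1e^K\binom{F}{G}$ and $b=\pi_2e^K\binom{F}{G}$. Both are divergence-free by item (3), and $\tilde N_2=a\otimes b-b\otimes a$ is antisymmetric. Hence $\partial_i\partial_j(\tilde N_2)_{ij}=0$ by symmetry of mixed partials. Alternatively, by \eqref{OtimesVSnabla}, $\dive\tilde N_2=(b\cdot\nabla)a-(a\cdot\nabla)b$, which is the Lie bracket of divergence-free fields and is therefore divergence-free. The subtlety is ordering: item (6) must be established before the $\Gamma$ part of item (5).
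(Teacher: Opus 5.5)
Your proposal is correct and follows the same route as the paper: item 1 from the range of $\mathbb{P}$ and the vanishing mean of divergences, items 2 and 4 by composing $\mathrm{p},\mathrm{q},\Delta^{-1}$, item 3 by the power series of $e^{K}$, and items 5--6 from the definitions. Your small variations are sound. You get the invariance of $\mathcal{A}$ from the conjugation identity $\mathcal{A}=e^{-K}Le^{K}-\mathrm{diag}(\Delta,0)$ of Theorem \ref{ConiugioTHM} rather than from the explicit formula \eqref{Def:MathcalA}. You also prove item 6 by antisymmetry of $\tilde N_2$ before the $\Gamma$ part of item 5, which makes explicit an ordering the paper leaves implicit.
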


\begin{proof}
All assertions follow from direct computations by combining the definitions of the respective operators with the structural properties of the Leray projector $\mathbb{P} := \mathrm{Id} - \Delta^{-1} \nabla \otimes \nabla$.
\begin{itemize}
    \item Property \ref{AlgLemma1} is an immediate consequence of \eqref{DEf:P,Q}, recalling that $\mathbb{P}$ is the orthogonal projection onto the space of divergence-free, zero-average vector fields.
    \item Properties \ref{AlgLemma2} and \ref{AlgLemma4} follow directly from \ref{AlgLemma1} by exploiting the linear definitions \eqref{SumupB} and \eqref{Def:M1}--\eqref{Def:N2}.
    \item Property \ref{AlgLemma2,5} is obtained by expanding the operator exponential in power series, $e^{K} - \mathrm{Id} = \sum_{j \geq 1} \frac{1}{j!} K^{j}$, and applying \ref{AlgLemma2} term-by-term. The claim for $e^K$ follows analogously since $e^K = \mathrm{Id} + (e^K - \mathrm{Id})$ and the identity operator trivially preserves the constraints.
    \item Properties \ref{AlgLemma5} and \ref{AlgLemma7} are deduced similarly from definitions \eqref{Def:MathcalA}, \eqref{DEf:AdjustedTransport}, \eqref{DEf:Gamma}, and \eqref{NtildeNeq2}, combined with the relations established in the previous steps.
\end{itemize}
\end{proof}

\subsection{The Lyapunov-Perron formula for the stable manifold}
In the following theorem, we work with classical solutions, as this is sufficient for the purpose of this paper. However, the statement can be adapted to more general notions of solutions with obvious modifications.

\begin{thm}\label{fdanjknjfdsjn756734}
Let $B$ be a stationary smooth solution of the Euler equations. Let $v, m : [0, +\infty) \times \mathbb{T}^3 \to \mathbb{R}^3$ be zero-average divergence-free vector fields. If $(v, m)$ is a smooth solution of the system
\begin{equation}\label{nfdjkshjbffbbfbf}
\begin{cases}
\partial_t v = \Delta v + \pi_1 \mathcal{A} \begin{pmatrix} v \\ m \end{pmatrix} + \mathbb{P} \operatorname{div} \big(m \otimes m - v \otimes v + N_1(v,m) \big) + \pi_1 \Gamma(v,m), \\
\partial_t m = \pi_2 \mathcal{A} \begin{pmatrix} v \\ m \end{pmatrix} + \operatorname{div} \big(V(v,m) \otimes m - m \otimes V(v,m) + N_2(v, m)\big) + \pi_2 \Gamma(v,m),
\end{cases}
\end{equation}
then
\[
\begin{pmatrix} u \\ b \end{pmatrix} := \begin{pmatrix} 0 \\ B \end{pmatrix} + e^{K} \begin{pmatrix} v \\ m \end{pmatrix}
\]
is a global smooth solution of the MHD equations
\begin{equation}\label{eq:mhdInLemma}
\begin{cases}
\partial_t u + \mathbb{P} \operatorname{div} (u \otimes u) - \Delta u = \mathbb{P} \operatorname{div} (b \otimes b),\\
\partial_t b + \operatorname{div} (b \otimes u) = \operatorname{div} (u \otimes b).
\end{cases}
\end{equation}
\end{thm}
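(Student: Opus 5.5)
The plan is to write the MHD system in the perturbative variables $(u,\tilde b)=(u,b-B)$ as an abstract evolution $\partial_t X = L X + \mathcal{N}(X)$, conjugate it by $e^{K}$, and check that the result is exactly \eqref{nfdjkshjbffbbfbf}. The computation is purely algebraic, resting on Theorem \ref{ConiugioTHM}, identity \eqref{NtildeNeq3}, and Proposition \ref{prop:algebraic_properties}.

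First I set $X:=e^K\binom{v}{m}$, so that $u=\pi_1 X$ and $b-B=\pi_2X$. Since $K$ is time independent, $e^{K}$ commutes with $\partial_t$. The system \eqref{nfdjkshjbffbbfbf} can be written compactly as
\[
\partial_t \binom{v}{m} = \Big(\begin{pmatrix}\Delta&0\\0&0\end{pmatrix}+\mathcal{A}\Big)\binom{v}{m} + e^{-K}\begin{pmatrix}\mathbb{P}\dive\tilde N_1(v,m)\\ \dive \tilde N_2(v,m)\end{pmatrix}.
\]
To see this, use \eqref{NtildeNeq1}--\eqref{NtildeNeq2} to recognize the divergence terms as $\mathbb{P}\dive\tilde N_1$ and $\dive\tilde N_2$, then use \eqref{NtildeNeq3} to absorb $\Gamma$. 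The first component already carries a $\mathbb{P}$. In the second component, $\mathbb{P}\dive\tilde N_2=\dive\tilde N_2$ holds because $\dive\dive\tilde N_2=0$ by Proposition \ref{prop:algebraic_properties}, item \ref{AlgLemma7}, and because $\dive\tilde N_2$ has zero average. Applying $e^{K}$ and using \eqref{Diagonalization} in the form $e^{K}(e^{-K}Le^{K})e^{-K}=L$, I get
\[
\partial_t X = L X + \begin{pmatrix}\mathbb{P}\dive(b'\otimes b'-u\otimes u)\\ \dive(u\otimes b'-b'\otimes u)\end{pmatrix},\qquad b':=b-B,
\]
where I used the definitions \eqref{DefTildeN1}--\eqref{DefTildeN2}.

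Next I expand the MHD equations \eqref{eq:mhdInLemma} with $b=B+b'$. The quadratic-in-$B$ term $\mathbb{P}\dive(B\otimes B)=\mathbb{P}(B\cdot\nabla)B$ vanishes because $B$ solves \eqref{nfjdkwEuler}. In the velocity equation the cross terms give $\mathbb{P}\dive(b'\otimes B+B\otimes b')=(\mathrm{p}+\mathrm{q})[b']$ by \eqref{DEf:P,Q}. In the induction equation they give $\dive(u\otimes B)-\dive(B\otimes u)=(B\cdot\nabla)u-(u\cdot\nabla)B$. I then need to identify this with $(\mathrm{p}-\mathrm{q})[u]$, which requires removing the Leray projector. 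I expect this to be the one real obstacle. The field $(B\cdot\nabla)u-(u\cdot\nabla)B=-\curl(u\times B)$ is divergence free since $u$ and $B$ are divergence free, and it has zero average since it is a divergence. Hence $\mathbb{P}$ acts as the identity on it, and the linear part is exactly $L\binom{u}{b'}$ with $L$ as in \eqref{SumupBOLD}.

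Finally I verify the remaining bookkeeping. By Proposition \ref{prop:algebraic_properties}, items \ref{AlgLemma2,5} and \ref{AlgLemma5}, $u$ and $b'$ are divergence free and zero average, so $b=B+b'$ is divergence free and all $\Delta^{-1}$'s are well defined (Remark \ref{RemWellDef}). Smoothness and global existence are inherited from $(v,m)$, since $e^{K}$ maps smooth fields to smooth fields; this uses that $B$ is smooth and that $K$ is a bounded operator, with the exponential series converging on each $H^s$. The zero average of $u$ is consistent with dropping the mean in the projected equation. I would also note that the nonlinear identity for the induction term needs no $\mathbb{P}$ for the same curl reason: $\dive(u\otimes b'-b'\otimes u)=-\curl(u\times b')$.
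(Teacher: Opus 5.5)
Your proposal is correct and is essentially the paper's own argument: write MHD for $(u,b-B)$ as $\partial_t X = LX + (\mathbb{P}\dive\tilde N_1,\ \dive\tilde N_2)$, conjugate by $e^{K}$ via Theorem \ref{ConiugioTHM}, and identify the nonlinear terms through \eqref{NtildeNeq1}--\eqref{NtildeNeq3}. You only run the chain in the direction $(v,m)\to(u,b)$ and spell out why $\mathbb{P}$ can be dropped in $(\mathrm{p}-\mathrm{q})[u]$, which the paper leaves implicit; note one harmless sign slip, namely $(B\cdot\nabla)u-(u\cdot\nabla)B=\curl(u\times B)$ and $\dive(u\otimes b'-b'\otimes u)=\curl(u\times b')$ with a plus sign, which does not matter since you only use divergence-freeness and zero mean.
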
  

\begin{proof}
Let $\tilde b := b - B$. We notice that $(u,b)$ satisfies \eqref{eq:mhdInLemma} if and only if $(u,\tilde b)$ satisfies
\begin{equation}\label{eq:mhd2}
\partial_t \begin{pmatrix} u \\ \tilde b \end{pmatrix} = L \begin{pmatrix} u \\ \tilde b \end{pmatrix} + \begin{pmatrix} \mathbb{P} \operatorname{div} (\tilde b \otimes \tilde b) - \mathbb{P} \operatorname{div} (u \otimes u) \\ \operatorname{div} (u \otimes \tilde b) - \operatorname{div} (\tilde b \otimes u) \end{pmatrix},
\end{equation}
where $L$ is the operator defined in \eqref{SumupBOLD}. For the reader's convenience, we recall that
\[
L := \begin{pmatrix} \Delta & \mathrm{p} + \mathrm{q} \\ \mathrm{p} - \mathrm{q} & 0 \end{pmatrix},
\]
and
\[
\mathrm{p} [\cdot] := \mathbb{P} \operatorname{div} ([\cdot] \otimes B ) = \mathbb{P} \big( (B \cdot \nabla) [\cdot] \big), \qquad \mathrm{q} [\cdot] := \mathbb{P} \operatorname{div} (B \otimes [\cdot] ) = \mathbb{P} \big( ([\cdot] \cdot \nabla ) B \big),
\]
where the equalities hold since $\operatorname{div} B = 0$ and $\operatorname{div} [\cdot] = 0$. Consequently, since $\operatorname{div} u = \operatorname{div} B = 0$, we indeed have
\[
\mathrm{p}[u] - \mathrm{q}[u] = \operatorname{div} (u \otimes B ) - \operatorname{div} (B \otimes u ) = (B \cdot \nabla) u - (u \cdot \nabla ) B.
\]
We now introduce the invertible change of variables
\[
\begin{pmatrix} v \\ m \end{pmatrix} := e^{-K} \begin{pmatrix} u \\ \tilde b \end{pmatrix}.
\]
Invoking Theorem \ref{ConiugioTHM}, we see that $(u,\tilde b)$ solves \eqref{eq:mhd2} if and only if $(v,m)$ solves
\begin{equation}\label{eq:mhd3preq}
\partial_t \begin{pmatrix} v \\ m \end{pmatrix} = \begin{pmatrix} \Delta v \\ 0 \end{pmatrix} + \mathcal{A} \begin{pmatrix} v \\ m \end{pmatrix} + e^{-K} \begin{pmatrix} \mathbb{P} \operatorname{div} \tilde N_1(v,m) \\ \operatorname{div} \tilde N_2(v,m) \end{pmatrix},
\end{equation}
where $\mathcal{A}$ is defined in \eqref{Def:MathcalA}. Using identities \eqref{NtildeNeq1}, \eqref{NtildeNeq2}, and \eqref{NtildeNeq3}, this system can be rewritten as
\[
\partial_t \begin{pmatrix} v \\ m \end{pmatrix} = \begin{pmatrix} \Delta v \\ 0 \end{pmatrix} + \mathcal{A} \begin{pmatrix} v \\ m \end{pmatrix} + \begin{pmatrix} \mathbb{P} \operatorname{div} (m \otimes m - v \otimes v + N_1(v, m)) + \pi_1 \Gamma(v,m) \\ \operatorname{div} \big(V(v,m) \otimes m - m \otimes V(v,m) + N_2(v, m)\big) + \pi_2 \Gamma(v,m) \end{pmatrix},
\]
which completes the proof.
\end{proof}

We denote by $\Phi_{t,s}$ the flow associated with the velocity field $V(v, m)$, defined by
\begin{equation}\label{nFlowDefPreq}
\begin{cases}
\partial_s \Phi_{t,s} (x) = V(v, m)(s, \Phi_{t,s}(x)),\\
\Phi_{t,t} (x) = x.
\end{cases}
\end{equation}

The following is the Lyapunov-Perron formulation to identify the stable manifold associated to equation \eqref{nfdjkshjbffbbfbf}. Note that we have moved to Lagrangian coordinates in the second equation (referring to the flow $\Phi_{t,s}$), while retaining Eulerian coordinates in the first. As explained in Section \ref{sec:proofstrategy}, this mixed Eulerian/Lagrangian formulation is crucial to handle the derivative loss arising from the non-linearity in a manner compatible with the stable manifold method presented in this paper.   

\begin{thm}\label{FinalTHM1dfhgsjdhgfs}
Let $B$ be a stationary smooth solution of the Euler equations and let $y : \mathbb{T}^3 \to \mathbb{R}^3$ be a zero-average divergence-free smooth vector field. Let $v,m: [0, +\infty) \times \mathbb{T}^3 \to \mathbb{R}^3$ be a pair of zero-average divergence-free vector fields. If $(v, m)$ is a smooth solution of
\begin{equation}
\label{iterazione integralePrequel}
\begin{cases}
\displaystyle v(t,x) = e^{t\Delta}y(x) + \int_0^t e^{(t-s)\Delta} \mathbb{P} \big( (m \cdot\nabla) m - (v \cdot\nabla) v + \operatorname{div} N_{1}(v, m) \big) (s,x) \, \de s \\ 
\displaystyle \qquad \qquad \qquad + \int_0^t e^{(t-s)\Delta} \left( \pi_1 \mathcal{A} \begin{pmatrix} v \\ m \end{pmatrix} + \pi_1 \Gamma(v,m) \right) (s,x) \, \de s, \\
\\
\displaystyle m(t,x) = -\int_t^\infty \big((m \cdot \nabla) V(v, m) \big)(s,\Phi_{t,s}(x)) \, \de s \\ 
\displaystyle \qquad \qquad \qquad - \int_t^\infty \left( \pi_2 \mathcal{A} \begin{pmatrix} v \\ m \end{pmatrix} + \pi_2 \Gamma(v,m) + \operatorname{div} N_2(v,m) \right) (s,\Phi_{t,s}(x)) \, \de s,
\end{cases}
\end{equation}
then $(v,m)$ is a global solution of \eqref{nfdjkshjbffbbfbf}.
\end{thm}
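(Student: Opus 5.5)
The plan is to differentiate each of the two integral identities in \eqref{iterazione integralePrequel} in time and check that we recover the corresponding equation of \eqref{nfdjkshjbffbbfbf}. Smoothness of $(v,m)$, together with the implicit integrability (the integrals over $[t,\infty)$ converge, so the integrands decay), justifies differentiating under the integral sign.

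\textbf{First equation.} This is the standard Duhamel argument. Set
\[
F_1 := \mathbb{P}\big((m\cdot\nabla)m-(v\cdot\nabla)v+\dive N_1(v,m)\big)+\pi_1\mathcal{A}\binom{v}{m}+\pi_1\Gamma(v,m).
\]
Then $\partial_t v = \Delta v + F_1$. By \eqref{OtimesVSnabla} and since $v,m$ are divergence free, $(m\cdot\nabla)m-(v\cdot\nabla)v = \dive(m\otimes m - v\otimes v)$. Hence $F_1$ is exactly the right-hand side of the first equation of \eqref{nfdjkshjbffbbfbf} with the $\Delta v$ term removed.

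\textbf{Second equation.} Here I use the Lagrangian structure. Put
\[
G := (m\cdot\nabla)V + \pi_2\mathcal{A}\binom{v}{m} + \pi_2\Gamma + \dive N_2,
\]
with $V=V(v,m)$, so that $m(t,x) = -\int_t^\infty G(s,\Phi_{t,s}(x))\,\de s$. I compose with the flow: fix $x$ and $t_0$, and evaluate at the point $\Phi_{t_0,t}(x)$. The group property $\Phi_{t,s}\circ\Phi_{t_0,t}=\Phi_{t_0,s}$ then gives
\[
m(t,\Phi_{t_0,t}(x)) = -\int_t^\infty G(s,\Phi_{t_0,s}(x))\,\de s .
\]
Differentiating in $t$ yields $\frac{\de}{\de t}\,m(t,\Phi_{t_0,t}(x)) = G(t,\Phi_{t_0,t}(x))$. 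Since $\Phi_{t_0,\cdot}(x)$ solves \eqref{nFlowDefPreq}, the chain rule gives
\[
\partial_t m + (V\cdot\nabla)m = G
\]
at the point $\Phi_{t_0,t}(x)$. Because $\Phi_{t_0,t}$ is a diffeomorphism and $t_0,x$ are arbitrary, this identity holds everywhere.

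\textbf{Matching the second equation of \eqref{nfdjkshjbffbbfbf}.} It remains to check
\[
(m\cdot\nabla)V-(V\cdot\nabla)m = \dive(V\otimes m - m\otimes V).
\]
This follows from \eqref{OtimesVSnabla}, which requires $\dive V=0$ (given by \eqref{fdsjlkedgj9856735219}) and $\dive m=0$.

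\textbf{Main obstacle.} The delicate step is the Lagrangian one. It needs three things.
\begin{itemize}
\item The flow must be globally defined, with the semigroup property. This holds because $V$ is smooth on the torus.
\item Convergence at $s\to\infty$ is needed to differentiate the improper integral. I would take the decay of the integrand $G$ along trajectories as part of the hypothesis that the integral in \eqref{iterazione integralePrequel} is meaningful.
\item The divergence-free property of $V$ is essential to convert the transport form into the tensor (divergence) form.
\end{itemize}
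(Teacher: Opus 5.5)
Your proposal is correct and follows the paper's proof essentially step by step. You use Duhamel plus \eqref{OtimesVSnabla} for the first equation; for the second, you compose with the flow via the semigroup property, differentiate in time, and convert transport form into divergence form using $\dive V=\dive m=0$ (the paper lets $\tau\to t$, where you use surjectivity of $\Phi_{t_0,t}$ or simply set $t=t_0$). One minor remark: once you compose with the flow, the integrand $G(s,\Phi_{t_0,s}(x))$ no longer depends on $t$, so that step needs only the fundamental theorem of calculus and convergence of the improper integral, not differentiation under the integral sign or decay of the integrand.
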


\begin{proof}
Since $\operatorname{div} v = \operatorname{div} m = 0$, we recall from \eqref{OtimesVSnabla} that
\[
\operatorname{div} (m \otimes m - v \otimes v) = (m \cdot\nabla) m - (v \cdot\nabla) v.
\]
Keeping this in mind, we see that the first equation in \eqref{iterazione integralePrequel} is simply the Duhamel formulation of the first equation in \eqref{nfdjkshjbffbbfbf}. 
Similarly, since $\operatorname{div} V(v,m) = 0$ (see Proposition \ref{prop:algebraic_properties}) we have
\[
\dive(V(v,m) \otimes m - m \otimes V(v,m)) = (m \cdot \nabla)V(v,m) - (V(v,m) \cdot \nabla)m.
\]
Thus, to complete the proof, it suffices to show that
\begin{equation}\label{jdkslgndjsjkdsgn3}
\partial_t m + (V(v, m)\cdot\nabla) m = (m \cdot \nabla) V(v,m) + \pi_2 \mathcal{A} \begin{pmatrix} v \\ m \end{pmatrix} + \pi_2 \Gamma(v,m) + \operatorname{div} N_2(v,m).
\end{equation}
We recall that $\Phi_{t,s}$ denotes the flow from time $t$ to time $s$, which satisfies the semigroup property:
\[
\Phi_{t,s}(\Phi_{\tau,t}(y)) = \Phi_{\tau,s}(y), \quad y \in \mathbb{T}^3.
\]
Applying the change of variables $x = \Phi_{\tau,t}(y)$ in the second equation of \eqref{iterazione integralePrequel}, we obtain
\begin{align}\label{fdnjskgdksdg74736}
m(t,\Phi_{\tau,t}(y)) &= -\int_t^\infty \big((m \cdot \nabla) V(v ,m) \big)(s,\Phi_{\tau,s}(y)) \, \de s \nonumber \\ 
&\quad - \int_t^\infty \left( \pi_2 \mathcal{A} \begin{pmatrix} v \\ m \end{pmatrix} + \pi_2 \Gamma(v,m) + \operatorname{div} N_2(v,m) \right) (s,\Phi_{\tau,s}(y)) \, \de s.
\end{align}
From \eqref{nFlowDefPreq}, we deduce that
\begin{equation}\label{fdjuhsiuhfghdskgjdhgsfdhFischer}
\frac{\de }{\de  t} \big( m(t,\Phi_{\tau,t}(y)) \big) = \partial_t m + (V(v ,m) \cdot \nabla) m \Big|_{(t,x) = (t,\Phi_{\tau,t}(y))}. 
\end{equation}
Thus, differentiating both sides of \eqref{fdnjskgdksdg74736} with respect to $t$, utilizing \eqref{fdjuhsiuhfghdskgjdhgsfdhFischer} alongside the fundamental theorem of calculus, and taking the limit as $\tau \to t$, we arrive at \eqref{jdkslgndjsjkdsgn3}.
\end{proof}

\section{Preliminary estimates}
In this section we collect some preliminary estimates that will be repeatedly used in the rest of the paper.
%
We recall some classic estimates for the heat flow.
\begin{lem}\label{lemma calore}
Let $f : \T^3 \to \R$ with zero-average. For any $k \geq 0$ and $s>0$ one has
\begin{equation}\label{fdjskhggnjds2}
\|e^{s\Delta}f \|_{H^{k+1}} \lesssim s^{-1/2} \|f\|_{H^k},
\end{equation}
and
\begin{equation}\label{fdjskhggnjds1}
\|e^{s\Delta}f \|_{H^k} \lesssim e^{-s}\|f\|_{H^k}.
\end{equation}
 \end{lem}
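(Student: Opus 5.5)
The statement is diagonal in Fourier space, so I would prove both bounds by expanding $f$ in Fourier series and using Parseval. Write $f(x) = \sum_{k \in \Z^3} \widehat f(k) e^{ik\cdot x}$. Since $f$ has zero average, $\widehat f(0) = 0$, so only frequencies with $|k| \geq 1$ contribute. The heat semigroup acts by $\widehat{e^{s\Delta} f}(k) = e^{-s|k|^2}\widehat f(k)$, and hence
$$
\|e^{s\Delta} f\|_{H^{k'}}^2 = \sum_{k \neq 0} \langle k\rangle^{2k'} e^{-2s|k|^2} |\widehat f(k)|^2 .
$$
Here $k'$ stands for the Sobolev index, to avoid a clash with the summation variable. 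Both estimates then reduce to pointwise bounds on the multiplier, uniformly in $k \neq 0$.

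For \eqref{fdjskhggnjds1}, I use that $|k|^2 \geq 1$ for every $k \neq 0$, which gives $e^{-2s|k|^2} \leq e^{-2s}$. Summing over $k$ yields $\|e^{s\Delta} f\|_{H^{k'}} \leq e^{-s}\|f\|_{H^{k'}}$, with constant $1$. This is exactly the Poincaré/Parseval fact already recalled in \eqref{fjdklasjdks64}.

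For \eqref{fdjskhggnjds2}, the extra derivative costs one factor of $\langle k\rangle^2$ in the squared norm, so I need
$$
\langle k\rangle^2 e^{-2s|k|^2} \lesssim s^{-1} \qquad \text{for } k \neq 0 .
$$
Since $|k| \geq 1$, we have $\langle k\rangle^2 \leq 2|k|^2$, so it suffices to bound $|k|^2 e^{-2s|k|^2}$. Setting $r = s|k|^2$, this quantity equals $s^{-1}\, r e^{-2r}$. The function $r e^{-2r}$ is bounded by $\frac{1}{2e}$ on $[0,\infty)$, which gives the required bound. Multiplying by $|\widehat f(k)|^2 \langle k\rangle^{2k'}$, summing, and taking square roots gives $\|e^{s\Delta} f\|_{H^{k'+1}} \lesssim s^{-1/2}\|f\|_{H^{k'}}$.

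There is no real obstacle in this argument. The only point needing care is the zero-average assumption. It is used twice: to get the exponential decay $e^{-s}$ in the first bound, and to replace $\langle k\rangle$ by $|k|$ in the second. Without it, the zero mode would neither decay nor gain regularity. The implicit constants depend on nothing, so in particular they are uniform in the index $k' \geq 0$ and valid for real, not only integer, indices.
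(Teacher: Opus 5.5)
Your Fourier-multiplier argument is correct, with explicit constants $1$ and $(es)^{-1/2}$ that are uniform in the Sobolev index. The paper gives no proof of this lemma and recalls it as a classical fact; in the introduction it describes the decay bound \eqref{fjdklasjdks64} as nothing more than Parseval plus Poincar\'e, which is the argument you wrote out.
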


The following product estimates can be found, for instance, in \cite[Proposition 2]{BenyOh}:

\begin{lem}\label{lem:GubinelliBound}
Let $s >0$ and $f, g : \T^3 \to \R$. Then
\begin{equation}\label{GubinelliBound}
\| fg \|_{H^{-s}} \lesssim \| f \|_{H^{-s}} \|g\|_{W^{s, \infty}}.
\end{equation}
\end{lem}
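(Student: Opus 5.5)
The plan is to prove the estimate by duality, reducing to a bilinear bound paired against a smooth test function. For $s>0$ we have
$$\|fg\|_{H^{-s}} = \sup\Big\{ \Big|\int_{\T^3} fg\,h\Big| : \|h\|_{H^{s}}\le 1\Big\},$$
and
$$\int fg\,h = \langle \langle D\rangle^{-s} f, \langle D\rangle^{s}(gh)\rangle,$$
so that
$$\Big|\int fgh\Big| \le \|f\|_{H^{-s}}\,\|gh\|_{H^{s}}.$$
It therefore suffices to prove the product estimate
$$\|gh\|_{H^s}\lesssim \|g\|_{W^{s,\infty}}\|h\|_{H^s},$$
where $\|g\|_{W^{s,\infty}} = \|\langle D\rangle^s g\|_{L^\infty}$ as defined in \eqref{fjewi75}. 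This is a Kato--Ponce type inequality with the $L^\infty$ norm placed on the $W^{s,\infty}$ factor.

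To prove the product estimate, I would use the Littlewood--Paley paraproduct decomposition
$$gh = T_g h + T_h g + R(g,h),$$
where $T_g h=\sum_N P_{\le N/8}g\,P_N h$. The three pieces are handled as follows, using the characterization \eqref{definizione besov} of the $H^s$ norm via almost-orthogonality of the dyadic blocks.

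\begin{itemize}
\item For $T_g h$, the block $P_{\le N/8}g\,P_Nh$ has frequency $\sim N$. Bounding $\|P_{\le N/8}g\|_{L^\infty}\lesssim\|g\|_{L^\infty}\lesssim \|g\|_{W^{s,\infty}}$ gives $\|T_gh\|_{H^s}\lesssim \|g\|_{L^\infty}\|h\|_{H^s}$.
\item For $T_h g$, each block is $P_{\le N/8}h\,P_N g$. Using $\|P_N g\|_{L^\infty}\lesssim N^{-s}\|\langle D\rangle^s g\|_{L^\infty}$ from Bernstein, the quantity $N^{s}\|P_{\le N/8}h\,P_Ng\|_{L^2}$ is bounded by $\|h\|_{L^2}\|g\|_{W^{s,\infty}}$ uniformly in $N$.
\item For the resonant term $R(g,h)=\sum_{N\sim N'}P_Ng\,P_{N'}h$, the output frequency is $\lesssim N$. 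Since $s>0$, summing over $M\lesssim N$ gives $\|R\|_{H^s}\lesssim \sum_N N^s\|P_Ng\|_{L^\infty}\|P_{N'}h\|_{L^2}$.
\end{itemize}

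The main obstacle is the summability in $N$ of the $T_hg$ and $R$ terms. Uniform control of each dyadic block does not by itself give an $\ell^2$ sum, because $\|P_N g\|_{L^\infty} N^s$ is only bounded, not square-summable. For $T_hg$ I would recover square-summability from $h$ by bounding
$$\|P_{\le N/8}h\,P_Ng\|_{L^2}\le \|P_{\le N/8}h\|_{L^2}\,\|P_Ng\|_{L^\infty}$$
only when $h$ is at comparable frequency. Alternatively, I would redistribute the derivatives, writing $N^s \lesssim \sum_{M\le N} M^{s}(N/M)^{s}$ and pushing part of the weight onto $h$. For $R$ the $\ell^2$ structure comes from $P_{N'}h$: Cauchy--Schwarz gives
$$\sum_N N^s\|P_Ng\|_{L^\infty}\|P_{N'}h\|_{L^2}\lesssim \sup_N N^{s}\|P_Ng\|_{L^\infty}\cdot \sum_N\|P_{N}h\|_{L^2}.$$
This is not quite $\|h\|_{H^s}$, so I would keep a factor $N^{\delta}$ on $h$ and use $s>0$. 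If this bookkeeping fails to close, I would simply cite \cite[Proposition 2]{BenyOh}, which is the reference the paper already gives for this estimate.
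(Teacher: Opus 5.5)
Your duality reduction is correct and is the standard route: $|\int fgh|\le \|f\|_{H^{-s}}\|gh\|_{H^s}$, so everything rests on $\|gh\|_{H^s}\lesssim \|\langle D\rangle^s g\|_{L^\infty}\|h\|_{H^s}$, and that estimate is true. The paper gives no argument here; it only cites \cite[Proposition 2]{BenyOh}, so your fallback is literally what the paper does. The $T_gh$ term closes as you indicate. So does the resonant term, because $\sum_N\|P_Nh\|_{L^2}\lesssim_s\|h\|_{H^s}$ by Cauchy--Schwarz since $s>0$. You also use $\|g\|_{L^\infty}\lesssim\|\langle D\rangle^s g\|_{L^\infty}$; this holds because for $s>0$ the operator $\langle D\rangle^{-s}$ is convolution with an integrable kernel.

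There is a genuine gap in the high--low term $T_hg=\sum_N P_{\le N/8}h\,P_Ng$, and it is not bookkeeping. Every bound you propose sees $g$ only through $\sup_N N^s\|P_Ng\|_{L^\infty}$, i.e. its $B^s_{\infty,\infty}$ norm, and the product estimate is false with that norm. Take $h\equiv 1$, so that $T_hg$ is essentially the high-frequency part of $g$, and take $g_K=\sum_{k\le K}2^{-ks}\cos(2^kx_1)$. Then $\sup_N N^s\|P_Ng_K\|_{L^\infty}\lesssim 1$ uniformly in $K$, while $\|g_K\|_{H^s}^2\sim K$. Likewise, redistributing $N^s=M^s(N/M)^s$ onto the low-frequency factor just gives back $\|P_Mh\|_{L^2}\|g\|_{W^{s,\infty}}$ for every $N$. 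There is no decay in $N$, so the sum over $N$ still diverges.

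The missing idea is to use $\langle D\rangle^s g$ as a single function, via $L^\infty(\T^3)\subset L^q(\T^3)$ for finite $q$. You should also spend the slack that comes from having $\|h\|_{H^s}$, not merely $\|h\|_{L^2}$, on the right. Fix $q\in(2,\infty)$ with $3/q\le s$ and set $\frac{1}{q^*}=\frac12-\frac1q$, so that $H^s\hookrightarrow L^{q^*}$. Let $\mathcal{M}$ be the Hardy--Littlewood maximal function, so that $\sup_N|P_{\le N/8}h|\lesssim \mathcal{M}h$. Using almost orthogonality of the blocks, H\"older with $\frac{2}{q^*}+\frac{2}{q}=1$, and the Littlewood--Paley square function theorem in $L^q$, you get
\[
\|T_hg\|_{H^s}^2\lesssim \sum_N\int_{\T^3}|P_{\le N/8}h|^2\,|N^sP_Ng|^2\,\de x\le \|\mathcal{M}h\|_{L^{q^*}}^2\Big\|\Big(\sum_N|N^sP_Ng|^2\Big)^{1/2}\Big\|_{L^q}^2\lesssim \|h\|_{H^s}^2\,\|\langle D\rangle^s g\|_{L^\infty}^2 .
\]
Simpler still, you can bypass paraproducts entirely. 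The product estimate is exactly the fractional Leibniz rule, Lemma \ref{lem:fractional}, with $r=2$ and $(p_1,q_1)=(\infty,2)$. For the second pair take $(p_2,q_2)=(\infty,2)$, or $(p_2,q_2)=(q,q^*)$ if you want to avoid the $L^\infty$ endpoint on the differentiated factor. Combine this with $\|g\|_{L^\infty}\lesssim\|\langle D\rangle^s g\|_{L^\infty}$ and $\|h\|_{L^{q^*}}\lesssim\|h\|_{H^s}$.
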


%
The following inequality is known as {\em fractional Leibniz rule}, see \cite{Grafakos}.
\begin{lem}\label{lem:fractional}
Let $s>0$, $1<r<\infty$ and $1<p_1,q_1,p_2,q_2\leq \infty$ such that
$$
\frac1r=\frac{1}{p_1}+\frac{1}{q_1}=\frac{1}{p_2}+\frac{1}{q_2}.
$$
Then, there exists a constant $C>0$ (depending on $s,r,p_1,q_1,p_2,q_2$) such that for any $f,g\in C^{\infty}(\T^3)$
\begin{equation}
\|\langle D\rangle^s(fg)\|_{L^r}\leq C\left(\|f\|_{L^{p_1}}\|\langle D\rangle^s g\|_{L^{q_1}}+\|\langle D\rangle^sf\|_{L^{p_2}}\| g\|_{L^{q_2}}\right).
\end{equation}
\end{lem}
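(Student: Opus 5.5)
The statement to prove is the fractional Leibniz rule of Lemma~\ref{lem:fractional} on $\T^3$. I would reduce it to a paraproduct decomposition built on the Littlewood--Paley projectors $P_N$ introduced above. The first step is to write
\[
fg=\sum_{N}P_N f\,P_{\le N/8}g+\sum_N P_{\le N/8}f\,P_N g+\sum_{N\sim M}P_Nf\,P_Mg =: \Pi_1+\Pi_2+\Pi_3 ,
\]
where the low-frequency block $P_{\le 1}$ is treated as one more dyadic piece. The term $\Pi_1$ will be bounded by $\|\langle D\rangle^s f\|_{L^{p_2}}\|g\|_{L^{q_2}}$, the term $\Pi_2$ by $\|f\|_{L^{p_1}}\|\langle D\rangle^s g\|_{L^{q_1}}$, and the resonant term $\Pi_3$ by either of the two.

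For $\Pi_1$, each summand $P_Nf\,P_{\le N/8}g$ has Fourier support in an annulus of size $\sim N$. So $\langle D\rangle^s$ acts on it essentially as multiplication by $N^s$; precisely, it acts as $N^s$ times a uniformly bounded multiplier, as in the Bernstein estimates above. I would then use the Littlewood--Paley square function characterization of $L^r$, valid since $1<r<\infty$:
\[
\|\langle D\rangle^s\Pi_1\|_{L^r}\lesssim \Big\|\Big(\sum_N N^{2s}|P_Nf|^2|P_{\le N/8}g|^2\Big)^{1/2}\Big\|_{L^r}.
\]
Next I bound $|P_{\le N/8}g|\le \mathcal M g$ pointwise, where $\mathcal M$ is the maximal function. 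Hölder's inequality then gives
\[
\|\langle D\rangle^s\Pi_1\|_{L^r}\lesssim \|(\textstyle\sum_N N^{2s}|P_Nf|^2)^{1/2}\|_{L^{p_2}}\,\|\mathcal M g\|_{L^{q_2}}\lesssim \|\langle D\rangle^s f\|_{L^{p_2}}\|g\|_{L^{q_2}},
\]
using square-function bounds and the maximal theorem. When $q_2=\infty$, the bound $\|\mathcal Mg\|_{L^\infty}\le\|g\|_{L^\infty}$ holds trivially. When $p_2=\infty$, we have $q_2=r<\infty$; I would then instead estimate $\sup_N N^s|P_N f|\lesssim \|\langle D\rangle^sf\|_{L^\infty}$ and keep the square function on the $g$ side, $(\sum_N|P_{\le N/8}g|^2)^{1/2}$. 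This last object is not directly bounded, so for this endpoint I would follow the Coifman--Meyer bilinear multiplier argument. That is the technical point I expect to need care. The term $\Pi_2$ is symmetric.

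The main obstacle is $\Pi_3$. Here the output frequency of $P_Nf\,P_Mg$ with $N\sim M$ can be anywhere below $N$, so there is no annulus localization. I would write $\langle D\rangle^s\Pi_3=\sum_K P_K\langle D\rangle^s\sum_{N\gtrsim K}P_Nf\,\tilde P_Ng$ and use $s>0$: the factor $K^s N^{-s}$ is summable over $N\gtrsim K$. This reduces the estimate to one with $\langle D\rangle^s$ placed on $f$. After that, the same square-function, maximal-function and Hölder argument applies, in the vector-valued Fefferman--Stein form.

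Finally, to transfer the result from $\R^3$ to the torus, I would either use the periodic Littlewood--Paley theory directly, which is valid by transference of Fourier multipliers, or simply cite \cite{Grafakos}. That is what I would ultimately do, since the statement is classical and is quoted there.
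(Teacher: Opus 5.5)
The paper does not prove this lemma: it quotes the Kato--Ponce inequality from \cite{Grafakos}, so your final step of citing that reference is exactly the paper's route. Your paraproduct sketch is the standard argument behind the reference and is sound; one small remark is that in $\Pi_3$ the case $p_2=\infty$ is handled by putting the square function on $g$ (harmless there, since the two frequencies are comparable), while the genuinely delicate endpoint is the one in $\Pi_1$, $\Pi_2$, which you correctly defer to the Coifman--Meyer (BMO paraproduct) bound.
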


We recall the Higher order chain rule in Sobolev spaces and we deduce an associated bound that will be useful in 
the rest of the paper.

\begin{lem}[Faà di Bruno's formula]\label{LemmaFaaDiBruno}
Let $g:\T^d\to\T^m$ and $F:\T^m\to \R$ be smooth functions. For each multi-index $\alpha \in \N^d$, we have
\begin{equation}
\partial^\alpha(F\circ g)=\sum_{\mu,\nu}C_{\mu,\nu}\, (\partial^\mu F) \circ g\prod_{\overset{1\leq |\beta|\leq |\alpha|}{1\leq j\leq m}}(\partial^\beta g_j)^{\nu_{\beta, j}},
\end{equation}
where the constants $C_{\mu,\nu}$ are non-negative integers, and the sum is taken over those $\mu$ and $\nu$ such that $1\leq |\mu|\leq |\alpha|$, $\nu_{\beta, j}\in \N$,
$$
\sum_{1\leq |\beta|\leq |\alpha|}\nu_{\beta, j}=\mu_j, \mbox{ for }1\leq j\leq m,\quad \mbox{and}\quad \sum_{\overset{1\leq |\beta|\leq |\alpha|}{1\leq j\leq m}}\beta\nu_{\beta, j}=\alpha .
$$
\end{lem}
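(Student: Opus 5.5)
The formula is proved by induction on $|\alpha|$, keeping track of the two constraints
$$
\sum_{\beta}\nu_{\beta,j}=\mu_j \quad (1\le j\le m), \qquad \sum_{\beta,j}\beta\,\nu_{\beta,j}=\alpha ,
$$
and of the fact that the coefficients $C_{\mu,\nu}$ stay non-negative integers. Since $g$ takes values in $\T^m$, we work locally with a smooth lift of $g$ to $\R^m$ and view $F$ as a $2\pi$-periodic function on $\R^m$. Derivatives $\partial^\beta g_j$ with $|\beta|\ge1$ are then well defined and independent of the lift, so the computation is purely local. For $\alpha=0$ nothing is claimed, and the statement concerns $|\alpha|\ge1$.

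\emph{Base case.} For $|\alpha|=1$, say $\alpha=e_i$, the chain rule gives
$$
\partial_i(F\circ g)=\sum_{j=1}^m (\partial_j F)\circ g\;\partial_i g_j .
$$
This is of the claimed form with $\mu=e_j$, $\nu_{e_i,j}=1$, all other $\nu$ equal to zero, and $C_{\mu,\nu}=1$. Both constraints hold trivially.

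\emph{Inductive step.} Assume the formula holds for $\alpha$, and apply $\partial_i$ to a generic term $(\partial^\mu F)\circ g\,\prod(\partial^\beta g_j)^{\nu_{\beta,j}}$. By the Leibniz rule there are two kinds of contributions.

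First, the derivative may fall on $(\partial^\mu F)\circ g$. By the chain rule this produces $\sum_k(\partial^{\mu+e_k}F)\circ g\,\partial_i g_k$ times the old product. This corresponds to the new data $\mu'=\mu+e_k$ and $\nu'_{e_i,k}=\nu_{e_i,k}+1$. Then $\sum_\beta\nu'_{\beta,k}=\mu'_k$ and $\sum\beta\nu'=\alpha+e_i$, while $1\le|\mu'|\le|\alpha|+1$.

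Second, the derivative may fall on one factor $(\partial^\beta g_j)^{\nu_{\beta,j}}$. This gives $\nu_{\beta,j}(\partial^\beta g_j)^{\nu_{\beta,j}-1}\partial^{\beta+e_i}g_j$. It corresponds to $\mu'=\mu$, $\nu'_{\beta,j}=\nu_{\beta,j}-1$ and $\nu'_{\beta+e_i,j}=\nu_{\beta+e_i,j}+1$. Then $\sum_\beta\nu'_{\beta,j}$ is unchanged, $\sum\beta\nu'$ increases by exactly $e_i$, and $|\beta+e_i|\le|\alpha|+1$.

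Collecting like terms yields the formula for $\alpha+e_i$. The new coefficients are sums of products of the old $C_{\mu,\nu}$ with the integers $1$ or $\nu_{\beta,j}$, hence again non-negative integers. Since $\mu$ never decreases from its nonzero initial value, the condition $|\mu|\ge1$ is preserved.

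No real obstacle is expected. The only care needed is the combinatorial bookkeeping in the inductive step: one must check that the two types of contributions preserve exactly the stated index constraints and the range $1\le|\beta|\le|\alpha|$. The explicit values of $C_{\mu,\nu}$ are never needed, only their integrality and non-negativity.
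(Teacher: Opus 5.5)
Your induction on $|\alpha|$ is correct and complete. The paper gives no proof of this lemma; it simply recalls it as the classical higher-order chain rule, so your argument supplies the standard proof it omits.

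You are also right that the statement can only hold for $|\alpha|\ge 1$: for $\alpha=0$ the admissible range $1\le|\mu|\le 0$ is empty, and the formula would wrongly give $F\circ g=0$.
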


\begin{lem}\label{LemProd}
Let $f_1, \ldots, f_n : \T^3 \to \R$ be smooth functions and $\alpha_1, \ldots, \alpha_n\in\N^3$ multi-indices with $|\alpha_i| \geq 1$ for all $i= 1, \ldots, n$. Let $k = \displaystyle\sum_{i=1}^n |\alpha_i|$.
Then, for all $\e >0$ we have that
\begin{equation}
\left\| \prod_{i=1}^n \partial^{\alpha_i} f_i \right\|_{L^2} \lesssim 
\sum_{j=1}^{n} \| \nabla f_j \|_{H^{k - n + \e}} \prod_{i \neq j} \| \nabla f_i \|_{L^{\infty}}.
\end{equation}
\end{lem}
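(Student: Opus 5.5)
We prove Lemma \ref{LemProd} by Hölder's inequality in the product followed by Gagliardo--Nirenberg interpolation for each factor, with the $\e$-loss absorbing the endpoint cases.

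\textbf{Reduction.} Write $g_i := \nabla f_i$ and $\beta_i := \alpha_i - e_{k_i}$ for a suitable coordinate $k_i$, so that $\partial^{\alpha_i} f_i$ is a component of $\partial^{\beta_i} g_i$ with $|\beta_i| = |\alpha_i| - 1 =: a_i \geq 0$. Then $\sum_i a_i = k - n =: m$, and it suffices to bound $\| \prod_i \partial^{\beta_i} g_i \|_{L^2}$ by $\sum_j \|g_j\|_{H^{m+\e}} \prod_{i\neq j}\|g_i\|_{L^\infty}$. If $m = 0$ the claim is immediate: we put one factor in $L^2$ and the rest in $L^\infty$.

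\textbf{Case $m \geq 1$.} We apply Hölder with exponents $p_i = 2m/a_i$ (with $p_i = \infty$ when $a_i = 0$), so that $\sum_i 1/p_i = 1/2$. For each factor we then use the Gagliardo--Nirenberg inequality on $\T^3$ (applied to zero-average parts, or including lower-order terms, which are harmless since $\|g\|_{L^\infty}$ controls them):
\[
\|\partial^{\beta_i} g_i\|_{L^{2m/a_i}} \lesssim \|g_i\|_{L^\infty}^{1 - a_i/m} \|g_i\|_{H^{m}_{\ast}}^{a_i/m}.
\]
Here $H^m_\ast$ denotes $\dot H^m$ plus the $L^\infty$ term. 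This is the classical interpolation underlying Moser-type estimates. The exponent bookkeeping is consistent: at the $L^\infty$ endpoint, $1/p = 0$; at the $H^m$ endpoint, $1/p = 1/2 - m/3 + \ldots$ with $a_i$ derivatives, and the scaling identity $1/p_i = a_i/(2m)$ is exactly the one that makes the interpolation between $L^\infty$ and $\dot H^m$ balanced. The one delicate point is that the $L^\infty$ endpoint of Gagliardo--Nirenberg can fail. This is precisely why we allow $H^{m+\e}$: interpolating between $L^\infty$ and $\dot H^{m+\e}$ (or replacing $L^\infty$ by the Besov space $B^0_{\infty,\infty}$, which it dominates, as in the Littlewood--Paley proof via Bernstein's inequalities \eqref{eq:bern1}--\eqref{eq:bern5}) gives a valid inequality with a constant depending on $\e$. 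Concretely, we decompose each $g_i = \sum_N P_N g_i$ and bound $\|P_N \partial^{\beta_i} g_i\|_{L^{p_i}} \lesssim N^{a_i} \min(\|g_i\|_{L^\infty},\, N^{3/2 - 3/p_i - m - \e}\|g_i\|_{H^{m+\e}})$ using Bernstein. Summing over $N$ with the gain $N^{-\e}$ yields the interpolation bound up to $\e$.

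\textbf{Conclusion.} Multiplying the factor bounds gives
\[
\prod_i \|g_i\|_{L^\infty}^{1-a_i/m} \|g_i\|_{H^{m+\e}}^{a_i/m}.
\]
Since $\sum_i a_i/m = 1$, the weighted AM--GM (Young) inequality bounds this by
\[
\sum_j \frac{a_j}{m}\, \|g_j\|_{H^{m+\e}} \prod_{i\neq j}\|g_i\|_{L^\infty},
\]
which is the claim.

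The main obstacle is the endpoint interpolation with $L^\infty$. We handle it with the Littlewood--Paley argument above: the $\e$ margin makes the dyadic sums converge, which is exactly why the lemma is stated with $H^{k-n+\e}$.
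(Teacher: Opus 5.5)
Your architecture is sound and differs from the paper's: H\"older with $p_i=2m/a_i$, a per-factor interpolation between $L^\infty$ and $\dot H^{m}$, then weighted AM--GM using $\sum_i a_i/m=1$. However, the step you call the main obstacle is justified incorrectly.

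\textbf{The worry is unfounded.} The Moser--Gagliardo--Nirenberg inequality
\[
\|\partial^{\beta}g\|_{L^{2m/a}}\lesssim \|g\|_{L^\infty}^{1-a/m}\|g\|_{\dot H^{m}}^{a/m},\qquad |\beta|=a\leq m ,
\]
holds for smooth periodic $g$ on $\T^3$ with no loss. The exceptional case in Nirenberg's theorem excludes only $\theta=1$, i.e.\ $a=m$, where the bound is trivial. On the torus it follows from the integration-by-parts estimate $\|D^j g\|_{L^{2m/j}}^2\lesssim \|D^{j-1}g\|_{L^{2m/(j-1)}}\|D^{j+1}g\|_{L^{2m/(j+1)}}$, which has no boundary terms, together with log-convexity. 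So you can use the homogeneous form directly and drop the $L^\infty$ summand in $H^m_\ast$.

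\textbf{The Littlewood--Paley substitute fails as written.} Set $X=\|g\|_{L^\infty}$ and $Y=\|g\|_{H^{m+\e}}$. Summing $N^{a}\min\bigl(X, N^{3/2-3/p-m-\e}Y\bigr)$ with $p=2m/a$ gives $X^{1-a/\gamma}Y^{a/\gamma}$, where $\gamma=m+\e-\tfrac32(1-a/m)$. This $\gamma$ is $<m$ whenever $a<m$ and $\e$ is small. For $m=2$, $a=1$ you get essentially $\|\partial g\|_{L^4}\lesssim X^{1/5}Y^{4/5}$ instead of $X^{1/2}Y^{1/2}$. The $Y$-exponents then sum to more than $1$, and the AM--GM step fails when $Y_i/X_i\to\infty$, which genuinely happens for high-frequency data. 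The culprit is the Bernstein step $L^2\to L^{p}$. If you instead interpolate on each block by H\"older,
\[
\|P_N\partial^{\beta}g\|_{L^{p}}\leq \|P_N\partial^{\beta}g\|_{L^\infty}^{1-a/m}\|P_N\partial^{\beta}g\|_{L^2}^{a/m}\lesssim N^{-\e a/m}X^{1-a/m}Y^{a/m},
\]
the dyadic sum converges and gives exactly the exponents you need.

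\textbf{Comparison with the paper, after this repair.} The paper decomposes the whole product dyadically and works in the region where $f_j$ carries the highest frequency. It moves all $k-n$ surplus derivatives onto that factor using $N_i\leq N_j$, and bounds each lower-frequency factor crudely by $\|\nabla f_i\|_{L^\infty}$. The resulting $(\log N_j)^{n-1}$ count is absorbed into $N_j^{\e/2}$ before Cauchy--Schwarz in $N_j$. That argument uses only Bernstein's inequalities, but the $\e$-loss is built into it. Your route factorizes first and interpolates each factor separately. It gives the sharper, $\e$-free bound with $\|\nabla f_j\|_{\dot H^{k-n}}$, which is the ``finer argument'' mentioned in the remark after the lemma. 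The price is invoking the Moser--Gagliardo--Nirenberg inequality.
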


\begin{proof}
For each function $f_i$, the Littlewood-Paley decomposition involves summing over dyadic frequencies of the form $2^\ell$ with $\ell \in \N$. Rigorously, one could denote the frequency $2^\ell$ associated with $f_i$ as $N_{i,\ell} = 2^\ell$. However, to lighten the notation, we will simply write $N_i \in 2^{\N}$ to denote the generic dyadic frequency associated with $f_i$, and write $\sum_{N_i}$ to mean the sum over all such dyadic scales. Expanding the product point-wise yields a multiple sum over all possible dyadic frequencies:
\begin{equation*}
\prod_{i=1}^n \partial^{\alpha_i} f_i = \sum_{N_1, \ldots, N_n \in 2^{\N}} \prod_{i=1}^n P_{N_i} \partial^{\alpha_i} f_i.
\end{equation*}

To estimate the $L^2$ norm of this sum, we divide the frequency space into $n$ symmetric regions based on which function has the highest frequency. In the $j$-th region, the frequency $N_j$ associated with $f_j$ is dominant, meaning $N_i \leq N_j$ for all $i \neq j$. 

Recalling that $|\alpha_i| \geq 1$ for all $i = 1, \ldots, n$, we extract one derivative from each term to form gradients, leaving the remaining $k-n$ derivatives on the dominant frequency term. Applying the triangle inequality, we obtain:
\begin{align*}
\left\| \prod_{i=1}^n \partial^{\alpha_i}  f_i \right\|_{L^2} 
& \leq \sum_{j=1}^n \sum_{N_j} \sum_{\substack{N_i \leq N_j \\ i \neq j}}  
\left\| P_{N_j} \partial^{\alpha_j} f_j \prod_{i \neq j} P_{N_i} \partial^{\alpha_i} f_{i} \right\|_{L^2}
\\ 
& \lesssim \sum_{j=1}^n \sum_{N_j} \sum_{\substack{N_i \leq N_j \\ i \neq j}}  
N_j^{k-n}  \| P_{N_j} \nabla  f_j \|_{L^2} \prod_{i \neq j}  \| \nabla P_{N_i} f_{i} \|_{L^\infty}
\\
& \lesssim \sum_{j=1}^n \prod_{i \neq j}  \| \nabla  f_{i} \|_{L^\infty} \sum_{N_j} \sum_{\substack{N_i \leq N_j \\ i \neq j}}  
N_j^{k-n}  \| P_{N_j} \nabla  f_j \|_{L^2} 
\\ 
& \lesssim \sum_{j=1}^n \prod_{i \neq j}  \| \nabla f_{i} \|_{L^\infty} \sum_{N_j} N_j^{k-n + \e/2}  \| P_{N_j} \nabla  f_j \|_{L^2} 
\\ 
& \lesssim \sum_{j=1}^n \prod_{i \neq j}  \| \nabla f_{i} \|_{L^\infty} \left( \sum_{N_j} N_j^{2(k-n + \e)}  \| P_{N_j} \nabla  f_j \|^2_{L^2} \right)^{1/2} \left( \sum_{N_j} N_j^{-\e} \right)^{1/2}
\\ 
& \lesssim \sum_{j=1}^n \prod_{i \neq j}  \| \nabla f_{i} \|_{L^\infty} \|\nabla f_{j}\|_{H^{k-n + \e}}.
\end{align*}

In the second inequality, we used the distribution of the $k$ derivatives and Hölder's inequality.
In the third inequality, we bounded each $\| \nabla P_{N_i} f_i \|_{L^\infty}$ by the full norm $\| \nabla f_i \|_{L^\infty}$, using the uniform boundedness of the Littlewood-Paley projectors on $L^\infty$.
In the fourth inequality, we summed over the $n-1$ non-dominant dyadic frequencies $N_i \leq N_j$. Since the frequencies are dyadic, this summation produces at most an $O((\log N_j)^{n-1})$ factor, which is absorbed by the $N_j^{\e/2}$ term for any $\e > 0$.
In the fifth inequality, we applied the Cauchy-Schwarz inequality with respect to the dominant frequency scale $N_j$. Since $N_j \in 2^{\N}$, the geometric series $\sum N_j^{-\e}$ converges, yielding a uniform constant. The final implicit constant depends on $\e$, although we do not stress this to simplify the notations.
\end{proof}
\begin{rem}
The loss of $\e$-derivatives in the statement is removable by a finer argument, but this is not relevant for our purposes.
\end{rem}

\begin{lem}\label{CorollaryFaaDiBruno}
Let $k \geq 4$, $k \in \N$. Let $F:\T^3\to \R$ be a smooth function and $\Phi : \T^3\to\T^3$ an invertible smooth map. 
Suppose that 
\begin{equation}\label{JacobianBounded}
\| \det \nabla \Phi \|_{L^{\infty}} \sim 1
\end{equation} 
where $\nabla \Phi$ is the Jacobian matrix of the map $\Phi$. Then, 
for each multi-index $\alpha \in \N^3$, we have
\begin{equation}\label{fdshaukghudsjkg}
\| F\circ \Phi \|_{H^{k}} \lesssim \| F \|_{H^k}  (1+ \| \nabla \Phi \|_{H^{k-1}}^k) 
\end{equation}
\end{lem}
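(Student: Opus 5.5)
Since $\|\cdot\|_{H^k}$ is equivalent to $\|F\circ\Phi\|_{L^2}+\sum_{|\alpha|=k}\|\partial^\alpha(F\circ\Phi)\|_{L^2}$ for integer $k$, we treat the $L^2$ part and the top-order derivatives separately, using Lemma \ref{LemmaFaaDiBruno} and Lemma \ref{LemProd}. For the $L^2$ part, the change of variables $x'=\Phi(x)$ gives
\[
\|F\circ\Phi\|_{L^2}^2=\int_{\T^3}|F|^2\,|\det\nabla\Phi^{-1}|\,\de x' .
\]
By \eqref{JacobianBounded}, interpreted as the Jacobian determinant being bounded above and below, this is $\lesssim\|F\|_{L^2}^2$. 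The same change of variables gives $\|G\circ\Phi\|_{L^2}\lesssim\|G\|_{L^2}$ for any $G$, which we use repeatedly.

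For $|\alpha|=k$, Lemma \ref{LemmaFaaDiBruno} writes $\partial^\alpha(F\circ\Phi)$ as a finite sum of terms
\[
(\partial^\mu F)\circ\Phi\,\prod_{i=1}^{|\mu|}\partial^{\beta_i}\Phi_{j_i},\qquad |\beta_i|\ge1,\quad \sum_i|\beta_i|=k,\quad 1\le|\mu|\le k .
\]
We split according to the size of $|\mu|$.

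\emph{Case $|\mu|=k$.} Then every $|\beta_i|=1$, and we bound the term by $\|(\partial^\mu F)\circ\Phi\|_{L^2}\|\nabla\Phi\|_{L^\infty}^k\lesssim\|F\|_{H^k}\|\nabla\Phi\|_{L^\infty}^k$. Since $k-1\ge3>3/2$, Sobolev embedding gives $\|\nabla\Phi\|_{L^\infty}\lesssim\|\nabla\Phi\|_{H^{k-1}}$, so this term is $\lesssim\|F\|_{H^k}\|\nabla\Phi\|_{H^{k-1}}^k$.

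\emph{Case $|\mu|<k$.} Put $(\partial^\mu F)\circ\Phi$ in $L^\infty$ and the product in $L^2$. Write $\partial^{\beta_i}\Phi_j=\partial^{\beta_i-e}(\partial_e\Phi_j)$, so the factors are derivatives of order $|\beta_i|-1$ of components of $\nabla\Phi$. Factors with $|\beta_i|=1$ go directly in $L^\infty$. For the remaining $n'$ factors, of total order $k'=\sum(|\beta_i|-1)\le k-|\mu|$, apply Lemma \ref{LemProd} with $f_i=\nabla\Phi$. This requires regularity $\nabla(\nabla\Phi)\in H^{k'-n'+\e}$, i.e.\ $\nabla\Phi\in H^{k'-n'+1+\e}\subset H^{k-1}$, which holds as long as $k'-n'\le k-3$. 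This is true when $|\mu|\ge2$ or when only one factor is non-trivial. For $L^\infty$ control of $\partial^\mu F$, note $\|\partial^\mu F\|_{L^\infty}\lesssim\|F\|_{H^{|\mu|+3/2+\e}}\le\|F\|_{H^k}$ whenever $|\mu|\le k-2$. The Jacobian factors are again handled by Sobolev, $\|\nabla\Phi\|_{L^\infty}\lesssim\|\nabla\Phi\|_{H^{k-1}}$. The total power of $\|\nabla\Phi\|_{H^{k-1}}$ is $|\mu|\le k$, and the elementary inequality $a^j\le 1+a^k$ for $j\le k$ yields the factor $1+\|\nabla\Phi\|_{H^{k-1}}^k$.

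The main obstacle is the borderline range $|\mu|\in\{1,k-1\}$. When $|\mu|=1$ there is a single factor $\partial^\beta\Phi$ with $|\beta|=k$, i.e.\ $k-1$ derivatives of $\nabla\Phi$. Here we instead put $(\nabla F)\circ\Phi$ in $L^\infty$ and $\partial^\beta\Phi$ in $L^2$, which costs only $\|\nabla\Phi\|_{H^{k-1}}$ and $\|\nabla F\|_{L^\infty}\lesssim\|F\|_{H^k}$, valid since $k\ge4$. When $|\mu|=k-1$, exactly one $|\beta_i|=2$ and the rest equal $1$. We bound $(\partial^\mu F)\circ\Phi$ in $L^2$ by the change of variables, giving $\lesssim\|F\|_{H^k}$, and place $\nabla^2\Phi$ in $L^\infty$. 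This is allowed because $\|\nabla^2\Phi\|_{L^\infty}\lesssim\|\nabla\Phi\|_{H^{k-1}}$ when $k-2>3/2$, which holds since $k\ge4$. The general principle is to put the highest-derivative factor in $L^2$, pulling it back to the flat torus via the Jacobian bound when it is composed with $\Phi$, and to put all lower factors in $L^\infty$. The hypothesis $k\ge4$ guarantees that every such lower factor is controlled in $L^\infty$ by Sobolev embedding, so the borderline cases close.
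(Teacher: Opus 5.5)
Your proof is correct, and its skeleton is the paper's. The $L^2$ part is done by change of variables, and the configurations $|\mu|=k$, $|\mu|=k-1$ and $|\mu|=1$ are treated exactly as in the paper's Cases 2, 3 and 4: the highest-order factor goes in $L^2$, pulled back through the Jacobian bound when it is composed with $\Phi$, and every other factor goes in $L^\infty$.

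The genuine difference is the intermediate range $2\le|\mu|\le k-2$, where you invoke the Littlewood--Paley product estimate of Lemma \ref{LemProd}. The paper avoids that lemma. It splits according to the largest active $|\beta|$ rather than $|\mu|$ alone.
\begin{itemize}
\item When every active $\beta$ has $|\beta|\le k-2$ (Case 1), it puts \emph{all} factors in $L^\infty$, using $H^k\hookrightarrow W^{k-2,\infty}$ for $\partial^\mu F$ and $H^{k-1}\hookrightarrow W^{k-3,\infty}$ for the $\partial^\beta\Phi$.
\item The only other configuration, $|\beta|=k-1$ (Case 5), forces $|\mu|=2$ with one further first-order factor, and is closed by putting $\partial^\beta\Phi$ in $L^2$.
\end{itemize}

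In fact Lemma \ref{LemProd} is never needed in your setting either.
\begin{itemize}
\item If there are $n'\ge2$ non-trivial factors, then $\sum_i(|\beta_i|-1)=k-|\mu|$ together with $|\mu|\ge n'\ge 2$ forces every $|\beta_i|\le k-2$, so the crude $L^\infty$ bound applies.
\item If there is a single non-trivial factor, you can put it directly in $L^2$ at cost $\|\nabla\Phi\|_{H^{|\beta|-1}}\le\|\nabla\Phi\|_{H^{k-1}}$, with no $\varepsilon$-loss.
\end{itemize}

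Relatedly, your claim that $k'-n'\le k-3$ holds ``when only one factor is non-trivial'' is false for $|\mu|=1$. There Lemma \ref{LemProd} would require $\nabla\Phi\in H^{k-1+\varepsilon}$. This is harmless, because you treat $|\mu|=1$ separately by hand.

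The paper's route is therefore more elementary, exploiting the slack that $k\ge4$ provides. Yours is closer to what one would need at lower regularity, but your borderline cases still rely on $k\ge4$ through Sobolev embeddings, so that extra generality is not actually exploited here.
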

\begin{proof}
First of all, by \eqref{JacobianBounded} and the change of variables $y = \Phi(x)$
$$
\| F\circ \Phi \|_{L^2} \lesssim \|F\|_{L^2}.
$$
Hence, in order to prove \eqref{fdshaukghudsjkg}, it suffices to prove
\begin{equation}\label{fjdhushjgfff1222}
\| \partial^{\alpha} (F\circ \Phi ) \|_{L^{2}} \lesssim \| F \|_{H^k}  (1+ \| \nabla \Phi \|_{H^{k-1}}^k) 
\end{equation}
for all multi-index $\alpha$ with $|\alpha| = k$. \\
\\
By the Faà di Bruno formula (Lemma \ref{LemmaFaaDiBruno}) we have 
\begin{equation}\label{fdhsudjkghnsjadkjhgksdejgn}
\partial^\alpha(F\circ \Phi) = \sum_{\mu,\nu}C_{\mu,\nu}\, ( \partial^\mu F) \circ \Phi \prod_{\overset{1\leq |\beta|\leq k}{1\leq j\leq 3}}(\partial^\beta \Phi_j)^{\nu_{\beta, j}},
\end{equation}
where the constants $C_{\mu,\nu}$ are non-negative integers, and the sum runs over $\mu \in \N^3$ with $1\le|\mu|\le k$ and $\nu_{\beta, j}\in \N^*$ such that
\[
\sum_{1\le|\beta|\le k}\nu_{\beta,j} = \mu_j \ \ (1\le j\le 3), 
\qquad
\sum_{\substack{1\le|\beta|\le k\\ 1\le j\le 3}} \beta\,\nu_{\beta,j} = \alpha.
\]
Summing the components of these two identities gives
\begin{equation}\label{recallthismfdkslejdgfkn}
k := |\alpha| = \sum_{i = 1}^{3} \alpha_{i} =  \sum_{i = 1}^{3} \sum_{\overset{1\leq |\beta|\leq k}{1\leq j\leq 3}}\beta_{i}\nu_{\beta, j} = \sum_{\overset{1\leq |\beta|\leq k}{1\leq j\leq 3}} \sum_{i = 1}^{3} \beta_{i}\nu_{\beta, j} = \sum_{\overset{1\leq |\beta|\leq k}{1\leq j\leq 3}} |\beta| \nu_{\beta, j},
\end{equation}
and
\begin{equation}\label{fdjnsakjfhgn}
|\mu|= \sum_{j=1}^{3} \mu_j = \sum_{\overset{1\leq |\beta|\leq k}{1\leq j\leq 3}} \nu_{\beta, j},
\end{equation}
which also implies that
\begin{equation}\label{MainCounting}
k - |\mu| =\sum_{\overset{1\leq |\beta|\leq k}{1\leq j\leq 3}}  (|\beta| - 1)\nu_{\beta, j}.\tag{$\star$}
\end{equation}

We say that a multi-index $\beta$ in the sum is {\it active} (for a given term of \eqref{fdhsudjkghnsjadkjhgksdejgn}) if there exists a $j \in \{1,2,3\}$ such that $\nu_{\beta, j} \geq 1$; {\it non-active} multi-indices do not contribute to the corresponding product. In order to prove \eqref{fjdhushjgfff1222}, we split the sum \eqref{fdhsudjkghnsjadkjhgksdejgn} into five families of terms, not mutually exclusive, whose union covers every term.
We will denote by $\widetilde{\sum}$ the sum restricted to the case under consideration, and we will repeatedly use the (not-sharp) Sobolev embedding $H^3(\T^3)\hookrightarrow W^{1,\infty}(\T^3)$ without further mentioning it.\\

\smallskip
\noindent\textbf{Case 1: $1\le|\mu|\le k-2$ and $|\beta|\le k-2$ for every active $\beta$.}
By \eqref{fdjnsakjfhgn} we can bound every factor in $L^\infty$, using that $|\T^3|<\infty$, by Sobolev embeddings we have
\begin{align}
\left\|  \widetilde{\sum}_{\mu,\nu}C_{\mu,\nu}\, ( \partial^\mu F) \circ \Phi \prod_{\overset{1\leq |\beta|\leq k-2}{1\leq j\leq 3}}(\partial^\beta \Phi_j)^{\nu_{\beta, j}} \right\|_{L^{2}}
&\lesssim \sup_{1 \leq |\mu| \leq k-2} \| ( \partial^\mu F) \circ \Phi \|_{L^{\infty}} (1+\| \nabla \Phi \|^{|\mu|}_{W^{k -3, \infty}})\nonumber\\
&\lesssim \sup_{1 \leq |\mu| \leq k-2} \| \partial^\mu F\|_{L^{\infty}} (1+ \| \nabla \Phi \|^{k-2}_{H^{k-1}} ) \nonumber\\
&\lesssim \| F\|_{W^{k-2, \infty}} (1+ \| \nabla\Phi \|^{k-2}_{H^{k-1}}) \nonumber\\
&\lesssim \| F \|_{H^{k}} (1+  \| \nabla\Phi \|^{k-2}_{H^{k-1}}).\label{Crude1}
\end{align} 
Note that in the last estimate we use $k\geq 4$ for the Sobolev embedding $H^k(\T^3)\hookrightarrow W^{k-2, \infty}(\T^3)$.

\smallskip
\noindent\textbf{Case 2: $|\mu|=k$, with active $\beta$.}
By \eqref{MainCounting}, $|\beta|=1$ for every {\it active} $\beta$, so \eqref{recallthismfdkslejdgfkn} gives $k$ factors, each an entry of $\nabla\Phi$. Thus, we have that 
\begin{align}
\left\|  \widetilde{\sum}_{\mu,\nu}C_{\mu,\nu}\, ( \partial^\mu F) \circ \Phi \prod_{\overset{1\leq |\beta|\leq |\alpha|}{1\leq j\leq 3}}(\partial^\beta \Phi_j)^{\nu_{\beta, j}} \right\|_{L^{2}} 
&\lesssim \sup_{|\mu| =k} \| ( \partial^\mu F) \circ \Phi \|_{L^{2}} \| \nabla \Phi \|^{|\mu|}_{L^{\infty}} \nonumber\\
&\lesssim \sup_{|\mu| =k} \|  \partial^\mu F  \|_{L^{2}} \|\nabla \Phi \|^{k}_{H^{3}} \nonumber\\
&\lesssim \| F  \|_{H^k} \|\nabla \Phi \|^{k}_{H^{3}}. \label{Crude2}
\end{align} 

\smallskip
\noindent\textbf{Case 3: $|\mu|=k-1$, with active $\beta$.}
By \eqref{MainCounting}, exactly one {\it active} pair $(\tilde\beta,\tilde j)$ has $|\tilde\beta|=2$, with $\nu_{\tilde\beta,\tilde j}=1$ and $\nu_{\tilde\beta,j}=0$ for $j\ne\tilde j$; every other {\it active} $\beta$ satisfies $|\beta|=1$. Thus, we obtain
\begin{align}
\left\|  \widetilde{\sum}_{\mu,\nu}C_{\mu,\nu}\, ( \partial^\mu F) \circ \Phi \prod_{\overset{1\leq |\beta|\leq |\alpha|}{1\leq j\leq 3}}(\partial^\beta \Phi_j)^{\nu_{\beta, j}} \right\|_{L^{2}} 
& \lesssim \sup_{|\mu| =k-1} \| ( \partial^\mu F) \circ \Phi \|_{L^{2}} \| \nabla \Phi \|_{W^{1, \infty}} \| \nabla \Phi \|^{|\mu|-1}_{L^{\infty}}  \nonumber \\ 
& \lesssim\sup_{ |\mu| = k-1} \|  \partial^\mu F \|_{L^{2}} \| \nabla \Phi \|^{k-1}_{W^{1, \infty}} \nonumber\\
&\lesssim \|  F \|_{H^k} \| \nabla \Phi \|^{k-1}_{H^3}.\label{Crude3}  
\end{align}

\smallskip
\noindent\textbf{Case 4: there is an active $\tilde\beta$ with $|\tilde\beta|=k$.}
By \eqref{MainCounting}, $k-|\mu|\ge k-1$, so we deduce that $|\mu|=1$; by \eqref{fdjnsakjfhgn}, $\nu_{\tilde\beta,\tilde j}=1$ for a single $\tilde j$, and no other $\beta$ is {\it active}. Using this information we obtain 
\begin{align}
\left\|  \widetilde{\sum}_{\mu,\nu}C_{\mu,\nu}\, ( \partial^\mu F) \circ \Phi \prod_{\overset{1\leq |\beta|\leq |\alpha|}{1\leq j\leq 3}}(\partial^\beta \Phi_j)^{\nu_{\beta, j}} \right\|_{L^{2}} 
&\lesssim \sup_{|\mu| =1} \| ( \partial^\mu F) \circ \Phi \|_{L^{\infty}} \| \nabla \Phi \|_{H^{k-1}} \nonumber \\
&\lesssim \sup_{ |\mu| = 1} \|  \partial^\mu F \|_{L^{\infty}} \| \nabla \Phi \|_{H^{k-1}} \nonumber\\
&\lesssim \|F \|_{H^3} \| \nabla \Phi \|_{H^{k-1}}.  \label{Crude4}
\end{align} 

\smallskip
\noindent\textbf{Case 5: there is an active $\tilde\beta$ with $|\tilde\beta|=k-1$.}
Let $M=\sum_j\nu_{\tilde\beta,j}\ge1$ be its total multiplicity. From \eqref{MainCounting} we deduce that 
$$
M(k-2)\le k-|\mu|\le k-1.
$$
Now, note that $M\ge2$ would force $k\le3$, but this is impossible since we are assuming that $k\ge4$. So $M=1$. By \eqref{recallthismfdkslejdgfkn}, the order budget left after $\tilde\beta$ is $k-(k-1)=1$, which must be filled by \emph{exactly one} further active pair $(\beta^*,j^*)$ with $|\beta^*|=1$. In particular, from \eqref{fdjnsakjfhgn} we deduce $|\mu|=2$. Hence
\begin{align}
\left\|  \widetilde{\sum}_{\mu,\nu}C_{\mu,\nu}\, ( \partial^\mu F) \circ \Phi \prod_{\overset{1\leq |\beta|\leq |\alpha|}{1\leq j\leq 3}}(\partial^\beta \Phi_j)^{\nu_{\beta, j}} \right\|_{L^{2}} 
&\lesssim \sup_{|\mu| = 2} \| ( \partial^\mu F) \circ \Phi \|_{L^{\infty}} \| \nabla \Phi \|_{H^{k-2}} \nonumber\\ 
&\lesssim \sup_{ |\mu| = 2} \|  \partial^\mu F \|_{L^{\infty}} \| \nabla \Phi \|_{H^{k-2}} \nonumber\\
&\lesssim \|  F \|_{H^4} \| \nabla \Phi \|_{H^{k-2}}. \label{Crude5}  
\end{align} 
Combining the five estimates above \eqref{Crude1}--\eqref{Crude5} we obtain \eqref{fdshaukghudsjkg}, and the Lemma follows.
\end{proof}

\subsection{Linear estimates}

The estimates proved in this section are not optimal, but they are sufficient for our purposes. Some of them can be easily improved, which would be necessary if one aims to extend the results of this paper to less regular solutions.

We will assume, without further mention, that we are always working with $H^2$ vector fields (which is sufficient for our setting), ensuring that the operators are well-defined. The statements may be extended to a less regular scenario with due modifications. We will also assume in the statements that we work with zero-average, divergence-free vector fields. This ensures, in particular, that $\Delta^{-1}$ is always well-defined, see Remark \ref{RemWellDef}.

\begin{lem}\label{Lemm1Linear}
Let $F:\mathbb{T}^3\to \mathbb{R}^3$ be a zero-average, divergence-free vector field. For $\ell > 3/2$, we have  
\begin{align}
\|\mathrm{p} [F] \|_{H^{\ell}} &\lesssim \|B\|_{H^{\ell}} \|F\|_{H^{\ell+1}}, \label{pBound} \\
\|\mathrm{q} [F] \|_{H^{\ell}} &\lesssim \|B\|_{H^{\ell +1}} \|F\|_{H^{\ell}}. \label{qBound}
\end{align}
For fractional norms and $H^1$, the following bounds hold:
\begin{align}
\|\mathrm{p} [F] \|_{H^1} &\lesssim \left( \| B \|_{H^{\frac{3}{2}}} + \| B \|_{L^{\infty}} \right) \|F\|_{H^{2}}, \label{pBoundFractionalH1} \\
\|\mathrm{q} [F] \|_{H^1} &\lesssim \left( \| B \|_{H^{\frac{5}{2}}} + \| B \|_{W^{1,\infty}} \right) \|F\|_{H^1}, \label{qBoundFractionalH1} \\
\|\mathrm{p} [F] \|_{L^2} &\lesssim \|B\|_{L^{\infty}} \|F\|_{H^{1}}, \label{pBoundFractional} \\
\|\mathrm{q} [F] \|_{L^2} &\lesssim \|B\|_{W^{1, \infty}} \|F\|_{L^2}. \label{qBoundFractional}
\end{align}
Finally, for negative Sobolev spaces with $\ell > 0$:
\begin{align}
\|\mathrm{p} [F] \|_{H^{-\ell}} &\lesssim \|B\|_{W^{\ell, \infty}} \|F\|_{H^{-\ell +1}}, \label{pBoundFractionalNegative} \\
\|\mathrm{q} [F] \|_{H^{-\ell}} &\lesssim \|B\|_{W^{\ell +1, \infty}} \|F\|_{H^{-\ell}}. \label{qBoundFractionalNegative}
\end{align}
\end{lem}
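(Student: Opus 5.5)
The starting point is that the Leray projector $\mathbb{P}$ is a Fourier multiplier with symbol of modulus at most one, so it is bounded on every $H^s$, $s\in\R$ (on $L^2$ it is an orthogonal projection). All eight estimates therefore reduce to bounds on $(B\cdot\nabla)F$ and $(F\cdot\nabla)B$ without the projector. The structural input is $\dive B = \dive F = 0$, which gives the divergence forms $(B\cdot\nabla)F = \dive(F\otimes B)$ and $(F\cdot\nabla)B=\dive(B\otimes F)$ from \eqref{OtimesVSnabla}; we use these only when it helps to move a derivative off a factor.

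For \eqref{pBound} and \eqref{qBound} we use the algebra property of $H^\ell(\T^3)$ for $\ell>3/2$, that is, $\|fg\|_{H^\ell}\lesssim\|f\|_{H^\ell}\|g\|_{H^\ell}$. This follows from Lemma \ref{lem:fractional} with $r=2$, together with Sobolev embedding $H^\ell\hookrightarrow L^\infty$. Applying it componentwise to $B_j\,\partial_jF_i$ gives $\|B\|_{H^\ell}\|\nabla F\|_{H^\ell}$, and applying it to $F_j\,\partial_jB_i$ gives $\|F\|_{H^\ell}\|B\|_{H^{\ell+1}}$.

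The $H^1$ bounds \eqref{pBoundFractionalH1} and \eqref{qBoundFractionalH1} come from Lemma \ref{lem:fractional} with $s=1$, $r=2$, distributing the exponents differently in the two terms. For $\mathrm{p}$, the term where the derivative falls on $F$ is bounded by $\|B\|_{L^\infty}\|\nabla F\|_{H^1}$. The term where it falls on $B$ is bounded via Hölder by $\|\langle D\rangle B\|_{L^{3}}\|\nabla F\|_{L^6}$; the Sobolev embeddings $H^{3/2}\hookrightarrow W^{1,3}$ and $H^1\hookrightarrow L^6$ then give $\|B\|_{H^{3/2}}\|F\|_{H^2}$. For $\mathrm{q}$, the term with the derivative on $B$ is bounded by $\|\nabla B\|_{H^1}$ times $\|F\|_{L^\infty}$. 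Since $\|F\|_{L^\infty}$ is not controlled by $\|F\|_{H^1}$, we pair it instead as $\|\langle D\rangle\nabla B\|_{L^3}\|F\|_{L^6}\lesssim\|B\|_{H^{5/2}}\|F\|_{H^1}$. The remaining term is bounded by $\|\nabla B\|_{L^\infty}\|F\|_{H^1}$. The $L^2$ bounds \eqref{pBoundFractional} and \eqref{qBoundFractional} are plain Hölder inequalities.

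For the negative-order bounds we apply Lemma \ref{lem:GubinelliBound}. For $\mathrm{q}$, we write $(F\cdot\nabla)B=\sum_j F_j\,\partial_jB$ and estimate $\|F_j\partial_jB\|_{H^{-\ell}}\lesssim\|F\|_{H^{-\ell}}\|\nabla B\|_{W^{\ell,\infty}}$, which is bounded by $\|B\|_{W^{\ell+1,\infty}}\|F\|_{H^{-\ell}}$. For $\mathrm{p}$, we use the divergence form $\dive(F\otimes B)$. Since $\partial_j$ maps $H^{-\ell+1}$ to $H^{-\ell}$, we get $\|\mathrm{p}[F]\|_{H^{-\ell}}\lesssim\|F\otimes B\|_{H^{-\ell+1}}$.

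This last step is the main obstacle, because $-\ell+1$ can be positive when $0<\ell<1$, and then Lemma \ref{lem:GubinelliBound} does not apply. In that range we would instead use duality with Lemma \ref{lem:fractional}, which gives $\|F_iB_j\|_{H^{1-\ell}}\lesssim\|F\|_{H^{1-\ell}}\|B\|_{W^{1-\ell,\infty}}+\|F\|_{L^2}\|\langle D\rangle^{1-\ell}B\|_{L^\infty}$. Both terms are bounded by $\|B\|_{W^{1,\infty}}\|F\|_{H^{1-\ell}}$, so in this range the $\mathrm{p}$ bound holds with $W^{\max(\ell,1),\infty}$ in place of $W^{\ell,\infty}$. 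The two exponents coincide at $\ell=1$, where $-\ell+1=0$ and Hölder suffices. For $\ell\geq 1$ we apply Lemma \ref{lem:GubinelliBound} with $s=\ell-1$, which gives $\|B\|_{W^{\ell-1,\infty}}\|F\|_{H^{-\ell+1}}$, and this is at most the claimed $\|B\|_{W^{\ell,\infty}}\|F\|_{H^{-\ell+1}}$. If the stated bound is to hold for every $\ell>0$ as written, the fractional range $0<\ell<1$ needs an estimate that costs only $W^{\ell,\infty}$ on $B$; I expect this is where the careful work lies.
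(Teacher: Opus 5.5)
Everything except the negative-order bound for $\mathrm{p}$ matches the paper's proof. That includes boundedness of $\mathbb{P}$, the algebra property for \eqref{pBound}--\eqref{qBound}, Lemma~\ref{lem:fractional} with the $L^3$--$L^6$ pairing for the $H^1$ bounds, H\"older for the $L^2$ bounds, and Lemma~\ref{lem:GubinelliBound} for \eqref{qBoundFractionalNegative}. The gap, as you acknowledge, is \eqref{pBoundFractionalNegative} for $0<\ell<1$. Your fallback there only yields $\|B\|_{W^{1,\infty}}$ on the right-hand side, which is weaker than the claimed $\|B\|_{W^{\ell,\infty}}$. So as written, the proposal does not prove the statement in that range.

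The obstruction comes from your passage to the divergence form $\dive(F\otimes B)$. That forces you to measure $F\otimes B$ in $H^{1-\ell}$, a space of positive order when $\ell<1$. Keep the non-divergence form instead, and put the derivative on $F$ inside the negative-norm factor. Applying Lemma~\ref{lem:GubinelliBound} with $s=\ell$, $f=\partial_jF_i$, $g=B_j$, and using $|k_j|\langle k\rangle^{-\ell}\le\langle k\rangle^{1-\ell}$, gives
\[
\|B_j\,\partial_jF_i\|_{H^{-\ell}}\lesssim\|\partial_jF_i\|_{H^{-\ell}}\,\|B_j\|_{W^{\ell,\infty}}\le\|F\|_{H^{1-\ell}}\,\|B\|_{W^{\ell,\infty}}.
\]
Combined with the boundedness of $\mathbb{P}$ on $H^{-\ell}$, this gives \eqref{pBoundFractionalNegative} for every $\ell>0$ at once, with no case distinction and no extra care needed. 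This is what the paper's one-line appeal to Lemma~\ref{lem:GubinelliBound} amounts to. Your divergence-form argument remains correct for $\ell\ge 1$, and for $\ell>1$ it even gives the slightly better factor $\|B\|_{W^{\ell-1,\infty}}$, but it is not needed.
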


\begin{proof}
Recalling the definitions
\begin{align}
\mathrm{p} [\cdot] &= \mathbb{P} \big( (B \cdot \nabla) [\cdot] \big), \nonumber \\
\mathrm{q} [\cdot] &= \mathbb{P} \big( ([\cdot] \cdot \nabla ) B \big), \label{dshuikhudfgs7}
\end{align}
and the fact that $\mathbb{P}$ is a Fourier multiplier with a bounded symbol, estimates \eqref{pBound} and \eqref{qBound} follow from the algebra property of $H^{\ell}$. Estimates \eqref{pBoundFractional} and \eqref{qBoundFractional} follow from Hölder's inequality, while \eqref{pBoundFractionalNegative} and \eqref{qBoundFractionalNegative} follow from Lemma \ref{lem:GubinelliBound}.

To prove \eqref{pBoundFractionalH1}, we use the fractional chain rule (Lemma \ref{lem:fractional}) and the Sobolev embeddings $H^{1} \hookrightarrow L^{6}$, $H^{1/2} \hookrightarrow L^{3}$ to get
\begin{align*}
\|\mathrm{p} [F] \|_{H^1} 
&= \| \langle D \rangle \big( (B \cdot \nabla) F \big) \|_{L^2} \\
&\leq \|\langle D \rangle B \|_{L^{3}} \| \nabla F\|_{L^6} + \| B \|_{L^{\infty}} \| \langle D \rangle \nabla F\|_{L^2} \\
&\leq \left( \| B \|_{H^{\frac{3}{2}}} + \| B \|_{L^{\infty}} \right) \| F\|_{H^2}. 
\end{align*}
Lastly, for the proof of \eqref{qBoundFractionalH1}, we proceed analogously to obtain
\begin{align*}
\|\mathrm{q} [F] \|_{H^1} 
&= \| \langle D \rangle \big( (F \cdot \nabla) B \big) \|_{L^2} \\
&\leq \|\langle D \rangle F \|_{L^{2}} \| \nabla B\|_{L^\infty} + \| F \|_{L^{6}} \| \langle D \rangle \nabla B\|_{L^{3}} \\
&\leq \left( \| B \|_{H^{\frac{5}{2}}} + \| B \|_{W^{1,\infty}} \right) \| F\|_{H^1}.
\end{align*}
This concludes the proof.
\end{proof}

\begin{lem}\label{Lemm2Linear}
Let $F, G :\mathbb{T}^3\to \mathbb{R}^3$ be zero-average, divergence-free smooth vector fields. Then, for $\ell > 7/2$ we have
\begin{align}
\left\|\pi_1 K \begin{pmatrix} F \\ G \end{pmatrix} \right\|_{H^{\ell}} &\lesssim \| B \|_{H^{\ell - 1}} \|G\|_{H^{\ell - 1}}, \label{fmdksdsjnsgd1} \\
\left\|\pi_2 K \begin{pmatrix} F \\ G \end{pmatrix} \right\|_{H^{\ell}} &\lesssim \| B \|_{H^{\ell + 1}} \|F\|_{H^{\ell - 1}}, \quad (\mbox{valid for } \ell > 3/2). \label{fmdksdsjnsgd2}
\end{align}
Consequently, for $\ell > 7/2$ it holds
\begin{equation}\label{fdnjskbjnfhjskhjdbfgsjk}
\left\| K \begin{pmatrix} F \\ G \end{pmatrix} \right\|_{H^{\ell} \times H^{\ell}} \lesssim \| B \|_{H^{\ell + 1}} \|(F, G)\|_{H^{\ell - 1} \times H^{\ell - 1}}.
\end{equation}
For the $L^2$ bounds, we have that
\begin{align}
\left\|\pi_1 K \begin{pmatrix} F \\ G \end{pmatrix} \right\|_{L^2} &\lesssim \| B \|_{W^{3, \infty}} \|G\|_{H^{-1}}, \label{fmdksdsjnsgd3} \\
\left\|\pi_2 K \begin{pmatrix} F \\ G \end{pmatrix} \right\|_{L^2} &\lesssim \| B \|_{W^{1,\infty}} \|F\|_{H^{- 1}}, \label{fmdksdsjnsgd4}
\end{align}
which imply
\begin{equation}\label{fdnjskbjnfhjskhjdbfgsjkBis}
\left\| K \begin{pmatrix} F \\ G \end{pmatrix} \right\|_{L^2 \times L^2} \lesssim \| B \|_{W^{3, \infty}} \|(F, G)\|_{H^{-1} \times H^{-1}}.
\end{equation}
Similarly, for the $H^1$ bounds
\begin{align}
\left\|\pi_1 K \begin{pmatrix} F \\ G \end{pmatrix} \right\|_{H^1} &\lesssim \| B \|_{W^{2, \infty}} \|G\|_{L^2}, \label{fmdksdsjnsgd3H1} \\
\left\|\pi_2 K \begin{pmatrix} F \\ G \end{pmatrix} \right\|_{H^1} &\lesssim \left( \| B \|_{H^{\frac{5}{2}}} + \| B \|_{W^{1,\infty}} \right) \|F\|_{L^2}, \label{fmdksdsjnsgd4H1}
\end{align}
yielding
\begin{equation}\label{fdnjskbjnfhjskhjdbfgsjkBisH1}
\left\| K \begin{pmatrix} F \\ G \end{pmatrix} \right\|_{H^1 \times H^1} \lesssim \left( \| B \|_{H^{\frac{5}{2}}} + \| B \|_{W^{2,\infty}} \right) \|(F, G)\|_{L^2 \times L^2}.
\end{equation}
\end{lem}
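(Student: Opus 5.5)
The plan is to write $\pi_1 K \binom{F}{G} = -\Delta^{-1}(\mathrm{p}+\mathrm{q})[G]$ and $\pi_2 K \binom{F}{G} = (\mathrm{p}-\mathrm{q})\Delta^{-1} F$. The operator $\Delta^{-1}$, acting on zero-average fields (Remark \ref{RemWellDef}), is a Fourier multiplier that gains two derivatives, $\|\Delta^{-1} f\|_{H^{s+2}} \lesssim \|f\|_{H^s}$ for every $s\in\R$. The Leray projector $\mathbb{P}$ is bounded on every $H^s$. With these two facts, every estimate reduces to a bound on $\mathrm{p}$ or $\mathrm{q}$ from Lemma \ref{Lemm1Linear}, applied at a suitably shifted regularity index.

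\textbf{$H^\ell$ bounds.} For \eqref{fmdksdsjnsgd1} I would first write
$$\|\pi_1 K\binom{F}{G}\|_{H^\ell}\lesssim \|\mathrm{p}[G]\|_{H^{\ell-2}}+\|\mathrm{q}[G]\|_{H^{\ell-2}}.$$
\begin{itemize}
\item For $\mathrm{q}[G]=\mathbb{P}((G\cdot\nabla)B)$ I apply \eqref{qBound} at index $\ell-2$. This needs $\ell-2>3/2$, which is exactly where $\ell>7/2$ enters. It gives $\|B\|_{H^{\ell-1}}\|G\|_{H^{\ell-2}}$.
\item For $\mathrm{p}$, the bound \eqref{pBound} at index $\ell-2$ would cost $\|G\|_{H^{\ell-1}}$ and $\|B\|_{H^{\ell-2}}$. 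Both are dominated by the right-hand side.
\end{itemize}
For \eqref{fmdksdsjnsgd2} I apply \eqref{pBound} and \eqref{qBound} at index $\ell>3/2$ to the field $\Delta^{-1}F$. This gives
$$\|\mathrm{p}[\Delta^{-1}F]\|_{H^\ell}\lesssim \|B\|_{H^\ell}\|F\|_{H^{\ell-1}},\qquad \|\mathrm{q}[\Delta^{-1}F]\|_{H^\ell}\lesssim \|B\|_{H^{\ell+1}}\|F\|_{H^{\ell-2}}.$$
Both are bounded by $\|B\|_{H^{\ell+1}}\|F\|_{H^{\ell-1}}$. Summing the two components, and using monotonicity $\|B\|_{H^{\ell-1}}\le\|B\|_{H^{\ell+1}}$, yields \eqref{fdnjskbjnfhjskhjdbfgsjk}. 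One must also check that $\Delta^{-1}F$ is zero-average and divergence-free, so that Lemma \ref{Lemm1Linear} applies; this holds because $\Delta^{-1}$ is a Fourier multiplier.

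\textbf{$L^2$ bounds.} For \eqref{fmdksdsjnsgd3} I need $\|\mathrm{p}[G]\|_{H^{-2}}+\|\mathrm{q}[G]\|_{H^{-2}}$.
\begin{itemize}
\item \eqref{pBoundFractionalNegative} with $\ell=2$ gives $\|B\|_{W^{2,\infty}}\|G\|_{H^{-1}}$.
\item For $\mathrm{q}$, \eqref{qBoundFractionalNegative} with $\ell=2$ would give $\|B\|_{W^{3,\infty}}\|G\|_{H^{-2}}$, which is dominated by $\|B\|_{W^{3,\infty}}\|G\|_{H^{-1}}$. (Using $\ell=1$ for $\mathrm{q}$ would instead give $W^{2,\infty}$ against $H^{-1}$.)
\end{itemize}
Together these give the $W^{3,\infty}$ bound. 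For \eqref{fmdksdsjnsgd4} I use \eqref{pBoundFractional} and \eqref{qBoundFractional} on $\Delta^{-1}F$:
$$\|B\|_{L^\infty}\|\Delta^{-1}F\|_{H^1}+\|B\|_{W^{1,\infty}}\|\Delta^{-1}F\|_{L^2}\lesssim \|B\|_{W^{1,\infty}}\|F\|_{H^{-1}}.$$

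\textbf{$H^1$ bounds.} For \eqref{fmdksdsjnsgd3H1} I need $\|(\mathrm{p}+\mathrm{q})[G]\|_{H^{-1}}$, and \eqref{pBoundFractionalNegative}–\eqref{qBoundFractionalNegative} with $\ell=1$ give $\|B\|_{W^{1,\infty}}\|G\|_{L^2}+\|B\|_{W^{2,\infty}}\|G\|_{H^{-1}}$. For \eqref{fmdksdsjnsgd4H1} I apply \eqref{pBoundFractionalH1} and \eqref{qBoundFractionalH1} to $\Delta^{-1}F$. This gives
$$(\|B\|_{H^{3/2}}+\|B\|_{L^\infty})\|F\|_{L^2}+(\|B\|_{H^{5/2}}+\|B\|_{W^{1,\infty}})\|F\|_{H^{-1}},$$
which is bounded as claimed. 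Summing the components gives \eqref{fdnjskbjnfhjskhjdbfgsjkBisH1}, using $\|B\|_{W^{1,\infty}}\le \|B\|_{W^{2,\infty}}$.

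\textbf{Main obstacle.} There is no real obstacle here; the argument is bookkeeping. The only delicate point is the threshold $\ell>7/2$, which is forced by the algebra property used for $\mathrm{q}[G]$ at index $\ell-2$. Along the way I would double-check that each index shift sends one of the stated norms of $B$ to something dominated by the norm on the right-hand side.
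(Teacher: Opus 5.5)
Your proposal is correct and follows the paper's route. The paper likewise splits $K$ into its $\mathrm{p}$- and $\mathrm{q}$-parts, applies Lemma \ref{Lemm1Linear} to each component (using that $\Delta^{-1}$ gains two derivatives on zero-average fields), and sums; your index-by-index bookkeeping supplies exactly the details it leaves implicit, with one small correction: the condition $\ell-2>3/2$ is needed to apply \eqref{pBound} to the $\mathrm{p}[G]$ term as well, not only \eqref{qBound} to $\mathrm{q}[G]$.
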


\begin{proof}
Recalling that
\begin{equation*}
 K = 
\begin{pmatrix} 0 & - \Delta^{-1} \circ \mathrm{p} \\ \mathrm{p} \circ \Delta^{-1} & 0 \end{pmatrix} 
+
\begin{pmatrix} 0 & -\Delta^{-1} \circ \mathrm{q} \\ - \mathrm{q} \circ \Delta^{-1} & 0 \end{pmatrix},
\end{equation*}
the component-wise inequalities follow easily from Lemma \ref{Lemm1Linear}. The combined bounds \eqref{fdnjskbjnfhjskhjdbfgsjk}, \eqref{fdnjskbjnfhjskhjdbfgsjkBis}, and \eqref{fdnjskbjnfhjskhjdbfgsjkBisH1} are then obtained by directly summing the estimates.
\end{proof}

\begin{lem}\label{ExpLemma} 
Let $F, G :\mathbb{T}^3\to \mathbb{R}^3$ be zero-average, divergence-free vector fields. For all $t \in \mathbb{R}$ and $\ell > 7/2$ it holds
\begin{align}
\left\| e^{t K} \begin{pmatrix} F \\ G \end{pmatrix} \right\|_{H^{\ell} \times H^{\ell}} &\leq e^{|t| C \| B \|_{H^{\ell + 1}}} \|(F, G)\|_{H^{\ell} \times H^{\ell}}, \label{fdnjskbjnfhjskhjdbfgsjk2} \\
\left\| (e^{tK} - \operatorname{Id}) \begin{pmatrix} F \\ G \end{pmatrix} \right\|_{H^{\ell} \times H^{\ell}} &\leq \left(e^{|t| C \|B\|_{H^{\ell +1}}} -1 \right) \|(F, G)\|_{H^{\ell-1} \times H^{\ell-1}}, \label{fdnjskbjnfhjskhjdbfgsjk3}
\end{align}
where $C > 0$ depends only on $\ell$. Additionally, for the $L^2$ and $H^1$ norms we have
\begin{align}
\left\| e^{t K} \begin{pmatrix} F \\ G \end{pmatrix} \right\|_{L^2 \times L^2} &\leq e^{|t| C \| B \|_{ W^{3, \infty} } } \|(F, G)\|_{L^2 \times L^2}, \label{fdnjskbjnfhjskhjdbfgsjk2L2} \\
\left\| (e^{tK} - \operatorname{Id}) \begin{pmatrix} F \\ G \end{pmatrix} \right\|_{L^2 \times L^2} &\leq \left(e^{|t| C \|B\|_{W^{3, \infty} }} -1 \right) \|(F, G)\|_{H^{-1} \times H^{-1}}, \label{fdnjskbjnfhjskhjdbfgsjk3L2} \\
\left\| (e^{tK} - \operatorname{Id}) \begin{pmatrix} F \\ G \end{pmatrix} \right\|_{H^1 \times H^1} &\leq \left(e^{|t| C \left( \| B \|_{H^{\frac{5}{2}}} + \| B \|_{W^{2,\infty}} \right) } -1 \right) \|(F, G)\|_{L^2 \times L^2}, \label{fdnjskbjnfhjskhjdbfgsjk3H1}
\end{align}
where $C > 0$ is an absolute constant.
\end{lem}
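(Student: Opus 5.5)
The plan is to expand $e^{tK}$ as a power series, $e^{tK} = \sum_{j\ge 0} \frac{t^j}{j!}K^j$, and bound each power $K^j$ by iterating the one-step bounds of Lemma~\ref{Lemm2Linear}. Throughout we use Proposition~\ref{prop:algebraic_properties}, item~\ref{AlgLemma2}: $K$ maps zero-average divergence-free pairs to zero-average divergence-free pairs. Hence every iterate $K^j(F,G)$ stays in the class where Lemma~\ref{Lemm2Linear} applies and $\Delta^{-1}$ is well defined. Convergence of the series in the relevant Sobolev norm then follows from the summable bounds below, so the exponential is legitimately defined.

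For \eqref{fdnjskbjnfhjskhjdbfgsjk2}, we first note that \eqref{fdnjskbjnfhjskhjdbfgsjk} gives $\|K(F,G)\|_{H^\ell\times H^\ell}\le C\|B\|_{H^{\ell+1}}\|(F,G)\|_{H^{\ell-1}\times H^{\ell-1}}$. Since $\|\cdot\|_{H^{\ell-1}}\le\|\cdot\|_{H^\ell}$, $K$ is bounded on $H^\ell\times H^\ell$ with operator norm at most $C\|B\|_{H^{\ell+1}}$. Therefore $\|K^j\|\le (C\|B\|_{H^{\ell+1}})^j$, and summing gives
\[
\|e^{tK}\|\le\sum_j \frac{|t|^j}{j!}\bigl(C\|B\|_{H^{\ell+1}}\bigr)^j=e^{|t|C\|B\|_{H^{\ell+1}}}.
\]
For \eqref{fdnjskbjnfhjskhjdbfgsjk3}, we write $(e^{tK}-\mathrm{Id})(F,G)=\sum_{j\ge1}\frac{t^j}{j!}K^{j-1}K(F,G)$. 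We apply the $H^\ell$ operator bound $j-1$ times to $K^{j-1}$, and use \eqref{fdnjskbjnfhjskhjdbfgsjk} once for the innermost $K$, which produces the $H^{\ell-1}$ norm of $(F,G)$. This gives the factor $(C\|B\|_{H^{\ell+1}})^j$, and summing over $j\ge1$ yields $e^{|t|C\|B\|_{H^{\ell+1}}}-1$.

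The $L^2$ and $H^1$ statements follow the same pattern; the only point to check is that each one-step estimate maps the space into itself. For \eqref{fdnjskbjnfhjskhjdbfgsjk2L2}, \eqref{fdnjskbjnfhjskhjdbfgsjkBis} combined with $\|\cdot\|_{H^{-1}}\le\|\cdot\|_{L^2}$ shows that $K$ has $L^2$ operator norm at most $C\|B\|_{W^{3,\infty}}$. The series argument then gives \eqref{fdnjskbjnfhjskhjdbfgsjk2L2}, and, applying \eqref{fdnjskbjnfhjskhjdbfgsjkBis} to the innermost factor, also \eqref{fdnjskbjnfhjskhjdbfgsjk3L2}. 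For \eqref{fdnjskbjnfhjskhjdbfgsjk3H1}, \eqref{fdnjskbjnfhjskhjdbfgsjkBisH1} together with $\|\cdot\|_{L^2}\le\|\cdot\|_{H^1}$ shows that $K$ is bounded on $H^1\times H^1$ with norm $\lesssim\|B\|_{H^{5/2}}+\|B\|_{W^{2,\infty}}$. Using \eqref{fdnjskbjnfhjskhjdbfgsjkBisH1} once more for the innermost factor, to pass from $L^2$ to $H^1$, gives the claim.

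No step is a genuine obstacle. The main care point is bookkeeping of the constants: all constants can be taken equal to the single constant $C$ appearing in the one-step bounds, because the cheap embeddings $H^{s}\hookrightarrow H^{s-1}$ have norm $1$ under the paper's Fourier definition of the norms. The one real fact to verify is the invariance of the zero-average divergence-free class under $K$, which is exactly Proposition~\ref{prop:algebraic_properties}.
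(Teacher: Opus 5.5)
Your proposal is correct and follows the paper's argument: expand $e^{tK}$ as a power series and iterate the one-step bounds of Lemma~\ref{Lemm2Linear}, using the smoothing estimate only on the innermost factor in the $(e^{tK}-\operatorname{Id})$ bounds. The paper says only ``iterating this estimate $j$ times'', and a literal iteration of \eqref{fdnjskbjnfhjskhjdbfgsjk} would lose one derivative per step. Your explicit use of the norm-one embeddings $H^{s}\hookrightarrow H^{s-1}$, which make $K$ bounded on each fixed space, is exactly what makes that step rigorous.
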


\begin{proof}
Estimate \eqref{fdnjskbjnfhjskhjdbfgsjk2} follows immediately from \eqref{fdnjskbjnfhjskhjdbfgsjk}. To prove \eqref{fdnjskbjnfhjskhjdbfgsjk3}, we note that as a consequence of \eqref{fdnjskbjnfhjskhjdbfgsjk}, for all $j \geq 1$ we have
\begin{equation}\label{mgnrjekrndgfsjk1}
\left\| K^j \begin{pmatrix} F \\ G \end{pmatrix} \right\|_{H^{\ell} \times H^{\ell}} \lesssim \| B \|_{H^{\ell + 1}} \|K^{j-1}(F, G)\|_{H^{\ell-1} \times H^{\ell-1}}. 
\end{equation}
Iterating this estimate $j$ times, we arrive at
\begin{equation}\label{AncoraWeakerexp}
\left\| K^j \begin{pmatrix} F \\ G \end{pmatrix} \right\|_{H^{\ell} \times H^{\ell}} \leq C^j \| B \|^j_{H^{\ell + 1}} \|(F, G)\|_{H^{\ell-1} \times H^{\ell-1}}, \qquad \forall j \geq 1.
\end{equation}
Using the power series expansion
\begin{equation}\label{mndfjskdbg57483}
(e^{tK} - \operatorname{Id}) \begin{pmatrix} F \\ G \end{pmatrix} = \sum_{j \geq 1} \frac{t^j}{j!} K^j \begin{pmatrix} F \\ G \end{pmatrix},
\end{equation}
combined with \eqref{AncoraWeakerexp} and the triangle inequality, we deduce \eqref{fdnjskbjnfhjskhjdbfgsjk3}.

Estimate \eqref{fdnjskbjnfhjskhjdbfgsjk2L2} follows from \eqref{fdnjskbjnfhjskhjdbfgsjkBis}. For \eqref{fdnjskbjnfhjskhjdbfgsjk3L2}, iterating \eqref{fdnjskbjnfhjskhjdbfgsjkBis} $j$ times yields
\begin{equation}\label{AncoraWeakerexpL2}
\left\| K^j \begin{pmatrix} F \\ G \end{pmatrix} \right\|_{L^2 \times L^2} \leq C^j \| B \|^j_{W^{3, \infty}} \|(F, G)\|_{H^{-1} \times H^{-1}}, \qquad \forall j \geq 1,
\end{equation}
which, proceeding as before, gives \eqref{fdnjskbjnfhjskhjdbfgsjk3L2}.

Finally, for \eqref{fdnjskbjnfhjskhjdbfgsjk3H1}, iterating \eqref{fdnjskbjnfhjskhjdbfgsjkBisH1} $j$ times leads to
\begin{equation}\label{AncoraWeakerexpH1}
\left\| K^j \begin{pmatrix} F \\ G \end{pmatrix} \right\|_{L^2 \times L^2} \leq C^j \left( \| B \|_{H^{\frac{5}{2}}} + \| B \|_{W^{2,\infty}} \right)^j \|(F, G)\|_{L^2 \times L^2}, \qquad \forall j \geq 1.
\end{equation}
Using the series expansion again completes the proof.
\end{proof}

\begin{lem}\label{Lemm3Linear}
Let $F:\mathbb{T}^3\to \mathbb{R}^3$ be a zero-average, divergence-free vector field. Then:
\begin{align}
\| M_1 F \|_{H^{\ell}} &\lesssim \|B\|^2_{H^{\ell +1}} \|F\|_{H^{\ell}}, \quad \ell > 7/2, \\
\| M_2 F \|_{H^{\ell}} &\lesssim \|B\|^2_{H^{\ell}} \|F\|_{H^{\ell}}, \quad \ell > 5/2, 
\end{align}
as well as
\begin{align}
\| M_1 F \|_{L^2} &\lesssim \| B \|_{W^{2, \infty}}^2 \|F\|_{L^2}, \label{fmdjisonjfdnwj74} \\
\| M_2 F \|_{L^2} &\lesssim \| B \|_{W^{1, \infty}}^2 \|F\|_{L^2}. 
\end{align}
\end{lem}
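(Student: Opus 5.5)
The plan is to treat each of the three factors in $M_1$ and $M_2$ separately and compose the bounds of Lemma \ref{Lemm1Linear} with the elliptic estimate $\|\Delta^{-1} G\|_{H^{s+2}} \lesssim \|G\|_{H^s}$. This estimate is valid for every $s\in\R$ on zero-average fields. By Remark \ref{RemWellDef}, every application of $\Delta^{-1}$ in the definitions \eqref{Def:M1}--\eqref{Def:M2} acts on a zero-average, divergence-free field, so it is legitimate. The only structural point is that $\mathrm{p}$ costs one derivative, whereas $\Delta^{-1}$ gains two. Since each composition contains exactly two $\mathrm{p}$'s and one $\Delta^{-1}$, the orders balance to zero, and we only need to track which Sobolev index of $B$ is required.

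For $M_2 F = -2\,\mathrm{p}\,\Delta^{-1}\mathrm{p} F$, I would proceed as follows. Applying \eqref{pBound} at level $\ell-1$ (which needs $\ell - 1 > 3/2$, hence $\ell>5/2$) gives $\|\mathrm{p} F\|_{H^{\ell-1}} \lesssim \|B\|_{H^{\ell-1}}\|F\|_{H^\ell}$. The elliptic gain then yields $\|\Delta^{-1}\mathrm{p}F\|_{H^{\ell+1}}\lesssim \|B\|_{H^{\ell-1}}\|F\|_{H^\ell}$. A final application of \eqref{pBound} at level $\ell$ gives $\|B\|_{H^\ell}\|B\|_{H^{\ell-1}}\|F\|_{H^\ell}\le \|B\|_{H^\ell}^2\|F\|_{H^\ell}$.

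For the first term of $M_1$, namely $\mathrm{p}\,\mathrm{p}\,\Delta^{-1}F$, I would use $\|\Delta^{-1}F\|_{H^{\ell+2}}\lesssim\|F\|_{H^\ell}$ and then apply \eqref{pBound} twice, at levels $\ell+1$ and $\ell$. This produces $\|B\|_{H^{\ell+1}}\|B\|_{H^\ell}\|F\|_{H^\ell}$. For the second term, $\Delta^{-1}\mathrm{p}\,\mathrm{p}F$, I would apply \eqref{pBound} at levels $\ell-1$ and $\ell-2$, then the elliptic gain from $H^{\ell-2}$ to $H^{\ell}$. The level $\ell-2$ application requires $\ell-2>3/2$, which is exactly where the hypothesis $\ell>7/2$ enters, and it gives $\|B\|_{H^{\ell-1}}\|B\|_{H^{\ell-2}}\|F\|_{H^\ell}$. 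All of these are dominated by $\|B\|^2_{H^{\ell+1}}\|F\|_{H^\ell}$. The operator $\mathbb{P}$ is harmless throughout, because it is a bounded Fourier multiplier that is already built into $\mathrm{p}$.

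The $L^2$ bounds follow the same pattern, using the low-regularity estimates. For $M_2$, I would write $\|\mathrm{p}\Delta^{-1}\mathrm{p}F\|_{L^2}\lesssim \|B\|_{L^\infty}\|\Delta^{-1}\mathrm{p}F\|_{H^1}\lesssim \|B\|_{L^\infty}\|\mathrm{p}F\|_{H^{-1}}$. Then \eqref{pBoundFractionalNegative} with $\ell=1$ gives $\|\mathrm{p}F\|_{H^{-1}}\lesssim\|B\|_{W^{1,\infty}}\|F\|_{L^2}$. For $\Delta^{-1}\mathrm{p}\mathrm{p}F$ in $M_1$, I would apply \eqref{pBoundFractionalNegative} twice, first at $H^{-2}$ and then at $H^{-1}$, followed by the elliptic gain from $H^{-2}$ to $L^2$. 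This gives $\|B\|_{W^{2,\infty}}\|B\|_{W^{1,\infty}}\|F\|_{L^2}$. For $\mathrm{p}\mathrm{p}\Delta^{-1}F$, I would use \eqref{pBoundFractional} and then bound $\|\mathrm{p}G\|_{H^1}$ with $G=\Delta^{-1}F\in H^2$.

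The main obstacle is this last $H^1$ step. A direct use of \eqref{pBoundFractionalH1} would involve $\|B\|_{H^{3/2}}$, which is not controlled by $W^{2,\infty}$-type norms in the stated form. To avoid this, I would instead expand $\nabla((B\cdot\nabla)G)$ with the integer-order Leibniz rule: $\|\mathrm{p}G\|_{H^1}\lesssim \|B\|_{W^{1,\infty}}\|G\|_{H^2}$. This is legitimate because $H^1$ is an integer norm and $\mathbb{P}$ commutes with derivatives, and it closes the bound with $\|B\|_{W^{1,\infty}}^2$.
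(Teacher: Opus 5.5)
Your proposal is correct and is essentially the paper's argument. The paper only asserts that the bounds follow from Lemma \ref{Lemm1Linear} after straightforward computations, and your composition with the two-derivative gain of $\Delta^{-1}$ is exactly that computation, including where $\ell>7/2$ and $\ell>5/2$ enter. One small correction: the $H^{3/2}$ norm in \eqref{pBoundFractionalH1} is not actually an obstacle, since on the finite-measure torus $\|B\|_{H^{3/2}}\le\|B\|_{H^2}\lesssim\|B\|_{W^{2,\infty}}$; your integer-order Leibniz bound $\|\mathrm{p}G\|_{H^1}\lesssim\|B\|_{W^{1,\infty}}\|G\|_{H^2}$ is nonetheless valid and slightly sharper.
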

\begin{proof}
Recalling definitions \ref{Def:M1} and \ref{Def:M2} of $M_1$ and $M_2$, the statement follows from Lemma \ref{Lemm1Linear} after straightforward computations.
\end{proof}

\begin{lem}\label{Lemm4Linear}
Let $F :\mathbb{T}^3\to \mathbb{R}^3$ be a zero-average, divergence-free vector field. Then, for $\ell > 7/2$
\begin{align}
\| L_1 F \|_{H^{\ell}} &\lesssim \|B\|^2_{H^{\ell+2}} \|F\|_{H^{\ell-1}}, \\
\| L_2 F \|_{H^{\ell}} &\lesssim \|B\|^2_{H^{\ell +1}} \|F\|_{H^{\ell -1}}, 
\end{align}
as well as
\begin{align}
\| L_1 F \|_{L^2} &\lesssim \|B\|^2_{W^{3, \infty}} \|F\|_{H^{-1}}, \\
\| L_2 F \|_{L^2} &\lesssim \|B\|^2_{W^{3, \infty}} \|F\|_{H^{-1}}. 
\end{align}
\end{lem}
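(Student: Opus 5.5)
The plan is to expand $L_1$ and $L_2$ into their four (resp. two) constituent compositions, as in \eqref{Def:L1}--\eqref{Def:L2}, and bound each term separately by composing the estimates of Lemma \ref{Lemm1Linear} with the gain of two derivatives from $\Delta^{-1}$. The operators $L_1,L_2$ involve at least one $\mathrm{q}$, which costs a derivative of $B$ but none of $F$; they also involve at most one $\mathrm{p}$, which costs one derivative of $F$. Together with $\Delta^{-1}$, each composition is therefore of order $2-2=0$ at worst. We claim a gain of one derivative (order $-1$), which has to come from the extra $\mathrm{q}$. Note that $\Delta^{-1}$ is well defined at every stage by Remark \ref{RemWellDef} and Proposition \ref{prop:algebraic_properties}.

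For the $H^\ell$ bound on $L_1$, I treat the terms as follows.

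$\mathrm{q}\circ\mathrm{p}\circ\Delta^{-1}$: by \eqref{qBound} and \eqref{pBound},
$$\|\mathrm{q}\,\mathrm{p}\,\Delta^{-1}F\|_{H^\ell}\lesssim \|B\|_{H^{\ell+1}}\|\mathrm{p}\Delta^{-1}F\|_{H^\ell}\lesssim \|B\|_{H^{\ell+1}}\|B\|_{H^\ell}\|\Delta^{-1}F\|_{H^{\ell+1}}\lesssim\|B\|_{H^{\ell+1}}^2\|F\|_{H^{\ell-1}}.$$

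$\mathrm{q}\circ\mathrm{q}\circ\Delta^{-1}$: applying \eqref{qBound} twice gives $\|B\|_{H^{\ell+1}}^2\|F\|_{H^{\ell-2}}$.

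$\mathrm{p}\circ\mathrm{q}\circ\Delta^{-1}$: by \eqref{pBound} then \eqref{qBound} at level $\ell+1$, we get $\|B\|_{H^\ell}\|B\|_{H^{\ell+2}}\|\Delta^{-1}F\|_{H^{\ell+1}}$. This is where $\|B\|_{H^{\ell+2}}$ enters.

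Terms with $\Delta^{-1}$ on the left: we need $\|\mathrm{q}\mathrm{p}F\|_{H^{\ell-2}}$, $\|\mathrm{q}\mathrm{q}F\|_{H^{\ell-2}}$, $\|\mathrm{p}\mathrm{q}F\|_{H^{\ell-2}}$. For instance, $\|\mathrm{p}\mathrm{q}F\|_{H^{\ell-2}}\lesssim\|B\|_{H^{\ell-2}}\|\mathrm{q}F\|_{H^{\ell-1}}\lesssim\|B\|_{H^{\ell-2}}\|B\|_{H^\ell}\|F\|_{H^{\ell-1}}$, which uses $\ell-2>3/2$.

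The condition $\ell>7/2$ is exactly what keeps every intermediate index above $3/2$, so that the algebra-property bounds \eqref{pBound}--\eqref{qBound} apply. For $L_2$, the terms $\mathrm{q}\Delta^{-1}\mathrm{p}$, $\mathrm{q}\Delta^{-1}\mathrm{q}$ and $\mathrm{p}\Delta^{-1}\mathrm{q}$ are handled the same way and need only $\|B\|_{H^{\ell+1}}$. For example, $\|\mathrm{p}\Delta^{-1}\mathrm{q}F\|_{H^\ell}\lesssim\|B\|_{H^\ell}\|\mathrm{q}F\|_{H^{\ell-1}}\lesssim\|B\|_{H^\ell}\|B\|_{H^\ell}\|F\|_{H^{\ell-1}}$.

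The $L^2$ bounds from $H^{-1}$ follow the same bookkeeping, now in negative norms via \eqref{pBoundFractionalNegative}--\eqref{qBoundFractionalNegative} and \eqref{pBoundFractional}--\eqref{qBoundFractional}.

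For $L_2$: $\|\mathrm{p}\Delta^{-1}\mathrm{q}F\|_{L^2}\lesssim\|B\|_{L^\infty}\|\mathrm{q}F\|_{H^{-1}}\lesssim\|B\|_{L^\infty}\|B\|_{W^{2,\infty}}\|F\|_{H^{-1}}$. Similarly, $\|\mathrm{q}\Delta^{-1}\mathrm{p}F\|_{L^2}\lesssim\|B\|_{W^{1,\infty}}\|\mathrm{p}F\|_{H^{-2}}\lesssim\|B\|_{W^{1,\infty}}\|B\|_{W^{2,\infty}}\|F\|_{H^{-1}}$.

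For $L_1$, the terms with $\Delta^{-1}$ on the right are easy: for example, $\|\mathrm{q}\mathrm{p}\Delta^{-1}F\|_{L^2}\lesssim\|B\|_{W^{1,\infty}}^2\|F\|_{H^{-1}}$. The terms $\Delta^{-1}\mathrm{q}\mathrm{p}F$ and $\Delta^{-1}\mathrm{p}\mathrm{q}F$ require $H^{-2}$ bounds on $\mathrm{q}\mathrm{p}F$ and $\mathrm{p}\mathrm{q}F$ in terms of $\|F\|_{H^{-1}}$. These follow from applying \eqref{qBoundFractionalNegative} with $\ell=2$ and then \eqref{pBoundFractionalNegative} with $\ell=1$ (or in the reverse order), which costs at most $\|B\|_{W^{3,\infty}}$. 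Tracking the exponents gives $\|B\|_{W^{3,\infty}}^2$ in all cases.

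The only real obstacle is the bookkeeping of where each derivative lands. In particular, one must check that no term requires more than $H^{\ell+2}$ (resp. $W^{3,\infty}$) regularity of $B$; the worst case is $\mathrm{p}\mathrm{q}\Delta^{-1}$, where $\mathrm{q}$ is measured at level $\ell+1$. One should also confirm that the Leray projector in the definitions of $\mathrm{p}$ and $\mathrm{q}$ is harmless: it is bounded on every $H^s$ as a Fourier multiplier.
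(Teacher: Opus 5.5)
Your proposal is correct and is the same argument as the paper's: the paper simply says the bounds follow from Lemma \ref{Lemm1Linear} after expanding \eqref{Def:L1}--\eqref{Def:L2}, and you have written out that bookkeeping term by term. There are two small slips in the $L^2$ part. For $\Delta^{-1}\mathrm{q}\mathrm{p}F$ you need \eqref{qBoundFractionalNegative} and \eqref{pBoundFractionalNegative} both with $\ell=2$; for $\Delta^{-1}\mathrm{p}\mathrm{q}F$ you need \eqref{pBoundFractionalNegative} with $\ell=2$ followed by \eqref{qBoundFractionalNegative} with $\ell=1$, since using $\mathrm{p}$ at $\ell=1$ would leave $\|F\|_{L^2}$ instead of $\|F\|_{H^{-1}}$. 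Also, the term $\mathrm{p}\mathrm{q}\Delta^{-1}F$ requires the $H^1$ bound \eqref{qBoundFractionalH1}, or simply $\|\mathrm{q}G\|_{H^1}\lesssim\|B\|_{W^{2,\infty}}\|G\|_{H^1}$, and not only the $L^2$ and negative-order estimates you cited.
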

\begin{proof}
Recalling definitions \ref{Def:L1} and \ref{Def:L2} of $L_1$ and $L_2$, the statement follows from Lemma \ref{Lemm1Linear} after straightforward computations. 
\end{proof}

\begin{lem}\label{Lemm5Linear}
Let $F :\mathbb{T}^3\to \mathbb{R}^3$ be a zero-average, divergence-free vector field. Then, for $\ell > 7/2$
\begin{align}
\| T_1 F \|_{H^{\ell}} &\lesssim \|B\|^2_{H^{\ell+2}} \|F\|_{H^{\ell-1}}, \\
\| T_2 F \|_{H^{\ell}} &\lesssim \|B\|^2_{H^{\ell +1 }} \|F\|_{H^{\ell-1}}, 
\end{align}
as well as
\begin{align}
\| T_1 F \|_{L^2} &\lesssim \|B\|^2_{W^{3, \infty}} \|F\|_{H^{-1}}, \\
\| T_2 F \|_{L^2} &\lesssim \|B\|^2_{W^{3, \infty}} \|F\|_{H^{-1}}. 
\end{align}
\end{lem}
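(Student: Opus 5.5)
We bound $T_1$ and $T_2$ by composing the single-operator estimates of Lemma \ref{Lemm1Linear} with the smoothing of $\Delta^{-1}$, exactly as in Lemmas \ref{Lemm3Linear}--\ref{Lemm4Linear}. Recall
$$
T_1 = \mathrm{p} \circ \mathrm{q} \circ \Delta^{-1} - \Delta^{-1} \circ \mathrm{q} \circ \mathrm{p}, \qquad
T_2 = \mathrm{p} \circ \Delta^{-1} \circ \mathrm{q} - \mathrm{q} \circ \Delta^{-1} \circ \mathrm{p}.
$$
The ingredients are:
\begin{itemize}
\item $\mathrm{p}$ loses one derivative, with $\|\mathrm{p}F\|_{H^s}\lesssim \|B\|_{H^s}\|F\|_{H^{s+1}}$ by \eqref{pBound}.
\item $\mathrm{q}$ loses none, with $\|\mathrm{q}F\|_{H^s}\lesssim \|B\|_{H^{s+1}}\|F\|_{H^s}$ by \eqref{qBound}.
\item $\Delta^{-1}$ gains two derivatives on zero-average fields; Remark \ref{RemWellDef} guarantees it is always applied legitimately.
\end{itemize}
Each of the four compositions therefore has net order $-1$ and is quadratic in $B$. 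The only care needed is to track which Sobolev index falls on $B$ at each step, and to check that every intermediate index stays above $3/2$ so that \eqref{pBound}--\eqref{qBound} apply.

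For $T_1$ in $H^\ell$, the first term gives
$$
\|\mathrm{p}\,\mathrm{q}\,\Delta^{-1}F\|_{H^\ell}\lesssim \|B\|_{H^\ell}\|\mathrm{q}\Delta^{-1}F\|_{H^{\ell+1}}\lesssim \|B\|_{H^\ell}\|B\|_{H^{\ell+2}}\|\Delta^{-1}F\|_{H^{\ell+1}}\lesssim \|B\|^2_{H^{\ell+2}}\|F\|_{H^{\ell-1}}.
$$
This is where the $H^{\ell+2}$ norm of $B$ in the statement comes from. The second term gives
$$
\|\Delta^{-1}\mathrm{q}\,\mathrm{p}F\|_{H^\ell}\lesssim\|\mathrm{q}\,\mathrm{p}F\|_{H^{\ell-2}}\lesssim \|B\|_{H^{\ell-1}}\|\mathrm{p}F\|_{H^{\ell-2}}\lesssim \|B\|_{H^{\ell-1}}\|B\|_{H^{\ell-2}}\|F\|_{H^{\ell-1}}.
$$
Here we need $\ell-2>3/2$, which is exactly the hypothesis $\ell>7/2$.

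For $T_2$ in $H^\ell$, the first term gives
$$
\|\mathrm{p}\,\Delta^{-1}\mathrm{q}F\|_{H^\ell}\lesssim\|B\|_{H^\ell}\|\mathrm{q}F\|_{H^{\ell-1}}\lesssim \|B\|_{H^\ell}\|B\|_{H^\ell}\|F\|_{H^{\ell-1}}.
$$
The second term gives
$$
\|\mathrm{q}\,\Delta^{-1}\mathrm{p}F\|_{H^\ell}\lesssim \|B\|_{H^{\ell+1}}\|\mathrm{p}F\|_{H^{\ell-2}}\lesssim\|B\|_{H^{\ell+1}}\|B\|_{H^{\ell-2}}\|F\|_{H^{\ell-1}}.
$$
This yields the bound with $\|B\|^2_{H^{\ell+1}}$.

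For the $L^2$ bounds the input $F$ is only in $H^{-1}$, so an operator that acts directly on $F$ may need a negative-index estimate. We use Hölder via \eqref{pBoundFractional}--\eqref{qBoundFractional} when the argument has nonnegative regularity, and the negative-space bounds \eqref{pBoundFractionalNegative}--\eqref{qBoundFractionalNegative} otherwise:
\begin{itemize}
\item $\|\mathrm{p}\,\mathrm{q}\Delta^{-1}F\|_{L^2}\lesssim \|B\|_{L^\infty}\|\mathrm{q}\Delta^{-1}F\|_{H^1}$. Then \eqref{qBoundFractionalH1}, with $\|B\|_{H^{5/2}}\lesssim\|B\|_{W^{3,\infty}}$ on $\T^3$, gives the bound $\|B\|_{W^{3,\infty}}\|\Delta^{-1}F\|_{H^1}\lesssim \|B\|_{W^{3,\infty}}\|F\|_{H^{-1}}$.
\item $\|\Delta^{-1}\mathrm{q}\,\mathrm{p}F\|_{L^2}\lesssim\|\mathrm{q}\,\mathrm{p}F\|_{H^{-2}}\lesssim\|B\|_{W^{3,\infty}}\|\mathrm{p}F\|_{H^{-2}}\lesssim\|B\|_{W^{3,\infty}}\|B\|_{W^{2,\infty}}\|F\|_{H^{-1}}$, using \eqref{qBoundFractionalNegative} and then \eqref{pBoundFractionalNegative} with $\ell=2$.
\item $\|\mathrm{p}\Delta^{-1}\mathrm{q}F\|_{L^2}\lesssim\|B\|_{L^\infty}\|\mathrm{q}F\|_{H^{-1}}\lesssim \|B\|_{L^\infty}\|B\|_{W^{2,\infty}}\|F\|_{H^{-1}}$.
\item $\|\mathrm{q}\Delta^{-1}\mathrm{p}F\|_{L^2}\lesssim\|B\|_{W^{1,\infty}}\|\mathrm{p}F\|_{H^{-2}}\lesssim\|B\|_{W^{1,\infty}}\|B\|_{W^{2,\infty}}\|F\|_{H^{-1}}$.
\end{itemize}
All of these factors are dominated by $\|B\|^2_{W^{3,\infty}}$.

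The main delicate point is the term $\mathrm{p}\,\mathrm{q}\,\Delta^{-1}$, where $\mathrm{q}$ must be estimated at level $H^{\ell+1}$. This forces the $\|B\|_{H^{\ell+2}}$ norm and explains the asymmetry between the $T_1$ and $T_2$ bounds. The remaining terms are routine once the derivative bookkeeping is set up.
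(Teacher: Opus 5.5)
Your proposal is correct and takes the same route as the paper. The paper's proof only says that the bounds follow from Lemma \ref{Lemm1Linear} applied to the definitions \eqref{Def:N1}--\eqref{Def:N2}; your term-by-term bookkeeping fills in exactly the computation it leaves implicit. That bookkeeping includes $\ell>7/2$ being what allows $\mathrm{p},\mathrm{q}$ at level $H^{\ell-2}$, and the use of \eqref{qBoundFractionalH1} together with the negative-index bounds for the $L^2$ estimates.
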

\begin{proof}
Recalling definitions \ref{Def:N1} and \ref{Def:N2} of $T_1$ and $T_2$, the statement follows from Lemma \ref{Lemm1Linear}. 
\end{proof}

\begin{lem}\label{LemmaBoundForA}
Let $\ell > 7/2$. Let $F, G :\mathbb{T}^3\to \mathbb{R}^3$ be zero-average, divergence-free vector fields and suppose $\|B\|_{H^{\ell+2}} \lesssim 1$. Then 
\begin{equation}\label{mathcalAbound}
\left\| \mathcal{A} \begin{pmatrix} F \\ G \end{pmatrix} \right\|_{H^{\ell} \times H^{\ell}} \lesssim \|B\|^2_{H^{\ell+2}} \|(F,G)\|_{H^{\ell} \times H^{\ell}},
\end{equation}
as well as:
\begin{equation}\label{fdhbsjdhghgsdsh7473462Stronger}
\left\| \mathcal{A} \begin{pmatrix} F \\ G \end{pmatrix} \right\|_{L^2 \times L^2} \lesssim \|B\|^2_{W^{3, \infty}} \|(F,G)\|_{L^2 \times L^2}.
\end{equation}
\end{lem}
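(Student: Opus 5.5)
The plan is to bound each piece of the explicit formula \eqref{Def:MathcalA} for $\mathcal{A}$ separately, using Lemmas \ref{Lemm2Linear}--\ref{Lemm5Linear} and Lemma \ref{ExpLemma}. The structure to exploit is that $K$ gains one derivative at the cost of one factor of $\|B\|$, by \eqref{fdnjskbjnfhjskhjdbfgsjk}, while $\mathrm{q}$ is of order zero. Hence every commutator involving $K$ is of order at most zero. Since $\|B\|_{H^{\ell+2}}\lesssim 1$, every polynomial in $\|B\|_{H^{\ell+2}}$ of degree at least two is bounded by $\|B\|^2_{H^{\ell+2}}$.

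\emph{Step 1 (diagonal part).} The block $\mathrm{diag}(\tfrac12 M_1+L_1+\tfrac12 T_1,\ \tfrac12 M_2+L_2+\tfrac12 T_2)$ is handled directly. Lemmas \ref{Lemm3Linear}, \ref{Lemm4Linear}, \ref{Lemm5Linear} give bounds $\|B\|^2_{H^{\ell+2}}\|F\|_{H^\ell}$, using $\|F\|_{H^{\ell-1}}\le\|F\|_{H^\ell}$ for $L_i,T_i$. The $L^2$ version \eqref{fdhbsjdhghgsdsh7473462Stronger} follows the same way from the $L^2$ statements of those lemmas, with $\|F\|_{H^{-1}}\le\|F\|_{L^2}$ and $\|B\|_{W^{2,\infty}},\|B\|_{W^{1,\infty}}\le\|B\|_{W^{3,\infty}}$.

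\emph{Step 2 (the commutator $[Q,K]$).} Set $Q:=\begin{pmatrix} M_1+L_1 & -\mathrm{q}\\ \mathrm{q} & M_2+L_2\end{pmatrix}$ and expand $QK-KQ$. In the term $QK$, $K$ maps $H^{\ell-1}\to H^{\ell}$ with constant $\|B\|_{H^{\ell+1}}$, and $Q$ is bounded on $H^\ell$. For the diagonal entries this bound is $\|B\|^2$, by Lemmas \ref{Lemm3Linear}, \ref{Lemm4Linear}; for $\mathrm{q}$ it is $\|B\|_{H^{\ell+1}}$, by \eqref{qBound}. So $\|QK(F,G)\|_{H^\ell}\lesssim \|B\|^2_{H^{\ell+2}}\|(F,G)\|_{H^{\ell-1}}$. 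In the term $KQ$, $Q$ is bounded on $H^{\ell-1}$ (valid since $\ell-1>5/2>3/2$ for \eqref{qBound}; for $M_1,L_1$ I would rerun Lemma \ref{Lemm1Linear} at level $\ell-1$, or observe that $K$ applied to $H^{\ell-1}$ data only needs $\|\cdot\|_{H^{\ell-1}}$ inputs), and then $K$ gains a derivative. This gives the same bound.

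\emph{Step 3 (integral remainder).} Write $R:=\mathrm{diag}(-M_1+T_1,-M_2+T_2)+[Q,K]$. Steps 1--2 show that $R$ is bounded on $H^\ell$ with norm $\lesssim\|B\|^2_{H^{\ell+2}}$. Then $[R,K]=RK-KR$ is also bounded on $H^\ell$ with norm $\lesssim \|B\|^3$, applying the same gain-of-one-derivative argument at level $\ell$ (for $KR$) and using boundedness of $R$ on $H^{\ell-1}$ (for $RK$). Finally, $e^{\pm sK}$ for $s\in[0,1]$ is bounded on $H^\ell$ by $e^{C\|B\|_{H^{\ell+1}}}\lesssim 1$, by \eqref{fdnjskbjnfhjskhjdbfgsjk2}. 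Integrating in $s$ concludes \eqref{mathcalAbound}. The $L^2$ estimate is identical, using \eqref{fdnjskbjnfhjskhjdbfgsjkBis}, \eqref{fdnjskbjnfhjskhjdbfgsjk2L2} and \eqref{qBoundFractional}. There $K$ maps $H^{-1}\to L^2$; for the $QK$-type terms we need $Q$ bounded on $L^2$, and for the $KQ$-type terms we need $Q$ bounded $H^{-1}\to H^{-1}$. The latter comes from \eqref{pBoundFractionalNegative}--\eqref{qBoundFractionalNegative} with $\ell=1$, at the cost of $\|B\|_{W^{2,\infty}}$ and $\Delta^{-1}$ factors.

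\emph{Main obstacle.} The delicate point is the one-level-lower bookkeeping: each factor must be bounded on $H^{\ell-1}$ (respectively on $H^{-1}$ in the $L^2$ case) with constants involving only $\|B\|_{H^{\ell+2}}$ (respectively $\|B\|_{W^{3,\infty}}$). This matters most for $M_1$ and $L_1$, which contain $\mathrm{p}\circ\mathrm{p}\circ\Delta^{-1}$ and $\mathrm{q}\circ\mathrm{p}\circ\Delta^{-1}$. These are order zero only because $\Delta^{-1}$ compensates the two derivatives, so at negative regularity one must use Lemma \ref{lem:GubinelliBound} carefully.
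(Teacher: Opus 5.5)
Your proposal is correct and follows essentially the paper's own proof. Both arguments bound the diagonal block, the commutator $[Q,K]$ and the integral remainder in \eqref{Def:MathcalA} term by term via Lemmas \ref{Lemm1Linear}--\ref{Lemm5Linear}, using that $K$ gains one derivative at the cost of one factor of $B$ and that $e^{\pm sK}$ is uniformly bounded by Lemma \ref{ExpLemma}.

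Two minor remarks.
\begin{itemize}
\item The one-level-lower (and $H^{-1}$) bookkeeping you single out as the main obstacle is not actually needed. The right-hand sides of the claimed bounds carry the full $H^{\ell}$ (resp.\ $L^2$) norm of $(F,G)$, so it suffices that your $Q$, $R$ and $K$ are bounded on $H^{\ell}$ (resp.\ $L^2$), which the cited lemmas give directly, for both $RK$ and $KR$. This also sidesteps the swapped $RK$/$KR$ labels in your Step 3.
\item Your power $\|B\|^2_{H^{\ell+2}}$ for $[Q,K]$ is the accurate one. Its upper-left entry $-\mathrm{q}\circ(\mathrm{p}-\mathrm{q})\circ\Delta^{-1}+\Delta^{-1}\circ(\mathrm{p}+\mathrm{q})\circ\mathrm{q}$ is only quadratic in $B$. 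The paper's \eqref{IneqCommMatr1} states a cube, which overlooks these $\mathrm{q}$-terms but does not affect the conclusion.
\end{itemize}
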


\begin{proof}
We recall the definition of the operator $\mathcal{A}$
\begin{align}\label{fmkdslkfdskjn5}
 \mathcal{A} &:= 
\begin{pmatrix} \frac12 M_1+L_1 + \frac12 T_1 & 0 \\ 0 & \frac12 M_2+L_2 +\frac12 T_2 \end{pmatrix}
+ \frac12\left[ \begin{pmatrix} M_1 + L_1 & - \mathrm{q} \\ \mathrm{q} & M_2 + L_2 \end{pmatrix}, K \right] \nonumber \\
&\quad + \frac12 \int_{0}^1 (1-s)^2 e^{-s K} \left[ \begin{pmatrix} -M_1 + T_1 & 0 \\ 0 & - M_2 + T_2 \end{pmatrix} + \left[ \begin{pmatrix} M_1 + L_1 & - \mathrm{q} \\ \mathrm{q} & M_2 + L_2 \end{pmatrix}, K \right] , K \right] e^{s K} \, \de s.
\end{align}

Let $\ell > 7/2$. We estimate the $H^{\ell} \times H^{\ell}$ norm of the right-hand side of \eqref{fmkdslkfdskjn5} term by term. From Lemmas \ref{Lemm3Linear}, \ref{Lemm4Linear} and \ref{Lemm5Linear}, we have 
\[
\left\| \begin{pmatrix} \frac12 M_1+L_1 + \frac12 T_1 & 0 \\ 0 & \frac12 M_2+L_2 + \frac12 T_2 \end{pmatrix} \begin{pmatrix} F \\ G \end{pmatrix} \right\|_{H^{\ell} \times H^{\ell}} \lesssim \|B\|^2_{H^{\ell+2}} \|(F,G)\|_{H^{\ell} \times H^{\ell}}.
\]
From inequality \eqref{qBound} and Lemmas \ref{Lemm2Linear}, \ref{Lemm3Linear}, and \ref{Lemm4Linear}, we deduce 
\begin{equation}\label{IneqCommMatr1}
\left\| \left[ \begin{pmatrix} M_1 + L_1 & - \mathrm{q} \\ \mathrm{q} & M_2 + L_2 \end{pmatrix}, K \right] \begin{pmatrix} F \\ G \end{pmatrix} \right\|_{H^{\ell} \times H^{\ell}} \lesssim \| B \|_{H^{\ell + 2}}^3 \|(F, G)\|_{H^{\ell -1 } \times H^{\ell -1}}.
\end{equation}
By Lemmas \ref{Lemm2Linear}, \ref{Lemm3Linear}, \ref{Lemm5Linear}, and \ref{ExpLemma}, we have for all $s \in [0,1]$: 
\[
\left\| e^{-s K} \left[ \begin{pmatrix} -M_1 + T_1 & 0 \\ 0 & - M_2 + T_2 \end{pmatrix}, K \right] e^{s K} \begin{pmatrix} F \\ G \end{pmatrix} \right\|_{H^{\ell} \times H^{\ell}} \lesssim \| B \|_{H^{\ell+2}}^3 \|(F, G)\|_{H^{\ell -1 } \times H^{\ell -1}}.
\] 
Furthermore, applying Lemmas \ref{Lemm2Linear} and \ref{ExpLemma} alongside inequality \eqref{IneqCommMatr1} yields for all $s \in [0,1]$: 
\[
\left\| e^{-s K} \left[ \left[ \begin{pmatrix} M_1 + L_1 & - \mathrm{q} \\ \mathrm{q} & M_2 + L_2 \end{pmatrix}, K \right] , K \right] e^{s K} \begin{pmatrix} F \\ G \end{pmatrix} \right\|_{H^{\ell} \times H^{\ell}} \lesssim \| B \|_{H^{\ell+2}}^4 \|(F, G)\|_{H^{\ell -2 } \times H^{\ell -2}}.
\] 
Collecting these estimates yields the desired bound \eqref{mathcalAbound}. 

Similarly, we estimate the $L^2 \times L^2$ norm. By Lemmas \ref{Lemm3Linear}, \ref{Lemm4Linear} and \ref{Lemm5Linear}: 
\[
\left\| \begin{pmatrix} \frac12 M_1+L_1 +\frac12  T_1 & 0 \\ 0 & \frac12 M_2+L_2 + \frac12 T_2 \end{pmatrix} \begin{pmatrix} F \\ G \end{pmatrix} \right\|_{L^2 \times L^2} \lesssim \|B\|^2_{W^{3, \infty}} \|(F,G)\|_{L^2 \times L^2}.
\]
From inequality \eqref{qBoundFractional} and Lemmas \ref{Lemm2Linear}, \ref{Lemm3Linear}, and \ref{Lemm4Linear}, we get 
\begin{equation}\label{IneqCommMatr1Bis}
\left\| \left[ \begin{pmatrix} M_1 + L_1 & - \mathrm{q} \\ \mathrm{q} & M_2 + L_2 \end{pmatrix}, K \right] \begin{pmatrix} F \\ G \end{pmatrix} \right\|_{L^2 \times L^2} \lesssim \| B \|_{W^{3, \infty}}^3 \|(F, G)\|_{H^{-1 } \times H^{-1}}.
\end{equation}
For the terms in the integral, for all $s \in [0,1]$: 
\[
\left\| e^{-s K} \left[ \begin{pmatrix} -M_1 + T_1 & 0 \\ 0 & - M_2 + T_2 \end{pmatrix}, K \right] e^{s K} \begin{pmatrix} F \\ G \end{pmatrix} \right\|_{L^2 \times L^2} \lesssim \| B \|_{W^{3, \infty}}^3 \|(F, G)\|_{H^{ -1 } \times H^{-1}},
\] 
and 
\[
\left\| e^{-s K} \left[ \left[ \begin{pmatrix} M_1 + L_1 & - \mathrm{q} \\ \mathrm{q} & M_2 + L_2 \end{pmatrix}, K \right] , K \right] e^{s K} \begin{pmatrix} F \\ G \end{pmatrix} \right\|_{L^2 \times L^2} \lesssim \| B \|_{W^{3, \infty}}^4 \|(F, G)\|_{H^{-2 } \times H^{-2}}.
\] 
Combining these completes the proof of \eqref{fdhbsjdhghgsdsh7473462Stronger}.
\end{proof}

\subsection{Nonlinear estimates for the remainder terms}
Having addressed the linear operators, we now turn our attention to the nonlinear remainders of the system. The goal of this subsection is to provide $H^\ell$ and $L^2$ bounds for these terms, by taking advantage of the exponential operator bounds and the algebraic properties derived in the previous sections.

\begin{lem}\label{RemainderEstimate1}
Let $\ell > 7/2$, and let $v, m, v', m' : \T^3 \to \R^3$ be zero-average and divergence-free vector fields. Assume $\|B\|_{H^{\ell +1}} \lesssim 1$. 
Then, for $j \in \{1,2\}$ we have that
\begin{align}
\| N_j(v,m)\|_{H^{\ell}} &\lesssim (e^{\|B\|_{H^{\ell +1}}} - 1) \| (v,m)\|_{H^{\ell} \times H^{\ell}} \| (v,m)\|_{H^{\ell-1} \times H^{\ell-1}}, \label{fndjskdjshbf664tg3w4r} \\
\| N_j(v,m) - N_j(v',m') \|_{L^2} &\lesssim (e^{\|B\|_{H^{\ell+1}}} - 1) \| (v-v',m-m')\|_{L^2 \times L^2} \nonumber \\
&\quad \times \big( \| (v,m)\|_{H^{\ell-1} \times H^{\ell-1}} + \| (v',m')\|_{H^{\ell-1} \times H^{\ell-1}} \big). \label{fndjskdjshbf664tg3w4rL2}
\end{align}
\end{lem}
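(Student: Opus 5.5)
The plan is to expand $N_j$ term by term. Each term in \eqref{Def:N_1Nonlinear}--\eqref{Def:N_2Nonlinear} is a tensor product $X \otimes Y$. At least one factor is of the form $\pi_i (e^{K} - \operatorname{Id})(v,m)^T$. The other factor is either $v$, $m$, or again such an expression. So everything reduces to two inputs: the algebra property of $H^\ell$ (valid since $\ell > 7/2 > 3/2$), and the exponential bounds of Lemma \ref{ExpLemma}.

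For \eqref{fndjskdjshbf664tg3w4r}, write $W := (e^{K}-\operatorname{Id})(v,m)^T$. By \eqref{fdnjskbjnfhjskhjdbfgsjk3} with $t=1$,
\[
\|W\|_{H^\ell\times H^\ell} \le (e^{C\|B\|_{H^{\ell+1}}}-1)\,\|(v,m)\|_{H^{\ell-1}\times H^{\ell-1}}.
\]
The mixed terms $m\otimes \pi_2 W$, $v\otimes\pi_1 W$ and their transposes are then handled by the algebra property: their $H^\ell$ norm is at most $\|(v,m)\|_{H^\ell}$ times the bound above. For the quadratic terms $\pi_i W\otimes \pi_{i'}W$, I would estimate one factor by \eqref{fdnjskbjnfhjskhjdbfgsjk3}. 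I would estimate the other factor by the trivial bound
\[
\|W\|_{H^\ell}\le (e^{C\|B\|}-1)\|(v,m)\|_{H^{\ell-1}}\le (e^{C\|B\|}-1)\|(v,m)\|_{H^{\ell}}.
\]
This gives a factor $(e^{C\|B\|}-1)^2\lesssim e^{C\|B\|}-1$, using $\|B\|_{H^{\ell+1}}\lesssim 1$. Finally, $e^{Cx}-1\lesssim_C e^x-1$ for $x\lesssim 1$, since both are comparable to $x$. That converts the constant in the exponent to the form stated.

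For \eqref{fndjskdjshbf664tg3w4rL2}, I would use bilinearity. Writing $X\otimes Y - X'\otimes Y' = (X-X')\otimes Y + X'\otimes (Y-Y')$, each difference term is a product of a difference with an undifferenced field. I put the difference in $L^2$ and the other factor in $L^\infty$, controlled by $H^{\ell-1}\hookrightarrow L^\infty$ (fine since $\ell-1>5/2>3/2$). Linearity of $e^K-\operatorname{Id}$ gives $W-W' = (e^K-\operatorname{Id})(v-v',m-m')^T$. There are then two kinds of terms.
\begin{itemize}
\item In the terms where the difference sits on $W$, I need $\|W-W'\|_{L^2}\lesssim (e^{C\|B\|}-1)\|(v-v',m-m')\|_{L^2}$. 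This follows from \eqref{fdnjskbjnfhjskhjdbfgsjk3L2} together with $\|\cdot\|_{H^{-1}}\le\|\cdot\|_{L^2}$, and $\|B\|_{W^{3,\infty}}\lesssim \|B\|_{H^{\ell+1}}$ by Sobolev embedding since $\ell+1>9/2$. The other factor is $v$, $m$ or $W$ in $L^\infty$, bounded by $\|(v,m)\|_{H^{\ell-1}}$. For $W$ this uses \eqref{fdnjskbjnfhjskhjdbfgsjk2} at level $\ell-1>5/2$, or more simply $\|W\|_{H^{\ell-1}}\lesssim\|W\|_{H^\ell}$ and \eqref{fdnjskbjnfhjskhjdbfgsjk3}.
\item In the terms where the difference sits on $v$ or $m$, the companion factor is $W'$ or $W$. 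I would bound it in $L^\infty\subset H^{\ell-1}$ by $(e^{C\|B\|}-1)\|(v',m')\|_{H^{\ell-1}}$, which again produces the small prefactor.
\end{itemize}

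The main obstacle is bookkeeping rather than anything deep. The point that needs care is the regularity range of Lemma \ref{ExpLemma}, which is stated for $\ell>7/2$. When I need the $L^\infty$ control of $W$ at level $\ell-1$, I should use $\|W\|_{L^\infty}\lesssim\|W\|_{H^{\ell}}$ together with \eqref{fdnjskbjnfhjskhjdbfgsjk3}. This avoids ever applying the lemma below its validity range.
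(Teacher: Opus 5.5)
Your proposal is correct and follows essentially the same route as the paper: the algebra property of $H^\ell$ combined with \eqref{fdnjskbjnfhjskhjdbfgsjk3} gives the first bound, and H\"older with \eqref{fdnjskbjnfhjskhjdbfgsjk3L2}, \eqref{fdnjskbjnfhjskhjdbfgsjk3} and the embedding $H^{\ell+1}\hookrightarrow W^{3,\infty}$ gives the difference bound. You also fill in two details the paper leaves implicit: you apply Lemma \ref{ExpLemma} only at level $\ell$, bounding $\|W\|_{L^\infty}$ through $\|W\|_{H^\ell}$, and you convert $e^{C\|B\|}-1$ into $e^{\|B\|}-1$ using $\|B\|_{H^{\ell+1}}\lesssim 1$.
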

\begin{proof}
Recalling definitions \eqref{Def:N_1Nonlinear} and \eqref{Def:N_2Nonlinear} of $N_j$, inequality \eqref{fndjskdjshbf664tg3w4r} follows from the algebra property of $H^{\ell}$ and inequality \eqref{fdnjskbjnfhjskhjdbfgsjk3}. Inequality \eqref{fndjskdjshbf664tg3w4rL2} follows from Hölder's inequality and inequalities \eqref{fdnjskbjnfhjskhjdbfgsjk3L2} and \eqref{fdnjskbjnfhjskhjdbfgsjk3} (together with the embedding $H^{\ell+1}(\T^3) \hookrightarrow W^{3, \infty}(\T^3)$).
\end{proof}

In the following lemma, we will prove a better estimate for the divergence of $N_2$. The crucial improvement is that there are no higher derivatives on $m$ on the right-hand side. 
 
\begin{lem}\label{ImprovedDIV}
Let $\ell > 7/2$, and let $v, m, v', m' : \T^3 \to \R^3$ be zero-average and divergence-free vector fields. Assume $\|B\|_{H^{\ell +2}} \lesssim 1$, then
\begin{align}
\| \dive N_2(v,m)\|_{H^{\ell}} &\lesssim (e^{\|B\|_{H^{\ell +2}}} - 1) \Big( \| v\|_{H^{\ell +1}} \| (v,m)\|_{H^{\ell-1} \times H^{\ell-1}} + \| (v,m)\|^2_{H^{\ell} \times H^{\ell}} \Big), \label{ImprovedDIVBound} \\
\| \dive (N_2(v,m) - N_2(v',m'))\|_{L^2} &\lesssim (e^{\|B\|_{H^{\ell+1}}} - 1) \Big( \| \nabla v - \nabla v' \|_{L^2} + \|m - m'\|_{L^2} \Big) \nonumber \\
&\quad \times \Big( \| (v,m)\|_{H^{\ell-1} \times H^{\ell-1}} + \| (v',m')\|_{H^{\ell-1} \times H^{\ell-1}} \Big). \label{ImprovedDIVBoundL2}
\end{align}
\end{lem}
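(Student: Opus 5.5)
Write $R := \pi_1(e^K-\operatorname{Id})(v,m)$ and $S := \pi_2(e^K-\operatorname{Id})(v,m)$. Then, by \eqref{Def:N_2Nonlinear},
$$N_2(v,m) = v\otimes S - S\otimes v + R\otimes S - S\otimes R.$$
By \eqref{fnejskkhjfg6458293u2} and $\dive v = 0$, identity \eqref{OtimesVSnabla} gives
$$\dive N_2 = (S\cdot\nabla) v - (v\cdot\nabla) S + (S\cdot\nabla) R - (R\cdot\nabla) S .$$
The point is that $m$ never appears undifferentiated-free at top order: it only enters through $R$ and $S$. By \eqref{fdnjskbjnfhjskhjdbfgsjk3}, applied at regularity $\ell+1$ (which needs $\|B\|_{H^{\ell+2}}$, hence the hypothesis),
$$\|R\|_{H^{\ell+1}} + \|S\|_{H^{\ell+1}} \lesssim (e^{C\|B\|_{H^{\ell+2}}}-1)\,\|(v,m)\|_{H^\ell\times H^\ell}.$$
Thus $e^K-\operatorname{Id}$ gains one derivative, which absorbs the derivative falling on $S$ or $R$. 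At regularity $\ell$ the same lemma gives $\|R\|_{H^\ell}+\|S\|_{H^\ell}\lesssim (e^{C\|B\|}-1)\|(v,m)\|_{H^{\ell-1}\times H^{\ell-1}}$. The constant $C$ in the exponent is harmless since $\|B\|\lesssim 1$, because $e^{Cx}-1\lesssim e^x-1$ on bounded sets.

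For \eqref{ImprovedDIVBound} I estimate each term with the algebra property of $H^\ell$ ($\ell>3/2$). First, $\|(S\cdot\nabla)v\|_{H^\ell}\lesssim \|S\|_{H^\ell}\|v\|_{H^{\ell+1}}$, which produces the $\|v\|_{H^{\ell+1}}\|(v,m)\|_{H^{\ell-1}\times H^{\ell-1}}$ contribution. Next, $\|(v\cdot\nabla)S\|_{H^\ell}\lesssim \|v\|_{H^\ell}\|S\|_{H^{\ell+1}}\lesssim (e^{C\|B\|}-1)\|(v,m)\|^2_{H^\ell\times H^\ell}$. The quadratic terms in $(R,S)$ are bounded in the same way by $\|R\|_{H^{\ell}}\|S\|_{H^{\ell+1}}$ and its symmetric counterpart, which carry an extra factor $(e^{C\|B\|}-1)\lesssim 1$.

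For \eqref{ImprovedDIVBoundL2} I write each difference telescopically, for instance
$$(S\cdot\nabla)v-(S'\cdot\nabla)v' = ((S-S')\cdot\nabla)v + (S'\cdot\nabla)(v-v'),$$
and estimate with Hölder's inequality. The pieces are handled as follows.
\begin{itemize}
\item For $((S-S')\cdot\nabla) v$, put $S-S'$ in $L^2$, bounded via \eqref{fdnjskbjnfhjskhjdbfgsjk3L2} by $(e^{C\|B\|_{W^{3,\infty}}}-1)\|(v-v',m-m')\|_{H^{-1}}$, and $\nabla v$ in $L^\infty$, controlled by $\|v\|_{H^{\ell-1}}$ since $\ell-1>5/2$.
\item For $(S'\cdot\nabla)(v-v')$, put $S'$ in $L^\infty$ and keep $\|\nabla(v-v')\|_{L^2}$.
\item For $(v\cdot\nabla)(S-S')$, use $\|\nabla (S-S')\|_{L^2}\le\|(e^K-\operatorname{Id})(v-v',m-m')\|_{H^1}$, which by \eqref{fdnjskbjnfhjskhjdbfgsjk3H1} is $\lesssim (e^{C\|B\|}-1)\|(v-v',m-m')\|_{L^2}$. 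This is again the one-derivative gain, and it is what allows only $\|m-m'\|_{L^2}$ on the right.
\item For $((v-v')\cdot\nabla)S'$, use $\|v-v'\|_{L^2}\lesssim\|\nabla(v-v')\|_{L^2}$ (Poincaré, zero average) and $\|\nabla S'\|_{L^\infty}\lesssim\|S'\|_{H^{\ell}}$.
\item The $R$–$S$ terms are treated identically.
\end{itemize}
Finally, the Sobolev norms of $B$ involved are all controlled by $\|B\|_{H^{\ell+1}}$, via $H^{\ell+1}\hookrightarrow W^{3,\infty}$ and $H^{5/2}\subset H^{\ell+1}$.

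The main obstacle is the bookkeeping of regularity: making sure every top-order derivative lands either on $v$, which is allowed in $H^{\ell+1}$, or on $R$ or $S$, where the smoothing of $e^K-\operatorname{Id}$ absorbs it. In particular the $H^{\ell+1}$ bound on $S$ requires applying Lemma \ref{ExpLemma} at level $\ell+1>7/2$, which is why $\|B\|_{H^{\ell+2}}$ rather than $\|B\|_{H^{\ell+1}}$ is needed in \eqref{ImprovedDIVBound}.
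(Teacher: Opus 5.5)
Your proposal is correct and is essentially the paper's proof. You rewrite $\dive N_2$ via \eqref{OtimesVSnabla} using \eqref{fnejskkhjfg6458293u2}, then obtain \eqref{ImprovedDIVBound} from the algebra property of $H^\ell$ together with \eqref{fdnjskbjnfhjskhjdbfgsjk3} applied at level $\ell+1$ (which is where $\|B\|_{H^{\ell+2}}$ enters), and \eqref{ImprovedDIVBoundL2} from H\"older's inequality with \eqref{fdnjskbjnfhjskhjdbfgsjk3}, \eqref{fdnjskbjnfhjskhjdbfgsjk3L2}, \eqref{fdnjskbjnfhjskhjdbfgsjk3H1} and $H^{\ell+1}\hookrightarrow W^{3,\infty}$; you also make explicit steps the paper leaves implicit, namely the Poincar\'e inequality for $\|v-v'\|_{L^2}$ and absorbing the constant $C$ in the exponent using $\|B\|\lesssim 1$.
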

\begin{proof}
Recalling definition \eqref{Def:N_2Nonlinear} of $N_2$ and the fact that 
\[
\dive \pi_j (e^{K} - \Id) \begin{pmatrix} v \\ m \end{pmatrix} = 0, \qquad j \in \{1,2\} \quad \mbox{(see Proposition \ref{prop:algebraic_properties}),}
\] 
we can use identity \eqref{OtimesVSnabla} to rewrite $\dive N_2$ as
\begin{align*}
\dive N_2(v,m) &= \left( \pi_2 (e^{K} - \Id) \begin{pmatrix} v \\ m \end{pmatrix} \cdot \nabla \right) v - (v \cdot \nabla ) \pi_2 (e^{K} - \Id) \begin{pmatrix} v \\ m \end{pmatrix} \\
&\quad + \dive \left(\pi_1 (e^{K} - \Id) \begin{pmatrix} v \\ m \end{pmatrix} \otimes \pi_2 (e^{K} - \Id) \begin{pmatrix} v \\ m \end{pmatrix} \right. \\
&\quad \left. - \pi_2 (e^{K} - \Id) \begin{pmatrix} v \\ m \end{pmatrix} \otimes (\pi_1 e^{K} - \Id) \begin{pmatrix} v \\ m \end{pmatrix} \right). 
\end{align*}
Inequality \eqref{ImprovedDIVBound} then follows from the algebra property of $H^{\ell}$ and inequality \eqref{fdnjskbjnfhjskhjdbfgsjk3}, while inequality \eqref{ImprovedDIVBoundL2} follows from Hölder's inequality and inequalities \eqref{fdnjskbjnfhjskhjdbfgsjk3}, \eqref{fdnjskbjnfhjskhjdbfgsjk3L2}, and \eqref{fdnjskbjnfhjskhjdbfgsjk3H1} (together with the embedding $H^{\ell+1}(\T^3) \hookrightarrow W^{3, \infty}(\T^3)$).
\end{proof}

\begin{lem}\label{NTildeLamma}
Let $\ell > 7/2$, and let $v, m, v', m' : \T^3 \to \R^3$ be zero-average and divergence-free vector fields. Assume $\|B\|_{H^{\ell +1}} \lesssim 1$. 
Then, for $j \in \{1,2\}$ we have that
\begin{align*}
\| \tilde N_j(v,m)\|_{H^{\ell}} &\lesssim \| (v,m)\|^2_{H^{\ell} \times H^{\ell}}, \\
\| \tilde N_j(v,m) - \tilde N_j(v',m')\|_{L^2} &\lesssim \| (v - v',m - m')\|_{L^2 \times L^2} \big(\| (v,m)\|_{H^{\ell} \times H^{\ell}} + \| (v',m')\|_{H^{\ell} \times H^{\ell}} \big).
\end{align*}
\end{lem}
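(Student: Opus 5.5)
The plan is to write each $\tilde N_j(v,m)$ as a bilinear expression in the two components of $e^{K}(v,m)$, and then reduce both bounds to the algebra property of $H^\ell$ (respectively Hölder's inequality) combined with the exponential bounds of Lemma \ref{ExpLemma}. Set
$$
W := e^{K}\begin{pmatrix} v\\ m\end{pmatrix}, \qquad W_i := \pi_i W, \qquad i\in\{1,2\}.
$$
By definitions \eqref{DefTildeN1}--\eqref{DefTildeN2}, each $\tilde N_j$ is a difference of tensor products of the form $W_i\otimes W_k$.

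\textbf{The $H^\ell$ bound.} Since $\ell>7/2>3/2$, $H^\ell$ is an algebra, so each component of $W_i\otimes W_k$ satisfies
$$
\|W_i\otimes W_k\|_{H^\ell}\lesssim \|W_i\|_{H^\ell}\|W_k\|_{H^\ell}.
$$
By \eqref{fdnjskbjnfhjskhjdbfgsjk2} with $t=1$ and the assumption $\|B\|_{H^{\ell+1}}\lesssim 1$, we have $\|W\|_{H^\ell\times H^\ell}\le e^{C\|B\|_{H^{\ell+1}}}\|(v,m)\|_{H^\ell\times H^\ell}\lesssim \|(v,m)\|_{H^\ell\times H^\ell}$. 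This yields the first estimate. Proposition \ref{prop:algebraic_properties} guarantees that $W_i$ is zero-average and divergence-free, so Lemma \ref{ExpLemma} indeed applies.

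\textbf{The $L^2$ difference bound.} Let $W'$ be the analogous field built from $(v',m')$. Write each difference of tensor products as
$$
W_i\otimes W_k - W_i'\otimes W_k' = (W_i-W_i')\otimes W_k + W_i'\otimes (W_k-W_k'),
$$
and apply Hölder's inequality to obtain
$$
\|\,\cdot\,\|_{L^2}\le \|W_i-W_i'\|_{L^2}\|W_k\|_{L^\infty}+\|W_i'\|_{L^\infty}\|W_k-W_k'\|_{L^2}.
$$
By linearity, $W-W'=e^K(v-v',m-m')$, and \eqref{fdnjskbjnfhjskhjdbfgsjk2L2} gives
$$
\|W-W'\|_{L^2\times L^2}\le e^{C\|B\|_{W^{3,\infty}}}\|(v-v',m-m')\|_{L^2\times L^2}.
$$
The embedding $H^{\ell+1}\hookrightarrow W^{3,\infty}$ (valid since $\ell+1>9/2$) together with the assumption $\|B\|_{H^{\ell+1}}\lesssim 1$ makes this constant harmless. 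For the $L^\infty$ factors, the Sobolev embedding $H^\ell\hookrightarrow L^\infty$ and the $H^\ell$ bound above give $\|W_k\|_{L^\infty}\lesssim\|(v,m)\|_{H^\ell\times H^\ell}$, and likewise for $W'$. Summing the finitely many terms concludes the proof.

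There is no genuine obstacle: the only point needing care is that the $L^2$ bound for $e^K$ requires $B\in W^{3,\infty}$, and this is supplied by the embedding.
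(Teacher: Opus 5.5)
Your proposal is correct and follows the paper's proof: the $H^\ell$ bound comes from the algebra property of $H^\ell$ together with \eqref{fdnjskbjnfhjskhjdbfgsjk2}, and the $L^2$ difference bound comes from the bilinear splitting and H\"older's inequality, with \eqref{fdnjskbjnfhjskhjdbfgsjk2L2} controlling the difference factor and \eqref{fdnjskbjnfhjskhjdbfgsjk2} plus $H^\ell\hookrightarrow L^\infty$ controlling the other. You also check explicitly that $H^{\ell+1}\hookrightarrow W^{3,\infty}$ holds precisely because $\ell>7/2$, a detail the paper leaves implicit.
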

\begin{proof}
Recalling definitions \eqref{DefTildeN1} and \eqref{DefTildeN2} of $\tilde N_j$, the first inequality follows from the algebra property of $H^{\ell}$ and inequality \eqref{fdnjskbjnfhjskhjdbfgsjk2}. The second inequality follows from Hölder's inequality and inequalities \eqref{fdnjskbjnfhjskhjdbfgsjk2} and \eqref{fdnjskbjnfhjskhjdbfgsjk2L2}.
\end{proof}

\begin{lem}\label{lem:GammaEstimate}
Let $\ell > 7/2$, and let $v, m, v', m' : \T^3 \to \R^3$ be zero-average and divergence-free vector fields. Assume $\|B\|_{H^{\ell +1}} \lesssim 1$, then
\begin{align}\label{GammaEstimate1}
\|\Gamma(v,m)\|_{H^{\ell}\times{H^{\ell}}} &\lesssim \left(e^{\|B\|_{H^{\ell +1}}} - 1\right) \| (v,m)\|^2_{H^{\ell} \times H^{\ell}},  \\
\label{GammaEstimate1L2} &\|\Gamma(v,m) - \Gamma(v',m')\|_{L^2 \times L^2} 
\\ 
&\lesssim \left(e^{\|B\|_{W^{3, \infty}}} - 1\right) \| (v - v',m- m')\|_{L^2 \times L^2} \nonumber  \big(\| (v,m)\|_{H^{\ell} \times H^{\ell}} + \| (v',m')\|_{ H^{\ell} \times H^{\ell}}\big). 
\end{align}
\end{lem}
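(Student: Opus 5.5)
The plan is to expand $\Gamma$ as in \eqref{DEf:Gamma} and estimate it term by term. Each term is a power $(-K)^j$, $j\ge1$, applied to the vector
$$W(v,m):=\begin{pmatrix} \mathbb{P}\dive \tilde N_1(v,m) \\ \dive \tilde N_2(v,m)\end{pmatrix}.$$
The key observation is that $W$ loses one derivative with respect to $\tilde N_j$, while a single application of $K$ gains one derivative. So we use one factor of $K$ to absorb the divergence and bound the remaining $K^{j-1}$ at fixed regularity.

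\textbf{The $H^\ell$ bound.} Write $K^j W = K^{j-1}(KW)$. By \eqref{fdnjskbjnfhjskhjdbfgsjk} we have
$$\|KW\|_{H^\ell\times H^\ell}\lesssim \|B\|_{H^{\ell+1}}\|W\|_{H^{\ell-1}\times H^{\ell-1}}\lesssim \|B\|_{H^{\ell+1}}\,\|(\tilde N_1,\tilde N_2)\|_{H^\ell\times H^\ell},$$
since $\mathbb{P}\dive$ and $\dive$ map $H^\ell$ to $H^{\ell-1}$. Lemma \ref{NTildeLamma} then bounds the last factor by $\|(v,m)\|^2_{H^\ell\times H^\ell}$.

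For the remaining $K^{j-1}$, again by \eqref{fdnjskbjnfhjskhjdbfgsjk}, each application of $K$ on $H^\ell$ costs a factor $C\|B\|_{H^{\ell+1}}$, using the trivial embedding $H^\ell\subset H^{\ell-1}$. Iterating gives
$$\|K^jW\|_{H^\ell\times H^\ell}\le C^j\|B\|^j_{H^{\ell+1}}\|(v,m)\|^2_{H^\ell\times H^\ell}.$$
Summing $\sum_{j\ge1} C^j\|B\|^j/j! = e^{C\|B\|}-1$, and using $e^{Cx}-1\lesssim e^{x}-1$ for $x\lesssim1$, yields \eqref{GammaEstimate1}. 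The divergence-free and zero-average properties needed to apply $K$ repeatedly, with $\Delta^{-1}$ well defined, come from Proposition \ref{prop:algebraic_properties}.

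\textbf{The $L^2$ difference bound.} The same scheme applies to $W(v,m)-W(v',m')$, now working at the $L^2$ level. Using \eqref{fdnjskbjnfhjskhjdbfgsjkBis},
$$\|K(W-W')\|_{L^2\times L^2}\lesssim \|B\|_{W^{3,\infty}}\|W-W'\|_{H^{-1}\times H^{-1}}\lesssim\|B\|_{W^{3,\infty}}\|\tilde N(v,m)-\tilde N(v',m')\|_{L^2},$$
because $\dive$ and $\mathbb{P}\dive$ are bounded from $L^2$ to $H^{-1}$. The second inequality of Lemma \ref{NTildeLamma} then gives the factor
$$\|(v-v',m-m')\|_{L^2\times L^2}\big(\|(v,m)\|_{H^\ell\times H^\ell}+\|(v',m')\|_{H^\ell\times H^\ell}\big).$$
Subsequent powers of $K$ are controlled on $L^2$ by \eqref{fdnjskbjnfhjskhjdbfgsjkBis}, using $\|\cdot\|_{H^{-1}}\le\|\cdot\|_{L^2}$, each at cost $C\|B\|_{W^{3,\infty}}$. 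Summing the series gives $e^{C\|B\|_{W^{3,\infty}}}-1$, which is \eqref{GammaEstimate1L2} since $\|B\|_{W^{3,\infty}}\lesssim\|B\|_{H^{\ell+1}}\lesssim 1$.

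\textbf{Main obstacle.} The only delicate point is that \eqref{fdnjskbjnfhjskhjdbfgsjk} is stated for $\ell>7/2$, and we apply it at level $\ell$, which is exactly our assumption. Some care is needed because $\pi_1 K$ involves only $G$ and $\pi_2K$ only $F$, but the combined bound handles both components, so no further difficulty is expected.
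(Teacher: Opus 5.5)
Your proposal is correct and follows the paper's proof. You expand $\Gamma$ in powers of $K$, use one application of Lemma~\ref{Lemm2Linear} (at the $H^\ell$ or $L^2$ level) to absorb the derivative in $\mathbb{P}\dive\tilde N_1$ and $\dive\tilde N_2$, bound the remaining factors of $K$ by the same lemma plus a trivial embedding, invoke Lemma~\ref{NTildeLamma}, and sum the exponential series; your explicit tracking of the $C^j$ constants and of $e^{Cx}-1\lesssim e^{x}-1$ just makes precise what the paper leaves implicit.
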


\begin{proof}
Recalling definition \eqref{DEf:Gamma} of $\Gamma(v,m)$, we obtain using Lemmas \ref{Lemm2Linear} and \ref{NTildeLamma}, for all $j \geq 1$
\begin{align*}
\frac{1}{j !} \left\| (-K)^j \begin{pmatrix} \mathbb{P} \dive \tilde N_1(v,m) \\ \dive \tilde N_2(v,m) \end{pmatrix} \right\|_{H^{\ell}\times{H^{\ell}}}
&\lesssim \frac{1}{j !} \|B\|_{H^{\ell +1}}^{j} \left\| \begin{pmatrix} \mathbb{P} \dive \tilde N_1(v,m) \\ \dive \tilde N_2(v,m) \end{pmatrix} \right\|_{H^{\ell-1} \times H^{\ell -1}} \\ 
&\lesssim \frac{1}{j !} \|B\|_{H^{\ell +1}}^{j} \left\| \begin{pmatrix} \tilde N_1(v,m) \\ \tilde N_2(v,m) \end{pmatrix} \right\|_{H^{\ell} \times H^{\ell}} \\ 
&\lesssim \frac{1}{j !} \|B\|_{H^{\ell +1}}^{j} \| (v,m)\|^2_{H^{\ell} \times H^{\ell}},
\end{align*}
as well as
\begin{align*}
\frac{1}{j !}  \left\| (-K)^j \right. &\left. \begin{pmatrix} \mathbb{P} \dive ( \tilde N_1(v,m) - \tilde N_1(v',m') ) \\ \dive ( \tilde N_2(v,m) - \tilde N_2(v',m')) \end{pmatrix} \right\|_{L^2\times L^2}\\
&\lesssim \frac{1}{j !} \|B\|_{W^{3, \infty}}^{j} \left\| \begin{pmatrix} \mathbb{P} \dive ( \tilde N_1(v,m) - \tilde N_1(v',m') ) \\ \dive ( \tilde N_2(v,m) - \tilde N_2(v',m') ) \end{pmatrix} \right\|_{H^{-1} \times H^{-1}} \\ 
&\lesssim \|B\|_{W^{3, \infty}}^{j} \left\| \begin{pmatrix} \tilde N_1(v,m) - \tilde N_1(v',m') \\ \tilde N_2(v,m) - \tilde N_2(v',m') \end{pmatrix} \right\|_{L^2 \times L^2} \\ 
&\lesssim \frac{1}{j !} \|B\|_{W^{3, \infty}}^{j} \| (v - v',m- m')\|_{L^2 \times L^2} \big(\| (v,m)\|_{H^\ell \times H^\ell} + \| (v',m')\|_{H^\ell \times H^\ell} \big).
\end{align*}
The statement follows by summing over $j \geq 1$.
\end{proof}

\subsection{Flow estimates}

Here we prove some preliminary estimates on the flow of a divergence-free vector field in $\Zeta$, that is the Banach space defined in \eqref{Def:Z}, for $\ell$ sufficiently large. 
Given $t_0, t \geq 0$, we denote with $\Phi_{t_0,t} (x)$ the flow associated to the vector field $V$ between times $t_0$ and $t$, namely the solution of
\begin{equation}\label{nFlowDefInLemma}
\begin{cases}
\frac{\de}{\de t} \Phi_{t_0,t} (x)=V(t,\Phi_{t_0,t}(x)),\\
\Phi_{t_0,t_0} (x)=x,
\end{cases}
\end{equation}

We start by proving Sobolev estimates on the flow.

\begin{lem}\label{stime flussoBis}
Let $\ell \geq 4$, $t\geq t_0 \geq 0$, $\theta \in (0,1)$ and $\varepsilon \in (0, \theta)$. 
Let $V \in C^1([0, +\infty) \times \T^3; \R^3) $ be a divergence-free vector field satisfying
\begin{equation}\label{piccolezza spiegone InLemma}
\| V \|_{\Zeta} \lesssim  \varepsilon.
\end{equation}
Then, the flow $\Phi_{t_0,t}$ of $V$ satisfies
\begin{align}\label{stima flussiBis}
\sup_{t\geq 0} \| \nabla \Phi_{t_0,t} \|_{H^{\ell -1}}& \lesssim 1 .
\end{align}
\end{lem}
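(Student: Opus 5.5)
We differentiate the flow equation \eqref{nFlowDefInLemma} in $x$ and work with the Jacobian matrix $A(t):=\nabla\Phi_{t_0,t}$, which solves the linear ODE
\begin{equation*}
\frac{\de}{\de t} A(t) = (\nabla V)(t,\Phi_{t_0,t})\, A(t), \qquad A(t_0)=\Id .
\end{equation*}
The plan has three steps: first an $L^\infty$ bound on $A$, then an $H^{\ell-1}$ energy estimate for $A$ via Lemma \ref{CorollaryFaaDiBruno}, and finally a Grönwall argument closed by a bootstrap.

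\textbf{Step 1 ($L^\infty$ bound).} Sobolev embedding with $\ell\geq 4$ gives $\|\nabla V(t)\|_{L^\infty}\lesssim \|V(t)\|_{H^\ell}\lesssim \varepsilon e^{-\theta t}$. Grönwall then yields
$$\|A(t)\|_{L^\infty}\leq \exp\Big(C\int_{t_0}^{\infty}\varepsilon e^{-\theta s}\,\de s\Big)\leq e^{C\varepsilon/\theta}\lesssim 1,$$
uniformly in $t\geq t_0\geq 0$; this is where the assumption $\varepsilon<\theta$ enters. Since $\dive V=0$, we also have $\det\nabla\Phi_{t_0,t}\equiv 1$, so the hypothesis \eqref{JacobianBounded} of Lemma \ref{CorollaryFaaDiBruno} holds.

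\textbf{Step 2 ($H^{\ell-1}$ estimate).} Apply $\partial^\alpha$ with $|\alpha|\leq \ell-1$ to the ODE and estimate $\frac{\de}{\de t}\|A\|_{H^{\ell-1}}$. We use the product estimate (algebra property of $H^{\ell-1}$, $\ell-1\geq 3>3/2$), in the tame form
$$\|(\nabla V\circ\Phi)A\|_{H^{\ell-1}}\lesssim \|\nabla V\circ\Phi\|_{H^{\ell-1}}\|A\|_{L^\infty}+\|\nabla V\circ\Phi\|_{L^\infty}\|A\|_{H^{\ell-1}}.$$
For the composition we use Lemma \ref{CorollaryFaaDiBruno} with $k=\ell-1$; when $\ell-1<4$ we use the elementary $H^3$ version proved by the same Faà di Bruno count. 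This gives
$$\|\nabla V\circ\Phi\|_{H^{\ell-1}}\lesssim \|V\|_{H^\ell}\,(1+\|A\|_{H^{\ell-2}}^{\ell-1}).$$

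\textbf{Main obstacle (Step 3).} The polynomial power of $\|A\|$ makes the resulting inequality superlinear, so plain Grönwall does not close. We handle this with a bootstrap. Set $X(t):=\sup_{t_0\leq s\leq t}\|A(s)\|_{H^{\ell-1}}$ and assume $X\leq 2C_0$ on an interval. There, integrating in time gives
$$X(t)\leq C_0+C\varepsilon\theta^{-1}\big(1+(2C_0)^{\ell}\big),$$
using $\int_{t_0}^{\infty} e^{-\theta s}\,\de s\leq\theta^{-1}$. Since $\varepsilon\ll\theta$ (the implicit smallness in \eqref{piccolezza spiegone InLemma}), the right-hand side is at most $\tfrac32 C_0$, so the bootstrap closes by continuity. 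Alternatively, one can avoid the bootstrap: proceed by induction on the derivative order, so that lower-order norms of $A$ are already bounded and the top-order inequality becomes linear, and then apply Grönwall.

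Either way we obtain $\sup_{t}\|\nabla\Phi_{t_0,t}\|_{H^{\ell-1}}\lesssim 1$, uniformly in $t_0$, which is \eqref{stima flussiBis}. The $C^1$ regularity of $V$ is only used to justify the ODE and the differentiation; the quantitative estimates can be made rigorous by mollifying $V$ and passing to the limit.
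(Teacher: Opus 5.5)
Your proposal is sound, but it takes a different route from the paper's proof, and you should make the closing step you list as an alternative your main argument. The paper never uses the Jacobian ODE or Lemma~\ref{CorollaryFaaDiBruno}. It proceeds as follows:
\begin{itemize}
\item By Gr\"onwall it first proves $L^\infty$ bounds for both $\nabla\Phi_{t_0,t}$ and $\nabla^2\Phi_{t_0,t}$, of size $e^{C\varepsilon/\theta}$ and $e^{C\varepsilon/\theta}\varepsilon/\theta$.
\item It expands $\partial^\alpha(V\circ\Phi_{t_0,t})$, $1\le|\alpha|\le\ell$, by Fa\`a di Bruno and splits according to $|\mu|$, using Lemma~\ref{LemProd} for the intermediate terms. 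Everything is then bounded by $\|V(t)\|_{H^\ell}(1+\|\nabla\Phi_{t_0,t}\|_{H^{\ell-1}})$.
\item This inequality is linear, so a single Gr\"onwall gives $\|\nabla\Phi_{t_0,t}\|^2_{H^{\ell-1}}\lesssim e^{C\varepsilon/\theta}(1+\varepsilon/\theta)$, which is $O(1)$ using only $\varepsilon<\theta$.
\end{itemize}
Your version is more modular: a tame product estimate plus the already-proved composition lemma, with no need for the $W^{2,\infty}$ bound or the Littlewood--Paley product lemma. The price is that Lemma~\ref{CorollaryFaaDiBruno} is not tame in $\Phi$. It carries the factor $\|\nabla\Phi\|^{k}_{H^{k-1}}$, and that is what makes your inequality superlinear.

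This is why the bootstrap, as written, does not prove the statement. It closes only if $C\varepsilon\theta^{-1}(1+(2C_0)^\ell)\le C_0/2$, i.e.\ $\varepsilon/\theta\le c(\ell)$ for some small constant. The hypotheses give only $\varepsilon<\theta$, hence $\|V\|_{\Zeta}/\theta\lesssim 1$; there is no further ``implicit smallness'' in \eqref{piccolezza spiegone InLemma}. This is harmless where the lemma is applied, since there $\theta=\frac14$ and $\varepsilon\ll 1$, but it is a strictly stronger hypothesis.

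Your alternative, induction on the regularity index $k=0,1,\dots,\ell-1$, does work for all $\varepsilon<\theta$. At level $k$ the composition term involves only $\|A\|_{H^{k-1}}$, so the inequality for $\|A\|_{H^k}$ is linear. Gr\"onwall then gives a bound controlled by $e^{C\varepsilon/\theta}$ and $\varepsilon/\theta$ alone.

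Carrying this out requires composition bounds at every level, including $k\le 3$, where Lemma~\ref{CorollaryFaaDiBruno} is not available; you flagged this only for the top level when $\ell=4$. These bounds are elementary using three ingredients: your Step~1 bound on $\|A\|_{L^\infty}$, the embedding $\|\nabla^2 V\|_{L^\infty}\lesssim\|V\|_{H^4}$, and volume preservation. For example, at $k=3$ the middle term $(\nabla^3 V\circ\Phi_{t_0,t})\,\nabla^2\Phi_{t_0,t}\,\nabla\Phi_{t_0,t}$ is estimated in $L^4\times L^4\times L^\infty$. Presented that way, your argument is a complete proof of the lemma as stated.
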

\begin{proof}
First we prove two preliminary bounds for the first and second partial derivatives. Namely, for all $\varepsilon >0$ it holds
\begin{equation}\label{Cidfgnusirg}
\sup_{t \geq 0} \| \partial_j  \Phi_{t_0,t}  \|_{L^{\infty}} 
 \lesssim e^{C \frac{ \varepsilon}{\theta}},
\end{equation}
and 
\begin{align} \label{fdnjskdhgjbgbg234}
\sup_{t \geq 0}\| \partial_i \partial_j  \Phi_{t_0,t} \|_{L^{\infty}} 
 \lesssim  e^{C \frac{\varepsilon}{\theta} } \frac{\varepsilon}{  \theta } .
\end{align}

\textbf{Proof of \eqref{Cidfgnusirg}}: from \eqref{nFlowDefInLemma}
we deduce
\begin{equation}\label{mnfdjksnnnfdjksjdnf}
\frac{\de}{\de t} \partial_j \Phi_{t_0,t} (x) = \sum_{k} (\partial_k V)(t, \Phi_{t_0,t} (x)) \, \partial_j (\Phi_{t_0,t} (x))_k,
\end{equation}
so that
$$
\frac{\de}{\de t} | \partial_j  \Phi_{t_0,t} (x)| = \frac{\partial_j  \Phi_{t_0,t} (x)}{|\partial_j  \Phi_{t_0,t} (x)|} \cdot  \frac{\de}{\de t} \partial_j \Phi_{t_0,t} (x)
\leq \left| \frac{\de}{\de t} \partial_j \Phi_{t_0,t} (x) \right| \lesssim  \| \nabla V(t,\cdot) \|_{L^\infty}  | \partial_j (\Phi_{t_0,t} (x))| .
$$
Thus, by Gronwall inequality, for all $t \geq 0$ we have
\begin{align} \label{fdnjskdhgjbgbg23}
| \partial_j  \Phi_{t_0,t} (x)| & \leq | \partial_j  \Phi_{t_0,t_0} (x) | e^{C\int_{t_0}^t \| \nabla V(s,\cdot) \|_{L^\infty} \, \de s} 
\\ & \nonumber
\lesssim 
e^{C \int_{t_0}^t \| V (s,\cdot) \|_{H^3} \, \de s}  \leq  e^{C \int_{t_0}^t e^{-\theta s} \| V (s,\cdot) \|_{\Zeta} \, \de s}  
\overset{\eqref{piccolezza spiegone InLemma}}{\leq}
 e^{C \varepsilon\int_{t_0}^t e^{-\theta s}  \, \de s} \leq e^{C\frac{\varepsilon}{\theta}} 
\end{align}
where in the second inequality we used Sobolev embeddings and the fact that $\Phi_{t_0,t_0}$ is the identity.\\
\\
\textbf{Proof of \eqref{fdnjskdhgjbgbg234}}: from \eqref{mnfdjksnnnfdjksjdnf} we deduce 
\begin{align*}
\frac{\de}{\de t} \partial_i \partial_j \Phi_{t_0,t} (x) &=
\sum_{k, k'}  (\partial_{k'} \partial_k  V)(t, \Phi_{t_0,t} (x)) \,  \partial_i (\Phi_{t_0,t} (x))_{k'} \, \partial_j (\Phi_{t_0,t} (x))_k 
\\
&+ \sum_{k}  (\partial_k V)(t, \Phi_{t_0,t} (x)) \, \partial_i \partial_j (\Phi_{t_0,t} (x))_k, 
\end{align*}
and then by using \eqref{fdnjskdhgjbgbg23} we obtain
\begin{align*}
\frac{\de}{\de t}  | \partial_i \partial_j  \Phi_{t_0,t} (x)| & = \frac{\partial_i \partial_j  \Phi_{t_0,t} (x)}{|\partial_i \partial_j  \Phi_{t_0,t} (x)|} \cdot  \frac{\de}{\de t} \partial_i \partial_j \Phi_{t_0,t} (x)
\leq \left| \frac{\de}{\de t} \partial_i \partial_j \Phi_{t_0,t} (x) \right| 
\\
& \lesssim  e^{C \frac{\varepsilon}{\theta}} \| \nabla^2  V(t,\cdot) \|_{L^{\infty}}  
+  | \partial_i \partial_j (\Phi_{t_0,t} (x))|  \| \nabla  V(t,\cdot) \|_{L^{\infty}}  .
\end{align*}
Thus, since $\partial_{i} \partial_{j} \Phi_{t_0, t_0} (x) = \partial_{i} \partial_{j} \Id = 0$, by Gronwall lemma we have 
\begin{align} \label{fdnjskdhgjbgbg234intheargument}
| \partial_i \partial_j  \Phi_{t_0,t} (x)| & \lesssim
e^{C \frac{\varepsilon}{\theta}} \int_{t_0}^t e^{C\int_s^t \| \nabla V(\tau,\cdot) \|_{L^\infty} \, d\tau}  \| \nabla^2  V(s,\cdot) \|_{L^{\infty}}  \, \de s
\\ & \nonumber
\lesssim e^{C \frac{\varepsilon}{\theta}}
\int_{t_0}^t e^{C\int_s^t \| V(\tau,\cdot) \|_{H^3} \, d\tau}  \|   V(s,\cdot) \|_{H^4}  \, \de s
\\ & \nonumber
\leq e^{C \frac{\varepsilon}{\theta} }
\int_{t_0}^t e^{C\int_s^t e^{- \theta \tau} \| V \|_{Z^{\ell,\theta}} \, d\tau} e^{ - \theta s} \| V \|_{Z^{\ell,\theta}}  \, \de s
\\ &
\overset{\eqref{piccolezza spiegone InLemma}}{\leq}
 e^{C \frac{\varepsilon}{\theta}} \int_{t_0}^t  e^{C \frac{\varepsilon}{\theta} - \theta s} \varepsilon  \, \de s
 \leq  e^{2 C \frac{\varepsilon}{\theta} } \frac{\varepsilon}{  \theta } .
\end{align}

\textbf{Proof of the main estimate}. We are now ready to prove the statement.  
Let $\alpha$
a multi-index with $1 \leq |\alpha| \leq \ell$. 
 From \eqref{nFlowDefInLemma} 
and Lemma \ref{LemmaFaaDiBruno} we deduce 
\begin{equation}
\frac{\de}{\de t} \partial^{\alpha} \Phi_{t_0,t} (x)=\sum_{\mu,\nu}C_{\mu,\nu}\, (\partial^\mu V)(t, \Phi_{t_0,t} (x)) 
\prod_{\overset{1\leq |\beta|\leq |\alpha|}{1\leq j\leq 3}}(\partial^\beta \Phi_{t_0,t} (x))^{\nu_{\beta, j}},
\end{equation}
where the constants $C_{\mu,\nu}$ are non-negative integers, and the sum is taken over those $\mu$ and $\nu$ such that $1\leq |\mu|\leq |\alpha|$, $\nu_{\beta, j}\in \N^*$,
$$
\sum_{1\leq |\beta|\leq |\alpha|}\nu_{\beta, j}=\mu_j, \mbox{ for }1\leq j\leq 3,\quad \mbox{and}
\quad \sum_{\overset{1\leq |\beta|\leq |\alpha|}{1\leq j\leq 3}}\beta\nu_{\beta, j}=\alpha
$$
Pairing the equation in $L^2$ with $\partial^{\alpha} \Phi_{t_0,t} (x)$ and using Cauchy-Schwarz we obtain
\begin{equation}
\frac{\de}{\de t} \| \partial^{\alpha} \Phi_{t_0,t} \|^2_{L^2} 
\lesssim  \left\| \sum_{\mu,\nu}C_{\mu,\nu}\, (\partial^\mu V)(t, \Phi_{t_0,t} ) 
\prod_{\overset{1\leq |\beta|\leq |\alpha|}{1\leq j\leq 3}}(\partial^\beta \Phi_{t_0,t} )^{\nu_{\beta, j}} \right\|_{L^2} \| \partial^{\alpha} \Phi_{t_0,t} \|_{L^2} .
\end{equation}
Summing over all the $\alpha$ with $1 \leq |\alpha| \leq \ell$ we arrive to
\begin{equation}
\frac{\de}{\de t} \| \nabla \Phi_{t_0,t} \|^2_{H^{\ell-1}} 
\lesssim \sup_{1 \leq |\alpha| \leq \ell} \left\| \sum_{\mu,\nu}C_{\mu,\nu}\, (\partial^\mu V)(t, \Phi_{t_0,t} ) 
\prod_{\overset{1\leq |\beta|\leq |\alpha|}{1\leq j\leq 3}}(\partial^\beta \Phi_{t_0,t} )^{\nu_{\beta, j}} \right\|_{L^2} \| \nabla \Phi_{t_0,t} \|_{H^{\ell-1}} .
\end{equation}
We now claim 
\begin{equation}\label{mfdjksldjngngngn}
\left\| \sum_{\mu,\nu}C_{\mu,\nu}\, (\partial^\mu V)(t, \Phi_{t_0,t} ) 
\prod_{\overset{1\leq |\beta|\leq |\alpha|}{1\leq j\leq 3}}(\partial^\beta \Phi_{t_0,t} )^{\nu_{\beta, j}} \right\|_{L^2} 
\lesssim \|  V(t,\cdot) \|_{H^{\ell}}  (1 + \| \nabla \Phi_{t_0,t} \|_{H^{\ell-1}}).
\end{equation}
If \eqref{mfdjksldjngngngn} were true, then by Young's inequality it follows that
$$
\frac{\de}{\de t} \| \nabla \Phi_{t_0,t} \|^2_{H^{\ell-1}}  \lesssim \|  V(t,\cdot) \|_{H^{\ell}}  \| \nabla \Phi_{t_0,t} \|^2_{H^{\ell-1}} + 
\|  V(t,\cdot) \|_{H^{\ell}},
$$
so that
\begin{align*}
\| \nabla \Phi_{t_0,t} \|^2_{H^{\ell-1}} 
& \leq \| \nabla \Phi_{t_0,t_0} \|^2_{H^{\ell-1}} e^{C \int_{t_0}^{t} \|  V(s,\cdot) \|_{H^{\ell}} \de s} + C 
\int_{t_0}^t e^{C \int_{s}^{t} \|  V(\tau,\cdot) \|_{H^{\ell}} \de \tau}  \|  V(s,\cdot) \|_{H^{\ell}} \, \de s
\\ &
\leq  \|  \nabla \Id \|^2_{L^2} e^{C \int_{t_0}^{t} e^{-\theta s} \|  V \|_{\Zeta} \de s}  + C \int_{t_0}^t e^{C  \int_{s}^{t} e^{-\theta \tau} \|  V \|_{\Zeta} \de \tau} e^{-\theta s}  \|V\|_{\Zeta} \, \de s
\\ &
\overset{\eqref{piccolezza spiegone InLemma}}{\lesssim}  e^{C \varepsilon \int_{t_0}^{t} e^{-\theta s}  \de s} 
+ \varepsilon \int_{t_0}^t e^{C \varepsilon \int_{s}^{t} e^{-\theta \tau} \de \tau} e^{-\theta s}  \, \de s
\\ &
\lesssim  e^{C \frac{\varepsilon}{\theta}} + \varepsilon e^{\frac{C}{\theta} \varepsilon} \int_{t_0}^t  e^{-\theta s}  \, \de s
\\ &
\lesssim  e^{C \frac{\varepsilon}{\theta}} + \frac{\varepsilon}{\theta} e^{\frac{C}{\theta} \varepsilon}.
\end{align*}
The desired estimate \eqref{stima flussiBis} follows then choosing $\varepsilon \in (0, \theta)$. \\
\\
\textbf{Proof of \eqref{mfdjksldjngngngn}}. We now prove the claim in \eqref{mfdjksldjngngngn}.
We start by considering the contribution of the multi-indices $\mu$ such that $1 \leq |\mu| \leq |\alpha| - 2$. We denote with 
$\widetilde{\sum}$ the restricted sum. Note that (see \eqref{recallthismfdkslejdgfkn}):
$$
\sum_{\overset{1\leq |\beta|\leq |\alpha|}{1\leq j\leq 3}}|\beta| \nu_{\beta, j}=|\alpha|.
$$
Letting 
$$
M := \sum_{\overset{1\leq |\beta|\leq |\alpha|}{1\leq j\leq 3}} \nu_{\beta, j} .
$$
and using Lemma \ref{LemProd} we obtain for $M \geq 2$ and for all $\varepsilon' >0$:
\begin{align}\label{mfdjksldjngngngn2}
\left\| \widetilde{\sum}_{\mu,\nu}C_{\mu,\nu}\, (\partial^\mu V)(t, \Phi_{t_0,t} ) 
\prod_{\overset{1\leq |\beta|\leq |\alpha|}{1\leq j\leq 3}}(\partial^\beta \Phi_{t_0,t} )^{\nu_{\beta, j}} \right\|_{L^2} 
&\lesssim 
\|\partial^\mu V (t,\cdot) \|_{L^{\infty}}  \| \nabla \Phi_{t_0,t} \|_{H^{|\alpha| - M + \varepsilon'}}  \| \nabla \Phi_{t_0,t} \|^{M-1}_{L^{\infty}},
\\ & \nonumber
\overset{\eqref{Cidfgnusirg}}{\lesssim}
e^{(M-1) C \frac{\varepsilon}{\theta}} \|  V(t,\cdot) \|_{H^{\ell}}  \| \nabla \Phi_{t_0,t}\|_{H^{\ell - M + \varepsilon'}}  .
\end{align}
On the other hand, if $M=1$ then only a single index $\nu_{\beta, j}$ is nonzero and equals one, thus we get immediately
\begin{align}\label{mfdjksldjngngngn3}
\left\| \widetilde{\sum}_{\mu,\nu}C_{\mu,\nu}\, (\partial^\mu V)(t, \Phi_{t_0,t}) 
\prod_{\overset{1\leq |\beta|\leq |\alpha|}{1\leq j\leq 3}}(\partial^\beta \Phi_{t_0,t})^{\nu_{\beta, j}} \right\|_{L^2} 
&\lesssim 
\|\partial^\mu V (t,\cdot) \|_{L^{\infty}}  \| \nabla \Phi_{t_0,t} \|_{H^{|\alpha| - 1 }}
\\ & \nonumber
\lesssim
  \|  V(t,\cdot) \|_{H^{\ell}}  \| \nabla \Phi_{t_0,t} \|_{H^{|\alpha| - 1}}  .
\end{align}

To handle the other cases it will be useful to recall that we have defined a multi-index $\beta$ in the sum as {\it active} if there exists a $j \in \{1,2,3 \}$ such that $\nu_{\beta, j} \geq 1$. Note that the $\beta$ that are {\it non active} do not give any contribution to the sum.

We then consider the contribution of the multi-indices $\mu$ such that $|\mu| = |\alpha| $. We denote with 
$\widetilde{\sum}$ the restricted sum. We deduce from identity \eqref{MainCounting}:
\begin{equation}
0 =\sum_{\overset{1\leq |\beta|\leq k}{1\leq j\leq 3}}  (|\beta| - 1)\nu_{\beta, j},
\end{equation} 
so that we can infer that   
$|\beta| = 1$ for all {\it active} $\beta$. Thus 
\begin{align}\label{mfdjksldjngngngn5}
\left\| \widetilde{\sum}_{\mu,\nu}C_{\mu,\nu}\, (\partial^\mu V)(t, \Phi_{t_0,t}) 
\prod_{\overset{1\leq |\beta|\leq |\alpha|}{1\leq j\leq 3}}(\partial^\beta \Phi_{t_0,t})^{\nu_{\beta, j}} \right\|_{L^2} 
&\lesssim 
\|\partial^\mu V (t,\cdot) \|_{L^2}    \| \nabla \Phi_{t_0,t} \|^{M}_{L^{\infty}},
\\ & \nonumber
\overset{\eqref{Cidfgnusirg}}{\lesssim}
 e^{M C \frac{\varepsilon}{\theta}} \|  V(t,\cdot) \|_{H^{|\alpha|}}    .
\end{align}

We finally consider the contribution of the multi-indices $\mu$ such that $|\mu| = |\alpha| - 1$. We denote with 
$\widetilde{\sum}$ the restricted sum. We deduce from identity \eqref{MainCounting}:
\begin{equation*}
1 =\sum_{\overset{1\leq |\beta|\leq |\alpha|}{1\leq j\leq 3}}  (|\beta| - 1)\nu_{\beta, j},
\end{equation*} 
so that we can infer that there exists a single couple $(\tilde{\beta}, \tilde{j})$ with $|\tilde{\beta}| = 2$ and 
 $\nu_{\tilde \beta, \tilde j} = 1$, $\nu_{\tilde \beta, j} = 0$ for all $j \neq \tilde j$, while we have   
$|\beta| = 1$ for all {\it active} $\beta \neq \tilde \beta$. Thus 
\begin{align}\label{mfdjksldjngngngn4}
\left\| \widetilde{\sum}_{\mu,\nu}C_{\mu,\nu}\, (\partial^\mu V)(t, \Phi_{t_0,t}) 
\prod_{\overset{1\leq |\beta|\leq |\alpha|}{1\leq j\leq 3}}(\partial^\beta \Phi_{t_0,t})^{\nu_{\beta, j}} \right\|_{L^2} 
&\lesssim 
\|\partial^\mu V (t,\cdot) \|_{L^2}  \| \nabla^2 \Phi_{t_0,t} \|_{L^{\infty}}  \| \nabla \Phi_{t_0,t} \|^{M-1}_{L^{\infty}},
\\ & \nonumber
\overset{\eqref{Cidfgnusirg}-\eqref{fdnjskdhgjbgbg234}}{\lesssim}
\frac{\varepsilon}{  \theta } e^{M C \frac{\varepsilon}{\theta}} \|  V(t,\cdot) \|_{H^{|\alpha|-1}}    .
\end{align}

After renaming the constants and choosing $\varepsilon \in (0, \theta)$ and 
$\varepsilon' >0$ sufficiently small (in fact $\varepsilon' \in (0, 1]$ suffices), the claim \eqref{mfdjksldjngngngn} follows by 
\eqref{mfdjksldjngngngn2}-\eqref{mfdjksldjngngngn3}-\eqref{mfdjksldjngngngn5}-\eqref{mfdjksldjngngngn4}. 
This concludes the proof.
\end{proof}

We now prove a quantitative stability result for the flows of vector fields in the space $\Zeta$.
\begin{lem}\label{stime_flussoBisStability}
Let $t_0 \geq 0$ and $\theta > 0$. 
Let $V \in C^1([0, +\infty) \times \T^3; \R^3)$ be a divergence-free vector field  
and let $W \in C^1([0, +\infty) \times \T^3; \R^3)$ be such that
\begin{equation}\label{ndfjskjd6465723}
\sup_{t \geq 0} e^{t \theta} \| \nabla W (t,\cdot) \|_{L^{\infty}} \lesssim 1.
\end{equation}
Let $\Phi_{t_0,t}$ and $\Psi_{t_0,t}$ be the flows associated with the vector fields $V$ and $W$, respectively.  Then, for all $t \geq t_0$, we have
\begin{equation}\label{stima_flussiBisStability}
 \| \Phi_{t_0,t} - \Psi_{t_0,t}\|_{L^2} \leq \frac{C}{\theta} \|V - W\|_{\ZetaZero},
\end{equation}
for some constant $C > 0$.
\end{lem}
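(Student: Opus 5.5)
We will argue by a Gronwall-type estimate in $L^2$ on the difference of the two flows. Set $D(t,x) := \Phi_{t_0,t}(x) - \Psi_{t_0,t}(x)$, understood as the difference of the lifts to $\R^3$ starting from the same point, so that $D(t_0,\cdot)=0$. From \eqref{nFlowDefInLemma} for $V$ and the analogous equation for $W$ we get
\begin{equation*}
\partial_t D(t,x) = \big(V(t,\Phi_{t_0,t}(x)) - W(t,\Phi_{t_0,t}(x))\big) + \big(W(t,\Phi_{t_0,t}(x)) - W(t,\Psi_{t_0,t}(x))\big).
\end{equation*}
The second bracket is bounded pointwise by $\|\nabla W(t,\cdot)\|_{L^\infty}|D(t,x)|$, by the mean value theorem.

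For the first bracket we use that $V$ is divergence-free, so $\Phi_{t_0,t}$ is volume preserving. The change of variables $y=\Phi_{t_0,t}(x)$ then gives
\begin{equation*}
\|(V-W)(t,\Phi_{t_0,t}(\cdot))\|_{L^2} = \|(V-W)(t,\cdot)\|_{L^2} \leq e^{-\theta t}\|V-W\|_{\ZetaZero}.
\end{equation*}
This is the only place where the divergence-free hypothesis on $V$ enters. No such hypothesis is needed on $W$, because $W$ is only evaluated through its Lipschitz bound.

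We then pass to the norm, using $\frac{\de}{\de t}\|D\|_{L^2}\le \|\partial_t D\|_{L^2}$ (justified by the same computation on $\|D\|_{L^2}^2$, or on $(\|D\|_{L^2}^2+\delta)^{1/2}$ with $\delta\to0$, to avoid dividing by zero). This yields
\begin{equation*}
\frac{\de}{\de t}\|D(t)\|_{L^2} \le \|\nabla W(t)\|_{L^\infty}\|D(t)\|_{L^2} + e^{-\theta t}\|V-W\|_{\ZetaZero}.
\end{equation*}
By \eqref{ndfjskjd6465723} we have $\int_{t_0}^\infty \|\nabla W(s)\|_{L^\infty}\,\de s \lesssim \int_0^\infty e^{-\theta s}\,\de s = \theta^{-1}$. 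Gronwall's inequality with $D(t_0)=0$ then gives
\begin{equation*}
\|D(t)\|_{L^2} \le e^{C/\theta}\int_{t_0}^t e^{-\theta s}\,\de s\,\|V-W\|_{\ZetaZero} \le \frac{e^{C/\theta}}{\theta}\|V-W\|_{\ZetaZero}.
\end{equation*}

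The main delicate point is the constant. Gronwall naturally produces the factor $e^{C/\theta}$, and for $\theta$ ranging in a fixed subinterval of $(0,1)$ this is harmless and can be absorbed into $C$, which gives \eqref{stima_flussiBisStability}. If one wants $C$ uniform in $\theta$, the implicit constant in \eqref{ndfjskjd6465723} has to be read as a small quantity. In the applications $W = V(v,m)$ has $\Zeta$-norm of order $\varepsilon<\theta$, so $\int \|\nabla W\|_{L^\infty}\lesssim \varepsilon/\theta\le 1$ and the exponential factor is $O(1)$.

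Two minor technical points remain. First, the flows live on $\T^3$, so we work with the lifts to $\R^3$; the difference $D$ is then a periodic function (both lifts equal the identity plus a periodic function), and its $L^2$ norm over a fundamental domain is well defined. Second, the $C^1$ regularity of $V$ and $W$ guarantees the flows exist and that the differentiation in $t$ is legitimate.
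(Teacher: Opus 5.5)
Your argument is correct and is essentially the paper's proof: the same splitting $V(\Phi)-W(\Phi)+W(\Phi)-W(\Psi)$, volume preservation of $\Phi$ for the first term, the Lipschitz bound on $W$ for the second, and Gr\"onwall starting from $D(t_0)=0$. The differences are cosmetic. The paper differentiates $\|D\|_{L^2}^2$ and uses the weighted Young inequality $|V-W|\,|D|\le \frac12 e^{\theta t}|V-W|^2+\frac12 e^{-\theta t}|D|^2$ instead of differentiating $\|D\|_{L^2}$. Its ``crude bound'' hides the same $e^{C/\theta}$ factor you point out, which is harmless because $\theta=\frac14$ is fixed in the applications.
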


\begin{proof}
We have 
\begin{align*}
\frac{\de}{\de t} (\Phi_{t_0,t} (x) - \Psi_{t_0,t} (x)) &= V(t,\Phi_{t_0,t}(x)) - W(t,\Psi_{t_0,t}(x)) \\ 
&= V(t,\Phi_{t_0,t}(x)) - W(t,\Phi_{t_0,t}(x)) + W(t,\Phi_{t_0,t}(x)) - W(t,\Psi_{t_0,t}(x)).
\end{align*}
Taking the scalar product with $\Phi_{t_0,t}(x) - \Psi_{t_0,t}(x)$ and using the Cauchy-Schwarz inequality, we get
\begin{align*}
\frac{1}{2} \frac{\de}{\de t} |\Phi_{t_0,t} (x) - \Psi_{t_0,t} (x)|^2 &\leq 
| V(t,\Phi_{t_0,t}(x)) - W(t,\Phi_{t_0,t}(x))| |\Phi_{t_0,t} (x) - \Psi_{t_0,t} (x)| \\
&\quad + | W(t,\Phi_{t_0,t}(x)) - W(t,\Psi_{t_0,t}(x))| |\Phi_{t_0,t} (x) - \Psi_{t_0,t} (x)|.
\end{align*}
On the other hand, using Cauchy-Schwarz again, we obtain
\begin{align*}
| V(t,\Phi_{t_0,t}(x)) - W(t,\Phi_{t_0,t}(x))| |\Phi_{t_0,t} (x) - \Psi_{t_0,t} (x)| &\leq 
\frac{1}{2} e^{\theta t} | V(t,\Phi_{t_0,t}(x)) - W(t,\Phi_{t_0,t}(x))|^2 \\
&\quad + \frac{1}{2} e^{-\theta t} |\Phi_{t_0,t} (x) - \Psi_{t_0,t} (x)|^2.
\end{align*}
We also have  
\[
|W(t,\Phi_{t_0,t}(x)) - W(t,\Psi_{t_0,t}(x))| \leq \| \nabla W(t) \|_{L^{\infty}} |\Phi_{t_0,t}(x) - \Psi_{t_0,t}(x)|.
\]
Thus, integrating over $\T^3$, we arrive at
\begin{align*}
\frac{\de}{\de t} \| \Phi_{t_0,t} - \Psi_{t_0,t} \|_{L^2}^2 
&\leq 2 \left( \| \nabla W(t,\cdot) \|_{L^{\infty}} + \frac{1}{2} e^{-\theta t} \right) \| \Phi_{t_0,t} - \Psi_{t_0,t} \|_{L^2}^2 \\
&\quad + e^{\theta t} \| V(t,\cdot) - W(t,\cdot)\|_{L^2}^2,
\end{align*}
where we used the fact that $V$ is divergence-free, which in particular implies that the flow $\Phi$ is measure-preserving. Recalling that $x = \Phi_{t_0,t_0} x = \Psi_{t_0,t_0} x$, Grönwall's lemma gives
\begin{align*}
\| \Phi_{t_0,t} - \Psi_{t_0,t} \|_{L^2}^2 
&\leq \int_{t_0}^t e^{2 \int_{s}^t \left( \| \nabla W(\tau,\cdot) \|_{L^{\infty}} + \frac{1}{2} e^{-\theta \tau} \right) \de \tau} e^{\theta s} \| V(s,\cdot) - W(s,\cdot)\|_{L^2}^2 \, \de s \\
&\overset{\eqref{ndfjskjd6465723}}{\lesssim} \int_{t_0}^t \| V(s,\cdot) - W(s,\cdot)\|_{L^2}^2 \, \de s \\
&\lesssim \int_{t_0}^t e^{- 2\theta s} e^{2 \theta s}\| V(s,\cdot) - W(s,\cdot)\|_{L^2}^2 \, \de s \\
&\leq \|V - W\|^2_{\ZetaZero} \int_{t_0}^t e^{- 2\theta s} \, \de s \lesssim \frac{1}{\theta} \|V - W\|^2_{\ZetaZero},
\end{align*}
where in the second inequality we used the crude bound
\begin{equation}\label{CBfnjdksjng}
e^{\int_{s}^t \left( \| \nabla W(\tau) \|_{L^{\infty}} + \frac{1}{2} e^{-\theta \tau} \right) \de \tau}
\overset{\eqref{ndfjskjd6465723}}{\leq} e^{\int_{s}^t \left( e^{-\theta \tau} C + \frac{1}{2} e^{-\theta \tau} \right) \de \tau}
\lesssim 1.
\end{equation}
This concludes the proof.
\end{proof}

The following result will be used to prove the convergence of the flow associated to the velocity field as $t \to \infty$.

\begin{lem}\label{LastFlowBound}
Let $t_0 \geq 0$ and $\theta > 0$. 
Let $V \in C^1([0, +\infty) \times \T^3; \R^3)$ be a divergence-free vector field  
that satisfies 
\begin{equation}\label{ndfjskjd6465723Bis}
\sup_{t \geq 0} e^{t \theta} \|  V (t,\cdot) \|_{W^{1, \infty}} \lesssim \rho.
\end{equation}
and let  $\Psi_{t_0,t}(x)$ be the associated flow.
Then, there exists a volume preserving $C^1$ diffeomorphism
$$ \Psi_{t_0,\infty} : x \in \T^3 \to \Psi_{t_0,\infty}(x) \in \T^3, $$ 
 that satisfies
\begin{equation}
\lim_{t \to \infty} \| \Psi_{t_0,t}(\cdot) - \Psi_{t_0,\infty}(\cdot)\|_{W^{1,\infty}} = 0.
\end{equation}
\end{lem}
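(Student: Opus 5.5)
The plan is to show that $t\mapsto\Psi_{t_0,t}$ is Cauchy in $W^{1,\infty}$ as $t\to\infty$, with an exponentially decaying modulus, and then to pass to the limit in the structural properties.

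\textbf{Step 1: $C^0$ convergence.} From the flow equation, for $t_0\le t\le t'$,
\[
|\Psi_{t_0,t'}(x)-\Psi_{t_0,t}(x)|\le\int_t^{t'}\|V(s)\|_{L^\infty}\,\de s\lesssim \frac{\rho}{\theta}e^{-\theta t}.
\]
This holds uniformly in $x$; on the torus it is understood for a continuous lift. Hence $\Psi_{t_0,t}$ converges uniformly to a continuous map $\Psi_{t_0,\infty}$.

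\textbf{Step 2: uniform bound on the Jacobian.} Differentiating the ODE gives $\frac{\de}{\de t}\nabla\Psi_{t_0,t}=(\nabla V)(t,\Psi_{t_0,t})\nabla\Psi_{t_0,t}$. Gronwall, exactly as in the proof of \eqref{Cidfgnusirg}, yields
\[
\sup_{t\ge t_0}\|\nabla\Psi_{t_0,t}\|_{L^\infty}\le e^{C\rho/\theta}.
\]

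\textbf{Step 3: $C^1$ convergence.} Integrating the same matrix ODE between $t$ and $t'$ and inserting the bound of Step 2 gives
\[
\|\nabla\Psi_{t_0,t'}-\nabla\Psi_{t_0,t}\|_{L^\infty}\le\int_t^{t'}\|\nabla V(s)\|_{L^\infty}e^{C\rho/\theta}\,\de s\lesssim\frac{\rho}{\theta}e^{C\rho/\theta}e^{-\theta t}.
\]
So $\nabla\Psi_{t_0,t}$ is Cauchy in $L^\infty$. Combined with Step 1, $\Psi_{t_0,\infty}$ is $C^1$ and $\Psi_{t_0,t}\to\Psi_{t_0,\infty}$ in $W^{1,\infty}$.

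\textbf{Step 4: diffeomorphism and volume preservation.} Since $\dive V=0$, Liouville gives $\det\nabla\Psi_{t_0,t}\equiv1$. Uniform convergence of the Jacobians then gives $\det\nabla\Psi_{t_0,\infty}\equiv1$, so the limit is a local diffeomorphism and preserves volume. For bijectivity I apply the same argument to the inverse maps $\Psi_{t_0,t}^{-1}=\Psi_{t,t_0}$.

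\textbf{Main obstacle: controlling the inverses.} The issue is to get bounds on $\Psi_{t,t_0}$ that are uniform in $t$. I would use the backward flow ODE, $\partial_t\Psi_{t,t_0}(x)=-\nabla\Psi_{t,t_0}(x)\,V(t,x)$. The Jacobian of the inverse is bounded uniformly: it equals the cofactor matrix of $\nabla\Psi_{t_0,t}$ evaluated at $\Psi_{t,t_0}$, since the determinant is $1$. Hence $|\partial_t\Psi_{t,t_0}|\lesssim e^{C\rho/\theta}\rho e^{-\theta t}$, and the inverses converge uniformly to a continuous map $G$.

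Passing to the limit in $\Psi_{t_0,t}\circ\Psi_{t,t_0}=\mathrm{Id}$ uses the uniform Lipschitz bound of Step 2, and gives $\Psi_{t_0,\infty}\circ G=\mathrm{Id}$ and $G\circ\Psi_{t_0,\infty}=\mathrm{Id}$. Thus $\Psi_{t_0,\infty}$ is a bijection. Being $C^1$ with nonvanishing Jacobian, it is a $C^1$ diffeomorphism by the inverse function theorem. As a fallback for bijectivity, a degree argument also works: a local diffeomorphism of $\T^3$ homotopic to the identity that preserves volume is a covering of degree $1$, hence bijective.
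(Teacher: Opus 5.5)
Your proposal is correct and follows essentially the same route as the paper: you prove Cauchy estimates in $t$ for $\Psi_{t_0,t}$ and $\nabla\Psi_{t_0,t}$ by integrating the flow ODE and its linearization, and the Gronwall bound $\|\nabla\Psi_{t_0,t}\|_{L^\infty}\lesssim e^{C\rho/\theta}$ closes the second estimate. Your treatment of the inverses fills in what the paper dispatches as ``a similar argument for the inverse'': you use the backward transport equation for $\Psi_{t,t_0}$, the adjugate bound on its Jacobian via $\det\nabla\Psi_{t_0,t}=1$, and a passage to the limit in both compositions (where, for $G\circ\Psi_{t_0,\infty}=\mathrm{Id}$, the relevant Lipschitz bound is the one on $\Psi_{t,t_0}$ from the adjugate estimate rather than Step 2).
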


\begin{proof}
Let $t_0 \leq t_1 \leq t_2$. 
First we note that 
$$\Psi_{t_0,t_2}(x) - \Psi_{t_0,t_1}(x) = \int_{t_1}^{t_2} \frac{\de}{\de s} \Psi_{t_0, s}(x) \, \de s $$
Thus
\begin{align}\label{4783877590345}
| \Psi_{t_0,t_2}(x) -  \Psi_{t_0,t_1}(x) | 
& \leq  \int_{t_1}^{t_2} \left| \frac{\de}{\de s}  \Psi_{t_0, s}(x) \right| \, \de s 
\\ & \nonumber
 = \int_{t_1}^{t_2} | V(s, \Psi_{t_0, s}(x)) | \, \de s 
\\ & \nonumber
\overset{\eqref{ndfjskjd6465723Bis}}{\lesssim} \rho \int_{t_1}^{t_2} e^{-\theta s}  \, \de s
\lesssim  \frac{\rho}{\theta} e^{-\theta t_1}. 
\end{align}
Then, proceeding exactly as in the proof of Lemma \ref{stime flussoBis} (in particular, 
we refer to the inequality~\eqref{fdnjskdhgjbgbg23}) we get
\begin{equation}\label{femkjfeuw44bfdjsk}
| \partial_j  \Psi_{t_0,t} (x)|  \leq | \partial_j  \Psi_{t_0,t_0} (x) | e^{C\int_{t_0}^t \| \nabla V(s,\cdot) \|_{L^\infty} \, \de s} 
\lesssim 
  e^{C \rho \int_{t_0}^t e^{-\theta s}  \, \de s}  
 \leq e^{C\frac{\rho}{\theta}}  
\end{equation}
Thus (recall \eqref{mnfdjksnnnfdjksjdnf})
\begin{align}\label{476875934}
|\partial_j \Psi_{t_0,t_2}(x) - \partial_j \Psi_{t_0,t_1}(x) | 
& \leq  \int_{t_1}^{t_2} \left| \frac{\de}{\de s}  \partial_j \Psi_{t_0, s}(x) \right| \, \de s 
\\ & \nonumber
 \lesssim \int_{t_1}^{t_2} | \nabla V(s, \Psi_{t_0, s}(x)) | |\partial_j \Psi_{t_0, s}(x)| \, \de s 
\\ & \nonumber
\overset{\eqref{ndfjskjd6465723Bis}-\eqref{femkjfeuw44bfdjsk}}{\lesssim} e^{C\frac{\rho}{\theta}}   \rho \int_{t_1}^{t_2} e^{-\theta s}  \, \de s
\leq  \frac{\rho}{\theta} e^{C\frac{\rho}{\theta}}  e^{-\theta t_1}. 
\end{align}
Estimates \eqref{4783877590345} and \eqref{476875934} show that
$\{\Psi_{t_0,t}\}_{t\geq t_0}$ is a Cauchy family in $C^1(\T^3;\T^3)$.
Therefore, there exists a map
\[
\Psi_{t_0,\infty}\in C^1(\T^3;\T^3),
\]
such that
\[
\Psi_{t_0,t}\longrightarrow\Psi_{t_0,\infty}
\qquad\text{in }W^{1,\infty}(\T^3),
\]
as $t\to\infty$. The statement follows by using a similar argument for the inverse of $\Psi_{t_0,t}$ and the divergence-free condition on $V(t)$.
\end{proof}

\section{The iterative scheme}\label{Sec:The iterative scheme}
Let $\ell \in \N$ with $\ell \geq 4$, and let $\theta \in (0,1)$. 
Recall that we defined the space
\begin{equation}
\Zeta:=\left\{ F\in L^\infty([0, + \infty);H^\ell(\T^3)): \|F\|_{\Zeta}
:= \sup_{t \geq 0}e^{\theta t}\|F(t,\cdot)\|_{H^\ell}<\infty\right\}.
\end{equation}
We initialise the scheme by setting $(v_0, m_0) = (0,0)$. For $n \in \N$, assume that the $n$-th iterate $(v_n, m_n) : [0, +\infty) \times \T^3 \to \R^3$ is well-defined and satisfies the following properties for all $t \geq 0$
\begin{align}
\int_{\T^3} v_n(t,x) \, \de x &= \int_{\T^3} m_n(t,x) \, \de x = 0, \label{piccolezza spiegone-2} \\
\dive v_n(t,x) &= \dive m_n(t,x) = 0. \label{piccolezza spiegone-1}
\end{align}
Furthermore, assume the smallness condition
\begin{equation}\label{piccolezza spiegone}
\|v_n\|_{\Zeta} + \|\nabla v_n\|_{L^2_t H^\ell} + \|m_n\|_{\Zeta} \leq \varepsilon.
\end{equation}
We denote by $\Ph_{t,s}$ the flow associated with the vector field $V(v_n, m_n)$, defined as the solution to
\begin{equation}\label{nFlowDef}
\begin{cases}
\partial_s \Ph_{t,s} (x)=(V(v_n, m_n))(s,\Ph_{t,s}(x)),\\
\Ph_{t,t} x=x,
\end{cases}
\end{equation}
where we recall that the distorted velocity field $V$ is defined as
\begin{equation}\label{|def:Vn}
V(v_n, m_n) := v_n + \pi_1 (e^{K} - \Id) \begin{pmatrix} v_n \\ m_n \end{pmatrix}.
\end{equation}
Then, we define the next iterate $(\vn,\mn)$ as the solution of the following integral system
\begin{equation}\label{iterazione integrale}
\begin{cases}
\begin{aligned}
v_{n+1}(t,x) &= e^{t\Delta} y(x) + \int_0^t e^{(t-s)\Delta} \p \Big( (m_n \cdot \nabla) m_n - (v_n \cdot \nabla) v_n + \dive N_1(v_n, m_n) \Big)(s,x) \, \de s \\
&\quad + \int_0^t e^{(t-s)\Delta} \pi_1 \left( \mathcal{A} \begin{pmatrix} v_n \\ m_n \end{pmatrix} + \Gamma(v_n, m_n) \right)(s,x) \, \de s,
\end{aligned} \\
\\
\begin{aligned}
m_{n+1}(t,x) &= -\int_t^\infty \Big( (m_{n+1} \cdot \nabla) V(v_n, m_n) \Big)(s, \Ph_{t,s}(x)) \, \de s \\ 
&\quad - \int_t^\infty \left( \pi_2 \mathcal{A} \begin{pmatrix} v_n \\ m_n \end{pmatrix} + \pi_2 \Gamma(v_n, m_n) + \dive N_2(v_n, m_n) \right)(s, \Ph_{t,s}(x)) \, \de s,
\end{aligned}
\end{cases}
\end{equation}
where $y \in H^\ell(\T^3)$ is a zero-average, divergence-free vector field.
Observe that $v_{n+1}$ is given explicitly as a function of the previous iterate $(v_n, m_n)$. In contrast, $m_{n+1}$ is introduced through an implicit relation (an integral equation along the characteristics), and its well-posedness is therefore not immediate. Theorem \ref{ThMFirstRecursiveBound} below establishes the existence and uniqueness of $m_{n+1}$, thereby ensuring that the iteration is well-defined. It is useful to note that the first equation in \eqref{iterazione integrale} is simply the Duhamel formulation of the following differential problem:
\begin{equation}\label{iterazione}
\begin{cases}
\begin{aligned}
\partial_t v_{n+1} &= \Delta v_{n+1} + \pi_1 \mathcal{A} \begin{pmatrix} v_n \\ m_n \end{pmatrix} + \pi_1 \Gamma(v_n, m_n) \\
&\quad + \p \Big( (m_n \cdot \nabla) m_n - (v_n \cdot \nabla) v_n + \dive N_1(v_n, m_n) \Big),
\end{aligned} \\
v_{n+1}(0,x) = y(x).
\end{cases}
\end{equation}

\subsection{First recursive bound}\label{Sec:Picard-1}

The parameter $\varepsilon_1$ in the following statement represents a small positive constant. For any $\rho > 0$, we define the closed ball in $\Zeta$ as:
$$
\mathcal{B}(\rho) := \left\{ m \in \Zeta \ : \ \|m\|_{\Zeta} \leq \rho \right\}.
$$

\begin{thm}\label{ThMFirstRecursiveBound}
Let $\ell \geq 4$ and $\theta = \frac14$. There exists $\varepsilon_1 \in (0,1)$ such that for all $\varepsilon \in (0, \varepsilon_1)$, the following holds. Let $B$ be a given field satisfying $\|B\|_{H^{\ell +2}} \leq \varepsilon$, and let $(v_n, m_n) : [0, +\infty) \times \T^3 \to \R^3$ be a pair of vector fields satisfying \eqref{piccolezza spiegone-2}--\eqref{piccolezza spiegone}. 
Then, the map $\Psi$ defined on $\mathcal{B}(\varepsilon)$ by
\begin{align*}
m\in \mathcal{B}(\e)\mapsto\Psi(m)(t,x) := & -\int_t^\infty \Big( (m \cdot \nabla) V(v_n, m_n) \Big)(s, \Ph_{t,s}(x)) \, \de s \\
& - \int_t^\infty \left( \pi_2 \mathcal{A} \begin{pmatrix} v_n \\ m_n \end{pmatrix} + \pi_2 \Gamma(v_n, m_n) + \dive N_2(v_n, m_n) \right)(s, \Ph_{t,s}(x)) \, \de s
\end{align*}
is a contraction. Moreover, there exists a constant $C \geq 1$ such that
\begin{equation}\label{fnjksnbfyd6}
\Psi(\mathcal{B}(\varepsilon)) \subseteq \mathcal{B}\left(C \varepsilon^2\right) \subseteq \mathcal{B}(\varepsilon).
\end{equation}
As a consequence, there exists a unique fixed point $m_{n+1} \in \mathcal{B}(\varepsilon)$ such that
\begin{equation}\label{Mn+1IsWellDEf}
m_{n+1} = \Psi(m_{n+1}),
\end{equation}
which additionally satisfies
\begin{equation}\label{fmdkslngngdgnljs734}
\int_{\T^3} m_{n+1}(t,x) \, \de x = 0, \quad \dive m_{n+1}(t,x) = 0 \qquad \forall t \geq 0.
\end{equation}
\end{thm}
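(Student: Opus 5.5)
The plan is to apply the Banach fixed point theorem to $\Psi$ on the closed ball $\mathcal{B}(\varepsilon)\subset\Zeta$. Throughout, $V_n:=V(v_n,m_n)$ and $\Ph_{t,s}$ is its flow.

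\textbf{Preliminary bounds on $V_n$ and on the flow.} By \eqref{fdnjskbjnfhjskhjdbfgsjk3} and \eqref{piccolezza spiegone},
\[
\|V_n\|_{\Zeta}\lesssim \|v_n\|_{\Zeta}+\varepsilon\|(v_n,m_n)\|_{\Zeta\times\Zeta}\lesssim\varepsilon .
\]
Also $\|\nabla V_n\|_{L^2_tH^\ell}\lesssim \|\nabla v_n\|_{L^2_tH^\ell}+\varepsilon^2\lesssim\varepsilon$. For the $\pi_1(e^K-\Id)$ part this uses that $\pi_1 K$ only involves $m_n$ and gains a derivative, by \eqref{fmdksdsjnsgd1}. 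By Proposition \ref{prop:algebraic_properties}, $V_n$ is divergence-free, so $\Ph_{t,s}$ is volume preserving. Since $\varepsilon<\theta=\frac14$, Lemma \ref{stime flussoBis} gives $\sup_s\|\nabla\Ph_{t,s}\|_{H^{\ell-1}}\lesssim 1$. Lemma \ref{CorollaryFaaDiBruno} then yields, uniformly in $t\le s$,
\[
\|F\circ\Ph_{t,s}\|_{H^\ell}\lesssim\|F\|_{H^\ell}.
\]

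\textbf{Mapping into $\mathcal{B}(C\varepsilon^2)$.} Fix $m\in\mathcal{B}(\varepsilon)$ and write $\Psi(m)(t)=-\int_t^\infty (G_m(s))\circ\Ph_{t,s}\,\de s$, where
\[
G_m=(m\cdot\nabla)V_n+\pi_2\mathcal{A}(v_n,m_n)+\pi_2\Gamma(v_n,m_n)+\dive N_2(v_n,m_n).
\]
The terms are handled as follows.
\begin{itemize}
\item The transport term: by the algebra property, $\|(m\cdot\nabla)V_n(s)\|_{H^\ell}\lesssim\|m(s)\|_{H^\ell}\|\nabla V_n(s)\|_{H^\ell}\le e^{-\theta s}\varepsilon\|\nabla V_n(s)\|_{H^\ell}$.
\item The linear term: Lemma \ref{LemmaBoundForA} gives $\|\pi_2\mathcal{A}(v_n,m_n)\|_{H^\ell}\lesssim\varepsilon^2\cdot\varepsilon e^{-\theta s}$.
\item The $\Gamma$ term: Lemma \ref{lem:GammaEstimate} gives $\|\pi_2\Gamma(v_n,m_n)\|_{H^\ell}\lesssim\varepsilon\cdot\varepsilon^2e^{-2\theta s}$.
\item The $N_2$ term: Lemma \ref{ImprovedDIV} gives $\|\dive N_2\|_{H^\ell}\lesssim\varepsilon\,(\|v_n\|_{H^{\ell+1}}\varepsilon e^{-\theta s}+\varepsilon^2e^{-2\theta s})$.
\end{itemize}
Multiplying by $e^{\theta t}$ and integrating over $s\ge t$, the purely exponential terms give $O(\varepsilon^3/\theta)$. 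The terms containing $\|\nabla V_n\|_{H^\ell}$ or $\|v_n\|_{H^{\ell+1}}$ are controlled by Cauchy--Schwarz:
\[
e^{\theta t}\int_t^\infty e^{-\theta s}\|\nabla V_n(s)\|_{H^\ell}\,\de s\le\Big(\int_t^\infty e^{-2\theta(s-t)}\de s\Big)^{1/2}\|\nabla V_n\|_{L^2_tH^\ell}\lesssim\theta^{-1/2}\varepsilon .
\]
Note that $\|v_n\|_{H^{\ell+1}}\lesssim\|\nabla v_n\|_{H^\ell}$ by the zero average. Altogether $\|\Psi(m)\|_{\Zeta}\le C\varepsilon^2$, and $C\varepsilon^2\le\varepsilon$ once $\varepsilon_1<1/C$.

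\textbf{Contraction.} Since the nonlinear forcing does not depend on $m$, it cancels in the difference:
\[
\Psi(m)-\Psi(m')=-\int_t^\infty\big(((m-m')\cdot\nabla)V_n\big)\circ\Ph_{t,s}\,\de s .
\]
The same Cauchy--Schwarz estimate gives $\|\Psi(m)-\Psi(m')\|_{\Zeta}\le C\varepsilon\|m-m'\|_{\Zeta}\le\frac12\|m-m'\|_{\Zeta}$, so $\Psi$ is a contraction.

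\textbf{Technical points.} Two issues need care.
\begin{itemize}
\item \emph{Measurability and continuity in $t$.} I would work on smooth approximations, or note that $\Psi(m)$ is continuous in $t$ because of the flow regularity.
\item \emph{Constraints of the fixed point.} Zero average and divergence-free are not preserved by composition with $\Ph_{t,s}$, so the ball should not be restricted a priori. Instead, once $m_{n+1}$ exists, I would argue through Theorem \ref{FinalTHM1dfhgsjdhgfs}'s computation: $m_{n+1}$ solves the transport-stretching equation
\[
\partial_t m+(V_n\cdot\nabla)m=(m\cdot\nabla)V_n+f ,
\]
where $f$ is divergence-free with zero average (Proposition \ref{prop:algebraic_properties}, including item \ref{AlgLemma7} for $\dive N_2$). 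Then $\dive m$ satisfies a homogeneous transport equation $\partial_t\dive m+V_n\cdot\nabla\dive m=0$. It decays at $t=\infty$, because $m\in\Zeta$ and the flow is bounded, so it vanishes identically. Integrating the equation in $x$ and using $\dive V_n=\dive m=0$ shows that $\int m$ is constant, and it also vanishes at infinity.
\end{itemize}

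\textbf{Main obstacle.} The hardest step is the derivative loss in $(m\cdot\nabla)V_n$, where $\nabla V_n$ sits in $H^\ell$ only in $L^2_t$. This is precisely what the weighted Cauchy--Schwarz in time absorbs, and it is where the $L^2_tH^{\ell+1}$ control in \eqref{piccolezza spiegone} is indispensable.
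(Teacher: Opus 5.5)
Your proposal is correct and follows essentially the same route as the paper. You use the Banach fixed point on $\mathcal{B}(\varepsilon)$ with the uniform flow bound from Lemma \ref{stime flussoBis} together with Lemma \ref{CorollaryFaaDiBruno}, and Cauchy--Schwarz in time against the $L^2_tH^{\ell}$ gradient control for the transport term. You then recover the divergence-free and zero-average constraints from the transport--stretching equation satisfied by the fixed point. The only cosmetic difference is that you bound $\|\nabla V(v_n,m_n)\|_{L^2_tH^\ell}$ in one go. The paper instead splits $V(v_n,m_n)=v_n+\pi_1(e^K-\Id)(v_n,m_n)$ and handles the smoothing correction through its pointwise exponential decay.
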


\begin{rem}
We emphasize that, by virtue of \eqref{fnjksnbfyd6}, the unique fixed point satisfies the bound
$$ 
\|m_{n+1}\|_{\Zeta} \leq C \varepsilon^2.
$$
\end{rem}

\begin{proof}
Recalling the definition \eqref{|def:Vn} of the distorted velocity field $V(\cdot, \cdot)$, and using the inductive hypothesis \eqref{piccolezza spiegone-1} alongside the algebraic properties in Proposition \ref{prop:algebraic_properties}, we have for all $t \geq 0$
\begin{equation}\label{DiveVn}
\dive V(v_n, m_n) = \dive v_n + \dive \pi_1 (e^{K} - \Id) \begin{pmatrix} v_n \\ m_n \end{pmatrix} = 0.
\end{equation}
Consequently, Liouville's theorem implies that the associated flow is volume-preserving, namely:
$$
(\det \nabla \Ph_{t,s} )(x) = 1 \qquad \forall s \geq t \geq 0, \, x \in \T^3.
$$

By applying inequality \ref{fdnjskbjnfhjskhjdbfgsjk3}, we estimate the Sobolev norm of $V$ as follows:
\begin{align*}
\| V(v_n, m_n)\|_{H^{\ell}} &\leq \| v_n \|_{H^{\ell}} + \left\| \pi_1 (e^{K} - \Id) \begin{pmatrix} v_n \\ m_n \end{pmatrix} \right\|_{H^{\ell}} \\
&\lesssim \| v_n \|_{H^{\ell}} + \varepsilon \|(v_n, m_n)\|_{H^{\ell-1} \times H^{\ell-1}} \\
& \leq  \| v_n \|_{H^{\ell}} + \varepsilon \left( \| v_n \|_{H^{\ell}} + \| m_n \|_{H^{\ell}} \right).
\end{align*}
Note that this bound holds point-wise in time, even if we omit the time dependence to simplify the
notation. We will keep doing this without mentioning. Taking the supremum in time with the exponential weight $e^{\theta t}$, we obtain
\begin{align*}
\| V(v_n, m_n)\|_{\Zeta} &\leq C \| v_n \|_{\Zeta} + C \varepsilon \left( \| v_n \|_{\Zeta} + \| m_n \|_{\Zeta} \right) \\
&\overset{\eqref{piccolezza spiegone}}{\leq} C \varepsilon + 2 C \varepsilon^2 \leq 2 C \varepsilon.
\end{align*}
We are now in a position to apply Lemma \ref{stime flussoBis}, yielding the uniform bound on the flow
\begin{equation}\label{AlsoThisfdlsdlgkm}
\| \nabla \Ph_{t,s} \|_{H^{\ell-1}} \lesssim 1 \qquad \forall s \geq t \geq 0.
\end{equation}

Next, we establish the contraction property. Let $m, m' \in \mathcal{B}(\varepsilon)$. Invoking Lemma \ref{CorollaryFaaDiBruno} with $F = ((m - m') \cdot \nabla) \pi_1 (e^{K} - \Id) \begin{pmatrix} v_n \\ m_n \end{pmatrix}$ and employing estimates \eqref{AlsoThisfdlsdlgkm} and \eqref{fdnjskbjnfhjskhjdbfgsjk3}, we control the non-linear contribution by
\begin{align*}
\left\| \int_{t}^{\infty}\right.&\left. \Big( ((m - m') \cdot \nabla) \pi_1 (e^{K} - \Id) \begin{pmatrix} v_n \\ m_n \end{pmatrix} \Big)(s, \Ph_{t,s}) \, \de s \right\|_{H^{\ell}} \\
&\leq \int_{t}^{\infty} \left\| \Big( ((m - m') \cdot \nabla) \pi_1 (e^{K} - \Id) \begin{pmatrix} v_n \\ m_n \end{pmatrix} \Big) (s,\cdot) \right\|_{H^{\ell}} \, \de s \\
&\lesssim \varepsilon \int_{t}^{\infty} \| (m-m')(s,\cdot)\|_{H^{\ell}} \|(v_n, m_n)(s,\cdot)\|_{H^{\ell} \times H^{\ell}} \, \de s \\
&\lesssim \varepsilon \| m-m'\|_{\Zeta} \left( \|v_n \|_{\Zeta} + \| m_n\|_{\Zeta} \right) \int_{t}^{\infty} e^{-2\theta s} \, \de s \\
&\overset{\eqref{piccolezza spiegone}}{\lesssim} \frac{\varepsilon^2}{2 \theta} \| m-m'\|_{\Zeta} \,e^{-2\theta t}.
\end{align*}

Similarly, applying Lemma \ref{CorollaryFaaDiBruno} to the transport term with $F = ((m-m') \cdot \nabla) v_n$, we obtain
\begin{align}\label{LeadingMBound}
\left\| \int_t^\infty \Big( ((m-m') \cdot \nabla) v_n \Big)(s, \Ph_{t,s}) \, \de s \right.&\left.\right\|_{H^{\ell}} 
\leq \int_t^\infty \| ((m-m') \cdot \nabla) v_n(s,\cdot) \|_{H^{\ell}} \, \de s \nonumber \\
&\leq \int_0^\infty \mathbf{1}_{[t, \infty)}(s) \| (m-m') (s,\cdot)\|_{H^{\ell}} \| \nabla v_n(s,\cdot) \|_{H^{\ell}} \, \de s.
\end{align}
Utilizing the Cauchy-Schwarz inequality with respect to the time variable and exploiting the $L^2_t H^\ell$ bound on $\nabla v_n$ specified in \eqref{piccolezza spiegone}, we deduce
\begin{align*}
\text{R.H.S. of } \eqref{LeadingMBound} &\leq \left( \int_t^\infty \| (m-m') (s,\cdot)\|^2_{H^{\ell}} \, \de s \right)^{1/2} \| \nabla v_n \|_{L^2_t H^\ell} \\
&\overset{\eqref{piccolezza spiegone}}{\leq} \varepsilon \| m-m' \|_{\Zeta} \left( \int_t^\infty e^{-2\theta s} \, \de s \right)^{1/2} = \frac{\varepsilon}{\sqrt{2\theta}} \| m-m' \|_{\Zeta}\, e^{- \theta t}.
\end{align*}
Since the difference satisfies $\Psi(m) - \Psi(m') = - \int_t^\infty ((m-m') \cdot \nabla) V(v_n, m_n) \, \de s$, combining the two previous estimates we get that
$$
\|\Psi(m)(t,\cdot) - \Psi(m')(t,\cdot) \|_{H^{\ell}} \lesssim \| m-m'\|_{\Zeta} \left( \frac{\varepsilon^2}{2 \theta} e^{-2\theta t} + \frac{\varepsilon}{\sqrt{2\theta}} e^{- \theta t} \right) \qquad \forall t \geq 0.
$$
Multiplying by $e^{\theta t}$ and taking the supremum over $t \geq 0$, we finally obtain
$$
\|\Psi(m) - \Psi(m') \|_{\Zeta} \lesssim \|m - m'\|_{\Zeta} \left( \frac{\varepsilon^2}{2 \theta} + \frac{\varepsilon}{\sqrt{2\theta}} \right).
$$
Therefore, provided $\varepsilon_1$ (and consequently $\varepsilon$) is chosen sufficiently small, the map $\Psi$ is a contraction on $\mathcal{B}(\varepsilon)$.

We now show that $\Psi$ maps $\mathcal{B}(\varepsilon)$ into a smaller ball of radius $C\varepsilon^2$, i.e.
\begin{equation}\label{fdsjkjhdkfngjsknjdgs23453}
\|\Psi(m)\|_{\Zeta} \leq C \varepsilon^2.
\end{equation}
Setting $m'=0$ in the previous inequality directly implies
\begin{equation}\label{fjdiosnfg5}
\sup_{t \geq 0}e^{\theta t} \left\| \int_{t}^{\infty} \Big( (m \cdot \nabla) V(v_n, m_n) \Big)(s, \Ph_{t,s}) \, \de s \right\|_{H^{\ell}} \leq C \varepsilon \|m \|_{\Zeta} \leq C \varepsilon^2.
\end{equation}
It remains to control the remaining source terms in the definition of $\Psi$. Applying Lemma \ref{CorollaryFaaDiBruno} with $F = \mathcal{A} \begin{pmatrix} v_n \\ m_n \end{pmatrix}$, together with bounds \eqref{AlsoThisfdlsdlgkm} and \eqref{mathcalAbound}, we have:
\begin{align}\label{hfdgsugdyfdsUff3}
\left\| \int_t^\infty \pi_2 \mathcal{A} \begin{pmatrix} v_n \\ m_n \end{pmatrix} (s, \Ph_{t,s}) \, \de s \right\|_{H^{\ell}} 
&\leq \int_t^\infty \left\| \pi_2 \mathcal{A} \begin{pmatrix} v_n \\ m_n \end{pmatrix} (s) \right\|_{H^{\ell}} \, \de s \nonumber \\
&\leq \varepsilon^2 \int_t^\infty \left( \| v_n(s,\cdot) \|_{H^{\ell}} + \| m_n(s,\cdot) \|_{H^{\ell}} \right) \, \de s \nonumber \\
&\leq \varepsilon^2 \left( \| v_n \|_{\Zeta} + \| m_n \|_{\Zeta} \right) \int_t^\infty e^{-\theta s} \, \de s \nonumber \\
&\overset{\eqref{piccolezza spiegone}}{\leq} \varepsilon^3 \int_t^\infty e^{-\theta s} \, \de s = \frac{\varepsilon^3}{\theta} e^{- \theta t},
\end{align}
which leads to
\begin{equation}\label{fjdiosnfg3}
\sup_{t \geq 0}e^{\theta t} \left\| \int_t^\infty \pi_2 \mathcal{A} \begin{pmatrix} v_n \\ m_n \end{pmatrix} (s, \Ph_{t,s}) \, \de s \right\|_{H^{\ell}} \leq \frac{\varepsilon^{3}}{\theta}.
\end{equation}

Similarly, utilizing Lemma \ref{CorollaryFaaDiBruno} with $F = \pi_2 \Gamma (v_n, m_n)$, alongside estimates \eqref{AlsoThisfdlsdlgkm}, \eqref{GammaEstimate1}, and the elementary inequality
\begin{equation}\label{fdnjskdjggsdjnkgsd46783}
e^{\|B\|_{H^{\ell +2}}} - 1 \leq 2 \varepsilon \qquad \forall \varepsilon \in (0, \varepsilon_1),
\end{equation}
we obtain
\begin{align*}
\left\| \int_t^\infty \pi_2 \Gamma(v_n, m_n) (s, \Ph_{t,s}) \, \de s \right\|_{H^{\ell}} 
&\leq \int_t^\infty \left\| \pi_2 \Gamma(v_n, m_n) (s) \right\|_{H^{\ell}} \, \de s \\
&\label{Gamma_calc}\lesssim \varepsilon \int_t^\infty \left( \| v_n(s) \|^2_{H^{\ell}} + \| m_n(s) \|^2_{H^{\ell}} \right) \, \de s \\
&\leq \varepsilon \left( \| v_n \|^2_{\Zeta} + \| m_n \|^2_{\Zeta} \right) \int_t^\infty e^{-2 \theta s} \, \de s \\
&\overset{\eqref{piccolezza spiegone}}{\leq} \varepsilon^3 \int_t^\infty e^{-2 \theta s} \, \de s \leq \frac{\varepsilon^3}{2 \theta} e^{-2 \theta t}.
\end{align*}
This handles the $\Gamma$ term, ensuring that
\begin{equation}\label{fjdiosnfg2}
\sup_{t \geq 0}e^{\theta t} \left\| \int_t^\infty \pi_2 \Gamma(v_n, m_n) (s, \Ph_{t,s}) \, \de s \right\|_{H^{\ell}} \leq \frac{\varepsilon^{3}}{2\theta}.
\end{equation}

Lastly, we bound the divergence term. Invoking Lemma \ref{CorollaryFaaDiBruno} with $F = \dive N_2 (v_n, m_n)$, and employing \eqref{AlsoThisfdlsdlgkm}, \eqref{ImprovedDIVBound}, and \eqref{fdnjskdjggsdjnkgsd46783}, we find
\begin{align*}
\left\| \int_t^\infty \dive N_2(v_n, m_n) (s, \Ph_{t,s}) \, \de s \right\|_{H^{\ell}} 
&\leq \int_t^\infty \left\| \dive N_2(v_n, m_n) (s,\cdot) \right\|_{H^{\ell}} \, \de s \\
&\lesssim \varepsilon \int_t^\infty \| v_n(s,\cdot) \|_{H^{\ell+1}} \left( \| m_n(s,\cdot) \|_{H^{\ell}} + \| v_n(s,\cdot) \|_{H^{\ell}} \right) \, \de s \\
&+ \int_t^\infty \left( \| m_n(s,\cdot) \|^2_{H^{\ell}} + \| v_n(s,\cdot) \|^2_{H^{\ell}} \right) \, \de s.
\end{align*}
Since $\int_{\T^3} v_n = 0$, the Poincaré inequality yields $\| v_n(s,\cdot) \|_{H^{\ell+1}} \lesssim \| \nabla v_n(s,\cdot) \|_{H^{\ell}}$. Re-applying the Cauchy-Schwarz argument executed in \eqref{LeadingMBound}, we know that
$$ 
\int_t^\infty \| m_n (s,\cdot)\|_{H^{\ell}} \| \nabla v_n(s,\cdot) \|_{H^{\ell}} \, \de s \leq \frac{\varepsilon^{2}}{\sqrt{2\theta}} e^{- \theta t},
$$
and an identical bound holds when $m_n$ is replaced by $v_n$. Gathering these estimates, we deduce
\begin{align*}
\left\| \int_t^\infty \dive N_2(v_n, m_n) (s, \Ph_{t,s}) \, \de s \right\|_{H^{\ell}} 
&\lesssim \frac{\varepsilon^{3}}{\sqrt{2\theta}} e^{- \theta t} + \varepsilon \left( \| m_n \|^2_{\Zeta} + \| v_n \|^2_{\Zeta} \right) \int_t^\infty e^{-2 \theta s} \, \de s \\
&\overset{\eqref{piccolezza spiegone}}{\leq} \frac{\varepsilon^{3}}{\sqrt{2\theta}} e^{- \theta t} + \frac{\varepsilon^3}{2 \theta} e^{-2 \theta t} \lesssim \frac{\varepsilon^{3}}{\theta} e^{- \theta t},
\end{align*}
which translates to the bound
\begin{equation}\label{fjdiosnfg1}
\sup_{t \geq 0}e^{\theta t} \left\| \int_t^\infty \dive N_2(v_n, m_n) (s, \Ph_{t,s}) \, \de s \right\|_{H^{\ell}} \lesssim \frac{\varepsilon^{3}}{\theta}.
\end{equation}

Combining \eqref{fjdiosnfg5}, \eqref{fjdiosnfg3}, \eqref{fjdiosnfg2}, and \eqref{fjdiosnfg1}, and choosing $\varepsilon_1$ sufficiently small, we conclude that \eqref{fdsjkjhdkfngjsknjdgs23453} holds, proving that $\Psi(\mathcal{B}(\varepsilon)) \subseteq \mathcal{B}(C\varepsilon^2) \subseteq \mathcal{B}(\varepsilon)$. The Banach fixed-point theorem ensures the existence of a unique fixed point $m_{n+1}$ satisfying $\|m_{n+1}\|_{\Zeta} \leq C\varepsilon^2$.\\

It remains to verify the structural properties \eqref{fmdkslngngdgnljs734}. Recall that the flow maps satisfy the standard semigroup property $\Ph_{t,s}(\Ph_{\tau,t}(y)) = \Ph_{\tau,s}(y)$. Performing the change of variables $x = \Ph_{\tau,t}(y)$ in the fixed-point equation yields:
\begin{align}\label{fdjskjkfdskhje64}
m_{n+1}(t, \Ph_{\tau,t}(y)) = & -\int_t^\infty \Big( (m_{n+1} \cdot \nabla) V(v_n, m_n) \Big)(s, \Ph_{\tau,s}(y)) \, \de s \\
& - \int_t^\infty \left( \pi_2 \mathcal{A} \begin{pmatrix} v_n \\ m_n \end{pmatrix} + \pi_2 \Gamma(v_n, m_n) + \dive N_2(v_n, m_n) \right) (s, \Ph_{\tau,s}(y)) \, \de s. \nonumber
\end{align}
Differentiating \eqref{fdjskjkfdskhje64} with respect to $t$ using the relation $\partial_t \Ph_{\tau,t}(y) = V(v_n, m_n)(t, \Ph_{\tau,t}(y))$, we find
\begin{equation}\label{fdjuhsiuhfghdskgjdhgsfdhFischer2}
\frac{\de}{\de t} \Big( m_{n+1}(t, \Ph_{\tau,t}(y)) \Big) = \Big( \partial_t m_{n+1} + (V(v_n, m_n) \cdot \nabla) m_{n+1} \Big)\Big|_{(t, \Ph_{\tau,t}(y))}.
\end{equation}
Applying the Fundamental Theorem of Calculus to the right-hand side of \eqref{fdjskjkfdskhje64} and subsequently taking the limit $\tau \to t$, we recover the differential problem satisfied by $m_{n+1}$
\begin{equation}\label{kmflòdskljfdjsn74}
\partial_t m_{n+1} + (V(v_n, m_n) \cdot \nabla) m_{n+1} = (m_{n+1} \cdot \nabla) V(v_n, m_n) + \pi_2 \mathcal{A} \begin{pmatrix} v_n \\ m_n \end{pmatrix} + \pi_2 \Gamma(v_n, m_n) + \dive N_2(v_n, m_n).
\end{equation}
Taking the divergence of \eqref{kmflòdskljfdjsn74}, and using $\dive V(v_n, m_n) = 0$ along with the structural identities from Proposition \ref{prop:algebraic_properties}, we see that $\dive m_{n+1}$ satisfies a pure transport equation
$$
\partial_t \dive m_{n+1} + (V(v_n, m_n) \cdot \nabla) \dive m_{n+1} = 0.
$$
Hence, $\dive m_{n+1}$ is constant along the characteristics. Since $m_{n+1} \in \Zeta$, it decays exponentially as $t \to +\infty$, forcing $\dive m_{n+1}(t, \cdot) \to 0$. Thus, we must have $\dive m_{n+1}(t, x) = 0$ identically for all $t \geq 0$. Finally, integrating \eqref{kmflòdskljfdjsn74} over the torus $\T^3$ and utilizing $\dive m_{n+1} = \dive V = 0$ combined with Proposition \ref{prop:algebraic_properties}, all the spatial derivative terms vanish, leading to:
$$
\frac{\de}{\de t} \int_{\T^3} m_{n+1}(t,x) \, \de x = 0 \qquad \forall t \geq 0.
$$
Since $m_{n+1} \in \Zeta$ implies $\int_{\T^3} m_{n+1}(t,x) \, \de x \to 0$ as $t \to +\infty$, the conserved integral must be identically zero for all times. This concludes the proof.
\end{proof}

We conclude this section with some technical lemmas that will be used in the proof of the convergence of the iteration in Section \ref{Sec:Convergence of the iterative scheme}. The proof of these lemmas follows the scheme of the proof of Theorem \ref{ThMFirstRecursiveBound}.

\begin{lem}\label{Bunch1}
Let $\ell > \frac52$, $\theta = \frac14$, and let $m, v : [0,+\infty) \times \T^3 \to \R^3$ be zero-average, divergence-free vector fields. 
Let $\Phi_{t,s} : \T^3 \to \T^3$, $s \geq t \geq 0$, be a family of $C^1$ maps satisfying
\begin{equation}\label{JacobianBoundedInLemma}
\det \nabla \Phi_{t,s} = 1 \qquad \forall s \geq t \geq 0.
\end{equation} 
If $v$ and $m$ satisfy the smallness condition
\begin{equation}\label{piccolezza spiegoneInLemma}
\| \nabla v \|_{L^2_t H^{\ell}} + \|v\|_{\Zeta} + \|m\|_{\Zeta} \leq \varepsilon,
\end{equation} 
then we have the estimate
\begin{equation}\label{fnjwknbdshdsfg533}
\sup_{t \geq 0}e^{\theta t}\left\| \int_t^\infty \left((m \cdot \nabla) v \right)(s, \Phi_{t,s}) \, \de s \right\|_{L^2} \lesssim \varepsilon \|m\|_{\ZetaZero},
\end{equation}
as well as
\begin{equation}\label{fnjwknbdshdsfg534}
\sup_{t \geq 0}e^{\theta t} \left\| \int_t^\infty \left((m \cdot \nabla) v \right)(s, \Phi_{t,s}) \, \de s \right\|_{L^2} \lesssim \varepsilon \|\nabla v\|_{L^2_t L^2}.
\end{equation}
\end{lem}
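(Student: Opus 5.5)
Both estimates are proved in the same way as \eqref{LeadingMBound} in the proof of Theorem \ref{ThMFirstRecursiveBound}, now at the $L^2$ level. First, by the volume preservation \eqref{JacobianBoundedInLemma} and the change of variables $y=\Phi_{t,s}(x)$,
$$
\left\| \int_t^\infty \left((m \cdot \nabla) v \right)(s, \Phi_{t,s}) \, \de s \right\|_{L^2}
\leq \int_t^\infty \|(m\cdot\nabla)v(s,\cdot)\|_{L^2}\,\de s .
$$
No regularity of $\Phi_{t,s}$ beyond $C^1$ is needed here, because we only measure the $L^2$ norm. Everything is then reduced to bounding $\|(m\cdot\nabla)v(s)\|_{L^2}$ by Hölder's inequality, placing the $L^\infty$ factor on whichever function is not measured in $L^2$.

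\textbf{Proof of \eqref{fnjwknbdshdsfg533}.} Use $\|(m\cdot\nabla)v\|_{L^2}\le \|m\|_{L^2}\|\nabla v\|_{L^\infty}\lesssim \|m\|_{L^2}\|\nabla v\|_{H^{\ell}}$, by the Sobolev embedding $H^\ell\hookrightarrow L^\infty$ ($\ell>3/2$). Then apply Cauchy--Schwarz in time:
$$
\int_t^\infty \|m(s)\|_{L^2}\|\nabla v(s)\|_{H^\ell}\,\de s\le \|m\|_{\ZetaZero}\Big(\int_t^\infty e^{-2\theta s}\de s\Big)^{1/2}\|\nabla v\|_{L^2_tH^\ell}\le \frac{\varepsilon}{\sqrt{2\theta}}\, e^{-\theta t}\|m\|_{\ZetaZero}.
$$
Multiplying by $e^{\theta t}$ gives the claim.

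\textbf{Proof of \eqref{fnjwknbdshdsfg534}.} Swap the roles: $\|(m\cdot\nabla)v\|_{L^2}\le \|m\|_{L^\infty}\|\nabla v\|_{L^2}\lesssim \|m\|_{H^\ell}\|\nabla v\|_{L^2}\le \varepsilon e^{-\theta s}\|\nabla v(s)\|_{L^2}$, using $\|m\|_{\Zeta}\le\varepsilon$. Cauchy--Schwarz in $s$ over $[t,\infty)$ then gives
$$
\frac{\varepsilon}{\sqrt{2\theta}}\, e^{-\theta t}\,\|\nabla v\|_{L^2_tL^2}.
$$

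The only point requiring care is the right allocation of the exponential weight. The $e^{-\theta s}$ decay must come from the factor measured in $\Zeta$ or $\ZetaZero$, so that after Cauchy--Schwarz it produces exactly $e^{-\theta t}$ to cancel the weight $e^{\theta t}$. The $\nabla v$ factor is used only in the unweighted space--time $L^2$ norm. With $\theta=\tfrac14$ fixed, the constants are absolute. The hypothesis $\ell>5/2$ is more than enough for the embeddings used above.
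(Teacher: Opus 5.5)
Your proof is correct and follows the paper's argument: Minkowski's inequality plus volume preservation, then H\"older with the $L^\infty$ factor on the other function, Sobolev, and integration in time. For \eqref{fnjwknbdshdsfg534} it is identical to the paper's. In \eqref{fnjwknbdshdsfg533} you deviate slightly by using $\|\nabla v\|_{L^2_tH^\ell}$ and Cauchy--Schwarz in time. The paper instead uses the pointwise decay $\|\nabla v(s)\|_{L^\infty}\lesssim\|v(s)\|_{H^\ell}\le\varepsilon e^{-\theta s}$ from the $\Zeta$ bound, which is where $\ell>\frac52$ enters, and integrates $e^{-2\theta s}$ directly, getting the slightly stronger decay $e^{-2\theta t}$ before the weight. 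Either version suffices.
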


\begin{proof}
Using the volume-preserving property \eqref{JacobianBoundedInLemma}, we control the $L^2$ norm in space as follows
\begin{align}\label{LeadingMBoundLemma}
\left\| \int_t^\infty \left(( m \cdot \nabla) v \right)(s, \Phi_{t,s}) \, \de s \right\|_{L^2}
&\leq \int_t^\infty \| ((m \cdot \nabla) v)(s,\cdot) \|_{L^2} \, \de s \nonumber \\
&\leq \int_t^\infty \| m (s,\cdot)\|_{L^{2}} \| \nabla v(s,\cdot) \|_{L^{\infty}} \, \de s.
\end{align}
Since $\ell > \frac{5}{2}$, the Sobolev embedding $H^\ell(\T^3) \hookrightarrow C^1(\T^3)$ implies $\|\nabla v(s,\cdot)\|_{L^\infty} \lesssim \|v(s,\cdot)\|_{H^\ell}$. Utilizing the smallness assumption \eqref{piccolezza spiegoneInLemma}, we obtain
\begin{align*}
\text{R.H.S. of } \eqref{LeadingMBoundLemma} &\lesssim \varepsilon \int_t^\infty e^{-\theta s} \| m(s,\cdot)\|_{L^2} \, \de s \\
&\leq \varepsilon \|m\|_{\ZetaZero} \int_t^\infty e^{-2\theta s} \, \de s = \frac{\varepsilon}{2\theta} \|m\|_{\ZetaZero} e^{- 2 \theta t}.
\end{align*}
Consequently, taking the supremum over $t \geq 0$ with the exponential weight $e^{\theta t}$, we get
\begin{equation}\label{fjdiosnfg4Lemma}
\sup_{t \geq 0}e^{\theta t} \left\| \int_t^\infty \left((m \cdot \nabla) v \right)(s, \Phi_{t,s}) \, \de s \right\|_{L^2} \lesssim \sup_{t \geq 0} \frac{\varepsilon}{2\theta} \|m\|_{\ZetaZero} e^{- \theta t} \leq \frac{\varepsilon}{2\theta} \|m\|_{\ZetaZero},
\end{equation}
which proves \eqref{fnjwknbdshdsfg533}.

In order to prove \eqref{fnjwknbdshdsfg534}, we shift the $L^\infty$ estimate onto the vector field $m$. Applying Hölder's inequality in space yields
\begin{align*}
\left\| \int_t^\infty \left((m \cdot \nabla) v \right)(s, \Phi_{t,s}) \, \de s \right\|_{L^2} 
&\leq \int_t^\infty \| ((m \cdot \nabla) v)(s,\cdot) \|_{L^2} \, \de s \\
&\leq \int_0^\infty \mathbf{1}_{[t, \infty)}(s) \| m (s,\cdot)\|_{L^{\infty}} \| \nabla v (s,\cdot) \|_{L^2} \, \de s.
\end{align*}
Since $\ell > \frac{5}{2} > \frac{3}{2}$, we exploit the embedding $H^\ell(\T^3) \hookrightarrow L^\infty(\T^3)$ and the uniform bound $\|m\|_{\Zeta} \leq \varepsilon$ from \eqref{piccolezza spiegoneInLemma} to bound $\|m(s,\cdot)\|_{L^\infty} \lesssim \varepsilon e^{-\theta s}$. Applying the Cauchy-Schwarz inequality with respect to time, we find
\begin{align*}
\left\| \int_t^\infty \left((m \cdot \nabla) v \right)(s, \Phi_{t,s}) \, \de s \right\|_{L^2} 
&\lesssim \varepsilon \left( \int_0^\infty \mathbf{1}_{[t, \infty)}(s) e^{- 2 \theta s} \, \de s \right)^{1/2} \| \nabla v \|_{L^2_t L^2} \\
&\leq \frac{\varepsilon}{\sqrt{2\theta}} e^{- \theta t} \| \nabla v \|_{L^2_t L^2}.
\end{align*}
Multiplication by $e^{\theta t}$ and taking the supremum over $t \geq 0$ yields
\begin{equation}
\sup_{t \geq 0}e^{\theta t} \left\| \int_t^\infty \left((m \cdot \nabla) v \right)(s, \Phi_{t,s}) \, \de s \right\|_{L^2} \lesssim \frac{\varepsilon}{\sqrt{2\theta}} \| \nabla v \|_{L^2_t L^2},
\end{equation}
from which \eqref{fnjwknbdshdsfg534} follows. Note that for this second bound, the Sobolev embedding only required $\ell > \frac{3}{2}$.
\end{proof}

\begin{lem}\label{Bunch2}
Let $\ell > \frac72$, $\theta = \frac14$ and  let $v, v', m, m' : [0,+\infty) \times \T^3 \to \R^3$  zero-average 
divergence-free vector fields and let $\|B\|_{H^{\ell + 1}} \leq \varepsilon$. 
Let $\Phi_{t,s} : \T^3\to\T^3$, $s \geq t \geq 0$ be a family of $C^1$ maps such that   
\begin{equation}\label{JacobianBoundedInLemmaB2}
  \det \nabla \Phi_{t,s} = 1, \qquad \forall s \geq t \geq 0.
\end{equation} 
If the vector fields $v,v',m,m'$ satisfy the smallness condition
\begin{equation}\label{piccolezza spiegoneInLemmaB2}
 \|v\|_{\Zeta}+\|m\|_{\Zeta} + \|v'\|_{\Zeta} + \|m'\|_{\Zeta} \leq \e,
\end{equation} 
then, for $j \in \{ 1,2\}$ we have that
\begin{equation}\label{fnjwknbdshdsfg533B2}
\sup_{t \geq 0}e^{\theta t} 
 \left\| \int_t^\infty \left( \pi_j \Gamma(v,m) - \pi_j \Gamma(v',m') \right) (s,\Phi_{t,s}) \, \de s \right\|_{L^2} 
  \nonumber
\lesssim \varepsilon^2 \left( \|v - v'\|_{\ZetaZero} + \|m - m'\|_{\ZetaZero}\right) .
\end{equation}
\end{lem}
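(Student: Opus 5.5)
The plan follows the $L^2$ part of the proof of Lemma \ref{Bunch1}, with \eqref{GammaEstimate1L2} replacing the product estimate.

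\textbf{Step 1: remove the flow.} Since $\det \nabla\Phi_{t,s}=1$ by \eqref{JacobianBoundedInLemmaB2}, the change of variables $y=\Phi_{t,s}(x)$ gives, pointwise in $s$,
$$\|(\pi_j\Gamma(v,m)-\pi_j\Gamma(v',m'))(s,\Phi_{t,s})\|_{L^2}=\|(\pi_j\Gamma(v,m)-\pi_j\Gamma(v',m'))(s)\|_{L^2}.$$
Minkowski's inequality then moves the $L^2$ norm inside the time integral, so the quantity to control is $\int_t^\infty\|\Gamma(v,m)(s)-\Gamma(v',m')(s)\|_{L^2\times L^2}\,\de s$.

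\textbf{Step 2: apply the Lipschitz bound for $\Gamma$.} By \eqref{GammaEstimate1L2}, the integrand is bounded by
$$\bigl(e^{\|B\|_{W^{3,\infty}}}-1\bigr)\,\|(v-v',m-m')(s)\|_{L^2\times L^2}\,\bigl(\|(v,m)(s)\|_{H^\ell\times H^\ell}+\|(v',m')(s)\|_{H^\ell\times H^\ell}\bigr).$$
Since $\ell>\frac72$, the embedding $H^{\ell+1}\hookrightarrow W^{3,\infty}$ holds. Together with $\|B\|_{H^{\ell+1}}\le\varepsilon\le 1$, this gives $e^{\|B\|_{W^{3,\infty}}}-1\lesssim\varepsilon$. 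The hypothesis $\|B\|_{H^{\ell+1}}\lesssim 1$ required by Lemma \ref{lem:GammaEstimate} is also satisfied.

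\textbf{Step 3: time integration.} Assumption \eqref{piccolezza spiegoneInLemmaB2} gives $\|(v,m)(s)\|_{H^\ell\times H^\ell}+\|(v',m')(s)\|_{H^\ell\times H^\ell}\le \varepsilon e^{-\theta s}$. By the definition of $\ZetaZero$, $\|(v-v',m-m')(s)\|_{L^2\times L^2}\le e^{-\theta s}\bigl(\|v-v'\|_{\ZetaZero}+\|m-m'\|_{\ZetaZero}\bigr)$. The integrand is therefore bounded by $\varepsilon^2 e^{-2\theta s}\bigl(\|v-v'\|_{\ZetaZero}+\|m-m'\|_{\ZetaZero}\bigr)$. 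Integrating over $[t,\infty)$ produces the factor $\frac{1}{2\theta}e^{-2\theta t}$. Multiplying by $e^{\theta t}$ and taking the supremum over $t\ge0$ gives the claim, since $e^{-\theta t}\le1$ and $\theta=\frac14$ is fixed, so the constant $\frac{1}{2\theta}$ is absorbed into $\lesssim$.

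\textbf{Main obstacle.} There is no real obstacle here. Because the flow is volume preserving, the $L^2$ estimate never involves derivatives of $\Phi_{t,s}$, so no flow bound is needed. The only points to check are that the regularity constant $W^{3,\infty}$ appearing in \eqref{GammaEstimate1L2} is controlled by $\|B\|_{H^{\ell+1}}$, which is where $\ell>\frac72$ is used, and that each of the two factors in Step 2 contributes one power of $e^{-\theta s}$. This second point is what leaves a spare $e^{-\theta t}$ after integration.
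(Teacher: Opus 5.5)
Your proposal is correct and follows the paper's proof essentially line by line. The paper also uses volume preservation to drop the flow, then applies the Lipschitz bound \eqref{GammaEstimate1L2} with $e^{\|B\|_{W^{3,\infty}}}-1\lesssim\varepsilon$, and finally integrates $e^{-2\theta s}$ to leave a spare factor $e^{-\theta t}$; your explicit check that $H^{\ell+1}\hookrightarrow W^{3,\infty}$ for $\ell>\frac72$ is a detail the paper leaves implicit.
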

\begin{proof}

Using the volume-preserving property \eqref{JacobianBoundedInLemmaB2} and inequality \eqref{GammaEstimate1L2} we get
\begin{align}\label{LeadingMBoundLemmaB2}
& \left\| \int_t^\infty \left( \pi_j \Gamma(v,m) - \pi_j \Gamma(v',m') \right)(s,\Phi_{t,s}) \, \de s \right\|_{L^2}
\\ \nonumber
&
\leq \int_t^\infty \| \left( \pi_j \Gamma(v,m) - \pi_j \Gamma(v',m') \right)(s,\cdot) \|_{L^2}  \, \de s 
\\ & \nonumber
\lesssim \varepsilon \int_t^\infty  \| (v - v',m- m')(s,\cdot)\|_{L^2 \times L^2} (\| (v,m)(s,\cdot)\|_{H^{\ell} \times H^{\ell}} 
+ \| (v',m')(s,\cdot)\|_{H^{\ell} \times H^{\ell}})  \, \de s 
\\ & \nonumber
\overset{\eqref{piccolezza spiegoneInLemmaB2}}{\leq} 
\varepsilon^2 (\|v-v'\|_{\ZetaZero} + \|m-m'\|_{\ZetaZero}) \int_t^\infty e^{-2\theta s}  \, \de s   
\\ & \nonumber
\leq \frac{\varepsilon^2}{2\theta} (\|v-v'\|_{\ZetaZero} + \|m-m'\|_{\ZetaZero})  e^{- 2\theta t}.
\end{align}
Thus, multiplying by $e^{\theta t}$ and taking the supremum over $t \geq 0$ we get the result.
\end{proof}

\begin{lem}\label{Bunch3}
Under the same assumption of Lemma \ref{Bunch2}, we have
\begin{equation}\label{fnjwknbdshdsfg533B3}
\sup_{t \geq 0}e^{\theta t} \left\| \int_t^\infty \left(  \dive \left(  N_2(v,m) -  N_2(v',m')\right) \right) (s,\Phi_{t,s}) \, \de s \right\|_{L^2} \lesssim \varepsilon^2 \left(  \| \nabla (v - v')  \|_{L^{2}_tL^2 }       
+    \|m - m'\|_{\ZetaZero} \right).
\end{equation}
\end{lem}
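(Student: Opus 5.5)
The argument follows the scheme of Lemmas \ref{Bunch1} and \ref{Bunch2}. The first step is to use the volume-preserving property \eqref{JacobianBoundedInLemmaB2} to drop the composition with $\Phi_{t,s}$ in $L^2$. By Minkowski's inequality this gives, for every $t \geq 0$,
\[
\left\| \int_t^\infty \left( \dive \left( N_2(v,m) - N_2(v',m')\right) \right) (s,\Phi_{t,s}) \, \de s \right\|_{L^2}
\leq \int_t^\infty \left\| \dive \left( N_2(v,m) - N_2(v',m')\right)(s,\cdot) \right\|_{L^2} \de s .
\]

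Next, I apply the improved difference bound \eqref{ImprovedDIVBoundL2} of Lemma \ref{ImprovedDIV} pointwise in time. Since $\|B\|_{H^{\ell+1}} \leq \varepsilon$, the prefactor satisfies $e^{\|B\|_{H^{\ell+1}}}-1 \lesssim \varepsilon$. By \eqref{piccolezza spiegoneInLemmaB2}, the factor $\|(v,m)(s)\|_{H^{\ell-1}\times H^{\ell-1}} + \|(v',m')(s)\|_{H^{\ell-1}\times H^{\ell-1}}$ is bounded by $\varepsilon e^{-\theta s}$. Hence the integrand is controlled by
\[
\varepsilon^2 e^{-\theta s}\Big( \|\nabla (v-v')(s,\cdot)\|_{L^2} + \|(m-m')(s,\cdot)\|_{L^2}\Big).
\]

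I then treat the two pieces separately, exactly as in the two halves of Lemma \ref{Bunch1}.

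For the $m$-piece, I write $\|(m-m')(s)\|_{L^2} \leq e^{-\theta s}\|m-m'\|_{\ZetaZero}$. Integrating $e^{-2\theta s}$ over $[t,\infty)$ gives $\frac{\varepsilon^2}{2\theta} e^{-2\theta t}\|m-m'\|_{\ZetaZero}$.

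For the $v$-piece, I apply Cauchy--Schwarz in time:
\[
\int_t^\infty e^{-\theta s}\|\nabla(v-v')(s)\|_{L^2}\,\de s \leq \Big(\int_t^\infty e^{-2\theta s}\de s\Big)^{1/2} \|\nabla(v-v')\|_{L^2_tL^2}
= \frac{e^{-\theta t}}{\sqrt{2\theta}}\|\nabla(v-v')\|_{L^2_tL^2}.
\]

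Multiplying both contributions by $e^{\theta t}$ and taking the supremum over $t \geq 0$ yields \eqref{fnjwknbdshdsfg533B3}, since $\theta = \frac14$ is fixed and the constants are absorbed.

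The only delicate point is making sure that \eqref{ImprovedDIVBoundL2} genuinely places no derivative on $m-m'$, so that only $\|m-m'\|_{L^2}$ appears. This is where the structure of $\dive N_2$ enters. Using \eqref{OtimesVSnabla} and \eqref{fnejskkhjfg6458293u2}, the terms involving $v$ and $\pi_2(e^K-\Id)$ become transport-type expressions. In the transport term $(v\cdot\nabla)\pi_2(e^K-\Id)(v,m)$, the derivative falls on $(e^K-\Id)(\cdot)$, which gains a derivative by \eqref{fdnjskbjnfhjskhjdbfgsjk3H1}. In the other term, the derivative falls on $v$. The quadratic $(e^K-\Id)\otimes(e^K-\Id)$ terms are handled with \eqref{fdnjskbjnfhjskhjdbfgsjk3L2}--\eqref{fdnjskbjnfhjskhjdbfgsjk3H1}. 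Since this is already established in Lemma \ref{ImprovedDIV}, I simply invoke it. I only need to note that the $L^\infty$ norms of the undifferentiated factors are controlled through $H^{\ell-1} \hookrightarrow W^{1,\infty}$, which holds because $\ell > 7/2$.
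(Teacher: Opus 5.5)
Your proposal is correct and follows the paper's proof essentially step by step. Both arguments use volume preservation to drop the composition with $\Phi_{t,s}$, then apply \eqref{ImprovedDIVBoundL2} together with the smallness assumption to get the factor $\varepsilon^2 e^{-\theta s}$, and finish with Cauchy--Schwarz in time for the $\nabla(v-v')$ piece and the $Z^{0,\theta}$ weight for the $m-m'$ piece. Your bound $e^{-2\theta s}$ for the $m$-piece is slightly sharper than the paper's $e^{-\theta s}$, which makes no difference to the conclusion.
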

\begin{proof}

Using the volume-preserving property \eqref{JacobianBoundedInLemmaB2} and \eqref{ImprovedDIVBoundL2} we get
\begin{align}\label{LeadingMBoundLemmaB3}
& \left\| \int_t^\infty \left(  \dive \left(  N_2(v,m) -  N_2(v',m')\right) \right)(s,\Phi_{t,s}) \, \de s \right\|_{L^2}
\\  \nonumber
&
\leq \int_t^\infty \| \left( \dive \left(  N_2(v,m) -  N_2(v',m')\right) \right)(s,\cdot) \|_{L^2}  \, \de s 
\\ & \nonumber
\leq \varepsilon \int_t^\infty   \Big( 
\| \nabla (v - v')(s,\cdot)  \|_{L^2 }   + \|(m - m')(s,\cdot)\|_{L^2} \Big)
 \Big( \| (v,m)(s,\cdot)\|_{H^{\ell -1} \times H^{\ell -1}} + \| (v',m')(s,\cdot)\|_{H^{\ell -1} \times H^{\ell -1}}  \Big)   \, \de s 
\\ & \nonumber
\overset{\eqref{piccolezza spiegoneInLemmaB2}}{\leq} 
\varepsilon^2 \int_0^\infty 1_{[t, \infty)}(s) e^{-\theta s}  \| \nabla (v - v')(s,\cdot)  \|_{L^2 }    \, \de s   
+ \varepsilon^2 \int_t^\infty e^{-\theta s}  \|(m - m')(s,\cdot)\|_{L^2}  \, \de s   
\\ & \nonumber
\leq 
\varepsilon^2   \| \nabla (v - v')  \|_{L^{2}_tL^2 }  \left( \int_0^\infty 1_{[t, \infty)}(s)  e^{-2\theta s}  \, \de s  \right)^{1/2}    
+ \varepsilon^2 \|m - m'\|_{\ZetaZero} \int_t^\infty e^{-\theta s}    \, \de s   
\\ & \nonumber
\leq 
\frac{\varepsilon^2}{\sqrt{2 \theta}}  e^{-\theta t} \| \nabla (v - v')  \|_{L^{2}_tL^2 }       
+ \frac{\varepsilon^2}{\theta}  e^{-\theta t} \|m - m'\|_{\ZetaZero}    
\end{align}
Thus, multiplying by $e^{\theta t}$ and taking the supremum over $t \geq 0$ we get
\begin{equation}\label{fjdiosnfg4LemmaB3} 
\sup_{t \geq 0}e^{\theta t} \left\| \int_t^\infty \left(  \dive \left(  N_2(v,m) -  N_2(v',m')\right) \right)(s,\Phi_{t,s}) \, \de s \right\|_{L^2}
  \lesssim 
\frac{\varepsilon^2}{\sqrt{2 \theta}}   \| \nabla (v - v')  \|_{L^{2}_tL^2 }       
+ \frac{\varepsilon^2}{\theta}   \|m - m'\|_{\ZetaZero}.
\end{equation}
\end{proof}

\subsection{Second recursive bound}\label{Sec:Picard}

In this subsection, we establish the second recursive bound, which guarantees the uniform control of the sequence of velocity fields generated by the iterative scheme. We begin by stating the principal theorem of this section.

\begin{thm}\label{ThmSecondIterative}
Let $\ell \geq 4$ and $\theta = \frac14$. There exists $\varepsilon_1 \in (0,1)$ such that for all $\varepsilon \in (0, \varepsilon_1)$ the following holds. Let $B$ be a given field such that $\|B\|_{H^{\ell +2}} \leq \varepsilon$, and let $v_n, m_n : [0, +\infty) \times \T^3 \to \R^3$ be a pair of vector fields satisfying \eqref{piccolezza spiegone-2}--\eqref{piccolezza spiegone}.
Then, the vector field $\vn$ defined via \eqref{iterazione integrale} satisfies
\begin{equation}
\dive \vn(t,x) =0, \quad \int_{\T^3} \vn(t,x) \, \de x = 0 \qquad \forall t \geq 0,
\end{equation}
and
\begin{equation}
\|\vn\|_{\Zeta}+\|\nabla \vn\|_{L^2_tH^\ell} \leq 2 \|y\|_{H^\ell} + C \varepsilon^2
\end{equation}
for some constant $C \geq 1$.
\end{thm}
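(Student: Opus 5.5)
The plan is to treat $\vn$ as the solution of the forced heat equation \eqref{iterazione}, with initial datum $y$ and forcing
$$
f_n := \pi_1 \mathcal{A} \begin{pmatrix} v_n \\ m_n \end{pmatrix} + \pi_1 \Gamma(v_n, m_n) + \p \Big( (m_n \cdot \nabla) m_n - (v_n \cdot \nabla) v_n + \dive N_1(v_n, m_n) \Big).
$$
The structural properties come first. By Proposition \ref{prop:algebraic_properties}, $\pi_1\mathcal{A}$ and $\pi_1\Gamma$ produce zero-average divergence-free fields, and $\p$ maps into zero-average divergence-free fields. Since $e^{t\Delta}$ commutes with $\dive$ and preserves the mean, and $y$ has both properties, the Duhamel formula \eqref{iterazione integrale} gives $\dive \vn = 0$ and $\int \vn = 0$ for all $t$.

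The quantitative bound comes from a weighted energy estimate in $H^\ell$. Apply $\DL$ to \eqref{iterazione} and pair with $\DL \vn$ in $L^2$. This gives
$$
\frac12 \frac{\de}{\de t}\|\vn\|_{H^\ell}^2 + \|\nabla \vn\|_{H^\ell}^2 = \langle \DL f_n, \DL \vn\rangle .
$$
The forcing term is placed in $H^{\ell-1}$ wherever a derivative is lost. We then use duality, $|\langle \DL f_n,\DL\vn\rangle| \le \|f_n\|_{H^{\ell-1}}\|\vn\|_{H^{\ell+1}}$, together with Poincaré, $\|\vn\|_{H^{\ell+1}}\lesssim \|\nabla\vn\|_{H^\ell}$ (zero average). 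Young's inequality then absorbs half of the dissipation.

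Poincaré also gives $\|\nabla \vn\|_{H^\ell}^2 \ge \|\vn\|_{H^\ell}^2$ up to harmless constants. Using part of the dissipation this way, multiplying by $e^{2\theta t}$ with $\theta = 1/4$ and integrating, we get
$$
e^{2\theta t}\|\vn(t)\|_{H^\ell}^2 + c\int_0^t \|\nabla\vn\|_{H^\ell}^2 \lesssim \|y\|_{H^\ell}^2 + \int_0^\infty e^{2\theta s}\|f_n(s)\|_{H^{\ell-1}}^2\,\de s .
$$
The spare decay $1-2\theta>0$ is exactly what allows the weight. To obtain the sharp constant $2\|y\|_{H^\ell}$ rather than $C\|y\|_{H^\ell}$, I would alternatively split $\vn = e^{t\Delta}y + w$. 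The linear part is estimated exactly using \eqref{fjdklasjdks64} and Parseval: $\|e^{t\Delta}y\|_{\Zeta}\le \|y\|_{H^\ell}$, and $\|\nabla e^{t\Delta}y\|_{L^2_tH^\ell}\le \|y\|_{H^\ell}/\sqrt2$. The energy estimate above is then run only for $w$, which has zero data.

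The forcing is bounded term by term in $L^2_t H^{\ell-1}$ with weight $e^{\theta t}$, using \eqref{piccolezza spiegone}:
\begin{itemize}
\item $\pi_1\mathcal{A}$: by \eqref{mathcalAbound}, $\|\pi_1\mathcal{A}(v_n,m_n)\|_{H^\ell}\lesssim \varepsilon^2(\|v_n\|_{H^\ell}+\|m_n\|_{H^\ell})$, so $e^{\theta s}\|\cdot\|_{H^{\ell-1}}\lesssim \varepsilon^3 e^{0\cdot s}$. This is not square integrable with the full weight. I would therefore run the energy estimate with a slightly smaller weight, or more simply estimate the linear Duhamel piece via the explicit kernel and Lemma \ref{lemma calore}: $\int_0^t e^{-(t-s)}\varepsilon^3 e^{-\theta s}\de s\lesssim \varepsilon^3 e^{-\theta t}$, using $\theta<1$. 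Its $L^2_tH^{\ell+1}$ contribution is handled by maximal regularity (Parseval in time), bounded by $\|f\|_{L^2_tH^{\ell-1}}$ without weight, which is finite, $\lesssim \varepsilon^3/\sqrt\theta$.
\item $\pi_1\Gamma$: handled similarly, by \eqref{GammaEstimate1}, giving $\lesssim\varepsilon^3 e^{-2\theta s}$.
\item The quadratic terms $(m_n\cdot\nabla)m_n$, $(v_n\cdot\nabla)v_n$ and $\dive N_1$: written in divergence form and bounded in $H^{\ell-1}$ by $\|m_n\otimes m_n\|_{H^\ell}+\dots\lesssim \varepsilon^2 e^{-2\theta s}$, using the algebra property and Lemma \ref{RemainderEstimate1}.
\end{itemize}
All these contributions are $O(\varepsilon^2)$ both in $\Zeta$ (with weight $e^{\theta t}$, since $2\theta$ or $\theta$ decay beats it after convolution with $e^{-(t-s)}$) and in $L^2_tH^{\ell+1}$.

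The main obstacle is reconciling the exponential weight with the $L^2_t H^{\ell+1}$ gain from the forcing terms that decay only like $e^{-\theta s}$, namely the $\mathcal{A}$ term. I would handle this by estimating $\|w\|_{\Zeta}$ directly from the Duhamel formula. For the $\Zeta$ norm, the half-derivative smoothing $\|e^{(t-s)\Delta}\nabla\cdot F\|_{H^\ell}\lesssim (t-s)^{-1/2}e^{-(t-s)/2}\|F\|_{H^\ell}$ from Lemma \ref{lemma calore} gives an integrable singularity, so no derivative is lost. I would estimate $\|\nabla w\|_{L^2_tH^\ell}$ separately by the unweighted energy inequality, where $\int_0^\infty \|f_n\|_{H^{\ell-1}}^2 \lesssim \varepsilon^6/\theta$. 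Summing the pieces gives $\|\vn\|_{\Zeta}+\|\nabla\vn\|_{L^2_tH^\ell}\le (1+1/\sqrt2)\|y\|_{H^\ell}+C\varepsilon^2\le 2\|y\|_{H^\ell}+C\varepsilon^2$.
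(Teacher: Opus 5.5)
Your final plan is correct and is essentially the paper's proof. The paper bounds $\|\vn\|_{\Zeta}$ through the Duhamel formula, using the decay $e^{-(t-s)}$ for the $\pi_1\mathcal{A}$ and $\pi_1\Gamma$ terms and the split semigroup with $(t-s)^{-1/2}$ smoothing for the divergence-form terms, and bounds $\|\nabla\vn\|_{L^2_tH^\ell}$ through an unweighted $H^\ell$ energy estimate, exactly as you do. Splitting off $e^{t\Delta}y$ and checking the constraints on the Duhamel formula are only cosmetic differences. (Minor slip: the quadratic forcing gives $\int_0^\infty\|f_n\|_{H^{\ell-1}}^2\,\de s\lesssim\varepsilon^4/\theta$, not $\varepsilon^6/\theta$, which still yields the required $C\varepsilon^2$.)
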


Most of the estimates required to prove Theorem \ref{ThmSecondIterative} are established in the following two lemmas, which we state and prove before proceeding to the main proof.

\begin{rem}
It is worth pointing out that $v_n, m_n,$ and $v_{n+1}$ may be generic vector fields in Lemmas \ref{FTWOM1}--\ref{gmjdskhkjdsfgjkgsdnj9435782}; however, we deliberately keep the notation $v_n, m_n, v_{n+1}$ to maintain consistency and avoid confusion with the scheme.  
\end{rem}

\begin{lem}\label{FTWOM1}
Let $\ell \geq 4$ and let $B$ be a vector field such that $\|B\|_{H^{\ell +2}} \leq \varepsilon$. Let $v_n, m_n, v_{n+1}: \T^3 \to \R^3$. Then the following estimates hold
\begin{equation}\label{fdnjskngjds1}
\left| \int_{\T^3} \left( \DL \pi_1 \mathcal{A} \begin{pmatrix} v_n \\ m_n \end{pmatrix} \right) \cdot \DL \vn \, \de x \right| 
\leq \frac{1}{100} \| \vn\|^2_{H^{\ell}} + C \varepsilon^4 (\|v_n\|^2_{H^{\ell}} + \|m_n\|^2_{H^{\ell}}),
\end{equation}
\begin{equation}\label{fdnjskngjds2}
\left| \int_{\T^3} \left( \DL \pi_1 \Gamma(v_n,m_n) \right) \cdot \DL \vn \, \de x \right| 
\leq \frac{1}{100} \| \vn\|^2_{H^{\ell}} + C \varepsilon^2 (\|v_n\|^4_{H^{\ell}} + \|m_n\|^4_{H^{\ell}}),
\end{equation}
and
\begin{equation}\label{fdnjskngjds3}
\left| \int_{\T^3} \left( \DL N_1(v_n,m_n) \right) \cdot \nabla \DL \vn \, \de x \right| 
\leq \frac{1}{100} \| \vn\|^2_{H^{\ell +1}} + C \varepsilon^2 (\|v_n\|^4_{H^{\ell}} + \|m_n\|^4_{H^{\ell}}).
\end{equation}
\end{lem}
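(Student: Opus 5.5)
All three estimates follow the same pattern. We apply Cauchy--Schwarz in $L^2$ to the pairing. We then use Young's inequality in the form $ab \le \frac{1}{100}a^2 + 25 b^2$, with $a$ the norm of $v_{n+1}$ and $b$ the $H^\ell$ norm of the operator applied to $(v_n,m_n)$. Finally, we bound $b$ by the operator estimates already proved: Lemma \ref{LemmaBoundForA}, Lemma \ref{lem:GammaEstimate} and Lemma \ref{RemainderEstimate1}. In each case we absorb powers of $\|B\|_{H^{\ell+2}}\le\varepsilon$, and use the elementary bound \eqref{fdnjskdjggsdjnkgsd46783}, $e^{\|B\|_{H^{\ell+2}}}-1\le 2\varepsilon$ (true for $\varepsilon_1$ small). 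Throughout, $v_n,m_n,v_{n+1}$ are zero-average and divergence-free, so all operators are well defined (Remark \ref{RemWellDef}).

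For \eqref{fdnjskngjds1} we write
\[
\Big|\int_{\T^3} \DL \pi_1\mathcal{A}\binom{v_n}{m_n}\cdot \DL v_{n+1}\Big| \le \Big\|\mathcal{A}\binom{v_n}{m_n}\Big\|_{H^\ell\times H^\ell}\|v_{n+1}\|_{H^\ell}.
\]
Since $\ell\ge4>7/2$ and $\|B\|_{H^{\ell+2}}\le\varepsilon\lesssim1$, \eqref{mathcalAbound} bounds the first factor by $C\varepsilon^2(\|v_n\|_{H^\ell}+\|m_n\|_{H^\ell})$. Young's inequality then gives $\frac1{100}\|v_{n+1}\|_{H^\ell}^2+C\varepsilon^4(\|v_n\|^2_{H^\ell}+\|m_n\|^2_{H^\ell})$.

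For \eqref{fdnjskngjds2} the argument is identical, now using \eqref{GammaEstimate1}: $\|\Gamma(v_n,m_n)\|_{H^\ell\times H^\ell}\lesssim \varepsilon\|(v_n,m_n)\|^2_{H^\ell\times H^\ell}$. Squaring, and using $(a+b)^4\lesssim a^4+b^4$, produces $C\varepsilon^2(\|v_n\|^4_{H^\ell}+\|m_n\|^4_{H^\ell})$.

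For \eqref{fdnjskngjds3} we pair $\DL N_1$ with $\nabla\DL v_{n+1}$. We bound this by $\|N_1(v_n,m_n)\|_{H^\ell}\|\nabla v_{n+1}\|_{H^\ell}$ and then use $\|\nabla v_{n+1}\|_{H^\ell}\le\|v_{n+1}\|_{H^{\ell+1}}$. Estimate \eqref{fndjskdjshbf664tg3w4r} gives $\|N_1\|_{H^\ell}\lesssim\varepsilon\|(v_n,m_n)\|_{H^\ell\times H^\ell}\|(v_n,m_n)\|_{H^{\ell-1}\times H^{\ell-1}}\le\varepsilon\|(v_n,m_n)\|^2_{H^\ell\times H^\ell}$, and Young's inequality concludes.

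There is no real obstacle here. The only points to check are the regularity bookkeeping, namely that $\ell\ge4$ meets the thresholds $\ell>7/2$ and $\|B\|_{H^{\ell+1}}\lesssim1$ required by the cited lemmas, and that in the third estimate the extra derivative lands on $v_{n+1}$. That term is measured in $H^{\ell+1}$, which is where the diffusion will later absorb it.
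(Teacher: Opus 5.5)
Your proposal is correct and follows the paper's proof essentially verbatim. In each case you apply Cauchy--Schwarz to the pairing, bound the operator with Lemmas~\ref{LemmaBoundForA}, \ref{lem:GammaEstimate} and \ref{RemainderEstimate1} using $e^{\|B\|}-1\lesssim\varepsilon$, and then use Young's inequality to split off $\frac{1}{100}\|v_{n+1}\|^2$ in $H^{\ell}$ or $H^{\ell+1}$.
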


\begin{proof}
Throughout the proof, $C>0$ will denote several constants, possibly increasing from line to line. We start by proving \eqref{fdnjskngjds1}: from inequality \eqref{mathcalAbound} and the Cauchy-Schwarz inequality, we get
\begin{align*}
\left| \int_{\T^3} \left( \DL \pi_1 \mathcal{A} \begin{pmatrix} v_n \\ m_n \end{pmatrix} \right) \cdot \DL \vn \, \de x \right|
&\leq \left\| \DL \pi_1 \mathcal{A} \begin{pmatrix} v_n \\ m_n \end{pmatrix} \right\|_{L^2} \|\DL \vn\|_{L^2} \\
&\leq \left\| \mathcal{A} \begin{pmatrix} v_n \\ m_n \end{pmatrix} \right\|_{H^{\ell} \times H^{\ell}} \| \vn\|_{H^{\ell}} \\
&\leq C \varepsilon^2 (\|v_n\|_{H^{\ell}} + \|m_n\|_{H^{\ell}}) \| \vn\|_{H^{\ell}} \\
&\leq \frac{1}{100} \| \vn\|^2_{H^{\ell}} + C \varepsilon^4 (\|v_n\|^2_{H^{\ell}} + \|m_n\|^2_{H^{\ell}}),
\end{align*}
where in the last step we applied Young's inequality. 
We now consider \eqref{fdnjskngjds2}: similarly to the previous case, we use the inequality \eqref{GammaEstimate1} and we obtain
\begin{align*}
\left| \int_{\T^3} \left( \DL \pi_1 \Gamma(v_n,m_n) \right) \cdot \DL \vn \, \de x \right|
&\leq \left\| \DL \pi_1 \Gamma(v_n,m_n) \right\|_{L^2} \|\DL \vn\|_{L^2} \\
&\leq \left\| \Gamma(v_n,m_n) \right\|_{H^{\ell} \times H^{\ell}} \| \vn\|_{H^{\ell}} \\
&\leq C \varepsilon (\|v_n\|^2_{H^{\ell}} + \|m_n\|^2_{H^{\ell}}) \| \vn\|_{H^{\ell}} \\
&\leq \frac{1}{100} \| \vn\|^2_{H^{\ell}} + C \varepsilon^2 (\|v_n\|^4_{H^{\ell}} + \|m_n\|^4_{H^{\ell}}).
\end{align*}

Lastly, in order to prove \eqref{fdnjskngjds3} we use \eqref{fndjskdjshbf664tg3w4r} and we get that
\begin{align*}
\left| \int_{\T^3} \left( \DL N_1(v_n,m_n) \right) \cdot \nabla \DL \vn \, \de x \right|
&\leq \left\| \DL N_1(v_n,m_n) \right\|_{L^2} \|\nabla \DL \vn\|_{L^2} \\
&\leq \left\| N_1(v_n,m_n) \right\|_{ H^{\ell}} \| \vn\|_{H^{\ell+1}} \\
&\leq C \varepsilon (\|v_n\|^2_{H^{\ell}} + \|m_n\|^2_{H^{\ell}}) \| \vn\|_{H^{\ell +1}} \\
&\leq \frac{1}{100} \| \vn\|^2_{H^{\ell+1}} + C \varepsilon^2 (\|v_n\|^4_{H^{\ell}} + \|m_n\|^4_{H^{\ell}}).
\end{align*}
This concludes the proof of the lemma.
\end{proof}

\begin{lem}\label{gmjdskhkjdsfgjkgsdnj9435782}
Let $\ell \geq 4$, $\theta = \frac14$, and let $B$ be a vector field such that $\|B\|_{H^{\ell +2}} \leq \varepsilon$. Let $v_n, m_n : [0, + \infty) \times \T^3 \to \R^3$ be zero-average, divergence-free vector fields such that
\begin{equation}\label{fdnjskfdjhbsjksdfh664}
\|v_n\|_{\Zeta} + \|m_n\|_{\Zeta} \lesssim \varepsilon.
\end{equation}
Then the following estimates hold
\begin{equation}\label{fdkjskldnfgnujkuu65}
\sup_{t \geq 0}e^{\theta t} \left\| \int_0^t e^{(t-s)\Delta} \p ((m_n \cdot \nabla) m_n) (s,\cdot) \, \de s \right\|_{H^{\ell}} \lesssim \varepsilon^2,
\end{equation}
\begin{equation}\label{fdkjskldnfgnujkuu64}
\sup_{t \geq 0}e^{\theta t} \left\| \int_0^t e^{(t-s)\Delta} \p ((v_n \cdot \nabla) v_n) (s,\cdot) \, \de s \right\|_{H^{\ell}} \lesssim \varepsilon^2,
\end{equation}
and
\begin{equation}\label{fdkjskldnfgnujkuu64N1}
\sup_{t \geq 0}e^{\theta t} \left\| \int_0^t e^{(t-s)\Delta} \p \dive N_{1}(v_n, m_n) (s,\cdot) \, \de s \right\|_{H^{\ell}} \lesssim \varepsilon^3.
\end{equation}
\end{lem}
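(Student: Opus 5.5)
All three bounds in Lemma \ref{gmjdskhkjdsfgjkgsdnj9435782} will be proved the same way. Write the nonlinearity in divergence form, use the smoothing estimate \eqref{fdjskhggnjds2} to absorb the single derivative it carries, and use the exponential decay \eqref{fdjskhggnjds1} of the heat semigroup together with the $e^{-2\theta s}$ decay of the quadratic source. The only real point is to split the heat kernel so that both effects are used at once. Concretely, for zero-average $f$ and $s<t$ we write $e^{(t-s)\Delta}=e^{\frac{t-s}{2}\Delta}e^{\frac{t-s}{2}\Delta}$ and get
\begin{equation*}
\|e^{(t-s)\Delta}\dive G\|_{H^{\ell}}\lesssim (t-s)^{-1/2}e^{-\frac{t-s}{2}}\|\dive G\|_{H^{\ell-1}}\lesssim (t-s)^{-1/2}e^{-\frac{t-s}{2}}\|G\|_{H^{\ell}} .
\end{equation*}
Here the first factor uses \eqref{fdjskhggnjds2} and the second uses \eqref{fdjskhggnjds1}. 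This applies because $\p\dive G$ has zero average, and $\p$ is a bounded Fourier multiplier commuting with the heat flow.

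For \eqref{fdkjskldnfgnujkuu65}, since $\dive m_n=0$ we use \eqref{OtimesVSnabla} to write $(m_n\cdot\nabla)m_n=\dive(m_n\otimes m_n)$. The algebra property of $H^\ell$ ($\ell\ge4>3/2$) gives $\|m_n\otimes m_n(s)\|_{H^\ell}\lesssim \|m_n(s)\|_{H^\ell}^2\lesssim \varepsilon^2e^{-2\theta s}$ by \eqref{fdnjskfdjhbsjksdfh664}. Hence
\begin{equation*}
e^{\theta t}\Big\|\int_0^t e^{(t-s)\Delta}\p\dive(m_n\otimes m_n)\,\de s\Big\|_{H^\ell}\lesssim \varepsilon^2\int_0^t (t-s)^{-1/2}e^{-\frac{t-s}{2}}e^{\theta t-2\theta s}\,\de s .
\end{equation*}
Since $\theta=\frac14$, we have $\theta t-2\theta s\le \theta(t-s)=\frac{t-s}{4}$. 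The integrand is therefore bounded by $\tau^{-1/2}e^{-\tau/4}$ with $\tau=t-s$, which is integrable on $(0,\infty)$ uniformly in $t$. This yields $\lesssim\varepsilon^2$. The bound \eqref{fdkjskldnfgnujkuu64} is identical, with $v_n\otimes v_n$ in place of $m_n\otimes m_n$.

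For \eqref{fdkjskldnfgnujkuu64N1} we run the same computation with $G=N_1(v_n,m_n)$. By \eqref{fndjskdjshbf664tg3w4r}, using $\|B\|_{H^{\ell+1}}\le\varepsilon$ so that $e^{\|B\|_{H^{\ell+1}}}-1\lesssim\varepsilon$, we get
\begin{equation*}
\|N_1(v_n,m_n)(s)\|_{H^\ell}\lesssim \varepsilon\,\|(v_n,m_n)(s)\|_{H^\ell\times H^\ell}^2\lesssim\varepsilon^3e^{-2\theta s} .
\end{equation*}
The same time integral then gives $\lesssim\varepsilon^3$. Lemma \ref{RemainderEstimate1} requires $\ell>7/2$, which holds since $\ell\ge4$.

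No step here is a genuine obstacle. The only point requiring care is the balance of exponents. One must check that the decay rate $1$ of the heat flow on zero-mean fields (here $\frac12$ after splitting) beats the growth $e^{\theta(t-s)}$ produced by the weight, which holds because $\theta<\frac12$. One must also check that the singularity $(t-s)^{-1/2}$ from placing the extra derivative on the heat kernel is integrable. The divergence structure is what keeps the source at $H^\ell$ regularity of $m_n$ rather than $H^{\ell+1}$; without it the estimate for the $m_n$ term would lose a derivative that no available bound controls.
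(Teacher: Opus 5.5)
Your proposal is correct and follows essentially the paper's proof. Both use the splitting $e^{(t-s)\Delta}=e^{\sigma(t-s)\Delta}e^{(1-\sigma)(t-s)\Delta}$ with $\sigma=\frac12$, put the smoothing on one factor and the decay on the other, use the divergence form together with the algebra property, and invoke Lemma \ref{RemainderEstimate1} for $N_1$. The only cosmetic difference is in the time integral: you bound the integrand directly by $\tau^{-1/2}e^{-\tau/4}$, whereas the paper drops $e^{(\sigma-2\theta)s}\le 1$ and then uses that $e^{-(\sigma-\theta)t}\sqrt{t}$ is bounded.
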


\begin{proof}
We first prove \eqref{fdkjskldnfgnujkuu65}.
Using the divergence-free property $\dive m_n = 0$ and the identity \eqref{OtimesVSnabla}, we rewrite the integral as:
$$
\int_0^t e^{(t-s)\Delta} \p ((m_n \cdot \nabla) m_n) (s,\cdot) \, \de s = \int_0^t e^{(t-s)\Delta} \p \dive (m_n \otimes m_n) (s,\cdot) \, \de s.
$$
Then we bound the $H^\ell$ norm for $t \geq 0$:
\begin{equation}\label{fmgdjksldngjk1574y839}
e^{\theta t} \left\| \int_0^t e^{(t-s)\Delta} \p \dive (m_n \otimes m_n) (s,\cdot) \, \de s \right\|_{H^{\ell}} \leq e^{\theta t} \int_0^{t} \|e^{(t-s)\Delta}\dive (m_n \otimes m_n)(s,\cdot)\|_{H^\ell} \, \de s.
\end{equation}
Let $\sigma \in (0, 1)$ be chosen such that $\sigma \in (\theta , 2 \theta]$. Note that $(0,1) \cap (\theta , 2 \theta] \neq \emptyset$ for all $\theta \in (0,1)$, so such a $\sigma$ always exists. Since we have fixed $\theta = \frac14$, we can simply choose $\sigma = \frac12$. It will be instructive, however, to keep track of the explicit dependence on $\theta$ and $\sigma$ in the proof.

We rewrite the heat operator as $e^{(t-s)\Delta} = e^{\sigma(t-s)\Delta}e^{(1 - \sigma)(t-s) \Delta}$, and we use Lemma \ref{lemma calore} to get, for all $t \geq 0$
\begin{align*}
e^{\theta t} \int_0^{t} &\|e^{(t-s)\Delta}\dive (m_n \otimes m_n)(s,\cdot)\|_{H^\ell} \, \de s \\ 
&\lesssim e^{\theta t}\int_0^{t} e^{-\sigma(t-s)} \|e^{(1 - \sigma)(t-s)\Delta}\dive (m_n \otimes m_n)(s,\cdot)\|_{H^\ell} \, \de s \\
&\lesssim e^{(\theta - \sigma) t}\int_0^{t} e^{\sigma s}\left[(1-\sigma)(t-s)\right]^{-\frac12} \|(m_n \otimes m_n)(s,\cdot)\|_{H^\ell} \, \de s \\
&\leq (1-\sigma)^{-\frac12} \|m_n \|^2_{\Zeta} e^{(\theta - \sigma) t} \int_0^{t} e^{(\sigma - 2 \theta) s}\left(t-s\right)^{-\frac12} \, \de s \\
&\overset{\eqref{fdnjskfdjhbsjksdfh664}, \, \sigma \leq 2 \theta}{\lesssim} (1-\sigma)^{-\frac12} \varepsilon^2 e^{(\theta - \sigma) t} \int_0^{t} \left(t-s\right)^{-\frac12} \, \de s \\
&= (1-\sigma)^{-\frac12} \varepsilon^2 e^{(\theta - \sigma) t} \sqrt{t} \\
&\overset{\theta < \sigma}{\lesssim} (1-\sigma)^{-\frac12} (\sigma - \theta)^{-\frac12} \varepsilon^2,
\end{align*}
where we exploited the fact that for any $a > 0$, the function $t \mapsto e^{-at}\sqrt{t}$ achieves its maximum at $t = \frac{1}{2a}$, yielding a maximum value of $(2ae)^{-1/2}$.
In conclusion, taking the supremum over time yields
\begin{equation*}
\sup_{t \geq 0} e^{\theta t} \left\| \int_0^t e^{(t-s)\Delta} \p \dive (m_n \otimes m_n) (s,\cdot) \, \de s \right\|_{H^{\ell}} \overset{\theta=\frac14, \, \sigma = \frac12}{\leq} C \varepsilon^2.
\end{equation*}
The proof of \eqref{fdkjskldnfgnujkuu64} is identical, simply replacing $m_n$ with $v_n$.

We now prove the estimate \eqref{fdkjskldnfgnujkuu64N1}. For all $t \geq 0$, we have
\begin{equation*}
e^{\theta t} \left\| \int_0^t e^{(t-s)\Delta} \p \dive N_{1}(v_n, m_n) (s,\cdot) \, \de s \right\|_{H^{\ell}} \leq e^{\theta t} \int_0^{t} \|e^{(t-s)\Delta}\dive N_{1}(v_n, m_n)(s,\cdot)\|_{H^\ell} \, \de s.
\end{equation*}
Proceeding exactly as before, we exploit the splitting of the heat semigroup
\begin{align*}
e^{\theta t} \int_0^{t} &\|e^{(t-s)\Delta} \dive N_{1}(v_n, m_n) (s,\cdot)\|_{H^\ell} \, \de s \\ 
&\lesssim e^{\theta t}\int_0^{t} e^{-\sigma(t-s)} \|e^{(1 - \sigma)(t-s)\Delta}\dive N_{1}(v_n, m_n) (s,\cdot)\|_{H^\ell} \, \de s \\
&\lesssim e^{(\theta - \sigma) t}\int_0^{t} e^{\sigma s}\left((1-\sigma)(t-s)\right)^{-\frac12} \|N_{1}(v_n, m_n) (s,\cdot)\|_{H^\ell} \, \de s \\
&\overset{\eqref{fndjskdjshbf664tg3w4r}}{\lesssim} \varepsilon (1-\sigma)^{-\frac12} (\|m_n \|^2_{\Zeta} + \|v_n \|^2_{\Zeta}) e^{(\theta - \sigma) t}\int_0^{t} e^{(\sigma - 2 \theta) s}\left(t-s\right)^{-\frac12} \, \de s.
\end{align*}
Since $\|m_n \|_{\Zeta} + \|v_n \|_{\Zeta} \lesssim \varepsilon$, we can proceed precisely as in the previous bound to deduce
\begin{equation*}
\sup_{t \geq 0} e^{\theta t} \left\| \int_0^t e^{(t-s)\Delta} \p \dive N_{1}(v_n, m_n) (s,\cdot) \, \de s \right\|_{H^{\ell}} \overset{\theta=\frac14, \, \sigma = \frac12}{\leq} C \varepsilon^3.
\end{equation*}
This completes the proof.
\end{proof}
We are now ready to prove Theorem \ref{ThmSecondIterative}. 

\begin{proof}[\fbox{Proof of Theorem \ref{ThmSecondIterative}}]
We divide the proof in several steps.\\
\\
\underline{\em Step 1.} Divergence-free and zero-average properties. \\
\\
First, we prove that for all $n \in \N$ 
$$
\dive \vn = 0, \quad \int_{\T^3} \vn(t,x) \, \de x = 0.
$$
Recalling that the induction hypothesis assumes the same properties for $(v_n, m_n)$, and invoking the identity \eqref{OtimesVSnabla}, we can rewrite the differential equation \eqref{iterazione} as
\begin{equation}\label{iterazioneIn2d}
\partial_t\vn = \Delta\vn + \p\left( \dive N_1(v_n,m_n) + \dive(m_n \otimes m_n) - \dive(v_n \otimes v_n)\right) 
+ \pi_1 \mathcal{A} \begin{pmatrix} v_n \\ m_n \end{pmatrix} + \pi_1 \Gamma(v_n,m_n),
\end{equation}
supplemented with the initial condition $v_{n+1}(0,x) = y(x)$.

Taking the divergence of \eqref{iterazioneIn2d} and applying the algebraic properties from Proposition \ref{prop:algebraic_properties}, we obtain
$$ 
\partial_{t} \dive \vn = \Delta \dive \vn \qquad \forall t \geq 0.
$$ 
Since the initial data satisfies $\dive \vn(0,x) = \dive y(x) = 0$, the uniqueness of solutions to the heat equation yields
$$ 
\dive \vn(t,x) = 0 \qquad \forall t \geq 0.
$$
Similarly, integrating identity \eqref{iterazioneIn2d} over the torus $\T^3$, we find:
$$ 
\partial_{t} \int_{\T^3} \vn(t, x) \, \de x = 0 \qquad \forall t \geq 0.
$$
Since the initial velocity is zero-average, i.e.,
$$ 
\int_{\T^3} \vn(0, x) \, \de x = \int_{\T^3} y(x) \, \de x = 0,
$$ 
we immediately conclude that 
$$
\int_{\T^3} \vn(t, x) \, \de x = 0 \qquad \forall t \geq 0.
$$ 

\underline{\em Step 2.} Energy estimates for the gradient field \\
\\
We now aim to establish the uniform bound
$$
\|\vn\|_{\Zeta}+\|\nabla \vn\|_{L^2_tH^\ell} \leq 2 \|y\|_{H^\ell} + C \varepsilon^2.
$$
We start by showing that
\begin{equation}\label{fndjuksdughsugiohduNAblaV}
\|\nabla \vn\|^2_{L^2_tH^\ell} \leq \|y\|_{H^\ell}^2 + \frac{C \varepsilon^4}{\theta}.
\end{equation}
Taking the $H^\ell(\T^3)$ inner product of equation \eqref{iterazione} with $v_{n+1}$, integrating by parts, and exploiting the self-adjointness of the Leray projector $\mathbb{P}$ along with the property $\mathbb{P}v_{n+1} = v_{n+1}$, we arrive at
\begin{align*}
\frac12 \frac{\de}{\de t}\|\vn(t,\cdot)\|_{H^\ell}^2 + \|\nabla\vn(t,\cdot)\|_{H^\ell}^2 
&= \int_{\T^3} \left( \DL \left( \pi_1 \mathcal{A} \begin{pmatrix} v_n \\ m_n \end{pmatrix} + \pi_1 \Gamma(v_n,m_n) \right) \right) \cdot \DL \vn \, \de x
\\ 
 + \int_{\T^3}& \left( \DL \left(- m_n \otimes m_n + v_n \otimes v_n - N_1(v_n,m_n) \right) \right) \cdot \nabla \DL \vn \, \de x.
\end{align*}
Applying the Cauchy-Schwarz inequality, the algebra property of $H^{\ell}(\T^3)$, the Poincaré inequality, and the technical estimates \eqref{fdnjskngjds1}--\eqref{fdnjskngjds3}, we obtain:
\begin{equation}
\frac{\de}{\de t}\|\vn(t,\cdot)\|_{H^\ell}^2 + \|\nabla\vn(t,\cdot)\|_{H^\ell}^2 
\leq C\bigg( \|m_n(t,\cdot)\|_{H^\ell}^4 + \|v_n(t,\cdot)\|_{H^\ell}^4 + \varepsilon^4 (\|v_n(t,\cdot)\|_{H^\ell}^2 + \|m_n(t,\cdot)\|^2_{H^\ell}) \bigg).
\end{equation}
Integrating this inequality over the time interval $t \in [0, T]$ for $T \geq 0$ yields:
\begin{align*}
\|\vn(T,\cdot)\|_{H^\ell}^2 &+ \int_{0}^{T} \|\nabla\vn(t,\cdot)\|_{H^\ell}^2 \, \de t 
\\ 
\leq &\|y\|_{H^\ell}^2 + C\int_0^T \bigg( \|m_n(t,\cdot)\|_{H^\ell}^4 + \|v_n(t,\cdot)\|_{H^\ell}^4 + \varepsilon^4 (\|v_n(t,\cdot)\|_{H^\ell}^2 + \|m_n(t,\cdot)\|^2_{H^\ell}) \bigg) \, \de t
\\ 
\leq &\|y\|_{H^\ell}^2 + C\int_0^T \bigg( e^{-4 \theta t} \|m_n\|_{\Zeta}^4 + e^{-4 \theta t} \|v_n\|_{\Zeta}^4 + \varepsilon^4 (e^{-2 \theta t} \|v_n\|_{\Zeta}^2 + e^{-2 \theta t} \|m_n\|^2_{\Zeta}) \bigg) \, \de t
\\ 
\overset{\eqref{piccolezza spiegone}}{\leq}& \|y\|_{H^\ell}^2 + C \varepsilon^4 \int_0^T e^{-4 \theta t} \, \de t + C \varepsilon^6 \int_0^T e^{-2 \theta t} \, \de t 
\\
=& \|y\|_{H^\ell}^2 + \frac{C \varepsilon^4}{4 \theta} (1 - e^{-4 \theta T}) + \frac{C \varepsilon^6}{2 \theta} (1 - e^{-2 \theta T}).
\end{align*}
Taking the limsup as $T \to \infty$ and renaming the constants, we obtain
\begin{equation}
\left(\limsup_{T \to \infty} \|\vn(T,\cdot)\|_{H^\ell}^2\right) + \int_{0}^\infty \| \nabla \vn(t,\cdot) \|_{H^\ell}^2 \, \de t \leq \|y\|_{H^\ell}^2 + \frac{C \varepsilon^4}{\theta},
\end{equation}
which establishes the desired inequality \eqref{fndjuksdughsugiohduNAblaV}.\\
\\
\underline{\em Step 3.} Uniform bound for the $\Zeta$ norm. \\
\\
It remains to show that
\begin{equation}\label{fndjuksdughsugiohduZetaV2}
\| \vn\|_{\Zeta} \leq \|y\|_{H^\ell} + C \varepsilon^2.
\end{equation}
To this end, we exploit the integral formulation \eqref{iterazione integrale}, recalled below for convenience
\begin{align*}
\vn(t,\cdot) &= e^{t\Delta}y + \int_0^t e^{(t-s)\Delta} \p \left( (m_n \cdot \nabla) m_n - (v_n \cdot \nabla) v_n + \dive N_{1}(v_n, m_n) \right) (s,\cdot) \, \de s
\\ 
&\quad + \int_0^t e^{(t-s)\Delta} \left( \pi_1 \mathcal{A} \begin{pmatrix} v_n \\ m_n \end{pmatrix} + \pi_1 \Gamma(v_n,m_n) \right) (s,\cdot) \, \de s.
\end{align*}
We estimate the $\Zeta$ norm of the right-hand side term by term. Since $\int_{\T^3} y(x) \, \de x = 0$, 
applying~\eqref{fdjskhggnjds1} gives
\begin{equation}\label{ndjkskngdkskhjdf121}
\|e^{t\Delta} y\|_{\Zeta} = \sup_{t \geq 0} e^{\theta t} \| e^{t\Delta} y \|_{H^{\ell}} \leq \sup_{t \geq 0} e^{(\theta-1)t} \|y\|_{H^{\ell}} \overset{\theta \leq 1}{\leq} \|y\|_{H^{\ell}}.
\end{equation}
Next, invoking Lemma \ref{gmjdskhkjdsfgjkgsdnj9435782}, we also have that
\begin{equation}\label{ndjkskngdkskhjdf122}
\sup_{t \geq 0}e^{\theta t} 
\left\| \int_0^t e^{(t-s)\Delta} \p \left( (m_n \cdot \nabla) m_n - (v_n \cdot \nabla) v_n + \dive N_{1}(v_n, m_n) \right) (s,\cdot) \, \de s \right\|_{H^{\ell}} \leq C \varepsilon^2.
\end{equation}
For the linear operator term, using \eqref{fdjskhggnjds1} and \eqref{mathcalAbound}, we find that for all $t \geq 0$
\begin{align*}
e^{\theta t} \left\| \int_0^t e^{(t-s)\Delta} \pi_1 \mathcal{A} \begin{pmatrix} v_n \\ m_n \end{pmatrix} (s,\cdot) \, \de s \right\|_{H^{\ell}}
&\overset{\eqref{fdjskhggnjds1}}{\leq} e^{(\theta -1) t} \int_0^t e^{s} \left\| \mathcal{A} \begin{pmatrix} v_n \\ m_n \end{pmatrix} (s,\cdot) \right\|_{H^{\ell} \times H^{\ell}} \, \de s 
\\ 
&\overset{\eqref{mathcalAbound}}{\leq} e^{(\theta -1) t} \varepsilon^2 \int_0^t e^{s} (\| v_n(s,\cdot)\|_{H^{\ell}} + \| m_n(s,\cdot)\|_{H^{\ell}}) \, \de s 
\\ 
&\leq e^{(\theta -1) t} \varepsilon^2 (\| v_n\|_{\Zeta} + \| m_n \|_{\Zeta}) \int_0^t e^{(1-\theta) s} \, \de s
\\ 
&\overset{\eqref{piccolezza spiegone}}{\leq} e^{(\theta -1) t} \varepsilon^3 \int_0^t e^{(1-\theta) s} \, \de s \overset{\theta < 1}{\leq} \frac{\varepsilon^3}{1- \theta},
\end{align*}
which implies
\begin{equation}\label{ndjkskngdkskhjdf123}
\sup_{t \geq 0}e^{\theta t} \left\| \int_0^t e^{(t-s)\Delta} \pi_1 \mathcal{A} \begin{pmatrix} v_n \\ m_n \end{pmatrix} (s,\cdot) \, \de s \right\|_{H^{\ell}} \leq C \frac{\varepsilon^3}{1- \theta}.
\end{equation}
Finally, for the remainder term $\Gamma$, the bounds \eqref{fdjskhggnjds1} and \eqref{GammaEstimate1} yield for all $t \geq 0$
\begin{align*}
e^{\theta t} \left\| \int_0^t e^{(t-s)\Delta} \pi_1 \Gamma(v_n,m_n) (s,\cdot) \, \de s \right\|_{H^{\ell}}
&\overset{\eqref{fdjskhggnjds1}}{\leq} e^{(\theta -1) t} \int_0^t e^{s} \left\| \Gamma(v_n,m_n) (s,\cdot) \right\|_{H^{\ell} \times H^{\ell}} \, \de s 
\\ 
&\overset{\eqref{GammaEstimate1}}{\leq} e^{(\theta -1) t} \varepsilon \int_0^t e^{s} (\| v_n(s,\cdot)\|^2_{H^{\ell}} + \| m_n(s,\cdot)\|^2_{H^{\ell}}) \, \de s 
\\ 
&\leq e^{(\theta -1) t} \varepsilon (\| v_n\|_{\Zeta}^2 + \| m_n \|_{\Zeta}^2) \int_0^t e^{(1- 2\theta) s} \, \de s
\\ 
&\overset{\eqref{piccolezza spiegone}}{\leq} e^{(\theta -1) t} \varepsilon^3 \int_0^t e^{(1- 2 \theta) s} \, \de s \overset{\theta < \frac12}{\leq} \frac{\varepsilon^3}{1- 2 \theta} e^{- \theta t}.
\end{align*}
Taking the supremum over $t \geq 0$, we conclude
\begin{equation}\label{ndjkskngdkskhjdf123N}
\sup_{t \geq 0}e^{\theta t} \left\| \int_0^t e^{(t-s)\Delta} \pi_1 \Gamma(v_n,m_n) (s,\cdot) \, \de s \right\|_{H^{\ell}} \leq \sup_{t \geq 0} \frac{\varepsilon^3}{1-2 \theta} C e^{- \theta t} \overset{\theta > 0}{\leq} C \frac{\varepsilon^3}{1-2\theta}.
\end{equation}
Combining estimates \eqref{ndjkskngdkskhjdf121}, \eqref{ndjkskngdkskhjdf122}, \eqref{ndjkskngdkskhjdf123}, and \eqref{ndjkskngdkskhjdf123N} provides the bound \eqref{fndjuksdughsugiohduZetaV2}.
\end{proof}

We conclude this section with some technical lemmas that will be used in Section \ref{Sec:Convergence of the iterative scheme} to prove the convergence of the scheme. The proofs of these results follow the same arguments presented for Lemmas \ref{FTWOM1}--\ref{gmjdskhkjdsfgjkgsdnj9435782}, requiring only straightforward modifications.

\begin{lem}\label{FTWOM1IL}
Let $\ell > \frac72$ and let $B$ be a vector field such that $\|B\|_{H^{\ell +2}} \leq \varepsilon$. Let $F, G, v, m, v', m', v'' : \T^3 \to \R^3$. Then, the following estimate holds
\begin{equation}\label{fdnjskngjds1T0IL}
\left| \int_{\T^3} (F \otimes G)  v'' \, \de x \right| 
\leq \frac{1}{100} \| v'' \|^2_{L^2} + \min\left(\|F\|^2_{L^2} \|G\|^2_{L^{\infty}} , \, \|F\|^2_{L^\infty} \|G\|^2_{L^2}\right),
\end{equation}
and
\begin{equation}\label{fdnjskngjds1IL}
\left| \int_{\T^3} \pi_1 \mathcal{A} \begin{pmatrix} v \\ m \end{pmatrix} \cdot v'' \, \de x \right| 
\leq \frac{1}{100} \| v'' \|^2_{L^2} + C \varepsilon^4 (\|v\|^2_{L^2} + \|m \|^2_{L^2}).
\end{equation}
Assume further that the fields satisfy the smallness condition
\begin{equation}\label{fdjnsksdnusdf646527365rbdyfe}
\| v \|_{H^{\ell}} + \| v' \|_{H^{\ell}} + \| m \|_{H^{\ell}} + \| m' \|_{H^{\ell}} \lesssim \varepsilon.
\end{equation}
Then, we have the following estimates for the differences
\begin{equation}\label{fdnjskngjds2IL}
\left| \int_{\T^3} \pi_1 \left(\Gamma(v,m) - \Gamma(v',m')\right) \cdot v'' \, \de x \right| 
\leq \frac{1}{100} \| v'' \|^2_{L^2} + C \varepsilon^4 (\|v - v'\|^2_{L^2} + \|m - m'\|^2_{L^2}),
\end{equation}
and
\begin{equation}\label{fdnjskngjds3IL}
\left| \int_{\T^3} \left( N_1(v,m) - N_1(v',m') \right) \cdot \nabla v'' \, \de x \right| 
\leq \frac{1}{100} \| \nabla v'' \|^2_{L^2} + C \varepsilon^4 (\|v - v'\|^2_{L^2} + \|m - m'\|^2_{L^2}).
\end{equation}
\end{lem}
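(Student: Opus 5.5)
Each of the four estimates follows the same pattern: bound the integral by Cauchy--Schwarz (or H\"older), then split with Young's inequality $ab \le \frac{1}{100}a^2 + 25 b^2$, putting the $\frac1{100}$-weighted square on the test field $v''$ (or $\nabla v''$). The work lies in getting the right power of $\varepsilon$ in the second factor, and for that we rely on the $L^2$-level bounds already established in Lemmas \ref{LemmaBoundForA}, \ref{RemainderEstimate1} and \ref{lem:GammaEstimate}.

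For \eqref{fdnjskngjds1T0IL} we use H\"older,
$$\Big|\int_{\T^3} (F\otimes G) v''\Big| \le \|F\otimes G\|_{L^2}\|v''\|_{L^2},$$
with $\|F\otimes G\|_{L^2}\le \min(\|F\|_{L^2}\|G\|_{L^\infty}, \|F\|_{L^\infty}\|G\|_{L^2})$. Young's inequality with weights $\frac1{100}$ and $25$ then gives the bound, with the stated constant $1$ replaced by an absolute constant; this is harmless (or one rescales the weights). For \eqref{fdnjskngjds1IL} we apply \eqref{fdhbsjdhghgsdsh7473462Stronger} together with the embedding $H^{\ell+2}\hookrightarrow W^{3,\infty}$, which gives $\|B\|_{W^{3,\infty}}^2\lesssim \varepsilon^2$ and hence $\|\pi_1\mathcal{A}(v,m)\|_{L^2}\lesssim \varepsilon^2(\|v\|_{L^2}+\|m\|_{L^2})$. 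Cauchy--Schwarz and Young then produce the factor $\varepsilon^4$.

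For the difference estimates we use the smallness assumption \eqref{fdjnsksdnusdf646527365rbdyfe}. For \eqref{fdnjskngjds2IL}, inequality \eqref{GammaEstimate1L2} with $e^{\|B\|_{W^{3,\infty}}}-1\lesssim\varepsilon$ and $\|(v,m)\|_{H^\ell\times H^\ell}+\|(v',m')\|_{H^\ell\times H^\ell}\lesssim\varepsilon$ gives
$$\|\Gamma(v,m)-\Gamma(v',m')\|_{L^2\times L^2}\lesssim \varepsilon^2\big(\|v-v'\|_{L^2}+\|m-m'\|_{L^2}\big),$$
and we pair this with $v''$ as before. For \eqref{fdnjskngjds3IL}, inequality \eqref{fndjskdjshbf664tg3w4rL2} gives the same $\varepsilon^2$ Lipschitz bound, since $e^{\|B\|_{H^{\ell+1}}}-1\lesssim\varepsilon$ and the $H^{\ell-1}$ norms are $\lesssim\varepsilon$. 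We then pair with $\nabla v''$ using Cauchy--Schwarz, which puts $\frac1{100}\|\nabla v''\|_{L^2}^2$ on the right-hand side.

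There is no real obstacle here. The only point needing care is checking that the hypotheses $\ell>7/2$ and $\|B\|_{H^{\ell+2}}\le\varepsilon$ suffice to control the $W^{3,\infty}$ norms appearing in the cited lemmas. This holds because $\ell+2>11/2>3+3/2$, which gives $H^{\ell+2}\hookrightarrow W^{3,\infty}$. Once this is verified, the estimates reduce to bookkeeping of the powers of $\varepsilon$.
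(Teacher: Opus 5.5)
Your proposal is correct and follows the paper's proof essentially line by line: H\"older/Cauchy--Schwarz plus Young's inequality, with the powers of $\varepsilon$ supplied by \eqref{fdhbsjdhghgsdsh7473462Stronger}, \eqref{GammaEstimate1L2}, \eqref{fndjskdjshbf664tg3w4rL2}, the embedding into $W^{3,\infty}$, and the smallness assumption. You are also right that \eqref{fdnjskngjds1T0IL} holds only with a constant (sharp value $25$) in front of the minimum, which is harmless for its later use; however, this cannot be fixed by rescaling the Young weights while keeping the factor $\frac{1}{100}$, since $ab\le \frac{1}{100}a^2+b^2$ already fails for $a=50b$.
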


\begin{proof}
Throughout the proof, $C>0$ will denote several constants, possibly increasing from line to line.
First of all, the inequality \eqref{fdnjskngjds1T0IL} follows immediately by applying the Hölder and Cauchy-Schwarz inequalities. To prove \eqref{fdnjskngjds1IL}, we use \eqref{fdhbsjdhghgsdsh7473462Stronger} and we get
\begin{align*}
\left| \int_{\T^3} \pi_1 \mathcal{A} \begin{pmatrix} v \\ m \end{pmatrix} \cdot v'' \, \de x \right|
&\leq \left\| \pi_1 \mathcal{A} \begin{pmatrix} v \\ m \end{pmatrix} \right\|_{L^2} \|v'' \|_{L^2} \\
&\leq \left\| \mathcal{A} \begin{pmatrix} v \\ m \end{pmatrix} \right\|_{L^2 \times L^2} \| v'' \|_{L^2} \\
&\leq C \varepsilon^2 (\|v \|_{L^2} + \|m\|_{L^2}) \| v'' \|_{L^2} \\
&\leq \frac{1}{100} \| v'' \|^2_{L^2} + C \varepsilon^4 (\|v\|^2_{L^2} + \|m \|^2_{L^2}),
\end{align*}
where we used Young's inequality in the last step. 

We now prove \eqref{fdnjskngjds2IL}: we use \eqref{GammaEstimate1L2} together with the assumption \eqref{fdjnsksdnusdf646527365rbdyfe} and we get
\begin{align*}
\left| \int_{\T^3} \pi_1 \left( \Gamma(v,m) - \Gamma(v',m') \right) \cdot v'' \, \de x \right|
&\leq \left\| \pi_1 \left( \Gamma(v,m) - \Gamma(v',m') \right) \right\|_{L^2} \| v'' \|_{L^2} \\
&\leq \left\| \Gamma(v,m) - \Gamma(v',m') \right\|_{L^2 \times L^2} \| v'' \|_{L^2} \\
&\leq C \varepsilon^2 (\|v - v'\|_{L^2} + \|m - m'\|_{L^2}) \| v'' \|_{L^2} \\
&\leq \frac{1}{100} \| v'' \|^2_{L^2} + C \varepsilon^4 (\|v - v'\|^2_{L^2} + \|m - m'\|^2_{L^2}).
\end{align*}

Lastly, the proof of \eqref{fdnjskngjds3IL} follows from \eqref{fndjskdjshbf664tg3w4rL2} and hypothesis \eqref{fdjnsksdnusdf646527365rbdyfe}, indeed we have
\begin{align*}
\left| \int_{\T^3} \left( N_1(v,m) - N_1(v',m') \right) \cdot \nabla v'' \, \de x \right|
&\leq \left\| N_1(v,m) - N_1(v',m') \right\|_{L^2} \|\nabla v'' \|_{L^2} \\
&\leq C \varepsilon^2 (\|v - v'\|_{L^2} + \|m - m'\|_{L^2}) \| \nabla v'' \|_{L^2} \\
&\leq \frac{1}{100} \| \nabla v'' \|^2_{L^2} + C \varepsilon^4 (\|v - v'\|^2_{L^2} + \|m - m'\|^2_{L^2}).
\end{align*}
This completes the proof of the lemma.
\end{proof}

\begin{lem}\label{gmjdskhkjdsfgjkgsdnj9435782543}
Let $\ell > \frac72$, $\varepsilon \in (0,1)$ and $\theta = \frac14$. Let $B$ be a vector field such that $\|B\|_{H^{\ell+1}} \leq \varepsilon$. Let $F, G, v, v', m, m' : [0,\infty)\times \T^3 \to \R^3$ be divergence-free zero-average vector fields satisfying
\begin{equation}\label{fdjnsksdnusdf646527365rbdyfeBis}
\| v \|_{\ZetaL} + \| v' \|_{\ZetaL} + \| m \|_{\ZetaL} + \| m' \|_{\ZetaL} + \| F \|_{\ZetaL} + \| G \|_{\ZetaL} \lesssim \varepsilon.
\end{equation}
Then, the following bilinear estimate holds
\begin{equation}\label{fdkjskldnfgnujkuu65333543}
\sup_{t \geq 0}e^{\theta t} \left\| \int_0^t e^{(t-s)\Delta} \p (F \cdot \nabla) G (s,\cdot) \, \de s \right\|_{L^2} 
\lesssim \varepsilon \min \left(\|F\|_{\ZetaZero} , \|G\|_{\ZetaZero} \right),
\end{equation}
as well as the difference estimates:
\begin{equation}\label{fdkjskldnfgnujkuu64N1333543}
\sup_{t \geq 0}e^{\theta t} 
\left\| \int_0^t e^{(t-s)\Delta} \p \dive \left(N_{1}(v, m) - N_{1}(v', m')\right) (s,\cdot) \, \de s \right\|_{L^2} 
\lesssim C \varepsilon^2 (\|m-m' \|_{\ZetaZero} + \|v - v'\|_{\ZetaZero}),
\end{equation}
and
\begin{equation}\label{12344321}
\sup_{t \geq 0}e^{\theta t} 
\left\| \int_0^t e^{(t-s)\Delta} \pi_1 \left(\Gamma(v, m) - \Gamma(v', m') \right) (s,\cdot) \, \de s \right\|_{L^2} 
\lesssim \varepsilon^2 (\| m - m' \|_{\ZetaZero} + \| v - v'\|_{\ZetaZero} ).
\end{equation}
\end{lem}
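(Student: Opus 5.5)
We follow the scheme of Lemma \ref{gmjdskhkjdsfgjkgsdnj9435782}, now at the $L^2$ level. The point is that we can afford to lose the derivative produced by the divergence, because the heat semigroup smooths. For each of the three estimates we bound
\[
e^{\theta t}\int_0^t \|e^{(t-s)\Delta}(\cdots)(s)\|_{L^2}\,\de s
\]
by splitting $e^{(t-s)\Delta}=e^{\sigma(t-s)\Delta}e^{(1-\sigma)(t-s)\Delta}$ with $\sigma=\tfrac12\in(\theta,2\theta]$. Lemma \ref{lemma calore} then gives the factor $e^{-\sigma(t-s)}$. When a divergence has to be absorbed, it also gives the factor $[(1-\sigma)(t-s)]^{-1/2}$, applied in the form $\|e^{\tau\Delta}\dive f\|_{L^2}\lesssim \tau^{-1/2}\|f\|_{L^2}$. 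All the functions involved have zero average, as the lemma requires.

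\textbf{The bilinear estimate \eqref{fdkjskldnfgnujkuu65333543}.} Since $\dive F=0$, we write $\p (F\cdot\nabla)G=\p\dive(G\otimes F)$. By Hölder, $\|G\otimes F\|_{L^2}$ is bounded by both $\|F\|_{L^2}\|G\|_{L^\infty}$ and $\|F\|_{L^\infty}\|G\|_{L^2}$. The $L^\infty$ factor is controlled by $H^\ell\hookrightarrow L^\infty$ and \eqref{fdjnsksdnusdf646527365rbdyfeBis}, giving a bound $\lesssim\varepsilon e^{-\theta s}$. The $L^2$ factor is at most $e^{-\theta s}\|\cdot\|_{\ZetaZero}$. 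The integrand is therefore bounded by $\varepsilon\,e^{-2\theta s}\min(\|F\|_{\ZetaZero},\|G\|_{\ZetaZero})$, and the time integral is exactly the one computed in Lemma \ref{gmjdskhkjdsfgjkgsdnj9435782}:
\[
e^{(\theta-\sigma)t}\int_0^t e^{(\sigma-2\theta)s}(t-s)^{-1/2}\,\de s\lesssim(\sigma-\theta)^{-1/2}.
\]
Alternatively, we can avoid the divergence form and use $\|(F\cdot\nabla)G\|_{L^2}\le\|F\|_{L^2}\|\nabla G\|_{L^\infty}$ directly, with no singular time factor. Either way the estimate closes.

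\textbf{The $N_1$ difference \eqref{fdkjskldnfgnujkuu64N1333543}.} We apply the same smoothing step to the divergence and then use \eqref{fndjskdjshbf664tg3w4rL2}. Since $e^{\|B\|_{H^{\ell+1}}}-1\lesssim\varepsilon$ and the $H^{\ell-1}$ norms of $(v,m)$ and $(v',m')$ are $\lesssim\varepsilon e^{-\theta s}$, we get
\[
\|(N_1(v,m)-N_1(v',m'))(s)\|_{L^2}\lesssim\varepsilon^2e^{-2\theta s}\big(\|v-v'\|_{\ZetaZero}+\|m-m'\|_{\ZetaZero}\big).
\]
The time integral is then the same as in the bilinear estimate.

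\textbf{The $\Gamma$ difference \eqref{12344321}.} No divergence appears, so \eqref{fdjskhggnjds1} alone suffices. We use \eqref{GammaEstimate1L2} together with $\|B\|_{W^{3,\infty}}\lesssim\|B\|_{H^{\ell+1}}\le\varepsilon$, valid since $\ell>7/2$. The integrand is bounded by $\varepsilon^2 e^{-(t-s)}e^{-2\theta s}(\cdots)$, and the resulting time integral is handled exactly as in \eqref{ndjkskngdkskhjdf123N}, using $\theta<\tfrac12$.

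\textbf{Main obstacle.} The only delicate point is the bookkeeping of the exponential weights. The product of two $Z$-decaying factors gives decay $e^{-2\theta s}$, and this is what makes $\sigma\le2\theta$ admissible, while $\sigma>\theta$ is needed to absorb the factor $\sqrt t$. With $\theta=\tfrac14$ and $\sigma=\tfrac12$ both conditions hold, so all constants are absolute.
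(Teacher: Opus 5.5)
Your proposal is correct and follows the paper's proof essentially step by step. It uses the same divergence form $\p\dive(G\otimes F)$, the same splitting $e^{(t-s)\Delta}=e^{\sigma(t-s)\Delta}e^{(1-\sigma)(t-s)\Delta}$ with $\sigma=\frac12$, and an $L^2\times L^\infty$ product bound (the paper uses $H^{3/2+\delta}$ where you use $L^\infty$). It also uses \eqref{fndjskdjshbf664tg3w4rL2} for $N_1$, and \eqref{fdjskhggnjds1} together with \eqref{GammaEstimate1L2} for $\Gamma$.

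The only caveat is your parenthetical alternative. The bound $\|F\|_{L^2}\|\nabla G\|_{L^\infty}$ yields only $\varepsilon\|F\|_{Z^{0,\theta}}$, not the minimum, because the $G$-side would require $\|\nabla G\|_{L^2}$. So the divergence form is genuinely needed, and the $G$-side is exactly what is used later when the derivative falls on a difference such as $(m_{n-1}\cdot\nabla)(m_n-m_{n-1})$.
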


\begin{proof}
We divide the proof in several steps.\\
\\
\underline{\em Step 1.} Proof of the estimate \eqref{fdkjskldnfgnujkuu65333543}. \\
\\
Using the divergence-free condition $\dive F = 0$ along with identity \eqref{OtimesVSnabla}, we rewrite the integral as:
$$
\int_0^t e^{(t-s)\Delta} \p (F \cdot \nabla) G (s) \, \de s = \int_0^t e^{(t-s)\Delta} \p \dive (G \otimes F) (s) \, \de s.
$$
Then, taking the $L^2$ norm, we bound for all $t \geq 0$:
\begin{align}\label{fmgdjksldngjk1574y839}
e^{\theta t} \left\| \int_0^t e^{(t-s)\Delta} \p \dive (G \otimes F) (s,\cdot) \, \de s \right\|_{L^2}
&\leq e^{\theta t} \int_0^{t} \|e^{(t-s)\Delta} \dive (G \otimes F)(s,\cdot)\|_{L^2} \, \de s.
\end{align}
Let $\sigma \in (0, 1)$ such that $\sigma \in (\theta , 2 \theta]$. Note that $(0,1) \cap (\theta , 2 \theta] \neq \emptyset$ for all $\theta \in (0,1)$; however, since we have fixed $\theta = \frac14$, we can simply choose $\sigma = \frac12$. It will nonetheless be instructive to keep track of the explicit dependence on $\theta$ and $\sigma$ throughout the computations.

We split the heat kernel as $e^{(t-s)\Delta} = e^{\sigma(t-s)\Delta} e^{(1 - \sigma)(t-s) \Delta}$. Invoking Lemma \ref{lemma calore} and the Sobolev embedding product inequality (valid for any $\delta > 0$)
\begin{equation}\label{fduhknggnjdkw88472}
\|G \otimes F\|_{L^2} \lesssim \min\left( \|G\|_{L^2} \|F\|_{H^{\frac32 + \delta}} , \|F\|_{L^2} \|G\|_{H^{\frac32 + \delta}} \right)
\leq (\|F\|_{H^{\frac32 + \delta}} + \|G\|_{H^{\frac32 + \delta}}) \min\left( \|G\|_{L^2} , \|F\|_{L^2} \right),
\end{equation}
we obtain
\begin{align*}
e^{\theta t} &\int_0^{t}\|e^{(t-s)\Delta}\dive (G \otimes F)(s,\cdot)\|_{L^2} \, \de s
\\ 
&\le C e^{\theta t}\int_0^{t} e^{-\sigma(t-s)}\|e^{(1 - \sigma)(t-s)\Delta}\dive (G \otimes F)(s,\cdot)\|_{L^2} \, \de s 
\\
&\le C e^{(\theta - \sigma) t}\int_0^{t} e^{\sigma s}\left[(1-\sigma)(t-s)\right]^{-\frac12}\|(G \otimes F)(s,\cdot)\|_{L^2} \, \de s 
\\
&\overset{\eqref{fduhknggnjdkw88472}}{\le} C e^{(\theta - \sigma) t}\int_0^{t} e^{\sigma s}\left[(1-\sigma)(t-s)\right]^{-\frac12} \min\left( \|G(s,\cdot)\|_{L^2} , \|F(s,\cdot)\|_{L^2} \right) (\|F(s,\cdot)\|_{H^{\frac32 + \delta}} + \|G(s,\cdot)\|_{H^{\frac32 + \delta}}) \, \de s 
\\
&\overset{\eqref{fdjnsksdnusdf646527365rbdyfeBis}}{\le} C (1-\sigma)^{-\frac12} \varepsilon \min\left( \|G\|_{\ZetaZero} , \|F\|_{\ZetaZero} \right) e^{(\theta - \sigma) t}\int_0^{t} e^{(\sigma - 2 \theta) s}\left(t-s\right)^{-\frac12} \, \de s
\\
&\overset{\sigma \leq 2 \theta}{\le} C (1-\sigma)^{-\frac12} \varepsilon \min\left( \|G\|_{\ZetaZero} , \|F\|_{\ZetaZero} \right) e^{(\theta - \sigma) t}\int_0^{t} \left(t-s\right)^{-\frac12} \, \de s
\\
&= C (1-\sigma)^{-\frac12} \varepsilon \min\left( \|G\|_{\ZetaZero} , \|F\|_{\ZetaZero} \right) e^{-(\sigma - \theta) t} \sqrt{t}.
\end{align*}
In conclusion, exploiting the boundedness of the function $t\mapsto e^{-at}\sqrt{t}$ with $a>0$ as we did above, taking the supremum over $t \geq 0$ and substituting $\theta=\frac14, \sigma = \frac12$, we deduce
\begin{equation*}
\sup_{t \geq 0} e^{\theta t} \left\| \int_0^t e^{(t-s)\Delta} \p \dive (G \otimes F) (s) \, \de s \right\|_{L^2} \leq C \varepsilon \min\left( \|G\|_{\ZetaZero} , \|F\|_{\ZetaZero} \right).
\end{equation*}

\vspace{0.3cm}
\underline{\em Step 2.} Proof of the difference estimate \eqref{fdkjskldnfgnujkuu64N1333543}.\\
\\
For the non-linear remainder term, we have for all $t \geq 0$
\begin{align*}
e^{\theta t} &\left\| \int_0^t e^{(t-s)\Delta} \p \dive \left(N_{1}(v, m) - N_{1}(v', m')\right) (s,\cdot) \, \de s \right\|_{L^2}
\\ 
&\leq e^{\theta t} \int_0^{t}\|e^{(t-s)\Delta}\dive (N_{1}(v, m) - N_{1}(v', m')) (s,\cdot)\|_{L^2} \, \de s.
\end{align*}
Let us define $\Omega(s) := \| v \|_{H^\ell} + \| v' \|_{H^\ell} + \| m \|_{H^\ell} + \| m' \|_{H^\ell}$. Applying Lemma \ref{lemma calore} together with the heat kernel splitting, we obtain:
\begin{align*}
e^{\theta t} &\int_0^{t}\|e^{(t-s)\Delta} \dive (N_{1}(v, m) - N_{1}(v', m')) (s)\|_{L^2} \, \de s
\\ 
&\le C e^{\theta t}\int_0^{t} e^{-\sigma(t-s)}\|e^{(1 - \sigma)(t-s)\Delta}\dive (N_{1}(v, m) - N_{1}(v', m')) (s,\cdot)\|_{L^2} \, \de s 
\\
&\le C e^{(\theta - \sigma) t}\int_0^{t} e^{\sigma s}\left[(1-\sigma)(t-s)\right]^{-\frac12}\|(N_{1}(v, m) - N_{1}(v', m')) (s,\cdot)\|_{L^2} \, \de s 
\\
&\overset{\eqref{fndjskdjshbf664tg3w4rL2}}{\le} C \varepsilon (1-\sigma)^{-\frac12} e^{(\theta - \sigma) t}\int_0^{t} e^{\sigma s}\left(t-s\right)^{-\frac12} (\|(m-m')(s) \|_{L^2} + \|(v - v')(s) \|_{L^2}) \Omega(s) \, \de s
\\
&\le C \varepsilon^2 (1-\sigma)^{-\frac12} (\|m-m' \|_{\ZetaZero} + \|v - v'\|_{\ZetaZero}) e^{(\theta - \sigma) t}\int_0^{t} e^{(\sigma - 2 \theta) s}\left(t-s\right)^{-\frac12} \, \de s.
\end{align*}
Proceeding exactly as in the previous chain of inequalities to bound the time integral, we arrive at the desired conclusion
\begin{equation*}
\sup_{t \geq 0} e^{\theta t} \left\| \int_0^t e^{(t-s)\Delta} \p \dive (N_{1}(v, m) - N_{1}(v', m')) (s,\cdot) \, \de s \right\|_{L^2} \leq C \varepsilon^2 (\|m-m' \|_{\ZetaZero} + \|v - v'\|_{\ZetaZero}).
\end{equation*}

\vspace{0.3cm}
\underline{\em Step 3.} Proof of the difference estimate \eqref{12344321}.\\
\\
From bounds \eqref{fdjskhggnjds1} and \eqref{GammaEstimate1L2}, it follows that for all $t \geq 0$:
\begin{align*}
e^{\theta t} &\left\| \int_0^t e^{(t-s)\Delta} \pi_1 (\Gamma(v, m) - \Gamma(v', m')) (s,\cdot) \, \de s \right\|_{L^2}
\\
&\overset{\eqref{fdjskhggnjds1}}{\leq} e^{(\theta -1) t} \int_0^t e^{s} \left\| (\Gamma(v, m) - \Gamma(v', m')) (s,\cdot) \right\|_{L^2 \times L^2} \, \de s 
\\ 
&\overset{\eqref{GammaEstimate1L2}}{\leq} C e^{(\theta -1) t} \varepsilon \int_0^t e^{s} (\|(m-m')(s) \|_{L^2} + \|(v - v')(s,\cdot) \|_{L^2}) \Omega(s) \, \de s 
\\ 
&\overset{\eqref{fdjnsksdnusdf646527365rbdyfeBis}}{\le} C e^{(\theta -1) t} \varepsilon^2 (\| v - v'\|_{\ZetaZero} + \| m - m' \|_{\ZetaZero}) \int_0^t e^{(1- 2\theta) s} \, \de s
\\ 
&\overset{\theta < \frac12}{\leq} \frac{C \varepsilon^2}{1- 2 \theta} (\| v - v'\|_{\ZetaZero} + \| m - m' \|_{\ZetaZero}) e^{- \theta t}.
\end{align*}
Taking the supremum over $t \geq 0$ and recalling that $\theta = \frac{1}{4} > 0$, we conclude
\begin{align*}
\sup_{t \geq 0}e^{\theta t} \left\| \int_0^t e^{(t-s)\Delta} \pi_1 (\Gamma(v, m) - \Gamma(v', m') ) (s,\cdot) \, \de s \right\|_{L^2} 
&\leq \sup_{t \geq 0} \frac{C \varepsilon^2}{1- 2 \theta} (\| v - v'\|_{\ZetaZero} + \| m - m' \|_{\ZetaZero}) e^{- \theta t}
\\
&\leq \frac{C \varepsilon^2}{1- 2 \theta} (\| v - v'\|_{\ZetaZero} + \| m - m' \|_{\ZetaZero}),
\end{align*}
which completes the proof of the lemma.
\end{proof}

\section{Convergence of the iterative scheme}\label{Sec:Convergence of the iterative scheme}

In this section, we establish the convergence of the sequence $(v_n, m_n)$ defined by the iterative scheme \eqref{iterazione integrale}. We start by providing a uniform bound for the sequence.

\begin{prop}\label{ThmSecondIterativeMAIN}
Let $\ell \geq 4$ and $\theta = \frac14$. There exists $\varepsilon_1 \in (0,1)$ such that for all $\varepsilon \in (0, \varepsilon_1)$, the following holds. Let $B$ be such that $\|B\|_{H^{\ell +2}} \leq \varepsilon$, and let $y : \T^3 \to \R^3$ be a zero-average, divergence-free vector field satisfying
\begin{equation}\label{yNorm1}
\|y\|_{H^{\ell}} \leq \frac{\varepsilon}{3}.
\end{equation}
Then, the sequence $(v_n, m_n)_{n \in \N}$ defined in \eqref{iterazione integrale} satisfies for all $n \in \N$
\begin{equation}\label{InductionN}
\dive v_n = 0, \quad \dive m_n = 0, \quad \int_{\T^3} v_n \, \de x = 0, \quad \int_{\T^3} m_n \, \de x = 0,
\end{equation}
and
\begin{equation}\label{InductionNBis}
\| v_n \|_{\Zeta} + \|\nabla v_n\|_{L^2_t H^\ell} + \| m_n\|_{\Zeta} \leq \varepsilon.
\end{equation}
\end{prop}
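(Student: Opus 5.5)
We argue by induction on $n \in \N$, combining Theorem \ref{ThMFirstRecursiveBound} and Theorem \ref{ThmSecondIterative}. We take $\varepsilon_1$ to be the minimum of the thresholds in those two theorems, shrunk further at the end if needed. The base case $n=0$ is trivial: $(v_0,m_0)=(0,0)$ satisfies \eqref{InductionN} and \eqref{InductionNBis}.

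For the inductive step, assume $(v_n,m_n)$ satisfies \eqref{InductionN}--\eqref{InductionNBis}, which are exactly the hypotheses \eqref{piccolezza spiegone-2}--\eqref{piccolezza spiegone}. Theorem \ref{ThMFirstRecursiveBound} then shows that $m_{n+1}$ is well defined as the unique fixed point of $\Psi$ in $\mathcal{B}(\varepsilon)$. It is divergence-free with zero average by \eqref{fmdkslngngdgnljs734}, and it satisfies $\|m_{n+1}\|_{\Zeta}\le C\varepsilon^2$. Theorem \ref{ThmSecondIterative} gives that $v_{n+1}$ is divergence-free with zero average, and that
\[
\|v_{n+1}\|_{\Zeta}+\|\nabla v_{n+1}\|_{L^2_tH^\ell}\le 2\|y\|_{H^\ell}+C\varepsilon^2\le \tfrac23\varepsilon + C\varepsilon^2 ,
\]
where the last inequality uses \eqref{yNorm1}.

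Summing the two bounds, the left-hand side of \eqref{InductionNBis} at step $n+1$ is at most $\frac23\varepsilon+2C\varepsilon^2$. This is at most $\varepsilon$ as soon as $2C\varepsilon\le \frac13$, i.e.\ for $\varepsilon_1\le (6C)^{-1}$. The constants $C$ depend only on $\ell$ (and on $\theta=\frac14$), not on $n$, so this choice of $\varepsilon_1$ is uniform and the induction closes.

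The only delicate point is the factor $2$ in Theorem \ref{ThmSecondIterative}. It arises from adding the separate bounds on $\|v_{n+1}\|_{\Zeta}$ and on $\|\nabla v_{n+1}\|_{L^2_tH^\ell}$; the latter is a square root of \eqref{fndjuksdughsugiohduNAblaV}, bounded by $\|y\|_{H^\ell}+C\varepsilon^2$. This factor is exactly why the assumption $\|y\|_{H^\ell}\le\varepsilon/3$ leaves the margin $\varepsilon/3$ needed to absorb the quadratic errors. No further regularity issues arise, since all quantities are controlled in the norms already used in the hypotheses of the two theorems.
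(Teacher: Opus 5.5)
Your proof is correct and follows the paper's argument. The paper also argues by induction with the trivial base case $(v_0,m_0)=(0,0)$, and it uses Theorem \ref{ThMFirstRecursiveBound} for $m_{n+1}$ (existence, the structural constraints, and the bound $C\varepsilon^2$) and Theorem \ref{ThmSecondIterative} for $v_{n+1}$. You additionally make explicit the arithmetic $\frac23\varepsilon+2C\varepsilon^2\le\varepsilon$, i.e.\ the choice $\varepsilon_1\le (6C)^{-1}$, which the paper only states as ``sufficiently small.''
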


\begin{proof}
We proceed by induction on $n$. The base case $n=0$ is trivial as $(v_0, m_0) = (0,0)$. Assuming that \eqref{InductionN} and \eqref{InductionNBis} hold for some $n \in \N$, we verify the inductive step. By Theorem \ref{ThMFirstRecursiveBound}, $m_{n+1}$ satisfies $\dive m_{n+1} = 0$, $\int_{\T^3} m_{n+1} \, \de x = 0$, and
\begin{equation*}
\|m_{n+1}\|_{\Zeta} \leq C \varepsilon^2.
\end{equation*}
On the other hand, by Theorem \ref{ThmSecondIterative}, we have $\dive v_{n+1} = 0$, $\int_{\T^3} v_{n+1} \, \de x = 0$, and
\begin{equation*}
\|v_{n+1}\|_{\Zeta} + \|\nabla v_{n+1}\|_{L^2_t H^\ell} \leq 2\|y\|_{H^\ell} + C \varepsilon^2.
\end{equation*}
Thus, choosing $\varepsilon_1 > 0$ sufficiently small, we satisfy the bound \eqref{InductionNBis} for $n+1$.
\end{proof}

Relying on the uniform bound \eqref{InductionNBis} and the technical estimates developed in Sections \ref{Sec:Picard-1} through \ref{Sec:Picard}, it is a standard (though lengthy) procedure to prove that the sequence $(v_n, m_n)$ is Cauchy in the $\ZetaB$ norm for all $\ell' < \ell$. This is the content of the following theorem, the proof of which will occupy the remainder of this section.

\begin{thm}\label{ThmSecondIterativeMAIN2}
Under the assumptions of Proposition \ref{ThmSecondIterativeMAIN}, there exists a pair of zero-average, divergence-free vector fields $(v, m): [0, +\infty) \times \T^3 \to \R^3$ such that
\begin{equation}\label{ConvergenceReal}
\|v_n - v\|_{Z^{\ell',\theta}} + \| \nabla v_n - \nabla v \|_{L^2_t H^{\ell'}} + \|m_n - m\|_{Z^{\ell',\theta}} \overset{n \to \infty}{\longrightarrow} 0,
\end{equation}
for all $\ell' < \ell$. Moreover, the following estimate holds:
\begin{equation}\label{InductionNBisbdfshkbjgks}
\| v \|_{\Zeta} + \| \nabla v \|_{L^2_t H^{\ell}} + \| m\|_{\Zeta} \leq \varepsilon.
\end{equation}
\end{thm}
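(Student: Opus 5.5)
The plan is to prove that $(v_n,m_n)$ is Cauchy in the low norm
$$
\|v\|_{\ZetaZero}+\|\nabla v\|_{L^2_tL^2}+\|m\|_{\ZetaZero},
$$
and then to recover convergence in $Z^{\ell',\theta}$ for every $\ell'<\ell$ by interpolation against the uniform bound \eqref{InductionNBis}. Set
$$
\delta_n := \|v_{n+1}-v_n\|_{\ZetaZero}+\|\nabla(v_{n+1}-v_n)\|_{L^2_tL^2}+\|m_{n+1}-m_n\|_{\ZetaZero}.
$$
The aim is a contraction estimate $\delta_n\le C\varepsilon\,\delta_{n-1}$. With $\varepsilon_1$ small this makes $\sum_n\delta_n$ finite, so the sequence is Cauchy in the low norm.

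For the velocity differences, I will subtract the equations \eqref{iterazione} at steps $n$ and $n-1$ and run an $L^2$ energy estimate on $v_{n+1}-v_n$. This controls $\sup_t\|v_{n+1}-v_n\|_{L^2}^2+\|\nabla(v_{n+1}-v_n)\|^2_{L^2_tL^2}$. The quadratic terms $m_n\otimes m_n-m_{n-1}\otimes m_{n-1}$ and $v_n\otimes v_n-v_{n-1}\otimes v_{n-1}$ are split bilinearly and handled with \eqref{fdnjskngjds1T0IL}, putting the $L^\infty$ norm on the factor bounded by $\varepsilon e^{-\theta t}$. The terms $\mathcal A$, $\Gamma$ and $N_1$ are handled with \eqref{fdnjskngjds1IL}--\eqref{fdnjskngjds3IL}. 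After integrating in time, the right-hand side is $\lesssim\varepsilon^2\delta_{n-1}^2$, using the time decay $e^{-2\theta t}$.

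The weighted $\ZetaZero$ norm of $v_{n+1}-v_n$ comes instead from the Duhamel formula, using Lemma \ref{gmjdskhkjdsfgjkgsdnj9435782543}: \eqref{fdkjskldnfgnujkuu65333543} for the bilinear terms, \eqref{fdkjskldnfgnujkuu64N1333543} and \eqref{12344321} for $N_1$ and $\Gamma$, and the analogue of \eqref{ndjkskngdkskhjdf123} in $L^2$, via \eqref{fdhbsjdhghgsdsh7473462Stronger}, for $\mathcal A$.

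For the magnetic differences I expect the main obstacle: $m_{n+1}$ and $m_n$ are transported along different flows $\Phi^n$ and $\Phi^{n-1}$. I will write
$$
m_{n+1}(t)-m_n(t)=I+II,
$$
where $I$ collects the differences of integrands composed with the common flow $\Phi^n_{t,s}$, and $II$ collects the difference of the same integrand $G_{n-1}$ composed with $\Phi^n_{t,s}$ versus $\Phi^{n-1}_{t,s}$.

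Term $I$ is handled by Lemmas \ref{Bunch1}--\ref{Bunch3}, volume preservation, and \eqref{fdhbsjdhghgsdsh7473462Stronger}. Note that $I$ contains $(m_{n+1}-m_n)\cdot\nabla V(v_n,m_n)$; by \eqref{fnjwknbdshdsfg533} this is $\lesssim\varepsilon\|m_{n+1}-m_n\|_{\ZetaZero}$, so it is absorbed into the left-hand side. It also contains $(m_n\cdot\nabla)(V(v_n,m_n)-V(v_{n-1},m_{n-1}))$, which is bounded through \eqref{fnjwknbdshdsfg534} by $\varepsilon\|\nabla(v_n-v_{n-1})\|_{L^2_tL^2}$ plus $K$-corrections.

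Term $II$ is bounded pointwise by $\|\nabla G_{n-1}(s)\|_{L^\infty}|\Phi^n_{t,s}-\Phi^{n-1}_{t,s}|$. Here $\|\nabla G_{n-1}(s)\|_{L^\infty}\lesssim\varepsilon^2 e^{-\theta s}$ by the $H^\ell$ bounds with $\ell\ge4$. The flow difference satisfies $\|\Phi^n_{t,s}-\Phi^{n-1}_{t,s}\|_{L^2}\lesssim\theta^{-1}\|V(v_n,m_n)-V(v_{n-1},m_{n-1})\|_{\ZetaZero}\lesssim\delta_{n-1}$ by Lemma \ref{stime_flussoBisStability}; its hypothesis holds since $\|\nabla V\|_{L^\infty}\lesssim\varepsilon e^{-\theta t}$. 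Integrating over $s\ge t$ gives $e^{\theta t}\|II\|_{L^2}\lesssim\varepsilon^2\delta_{n-1}$.

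Combining the three estimates gives $\delta_n\le C\varepsilon\delta_{n-1}$.

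Let $(v,m)$ be the low-norm limit. Interpolating $\|f\|_{H^{\ell'}}\le\|f\|_{L^2}^{1-\ell'/\ell}\|f\|_{H^\ell}^{\ell'/\ell}$ against \eqref{InductionNBis}, pointwise in time with the weight $e^{\theta t}$ and similarly for the $L^2_t$ norm of the gradient, yields \eqref{ConvergenceReal}. The bound \eqref{InductionNBisbdfshkbjgks} follows from \eqref{InductionNBis} by weak-$*$ lower semicontinuity, since bounded sets in $L^\infty_tH^\ell$ and $L^2_tH^\ell$ are weak-$*$ compact and the limits are identified with $(v,m)$. The zero-average and divergence-free properties pass to the limit from \eqref{InductionN}.
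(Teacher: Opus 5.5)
Your proposal is correct and follows essentially the same route as the paper. You reduce to the Cauchy property in the low norm $\|\cdot\|_{Z^{0,\theta}}+\|\nabla\cdot\|_{L^2_tL^2}+\|\cdot\|_{Z^{0,\theta}}$ and control $v_{n+1}-v_n$ by an $L^2$ energy estimate plus Duhamel. You split $m_{n+1}-m_n$ into same-flow differences (Lemmas \ref{Bunch1}--\ref{Bunch3}, absorbing the $((m_{n+1}-m_n)\cdot\nabla)V$ term) and flow differences (Lemma \ref{stime_flussoBisStability}), then conclude by interpolation and weak-$*$ compactness. The only cosmetic difference is that you phrase the induction as the contraction $\delta_n\le C\varepsilon\,\delta_{n-1}$, whereas the paper proves the equivalent bound $\delta_n\le D^{n+1}\varepsilon^{n+1}$.
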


\begin{proof}
Once \eqref{ConvergenceReal} is proven, the fact that $v$ and $m$ are zero-average and divergence-free follows directly from the properties of $v_n$ and $m_n$ for all $n \in \N$. 

To prove \eqref{ConvergenceReal}, by the uniform bound \eqref{InductionNBis} and standard interpolation arguments, it suffices to show that there exists $(v, m)$ such that
\begin{equation}\label{Convergence0}
\|v_n - v\|_{\ZetaZero} + \| \nabla v_n - \nabla v \|_{L^2_t L^2} + \|m_n - m\|_{\ZetaZero} \overset{n \to \infty}{\longrightarrow} 0.
\end{equation}

To this end, we define the quantity
$$
\delta_n := \|\vn - v_n\|_{\ZetaZero} + \| \nabla \vn - \nabla v_n \|_{L^2_t L^2} + \| \mn - m_n\|_{\ZetaZero},
$$
and we will show that
\begin{equation}\label{ImpliesConvergence0}
\delta_{n} \leq D^{n+1} \varepsilon^{n+1}, \qquad \forall n \in \N,
\end{equation}
for some constant $D \geq 1$. This implies \eqref{Convergence0} after choosing $\varepsilon$ sufficiently small and summing the telescopic geometric series, establishing that $(v_n, m_n)$ is a Cauchy sequence. We proceed by induction.\\
\\
\textbf{\em Base case.} The base case $n=0$ is almost immediate since
$$
(v_1 - v_0, m_1 - m_0) = (e^{t \Delta} y, 0).
$$
Thus, $\delta_0 \leq D \varepsilon$ follows from \eqref{fdjskhggnjds1} and the zero-mean property of $y$. Indeed:
$$
\| e^{t \Delta} y \|_{\ZetaZero} = \sup_{t \geq 0} e^{\theta t} \|e^{t \Delta} y\|_{L^2} \leq \sup_{t \geq 0} e^{(\theta - 1) t} \| y\|_{L^2} \overset{\theta \leq 1}{=} \| y\|_{L^2} \overset{\eqref{yNorm1}}{\leq} \frac{\varepsilon}{2},
$$
along with the standard energy estimate for the heat equation
$$
\| \nabla e^{t \Delta} y \|_{L^2_t L^2} \leq \frac{1}{\sqrt{2}} \|y\|_{L^2} \overset{\eqref{yNorm1}}{\leq} \frac{\varepsilon}{2}.
$$
\textbf{\em Inductive step.} We assume that for some $n \in \N^*$, the following inductive hypothesis holds
\begin{equation}\label{IH}
\delta_{n-1} \leq D^n \varepsilon^n.
\end{equation}
We aim to prove that
\begin{equation}\label{WWDfhruehr}
\delta_{n} \leq D^{n+1} \varepsilon^{n+1}.
\end{equation}
This will be deduced from the following three bounds
\begin{align}
\label{VeryFinal1BoundIH} \| \mn - m_n\|_{\ZetaZero} &\leq \frac{1}{3} D^{n+1} \varepsilon^{n+1}, \\
\label{VeryFinal3BoundIH} \| \nabla \vn - \nabla v_n \|_{L^2_t L^2} &\leq \frac{1}{3} D^{n+1} \varepsilon^{n+1}, \\
\label{VeryFinal2BoundIH} \|\vn - v_n\|_{\ZetaZero} &\leq \frac{1}{3} D^{n+1} \varepsilon^{n+1}.
\end{align}
\noindent \textbf{Proof of \eqref{VeryFinal1BoundIH}.} From \eqref{iterazione integrale} and \eqref{|def:Vn}, we deduce
\begin{align} \nonumber
& \mn(t,x) - m_n(t,x) \\ \label{Uff2}
= & -\int_t^\infty \left( \left((\mn \cdot \nabla) v_n \right)(s,\Ph_{t,s}(x)) - \left((m_n \cdot \nabla) v_{n-1} \right)(s,\PhB_{t,s}(x)) \right) \, \de s \\ \label{Uff2.5}
& -\int_t^\infty \bigg( \left((\mn \cdot \nabla) \pi_1 (e^{K} - \Id) \begin{pmatrix} v_n \\ m_n \end{pmatrix} \right)(s,\Ph_{t,s}(x)) \\ \nonumber
& \qquad \qquad - \left((m_n \cdot \nabla) \pi_1 (e^{K} - \Id) \begin{pmatrix} v_{n-1} \\ m_{n-1} \end{pmatrix} \right)(s,\PhB_{t,s}(x)) \bigg) \, \de s \\ \label{Uff3}
& -\int_t^\infty \left( \left( \pi_2 \mathcal{A} \begin{pmatrix} v_n \\ m_n \end{pmatrix} \right)(s,\Ph_{t,s}(x)) - \left( \pi_2 \mathcal{A} \begin{pmatrix} v_{n-1} \\ m_{n-1} \end{pmatrix} \right)(s,\PhB_{t,s}(x)) \right) \, \de s \\ \label{Uff4}
& -\int_t^\infty \left( \left( \pi_2 \Gamma(v_n,m_n) \right)(s,\Ph_{t,s}(x)) - \left( \pi_2 \Gamma(v_{n-1},m_{n-1}) \right)(s,\PhB_{t,s}(x)) \right) \, \de s \\ \label{Uff5}
& -\int_t^\infty \left( \left( \dive N_2(v_n,m_n) \right)(s,\Ph_{t,s}(x)) - \left( \dive N_2(v_{n-1},m_{n-1}) \right)(s,\PhB_{t,s}(x)) \right) \, \de s.
\end{align}
We will estimate the $\ZetaZero$ norm of the right-hand side term by term.\\
\\
\textbf{Bound of \eqref{Uff2}.}
We estimate
\begin{align*}
\|\eqref{Uff2}\|_{\ZetaZero} & \lesssim 
\sup_{t \geq 0}e^{\theta t} \left\| \int_t^\infty  \left(((\mn - m_n)\cdot\nabla) v_n \right)(s,\Ph_{t,s})  \, \de s \right\|_{L^2} \\
& +  
\sup_{t \geq 0}e^{\theta t} \left\| \int_t^\infty  \left(( m_n \cdot\nabla) (v_n - v_{n-1}) \right)(s,\Ph_{t,s})  \, \de s \right\|_{L^2} \\
& +  
\sup_{t \geq 0}e^{\theta t} \left\| \int_t^\infty \left( \left((m_{n}\cdot\nabla) v_{n-1} \right)(s,\Ph_{t,s}) -  \left((m_{n}\cdot\nabla) v_{n-1} \right)(s,\PhB_{t,s}) \right) \, \de s \right\|_{L^2}.
\end{align*}
From Lemma \ref{Bunch1} and \eqref{InductionNBis} we get 
$$
\sup_{t \geq 0}e^{\theta t} \left\| \int_t^\infty  \left(((\mn - m_n)\cdot\nabla) v_n \right)(s,\Ph_{t,s})  \, \de s \right\|_{L^2} \leq C \varepsilon \|\mn - m_n\|_{\ZetaZero},
$$ 
as well as
$$
\sup_{t \geq 0}e^{\theta t} \left\| \int_t^\infty  \left(( m_n \cdot\nabla) (v_n - v_{n-1}) \right)(s,\Ph_{t,s})  \, \de s \right\|_{L^2} \leq C \varepsilon \delta_{n-1} \overset{\eqref{IH}}{\leq} C D^n \varepsilon^{n+1}.
$$ 
In order to handle the $\ZetaZero$ norm of the third term, we recall 
$$
(m_{n}\cdot\nabla) v_{n-1} := \dive ( v_{n-1} \otimes m_{n} ),
$$ 
so that 
\begin{align*}
\left| \left((m_{n}\cdot\nabla) v_{n-1} \right)\right.&\left.(s,\Ph_{t,s}(x)) -  \left((m_{n}\cdot\nabla) v_{n-1} \right)(s,\PhB_{t,s}(x)) \right| \\  
&\leq \| \nabla^2 m_{n}(s,\cdot) \|_{L^{\infty}} \| \nabla^2 v_{n-1}(s,\cdot)\|_{L^{\infty}}  \left| \Ph_{t,s}(x) - \PhB_{t,s}(x) \right| \\ 
&\leq \| m_{n}(s,\cdot) \|_{H^{\ell}} \|v_{n-1}(s,\cdot)\|_{H^{\ell}}  \left| \Ph_{t,s}(x) - \PhB_{t,s}(x) \right|.
\end{align*}
Then, using Lemma \ref{stime_flussoBisStability}
\begin{align}\label{PAI1} 
& \left\| \int_t^\infty \left( \left((m_{n}\cdot\nabla) v_{n-1} \right)(s,\Ph_{t,s}) -  \left((m_{n}\cdot\nabla) v_{n-1} \right)(s,\PhB_{t,s}) \right) \, \de s \right\|_{L^2} \\ \nonumber
& \leq \int_t^\infty \| m_{n}(s,\cdot) \|_{H^{\ell}} \|v_{n-1}(s,\cdot)\|_{H^{\ell}}  \| \Ph_{t,s} - \PhB_{t,s} \|_{L^2} \, \de s \\ \nonumber
& \leq \| v_n - v_{n-1}\|_{\ZetaZero} \|m_{n}\|_{\Zeta} \|v_{n-1}\|_{\Zeta} \int_t^\infty e^{- 2 \theta s}  \, \de s \overset{\eqref{IH}}{\lesssim} D^n \frac{\varepsilon^{n+2}}{2\theta} e^{-2 \theta t},
\end{align}
which implies
$$
\sup_{t \geq 0}e^{\theta t}  \left\| \int_t^\infty \left( \left((m_{n}\cdot\nabla) v_{n-1} \right)(s,\Ph_{t,s}) -  \left((m_{n}\cdot\nabla) v_{n-1} \right)(s,\PhB_{t,s}) \right) \, \de s\right\|_{L^2} \leq C D^n \frac{\varepsilon^{n+2}}{2\theta}.
$$ 
In conclusion
\begin{equation}\label{Absorb1}
\|\eqref{Uff2}\|_{\ZetaZero} \leq C \varepsilon \|\mn - m_n\|_{\ZetaZero} + C D^n \varepsilon^{n+1}.
\end{equation}

\textbf{Bound of \eqref{Uff2.5}.}
Proceeding analogously, we estimate
\begin{align*}
\|\eqref{Uff2.5}\|_{\ZetaZero} &
 \lesssim 
 \sup_{t \geq 0}e^{\theta t} \bigg\| \int_t^\infty  \left(((\mn - m_n)\cdot\nabla) \pi_1 (e^{K} - \Id)  \left(\begin{smallmatrix} v_n \\ m_n \end{smallmatrix}\right) \right)(s,\Ph_{t,s})  \, \de s \bigg\|_{L^2} \\
& + \sup_{t \geq 0}e^{\theta t} \bigg\| \int_t^\infty  \left(( m_n \cdot \nabla) \pi_1 (e^{K} - \Id)  \left(\begin{smallmatrix} v_n - v_{n-1} \\ m_n - m_{n-1}\end{smallmatrix}\right) \right)(s,\Ph_{t,s})  \, \de s \bigg\|_{L^2} \\
& + \sup_{t \geq 0}e^{\theta t} \bigg\| \int_t^\infty \bigg( ((m_{n}\cdot\nabla) \pi_1 (e^{K} - \Id)  \begin{pmatrix} v_{n-1} \\  m_{n-1}\end{pmatrix} )(s,\Ph_{t,s})
\\
&
\qquad \qquad \qquad \qquad  - ((m_{n}\cdot\nabla) \pi_1 (e^{K} - \Id)  \begin{pmatrix} v_{n-1} \\  m_{n-1}\end{pmatrix} )(s,\PhB_{t,s}) \bigg) \, \de s \bigg\|_{L^2}.
\end{align*}
Given any couple of divergence-free vector fields $(F, G) : [0, +\infty) \times \T^3 \to \R^3$, as 
consequence of inequality \eqref{fdnjskbjnfhjskhjdbfgsjk3L2} we have for all $t \geq 0$:
$$
\left\| \pi_1 (e^{K} - \Id)  \begin{pmatrix} F(t,\cdot) \\ G (t,\cdot)\end{pmatrix} \right\|_{L^2}
\lesssim \varepsilon (\|F(t,\cdot)\|_{H^{-1}} + \|G (t,\cdot)\|_{H^{-1}}),
$$
from which we deduce 
\begin{equation}\label{fdnjskkjfdh64632guyfew27894}
\left\| \pi_1 (e^{K} - \Id)  \begin{pmatrix} F  \\ G \end{pmatrix} \right\|_{\ZetaZero}
\lesssim \varepsilon (\| F \|_{\ZetaZero} + \|G \|_{\ZetaZero}) \overset{\eqref{InductionNBis}}{\lesssim} \varepsilon^2.
\end{equation}
Thus, we can invoke Lemma \ref{Bunch1} and \eqref{InductionNBis} to get 
$$
\sup_{t \geq 0}e^{\theta t} \left\| \int_t^\infty  \left(((\mn - m_n)\cdot\nabla) \pi_1 (e^{K} - \Id)  \begin{pmatrix} v_n  \\ m_n \end{pmatrix} \right)(s,\Ph_{t,s})  \, \de s \right\|_{L^2}
 \leq C \varepsilon^2 \|\mn - m_n\|_{\ZetaZero}  ,
$$ 
as well as
$$
\sup_{t \geq 0}e^{\theta t} \left\| \int_t^\infty  \left(( m_n \cdot\nabla) \pi_1 (e^{K} - \Id) \begin{pmatrix} v_n - v_{n-1} \\ m_n - m_{n-1}\end{pmatrix} \right)(s,\Ph_{t,s})  \, \de s \right\|_{L^2}
 \leq C \varepsilon^2 \delta_{n-1} 
\overset{\eqref{IH}}{\leq}    C D^n \varepsilon^{n+2}  .
$$ 
In order to handle the $\ZetaZero$ norm of the third term on the right hand side,  we recall 
$$
(m_{n}\cdot\nabla) \pi_1 (e^{K} - \Id)  \begin{pmatrix} v_{n-1} \\  m_{n-1}\end{pmatrix} 
:= \dive \left(\pi_1 (e^{K} - \Id)  \begin{pmatrix}  v_{n-1} \\  m_{n-1}\end{pmatrix} \otimes m_{n}\right),
$$
so that 
\begin{align*}
& \left| \left((m_{n}\cdot\nabla) \pi_1 (e^{K} - \Id)  \begin{pmatrix} v_{n-1} \\  m_{n-1}\end{pmatrix} \right)(s,\Ph_{t,s}(x)) 
-  \left((m_{n}\cdot\nabla) \pi_1 (e^{K} - \Id)  \begin{pmatrix} v_{n-1} \\  m_{n-1}\end{pmatrix} \right)(s,\PhB_{t,s}(x)) \right|
\\  
&\leq   \| \nabla^2 m_{n}(s,\cdot) \|_{L^{\infty}} \left\| \nabla^2 \pi_1 (e^{K} - \Id)  \begin{pmatrix} v_{n-1} \\  m_{n-1}\end{pmatrix}(s,\cdot)\right\|_{L^{\infty}}  \left| \Ph_{t,s}(x) - \PhB_{t,s}(x) \right|
\\ 
&\leq  \| m_{n}(s,\cdot) \|_{H^{\ell}} \left\| \pi_1 (e^{K} - \Id)  \begin{pmatrix} v_{n-1} \\  m_{n-1}\end{pmatrix}(s,\cdot) \right\|_{H^{\ell}}  \left| \Ph_{t,s}(x) - \PhB_{t,s}(x) \right|
\end{align*}
and 
\begin{align}\label{PAI1} 
& \bigg\| \int_t^\infty \bigg( \left((m_{n}\cdot\nabla) \pi_1 (e^{K} - \Id)  \begin{pmatrix} v_{n-1} \\ m_{n-1} \end{pmatrix} 
\right)(s,\Ph_{t,s}) 
\\ \nonumber
&   \qquad \qquad \qquad \qquad
-  \left((m_{n}\cdot\nabla) \pi_1 (e^{K} - \Id)  \begin{pmatrix} v_{n-1} \\ m_{n-1} \end{pmatrix} \right)(s,\PhB_{t,s}) \bigg) \, \de s \bigg\|_{L^2}
\\ \nonumber
& \lesssim 
\int_t^\infty \| m_{n}(s,\cdot) \|_{H^{\ell}} \left\|\pi_1 (e^{K} - \Id) \begin{pmatrix} v_{n-1}(s,\cdot) \\  m_{n-1}(s,\cdot)\end{pmatrix}\right\|_{H^{\ell}}  \| \Ph_{t,s} - \PhB_{t,s} \|_{L^2} \, \de s
\\ \nonumber
& \lesssim \| v_n - v_{n-1}\|_{\ZetaZero} \|m_{n}\|_{\Zeta} \left\| \pi_1 (e^{K} - \Id)  \begin{pmatrix} v_{n-1} \\  m_{n-1}\end{pmatrix} \right\|_{\Zeta}
\int_t^\infty e^{- 2 \theta s}   \, \de s
\\ & \nonumber 
\overset{\eqref{InductionNBis}-\eqref{fdnjskkjfdh64632guyfew27894}}{\lesssim} \varepsilon^3 \delta^{n-1}
\int_t^\infty e^{- 2 \theta s}   \, \de s 
\overset{\eqref{IH}}{\leq} D^n \frac{\varepsilon^{n+3}}{2\theta} e^{-2 \theta t}
\end{align}
where we have used Lemma \ref{stime_flussoBisStability} in the second inequality.
Thus
\begin{align}\label{PAI2}
& \sup_{t \geq 0}e^{\theta t} \bigg\| \int_t^\infty \bigg( \left((m_{n}\cdot\nabla) \pi_1 (e^{K} - \Id) \begin{pmatrix} v_{n-1} \\ m_{n-1} \end{pmatrix} 
\right)(s,\Ph_{t,s}) 
\\ \nonumber
&   \qquad \qquad \qquad \qquad
-  \left((m_{n}\cdot\nabla) \pi_1 (e^{K} - \Id) \begin{pmatrix} v_{n-1} \\ m_{n-1} \end{pmatrix} \right)(s,\PhB_{t,s}) \bigg) \, \de s \bigg\|_{L^2}
\\ &
\nonumber 
\leq  \sup_{t \geq 0} C D^n \frac{\varepsilon^{n+3}}{2\theta} e^{- \theta t} \leq C D^n \frac{\varepsilon^{n+3}}{2\theta} .
\end{align}
In conclusion 
\begin{equation}
\|\eqref{Uff2.5}\|_{\ZetaZero} 
\leq C \varepsilon^2 \|\mn - m_n\|_{\ZetaZero} + C D^n \varepsilon^{n+2}.
\end{equation}

\textbf{Bound of \eqref{Uff3}.}
We estimate
\begin{align*}
\|\eqref{Uff3}\|_{\ZetaZero} &   \leq 
\sup_{t \geq 0}e^{\theta t} \left\| \int_t^\infty 
 \left( \pi_2  
\mathcal{A} \begin{pmatrix} v_n - v_{n-1}\\ m_n - m_{n-1} \end{pmatrix}
 \right) (s,\Ph_{t,s})  \, \de s \right\|_{L^2}
 \\ 
 & + \sup_{t \geq 0}e^{\theta t} \left\| \int_t^\infty 
\left( \left( \pi_2  
\mathcal{A} \begin{pmatrix} v_{n-1} \\ m_{n-1} \end{pmatrix}
 \right) (s,\Ph_{t,s}) - 
 \left( \pi_2  
\mathcal{A} \begin{pmatrix} v_{n-1} \\ m_{n-1} \end{pmatrix}
 \right) (s,\PhB_{t,s}) \right) \, \de s \right\|_{L^2}.
 \end{align*}
For the first term on the right hand side, we use Lemma \ref{CorollaryFaaDiBruno}, Lemma \ref{stime flussoBis} and \eqref{InductionNBis} to get
\begin{align*}  
 \left\|  \int_t^\infty 
 \left( \pi_2  
\mathcal{A} \begin{pmatrix} v_n - v_{n-1} \\ m_n - m_{n-1} \end{pmatrix}
 \right) (s,\Ph_{t,s})  \, \de s \right\|_{L^2}
  \leq 
  \int_t^\infty 
\left\|  \left( \pi_2  
\mathcal{A} \begin{pmatrix} v_n - v_{n-1}  \\ m_n - m_{n-1} \end{pmatrix}
 \right) 
 (s,\cdot)  \, \de s \right\|_{L^2} .
\end{align*}
We now use the bound 
\begin{equation}\label{fdhbsjdhghgsdsh7473462}
\left\| \mathcal{A} \begin{pmatrix} v_n - v_{n-1}  \\ m_n - m_{n-1} \end{pmatrix} \right\|_{L^2 \times L^2} \lesssim
\varepsilon ^2 \|(v_n - v_{n-1}, m_n - m_{n-1} )\|_{L^2 \times L^2}   
\end{equation}
that follows by the estimate \eqref{fdhbsjdhghgsdsh7473462Stronger}. 
Using \eqref{fdhbsjdhghgsdsh7473462} and proceeding as in \eqref{hfdgsugdyfdsUff3} (with obvious modifications) we arrive to
\begin{align*}
\sup_{t \geq 0}e^{\theta t} \left\| \int_t^\infty 
 \left( \pi_2  
\mathcal{A} \begin{pmatrix} v_n - v_{n-1} \\ m_n - m_{n-1}\end{pmatrix}
 \right) (s,\Ph_{t,s})  \, \de s \right\|_{L^2} 
 \lesssim \frac{\varepsilon^2}{\theta} \delta_{n-1} 
 \overset{\eqref{IH}}{\lesssim} D^n \frac{\varepsilon^{n+2}}{\theta} .
 \end{align*}

In order to handle the $\ZetaZero$ norm of the second term above we use
\begin{align*}
\left| \left( \pi_2  
\mathcal{A} \begin{pmatrix} v_{n-1} \\ m_{n-1} \end{pmatrix}
 \right) (s,\Ph_{t,s}(x)) - \right.&\left.
 \left( \pi_2  
\mathcal{A} \begin{pmatrix} v_{n-1} \\ m_{n-1} \end{pmatrix}
 \right) (s,\PhB_{t,s}(x)) \right| 
 \\ &
 \leq \left\| \nabla \pi_2  \mathcal{A} \begin{pmatrix} v_{n-1}(s,\cdot) \\ m_{n-1}(s,\cdot) \end{pmatrix} \right\|_{L^{\infty}} \left| 
 \Ph_{t,s}(x) - \PhB_{t,s}(x) \right| 
\\ &
 \leq \left\|  \pi_2  \mathcal{A} \begin{pmatrix} v_{n-1}(s,\cdot) \\ m_{n-1}(s,\cdot) \end{pmatrix} \right\|_{H^{\ell}} \left| 
 \Ph_{t,s}(x) - \PhB_{t,s}(x) \right| 
 \\ & 
 \lesssim \varepsilon^2 (\| v_{n-1}(s,\cdot) \|_{H^{\ell}} + \| m_{n-1}(s,\cdot) \|_{H^{\ell}} ) \left| 
 \Ph_{t,s}(x) - \PhB_{t,s}(x) \right|
\end{align*}
where the second inequality follows by the Sobolev embedding and the third one by \eqref{mathcalAbound}.
Thus, collecting these estimates we arrive at
\begin{align}
\left\| \int_t^\infty \left( \pi_2  
\mathcal{A} \begin{pmatrix} v_{n-1} \\ m_{n-1} \end{pmatrix}
 \right) (s,\Ph_{t,s}) \right.&\left.- 
 \left( \pi_2  
\mathcal{A} \begin{pmatrix} v_{n-1} \\ m_{n-1} \end{pmatrix}
 \right) (s,\PhB_{t,s}) \, \de s \right\|_{L^2}\nonumber
\\ \nonumber
& \lesssim  \varepsilon^2
\int_t^\infty (\| m_{n-1}(s,\cdot) \|_{H^{\ell}} + \|v_{n-1}(s,\cdot)\|_{H^{\ell}} )  \| \Ph_{t,s} - \PhB_{t,s}\|_{L^2}
 \, \de s
\\ \nonumber
& \leq  \varepsilon^2  \| v_n - v_{n-1}\|_{\ZetaZero} (\|m_{n-1}\|_{\Zeta} + \|v_{n-1}\|_{\Zeta} )
\int_t^\infty e^{-  \theta s}   \, \de s
\\ & \nonumber
\overset{\eqref{InductionNBis}}{\leq} \varepsilon^3 \delta^{n-1}
\int_t^\infty e^{-  \theta s}   \, \de s 
\overset{\eqref{IH}}{\leq} D^n \frac{\varepsilon^{n+3}}{\theta} e^{- \theta t},
\end{align}
where we have used Lemma \ref{stime_flussoBisStability} in the second inequality.
In conclusion
\begin{equation}
\sup_{t \geq 0}e^{\theta t} \left\| \int_t^\infty \left( \pi_2  
\mathcal{A} \begin{pmatrix} v_{n-1} \\ m_{n-1} \end{pmatrix}
 \right) (s,\Ph_{t,s}) - 
 \left( \pi_2  
\mathcal{A} \begin{pmatrix} v_{n-1} \\ m_{n-1} \end{pmatrix}
 \right) (s,\PhB_{t,s}) \, \de s \right\|_{L^2} \leq C D^n    \frac{\varepsilon^{n+3}}{\theta}. 
\end{equation} 

\textbf{Bound of \eqref{Uff4}.}
We estimate 
\begin{align*}
\|\eqref{Uff4}\|_{\ZetaZero} &  \lesssim  
\sup_{t \geq 0}e^{\theta t} \left\| \int_t^\infty \left( \left( \pi_2 \Gamma(v_{n},m_{n})
 \right)(s,\Ph_{t,s}) -  \left( \pi_2 \Gamma(v_{n-1},m_{n-1})
 \right)(s,\Ph_{t,s}) \right) \, \de s \right\|_{L^2}
\\
& +  
\sup_{t \geq 0}e^{\theta t} \left\| \int_t^\infty \left( \left( \pi_2 \Gamma(v_{n-1},m_{n-1})
 \right) (s,\Ph_{t,s}) -  \left( \pi_2 \Gamma(v_{n-1},m_{n-1})
 \right)(s,\PhB_{t,s}) \right) \, \de s \right\|_{L^2}
\end{align*}

For the first term on the right hand side above, we use \eqref{fnjwknbdshdsfg533B2} to get
\begin{equation}  
\sup_{t \geq 0}e^{\theta t} \left\| \int_t^\infty \left( \left( \pi_2 \Gamma(v_{n},m_{n})
 \right)(s,\Ph_{t,s}) -  \left( \pi_2 \Gamma(v_{n-1},m_{n-1})
 \right)(s,\Ph_{t,s}) \right) \, \de s \right\|_{L^2}
 \leq C \varepsilon^2 \delta_{n-1} \overset{\eqref{IH}}{\leq} C D^n \varepsilon^{n+2} .
\end{equation}

In order to handle the $\ZetaZero$ norm of the second term, we use 
\begin{align*}
\left| \left( \pi_2 \Gamma(v_{n-1},m_{n-1})
 \right) (s,\Ph_{t,s}(x)) \right.&\left.-  \left( \pi_2 \Gamma(v_{n-1},m_{n-1})
 \right)(s,\PhB_{t,s}(x)) \right| 
 \\ &
 \leq \| \nabla \pi_2 \Gamma(v_{n-1},m_{n-1})
 (s,\cdot) \|_{L^{\infty}} \left| 
 \Ph_{t,s}(x) - \PhB_{t,s}(x) \right| 
\\ &
 \leq \|  \pi_2 \Gamma(v_{n-1},m_{n-1})
 (s,\cdot)  \|_{H^{\ell}} \left| 
 \Ph_{t,s}(x) - \PhB_{t,s}(x) \right| 
 \\ & 
 \lesssim \varepsilon (\| v_{n-1}(s,\cdot) \|^2_{H^{\ell}} + \| m_{n-1}(s,\cdot) \|^2_{H^{\ell}} ) \left| 
 \Ph_{t,s}(x) - \PhB_{t,s}(x) \right|
\end{align*}
where the second inequality follows by the Sobolev embedding and the third one by
 \eqref{GammaEstimate1}.
Thus, it follows that 
\begin{align*}
\Bigg\| \int_t^\infty \left( \left( \pi_2 \Gamma(v_{n-1},m_{n-1})
 \right) \right.&\left.(s,\Ph_{t,s}) -  \left( \pi_2 \Gamma(v_{n-1},m_{n-1})
 \right)(s,\PhB_{t,s}) \right)  \, \de s \Bigg\|_{L^2}
\\ 
& \lesssim  \varepsilon
\int_t^\infty (\| m_{n-1}(s,\cdot) \|^2_{H^{\ell}} + \|v_{n-1}(s,\cdot)\|^2_{H^{\ell}} )  \| \Ph_{t,s} - \PhB_{t,s} \|_{L^2} \, \de s
\\ 
& \leq  \varepsilon  \| v_n - v_{n-1}\|_{\ZetaZero} (\|m_{n-1}\|^2_{\Zeta} + \|v_{n-1}\|^2_{\Zeta} )
\\
&\overset{\eqref{InductionNBis}}{\leq} \varepsilon^3 \delta^{n-1}
\int_t^\infty e^{-  2 \theta s}   \, \de s \\
&\overset{\eqref{IH}}{\leq} D^n \frac{\varepsilon^{n+3}}{2\theta} e^{- 2 \theta t},
\end{align*}
where in the second inequality we have used Lemma \ref{stime_flussoBisStability}.
In conclusion
\begin{align*}
\sup_{t \geq 0}e^{\theta t} \Bigg\| \int_t^\infty \Big( \left( \pi_2 \Gamma(v_{n-1},m_{n-1})\right) & (s,\Ph_{t,s}) -  \left( \pi_2 \Gamma(v_{n-1},m_{n-1})\right)(s,\PhB_{t,s}) \Big) \, \de s \Bigg\|_{L^2} 
\\
& \leq  \sup_{t \geq 0} C D^n
\frac{\varepsilon^{n+3}}{2\theta} e^{-  \theta t} \leq 
C D^n \frac{\varepsilon^{n+3}}{2\theta}. 
\end{align*}

\textbf{Bound of \eqref{Uff5}.}
We estimate 
\begin{align*}
\|\eqref{Uff5}\|_{\ZetaZero} &  \lesssim  
\sup_{t \geq 0}e^{\theta t} \left\| \int_t^\infty \left( \left(    \dive N_2(v_n,m_n) \right) (s,\Ph_{t,s}) - \left(     \dive N_2(v_{n-1},m_{n-1}) \right) (s,\Ph_{t,s})\right) \, \de s \right\|_{L^2}
\\
& +  
\sup_{t \geq 0}e^{\theta t} \left\| \int_t^\infty \left( 
\left(    \dive N_2(v_{n-1},m_{n-1}) \right) (s,\Ph_{t,s}) - \left(    \dive N_2(v_{n-1},m_{n-1}) \right) (s,\PhB_{t,s})
\right) \, \de s \right\|_{L^2}
\end{align*}

For the first term on the right hand side, we use Lemma \ref{Bunch3} to get 
\begin{align*}
\sup_{t \geq 0}e^{\theta t} \left\| \int_t^\infty \left( \left( \left(    \dive N_2(v_n,m_n) \right) (s,\Ph_{t,s}) - \left(     \dive N_2(v_{n-1},m_{n-1}) \right) (s,\Ph_{t,s})\right) \right) \, \de s \right\|_{L^2} &\lesssim \varepsilon^2 \delta_{n-1} \\
&\overset{\eqref{IH}}{\lesssim} D^n  \varepsilon^{n+2}  
\end{align*}
In order to handle the $\ZetaZero$ norm of the second term we use 
\begin{align*}
\left| \left(   \dive N_2(v_{n-1},m_{n-1}) \right) (s,\Ph_{t,s}(x)) \right.&\left.- \left(   \dive N_2(v_{n-1},m_{n-1}) \right) (s,\PhB_{t,s}(x)) \right|
\\  
&\leq   \| \nabla     \dive N_2(v_{n-1},m_{n-1})(s,\cdot)  \|_{L^{\infty}} \left| \Ph_{t,s}(x) - \PhB_{t,s}(x) \right|
\\ 
&\overset{\eqref{ImprovedDIVBound}}{\lesssim} 
 \varepsilon (\| v_{n-1}(s,\cdot) \|^2_{H^{\ell}} + \| m_{n-1}(s,\cdot) \|^2_{H^{\ell}} ) \left| 
 \Ph_{t,s}(x) - \PhB_{t,s}(x) \right|
\end{align*}
which implies
\begin{align} \nonumber 
&
\left\| \int_t^\infty \left( 
\left(    \dive N_2(v_{n-1},m_{n-1}) \right) (s,\Ph_{t,s}) - \left(    \dive N_2(v_{n-1},m_{n-1}) \right) (s,\PhB_{t,s})
\right) \, \de s \right\|_{L^2}
\\ \nonumber
& \lesssim
\varepsilon \int_t^\infty  (\| v_{n-1}(s,\cdot) \|^2_{H^{\ell}} + \| m_{n-1}(s,\cdot) \|^2_{H^{\ell}} ) \|
 \Ph_{t,s} - \PhB_{t,s} \|_{L^2}
  \, \de s
\\ \nonumber
& \lesssim \varepsilon  \| v_n - v_{n-1}\|_{\ZetaZero} (\| v_{n-1} \|^2_{\Zeta} + \| m_{n-1} \|^2_{\Zeta} )
\int_t^\infty e^{- 2 \theta s}   \, \de s
\\ & \nonumber 
\overset{\eqref{InductionNBis}}{\leq} \varepsilon^2 \delta^{n-1}
\int_t^\infty e^{- 2 \theta s}   \, \de s 
\overset{\eqref{IH}}{\lesssim} D^n \frac{\varepsilon^{n+3}}{2\theta} e^{-2 \theta t}
\end{align}
where we have used Lemma \ref{stime_flussoBisStability} in the second inequality.
Thus
\begin{align*}  
\sup_{t \geq 0}e^{\theta t} \left\| \int_t^\infty \left( 
\left( \dive N_2(v_{n-1},m_{n-1}) \right) (s,\Ph_{t,s}) \right.\right.&\left.\left.- \left(    \dive N_2(v_{n-1},m_{n-1}) \right) (s,\PhB_{t,s})
\right) \, \de s \right\|_{L^2}
\\ 
& \leq
 \sup_{t \geq 0} C D^n \frac{\varepsilon^{n+3}}{2\theta} e^{- \theta t} \leq C D^n \frac{\varepsilon^{n+3}}{2\theta}
\end{align*}

\textbf{Conclusion.}
Collecting the previous estimates we arrive at
$$
\| \mn - m_n\|_{\ZetaZero} \leq C \varepsilon \| \mn - m_n\|_{\ZetaZero} + C D^n \varepsilon^{n+1},
$$
which implies \eqref{VeryFinal1BoundIH}, provided $\varepsilon > 0$ is chosen sufficiently small (to absorb the $C\varepsilon$ term into the left-hand side) and $D$ is chosen sufficiently large.\\
\\
\noindent \textbf{Proof of \eqref{VeryFinal3BoundIH}.} From the iterative equation \eqref{iterazione}, we deduce
\begin{align*}
\partial_t(\vn - v_n) = & \Delta(\vn - v_n) + \pi_1 \mathcal{A} \begin{pmatrix} v_n - v_{n-1} \\ m_n - m_{n-1} \end{pmatrix} + \pi_1 (\Gamma(v_n, m_n) - \Gamma(v_{n-1}, m_{n-1})) \\
& + \mathbb{P} \left( m_n\cdot\nabla m_n - m_{n-1}\cdot\nabla m_{n-1} - v_n\cdot\nabla v_n + v_{n-1} \cdot\nabla v_{n-1}\right) \\
&+ \dive ( N_1(v_n, m_n) - N_1(v_{n-1}, m_{n-1})).
\end{align*}
Taking the $L^2$ scalar product with $(\vn - v_n)$, integrating by parts, and using the self-adjointness of the Leray projector $\mathbb{P}$ (noting $\mathbb{P}v_{n+1}=v_{n+1}$), arriving to
\begin{align*}
\frac{1}{2} \frac{\de}{\de t}\|\vn - v_n\|_{L^2}^2 & + \|\nabla(\vn - v_n)\|_{L^2}^2 \\
= & \int_{\T^3} \left( \pi_1 \mathcal{A} \begin{pmatrix} v_n - v_{n-1} \\ m_n - m_{n-1} \end{pmatrix} + \pi_1 \Gamma(v_n, m_n) - \pi_1 \Gamma(v_{n-1}, m_{n-1}) \right) \cdot (\vn - v_n) \, \de x \\
& + \int_{\T^3} (- m_n\otimes m_n + m_{n-1}\otimes m_{n-1} + v_n \otimes v_n - v_{n-1} \otimes v_{n-1}) \cdot \nabla (\vn - v_n) \, \de x \\
& - \int_{\T^3} (N_1(v_n, m_n) - N_1(v_{n-1}, m_{n-1})) \cdot \nabla (\vn - v_n) \, \de x.
\end{align*}
Applying Lemma \ref{FTWOM1IL}, we arrive at
\begin{equation}
\frac{\de}{\de t}\| \vn - v_n\|_{L^2}^2 + \|\nabla (\vn - v_n)\|_{L^2}^2 \leq C \varepsilon^2 (\| v_n - v_{n-1}\|_{L^2}^2 + \| m_n - m_{n-1}\|_{L^2}^2).
\end{equation}
Integrating over $t \in [0, T]$, $T \geq 0$
\begin{align*}
& \| (\vn - v_n)(T,\cdot) \|_{L^2}^2 + \int_{0}^{T} \|\nabla (\vn - v_n)(t,\cdot)\|^2_{L^2} \, \de t \\
& \leq C\varepsilon^2 \int_0^T (\| v_n - v_{n-1}\|_{L^2}^2 + \| m_n - m_{n-1}\|_{L^2}^2) \, \de t \\
& \leq C\varepsilon^2 \int_0^T e^{-2 \theta t} (\| v_n - v_{n-1}\|_{\ZetaZero}^2 + \| m_n - m_{n-1}\|_{\ZetaZero}^2) \, \de t \\
& \leq C \varepsilon^2 \delta_{n-1}^2 \frac{1}{2 \theta} (1 - e^{-2 \theta T}) \overset{\eqref{IH}}{\leq} \frac{C}{\theta} D^{2n} \varepsilon^{2(n+1)} (1 - e^{-2 \theta T}),
\end{align*}
and finally, taking the limit $T \to \infty$, we get
\begin{equation}
\limsup_{T \to \infty} \| (\vn - v_n)(T,\cdot) \|_{L^2}^2 + \int_{0}^{\infty} \|\nabla (\vn - v_n)(t,\cdot)\|^2_{L^2} \, \de t \leq \frac{C}{\theta} D^{2n} \varepsilon^{2(n+1)},
\end{equation}
which implies \eqref{VeryFinal3BoundIH} for $D$ sufficiently large.\\
\\
\noindent \textbf{Proof of \eqref{VeryFinal2BoundIH}.} From the first equation in \eqref{iterazione integrale}, we deduce:
\begin{align*}
(\vn - v_n)(t,\cdot) = & \int_0^t e^{(t-s)\Delta} \mathbb{P} \left( (m_n\cdot\nabla) m_n - (m_{n-1}\cdot\nabla) m_{n-1} - (v_n\cdot\nabla) v_n + (v_{n-1}\cdot\nabla) v_{n-1} \right) \, \de s \\
& + \int_0^t e^{(t-s)\Delta} \mathbb{P} \left( \dive (N_{1}(v_n, m_n) - N_{1}(v_{n-1}, m_{n-1})) \right) \, \de s \\
& + \int_0^t e^{(t-s)\Delta} \left( \pi_1 \mathcal{A} \begin{pmatrix} v_n - v_{n-1} \\ m_n - m_{n-1} \end{pmatrix} + \pi_1 (\Gamma(v_n, m_n) - \Gamma(v_{n-1}, m_{n-1})) \right) \, \de s.
\end{align*}
Using Lemma \ref{gmjdskhkjdsfgjkgsdnj9435782543} and inequality \eqref{ndjkskngdkskhjdf123} (valid replacing $(v_n, m_n)$ with $(v_n-v_{n-1}, m_n-m_{n-1})$), we easily deduce
$$
\| \vn - v_n \|_{\ZetaZero} \leq C \varepsilon \delta_{n-1} \overset{\eqref{IH}}{\leq} C D^{n} \varepsilon^{n+1},
$$
which implies \eqref{VeryFinal2BoundIH} for $D$ sufficiently large.\\
\\
\noindent \textbf{Final step.} To complete the proof, we show \eqref{InductionNBisbdfshkbjgks}. From the uniform bound \eqref{InductionNBis}, the sequence $(v_n, m_n)$ admits a subsequence converging weakly to some $(\tilde v, \tilde m)$ satisfying
\begin{equation}
\| \tilde v \|_{\Zeta} + \| \nabla \tilde v \|_{L^2_t H^{\ell}} + \| \tilde m\|_{\Zeta} \leq \varepsilon.
\end{equation}
By the convergence \eqref{ConvergenceReal}, we must have $(v, m) = (\tilde v, \tilde m)$, and thus \eqref{InductionNBisbdfshkbjgks} follows.
\end{proof}

\section{Proof of Theorems \ref{RealMAINTHM}, \ref{ContMap}}\label{Sec:RealMAINTHM}
We are now ready to prove Theorem \ref{RealMAINTHM}.
\begin{proof}[\underline{Proof of Theorem \ref{RealMAINTHM}}.]
We divide the proof in three steps.\\
\\
\underline{\em Step 1.}\, Existence of the limit solution. \\
\\
From Theorem \ref{ThmSecondIterativeMAIN2}, we know that there exists $\varepsilon_1 > 0$ sufficiently small such that the following holds. Given any $\varepsilon \in (0, \varepsilon_1)$ and any zero-average, divergence-free vector field $y : \T^3 \to \R^3$ satisfying
\begin{equation}\label{yNorm}
\|y\|_{H^{\ell}} \leq \frac{\varepsilon}{3},
\end{equation}
there exists a pair of zero-average, divergence-free vector fields $(v, m): [0, +\infty) \times \T^3 \to \R^3$ bounded by
\begin{equation}\label{fjnsaljndf64678234bdfhyws}
\| v \|_{\Zeta} + \| m\|_{\Zeta} \leq \varepsilon,
\end{equation} 
and a sequence of zero-average, divergence-free vector fields $v_{n}, m_{n} : [0, +\infty) \times \T^3 \to \R^3$ satisfying the integral equations \eqref{iterazione integrale}, such that
\begin{equation*}
\|v_{n} - v\|_{\ZetaB} + \|m_{n} - m\|_{\ZetaB} \overset{n \to \infty}{\longrightarrow} 0,
\end{equation*}
for all $\ell' < \ell$. Note that the sequence $(v_n, m_n)$, and consequently the limit pair $(v,m)$, depends on the initial datum $y$.\\
\\
\underline{\em Step 2.}\, Passing to the limit and reconstructing the MHD solution. \\
\\
It is straightforward to see that, as a byproduct of the arguments from the previous section, we can pass to the limit as $n \to \infty$ in \eqref{iterazione integrale}, thereby proving that $(v, m)$ satisfies the limit integral equations \eqref{iterazione integralePrequel}. Then, invoking Theorem \ref{FinalTHM1dfhgsjdhgfs}, we deduce that the pair $(v, m)$ satisfies the differential equations \eqref{nfdjkshjbffbbfbf}. Thus, by Theorem \ref{fdanjknjfdsjn756734}, we conclude that
\begin{equation}\label{fdmkslnjnkdsf64}
\begin{pmatrix} u \\ b \end{pmatrix} 
:=
\begin{pmatrix} 0 \\ B \end{pmatrix}
+ e^{K} \begin{pmatrix} v \\ m \end{pmatrix},
\end{equation}
is a global solution of the MHD equations \eqref{eq:mhdInLemma}, supplemented with the initial datum
$$
\begin{pmatrix} u(0, x) \\ b(0, x) \end{pmatrix} 
:=
\begin{pmatrix} 0 \\ B(x) \end{pmatrix}
+ e^{K} \begin{pmatrix} y(x) \\ m(0, x) \end{pmatrix}.
$$ 
\underline{\em Step 3.}\, Structural properties and conclusion.\\
\\
Since $\dive B = \dive v = \dive m = 0$, we have $\dive u = \dive b = 0$, which follows as a direct consequence of Proposition \ref{prop:algebraic_properties}. Furthermore, since the perturbations have zero spatial average, i.e., 
$$
\int_{\T^3} v(t,x) \, \de x = \int_{\T^3} m(t,x) \, \de x = 0,
$$ 
we obtain
$$
\int_{\T^3} b(t,x) \, \de x = \int_{\T^3} B(x) \, \de x \quad \text{and} \quad \int_{\T^3} u(t,x) \, \de x = 0,
$$ 
again by Proposition \ref{prop:algebraic_properties}. From the equations \eqref{iterazione integralePrequel} and the bound \eqref{fjnsaljndf64678234bdfhyws}, proceeding exactly as in the proof of Theorems \ref{ThMFirstRecursiveBound}-\ref{ThmSecondIterative} we obtain 
\begin{equation}\label{improvedBoundForv}
\|v\|_{\Zeta} \leq   \|y\|_{H^\ell} + C_{\theta}   \varepsilon  (\| v \|_{\Zeta} + \| m \|_{\Zeta})
\end{equation}
and
\begin{equation}\label{improvedBoundForm}
\|m\|_{\Zeta} \leq    C_{\theta}  \varepsilon  \| m \|_{\Zeta} + 
C_{\theta} \varepsilon^2 \| v \|_{\Zeta}  .
\end{equation}
Summing these estimates we arrive to
\begin{equation}\label{recthisfinal}
\|v\|_{\Zeta} + \|m\|_{\Zeta} \leq   \|y\|_{H^\ell} + 2C_{\theta}   \varepsilon  (\| v \|_{\Zeta} + \| m \|_{\Zeta}).
\end{equation}
Taking $\varepsilon_1 >0$ such that 
\begin{equation}\label{recthisfinal2}
2C_{\theta} \varepsilon_1 \leq 1/2,
\end{equation}
and recalling that 
$\varepsilon \in (0, \varepsilon_1)$ we can absorb the second term on the right hand side into the left hand side and we arrive to
\begin{equation}\label{nmkdofskngfseko54}
\|v\|_{\Zeta} + \|m\|_{\Zeta} \leq 2 \|y\|_{H^{\ell}}.
\end{equation}
Plugging this estimate into \eqref{improvedBoundForv}-\eqref{improvedBoundForm} we arrive to 
\begin{equation}
\|v\|_{\Zeta} \leq   \|y\|_{H^\ell} + 2 C_{\theta}   \varepsilon  \|y\|_{H^{\ell}}
\end{equation}
and
\begin{equation}
\|m\|_{\Zeta} \leq    C_{\theta}  \varepsilon  \| m \|_{\Zeta} + 
2 C_{\theta} \varepsilon^2 \|y\|_{H^{\ell}}.
\end{equation}
Recalling \eqref{recthisfinal2} the latter gives
\begin{equation}\label{fmnjsokngfre77565}
\|m\|_{\Zeta} \leq  
\frac83 C_{\theta} \varepsilon^2 \|y\|_{H^{\ell}}. 
\end{equation}
Noting that 
\begin{equation}\label{fdmkslnjnkdsf64}
\begin{pmatrix} u \\ b \end{pmatrix} 
:=
\begin{pmatrix} 0 \\ B \end{pmatrix} + \begin{pmatrix} v \\ m \end{pmatrix}
+ (e^{K} - \Id) \begin{pmatrix} v \\ m \end{pmatrix},
\end{equation}
and that 
\begin{equation}\label{fdmkslnjnkdsf64Bis}
\| (e^{K} - \Id) \begin{pmatrix} v \\ m \end{pmatrix} \|_{\Zeta} \overset{\eqref{fdnjskbjnfhjskhjdbfgsjk3}}{\leq} C \varepsilon
( \| v \|_{\Zeta} +  \| m \|_{\Zeta}) \overset{\eqref{nmkdofskngfseko54}}{\leq} 2 C \varepsilon 
 \| y \|_{H^{\ell}},
\end{equation}
we can easily deduce the quantitative relaxation bound \eqref{Bound:QuantitativeRelaxation}. 

Theorem \ref{RealMAINTHM} is thus proved by setting the parametrization map to
\begin{equation}
\begin{pmatrix} \phi_1(y) \\  \phi_2(y) \end{pmatrix}
:= e^{K} \begin{pmatrix} y(\cdot) \\ m(0, \cdot) \end{pmatrix},
\end{equation}
where we recall that $m(0, \cdot)$ depends on $y$.

Since the matrix exponential $e^{K}$ is  invertible, the mapping $\Lambda : y \mapsto \Lambda(y):=(\phi_1(y), B + \phi_2(y))$ is automatically injective. \\

Let now $\Psi_{t_1, t_2}$ the flow associated to the velocity field $u$, as defined in \eqref{DiffPsiDef}. 
The induction equation~\eqref{induction} rewrites as
\begin{equation}\label{r42789548123}
b(t_2 , \Psi_{t_1, t_2} x) = (\nabla \Psi_{t_1, t_2}x) b(t_1,x).  
\end{equation}

From Lemma \ref{LastFlowBound} we know that $\Psi_{t_1, t_2}$ converges in $W^{1,\infty}$ to a volume preserving 
diffeomorphism that we denote with
$\Psi_{t_1, \infty}$, as $t_2  \to \infty$, namely
\begin{equation}
\lim_{t_2 \to \infty} \| \Psi_{t_1,t_2} - \Psi_{t_1,\infty}\|_{W^{1,\infty}} = 0.
\end{equation}
Thus, in particular 
\begin{equation}\label{r4278954812}
\lim_{t_2 \to \infty} (\nabla \Psi_{t_1, t_2}) b(t_1) = (\nabla \Psi_{t_1, \infty}) b(t_1) .
\end{equation}
We now claim that 
\begin{equation}\label{r427895481}
\lim_{t_2 \to \infty}  b(t_2 , \Psi_{t_1, t_2} x) = B(\Psi_{t_1, \infty} x)
\end{equation}
To prove this we estimate
\begin{align}
| b(t_2 , \Psi_{t_1, t_2} x) - B(\Psi_{t_1, \infty} x) | \leq 
| b(t_2 , \Psi_{t_1, t_2} x) - B(\Psi_{t_1, t_2} x) | + | B(\Psi_{t_1, t_2} x)  -  B(\Psi_{t_1, \infty} x) |. 
\end{align}
The second term on the right hand side vanishes as $t_2 \to \infty$ since $\Psi_{t_1, t_2} x \overset{t_2 \to \infty}{\to} \Psi_{t_1, \infty} x$ and $B$ is a continuous function. To handle the first term into the right hand side we note that
$$
| b(t_2 , \Psi_{t_1, t_2} x) - B(\Psi_{t_1, t_2} x) | \leq \sup_{y \in \T^3} |b(t_2 , y) - B(y)|
$$
and the right hand side now vanishes as $t_2 \to \infty$ as consequence of \eqref{Bound:QuantitativeRelaxation} (and Sobolev embedding). This proves the claim \eqref{r427895481}. We can use \eqref{r427895481}-\eqref{r4278954812} to pass to the limit
$t_2 \to \infty$ into the identity~\eqref{r42789548123} getting
\begin{equation}
B(\Psi_{t_1, \infty} x) = (\nabla \Psi_{t_1, \infty}x) b(t_1,x),
\end{equation} 
as desired. This completes the proof.


\end{proof}

We now move to the proof of Theorem \ref{ContMap}.
\begin{proof}
[\underline{Proof of Theorem \ref{ContMap}}.]
Taking the difference of equations \eqref{iterazione integralePrequel} for $(v,m)$ and $(v',m')$, respectively, 
and using the bound \eqref{fjnsaljndf64678234bdfhyws} (that now holds for both $(v,m)$ and $(v',m')$) the estimates 
\eqref{fjisuhfd66835985u} can be proved proceeding exactly as in the proof of Theorem \eqref{ThmSecondIterativeMAIN2}
and then coming back to the variables $(u,b)$, $(u',b')$ using \eqref{fdmkslnjnkdsf64} and \eqref{fdmkslnjnkdsf64Bis}.
We omit the full calculation, that is straightforward in light of the proof of Theorem \eqref{ThmSecondIterativeMAIN2}. 
\end{proof}

\section{Proof of Theorem \ref{OrbitsTHMPrequel}}\label{Sec:proofofOrbitsTHMPrequel}

In this section we show that the orbits of the (infinite dimensional family of) solutions constructed in Theorem \ref{RealMAINTHM}
form a large class, that is informally the content of Theorem \ref{OrbitsTHMPrequel}. The proof consists in selecting a representative initial datum lying on a common energy level. Since the energy decreases strictly at positive times, each orbit can intersect the resulting set at most once. Moreover, the selection is continuous and preserves the infinite-dimensional structure of the original parameter set.

\begin{proof}
[\underline{Proof of Theorem \ref{OrbitsTHMPrequel}}.]

We recall that we are assuming
$$
\|B\|_{H^{\ell +2}} \leq \varepsilon, \quad \|B\|_{L^2} =  \sigma \varepsilon.$$

Let $y$ as in Theorem \ref{RealMAINTHM}. In particular,
$$\|y\|_{H^{\ell}} \leq \frac{\varepsilon}{3}.$$ We further restrict to the $y$ that satisfies 
$$
\|y\|_{L^2}   = \frac13 c  \sigma \varepsilon.
$$
The constant $c>0$ must be sufficiently small that the choice of $y$ is indeed 
possible (once this is the case there is indeed an large set of such $y$). For instance we can take 
$c = 2^{-\ell - 1}$ (in particular, the constant $c$ only depends on $\ell$). 

Let $(u,b)$ be the solution, given by Theorem \ref{RealMAINTHM}, corresponding to $\theta y$, where $\theta \in (0,1)$.
Recalling~\eqref{fdmkslnjnkdsf64}, we note that 
$$
u(0, x) =   \pi_1 e^{K}\begin{pmatrix}
\theta y(x) \\ 
m(0, x)   
\end{pmatrix} 
=  \theta y(x) +
 \pi_1 (e^{K} - \Id) \begin{pmatrix}
\theta y(x) \\ 
m(0, x)
\end{pmatrix},
$$
and
$$
b(0, x) = B(x) + m(0,x) +   \pi_2 ( e^{K} - \Id) \begin{pmatrix}
\theta y(x) \\ 
m(0, x)   
\end{pmatrix}.
$$
It is worth recalling that the vector field $m(0, \cdot)$ depends on $\theta$ and $y$, even if we do not display this dependence in the notations. Note that we have 
$$
\| u(0,\cdot) \|_{L^2}^2 =  \frac19 c^2  \theta^2    \varepsilon^2 \sigma^2  + r_1, 
$$
with 
$$
\quad r_1 :=  
\bigg\| \pi_1 (e^{K} - \Id) \begin{pmatrix}
\theta y(\cdot) \\ 
m(0, \cdot) 
\end{pmatrix} \bigg\|_{L^2}^2
+ 2 \int_{\T^3}  \theta y(x) \cdot   \pi_1 (e^{K} - \Id) \begin{pmatrix}
\theta y(x)  \\ 
m(0,x)
\end{pmatrix}\de x,
$$
and 
$$
\| b(0,\cdot)\|_{L^2}^2 =     \varepsilon^2 \sigma^2 + r_2 ,
$$
with
\begin{align*}
r_2  := &\| m(0,\cdot) \|_{L^2}^2 + \bigg\|  \pi_2 ( e^{K} - \Id) \begin{pmatrix}
\theta y \\ 
m(0,\cdot)   
\end{pmatrix} \bigg\|_{L^2}^2 
\\ 
&
+ 2 \int_{\T^3} \left(B(x) \cdot m(0,x) +  B(x) \cdot 
\pi_2 ( e^{K} - \Id) \begin{pmatrix}
\theta y(x) \\ 
m(0,x)   
\end{pmatrix} \right)\de x \\
&+ 2\int_{\T^3} m(0,x) \cdot \pi_2 ( e^{K} - \Id) \begin{pmatrix}
\theta y(x) \\ 
m(0,x)   
\end{pmatrix}\de x.
\end{align*}
By using \eqref{fdnjskbjnfhjskhjdbfgsjk3L2}, \eqref{fmnjsokngfre77565} and the Cauchy-Schwarz inequality we can estimate the remainders as follows
$$
|r_1| \lesssim \theta^2 \varepsilon^3 \sigma^2, \quad |r_2| \lesssim \theta \varepsilon^3 \sigma^2.
$$
Thus, for $\theta =1$, we have
$$
\| u(0,\cdot) \|_{L^2}^2 + \| b(0,\cdot) \|_{L^2}^2 =  \frac{1}{9} c^2 \varepsilon^2 \sigma^2 + \varepsilon^2 \sigma^2 +
O(\varepsilon^3 \sigma^2) > \varepsilon^2 \sigma^2 + \frac{1}{18} c^2 \varepsilon^2 \sigma^2,
$$
for all $\varepsilon >0$ sufficiently small.
This means that the function
$$
E: \theta \in [0,1] \to E(\theta):= \| u(0,\cdot) \|_{L^2}^2 + \| b(0,\cdot) \|_{L^2}^2,
$$
satisfies $E(0) = \varepsilon^2 \sigma^2$ and $E(1) > \varepsilon^2 \sigma^2 + \frac{1}{18} c^2 \varepsilon^2 \sigma^2$.
By Theorem \ref{ContMap} 
 we also see that the function $E$ is continuous. This means that there exists at least a value of 
$\theta$ such that $E(\theta) = \varepsilon^2 \sigma^2 + \frac{1}{18}c^2 \varepsilon^2 \sigma^2$. In fact, we will consider the minimum of such $\theta$. To do so, we introduce the set
$$
P := \left\{ \theta \in [0,1] :  E(\theta) > \varepsilon^2 \sigma^2 + \frac{1}{18} c^2 \varepsilon^2 \sigma^2 \right\}.
$$
This set is non empty, since $1 \in P$ (because $E(1)  > \varepsilon^2 \sigma^2 + \frac{1}{18} c^2 \varepsilon^2 \sigma^2$). We can thus set 
$$
\theta(y):= \inf_{\theta \in [0,1]} P.
$$
Taking advantage of the continuity of $E$ it is easy to check that we must have
\begin{equation}\label{njfiwurehg7890934h789t483frdki}
E(\theta(y)) =  \varepsilon^2 \sigma^2 + \frac{1}{18} c^2 \varepsilon^2 \sigma^2,
\end{equation} 
as long as 
$\theta(y) >0$, that is indeed the case since $0 \notin P$ (because $E(0) =\varepsilon^2 \sigma^2$).
Recalling that we have defined, for $s \geq 0$
$$
\mathbb{B}^s_{\rho}:= \left\{ F \in H^{s}(\T^3) : \quad \dive F=0, \quad \int_{\T^3} F =0, \quad  \|F\|_{H^{s}} \leq \rho \right\}.
$$
We can consider the function
\begin{equation}\label{femiownfer6w6478}
\Theta: y  \in \mathbb{B}^\ell_{\frac{\varepsilon}{3}} \cap \partial \mathbb{B}^0_{\frac{c \sigma \varepsilon}{3}}  \to 
\Theta(y):= \theta(y) y.
\end{equation}
This function is clearly injective. Using Theorem \ref{ContMap} it is not hard to show that this function is also continuous. Moreover, 
the global solution $(u, b)$ to the \eqref{eq:mhdMoreStandard} equations with initial datum 
\begin{equation}
(u^\mathrm{in}, b^\mathrm{in}) = (\phi_1(\Theta(y)), B + \phi_2(\Theta(y))),
\end{equation}
satisfies 
\begin{equation}\label{fdsjnfre668453984i}
\| u^\mathrm{in} \|_{L^2}^2 + \| b^\mathrm{in}\|_{L^2}^2 = \varepsilon^2 \sigma^2 + \frac{1}{18} c^2 \varepsilon^2 \sigma^2, 
\qquad \| \nabla u^\mathrm{in} \|_{L^2} >0. 
\end{equation}
The identity follows by \eqref{njfiwurehg7890934h789t483frdki}, the inequality follows by the Poincar\'e
inequality $\| \nabla u^\mathrm{in} \|_{L^2} \geq  \| u^\mathrm{in} \|_{L^2}$ and by the fact that, for all 
$\varepsilon >0$ 
sufficiently small  
\begin{equation}\label{fndhwuybfe784}
\| u^\mathrm{in} \|_{L^2}^2 = 
\frac19 c^2  \theta^2(y)    \varepsilon^2 \sigma^2 +   
O(\theta^2(y) \varepsilon^3 \sigma^2) > \frac1{18} c^2  \theta^2(y)    \varepsilon^2 \sigma^2   >0,
\end{equation}
the lower bound \eqref{fndhwuybfe784} follows by the previous expansion. Recall that $\theta(y) >0$ since 
$0 \neq P$ and the function $E$ is continuous.

Recalling the energy balance \eqref{hfbEnergy}, this implies that 
$$
\|u(t,\cdot)\|_{L^2}^2 + \|b(t,\cdot)\|_{L^2}^2  < \varepsilon^2 \sigma^2 + \frac{1}{18} c^2\varepsilon^2 \sigma^2, \quad \forall t >0,
$$
thus the function 
$$
z \in \Theta\left(\mathbb{B}^\ell_{\frac{\varepsilon}{3}}  \cap \partial \mathbb{B}^0_{\frac{c \sigma \varepsilon}{3}}\right) \to 
\mathcal{O}(z),
$$
must be injective, as orbits leaving $z$ can not return to their initial energy value \eqref{fdsjnfre668453984i}.
The proof would be complete once we show that the function
\begin{equation}\label{fdmwjn7gr785}
\Theta: y  \in \mathbb{B}^\ell_{\frac{\varepsilon}{3}} \cap \partial \mathbb{B}^0_{\frac{c \sigma \varepsilon}{3}}  \to 
\Theta(y) \in  \Theta\left(\mathbb{B}^\ell_{\frac{\varepsilon}{3}}  \cap \partial \mathbb{B}^0_{\frac{c \sigma \varepsilon}{3}}\right),
\end{equation}
is a homeomorphism. 
We have already observed that this function is injective and continuous (with respect to the $L^2$-topology); moreover, it is surjective since its codomain is defined as its image.
By the Rellich-Kondrachov theorem, the domain of $\Theta$ is compact in the $L^2$ topology. On the other hand, since the codomain is a subset of $L^2$ endowed with the induced
$L^2$ topology, it is metrizable and hence Hausdorff. Therefore, $\Theta$ is a continuous bijection from a compact space onto a Hausdorff space and is thus a homeomorphism. The proof is concluded by setting 
$\Sigma := \Theta\left(\mathbb{B}^\ell_{\frac{\varepsilon}{3}}  \cap \partial \mathbb{B}^0_{\frac{c\sigma \varepsilon}{3}}\right)$.
\end{proof}

\

\

\section{Proof of Corollary \ref{MainCor}}
We now move to the proof of Corollary \ref{MainCor}, namely that the relaxation exhibited in Theorem \ref{RealMAINTHM}
is typically non trivial, according to Definition \ref{Def:triviality}. 

Before doing that, we need a technical lemma.
We introduce a family of Beltrami fields with unit frequency (often referred to as ABC flows) 
that, once evaluated at the point $\bar{x}$, form a base (in fact canonical) of $\R^3$. 
Let $x, \bar{x} \in \T^3$ and
$$
\gamma^{\bar{x}}_1(x) := (\cos(x_3 - \bar{x}_3), \sin(x_3 - \bar{x}_3), 0), 
$$
$$
\gamma^{\bar{x}}_2(x) := (0, \cos(x_1 - \bar{x}_1), \sin(x_1 - \bar{x}_1)),
$$
$$
\gamma^{\bar{x}}_3(x) := (\sin(x_2 - \bar{x}_2), 0, \cos(x_2 - \bar{x}_2)),
$$
Note that for all $j \in \{1,2,3\}$ we have 
$$
\curl (\gamma^{\bar{x}}_j) = - \gamma^{\bar{x}}_j,
$$ 
and thus, in particular
\begin{equation}\label{proprietà gamma}
\dive \gamma^{\bar{x}}_j = 0, \quad \int_{\T^3} \gamma^{\bar{x}}_j = 0.
\end{equation}
Moreover, $|\gamma^{\bar{x}}_j(x)| = 1$ for all $x \in \T^3$ and $j \in \{1,2,3 \}$. Furthermore, for any vector $z \in \R^3$ one has
\begin{equation}\label{4354367236437}
z = \sum_{j=1}^3 z_j \gamma^{\bar{x}}_j(\bar{x}).
\end{equation}
In the next Lemma we can tacitly assume that $\Omega \neq \emptyset$, or equivalently that $B \neq 0$, otherwise the 
statement is trivial (in fact the thesis become empty). 
\begin{lem}\label{BaireLemma}
Let $\ell > \frac52$ and define $\Omega := \{x \in \T^3 : B(x)  \neq 0\}$. There is a dense subset $E$ of 
$\Omega$ such that the following holds. Let
$$
\mathcal{F} := \left\{ F \in H_{{\it div}, {\it mean}}^{\ell}(\T^3) : \quad |F(x)|> \frac12, \quad \forall x \in \T^3  \right\}.
$$
There is a dense subset\footnote{$\bar{\mathcal{F}}$ is equipped with the topology induced by the $H^{\ell}(\T^3)$ norm.} $\mathcal{G}$ of $\bar{\mathcal{F}}$ such that for all $G \in \mathcal{G}$ we have
\begin{equation}\label{mnjeifwn77578934OLD}
|G(x) \wedge B(x)| > 0, \quad  | (B(x) \cdot \nabla) G(x) - (G(x) \cdot \nabla)B(x)| > 0, \quad \forall x \in E.
\end{equation}
Moreover, for all $G \in \mathcal{G}$ there is an open set $E_G$ such that $E \subset E_G$ and 
\begin{equation}\label{mnjeifwn77578934}
|G(x) \wedge B(x)| > 0, \quad  | (B(x) \cdot \nabla) G(x) - (G(x) \cdot \nabla)B(x)| > 0, \quad \forall x \in E_G.
\end{equation}
\end{lem}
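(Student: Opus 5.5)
The plan is a Baire category argument over a countable dense set of points, with each pointwise condition made generic by adding small Beltrami (ABC-type) perturbations. We may assume $\Omega\neq\emptyset$. Since $B\in H^{\ell}\hookrightarrow C^1$ for $\ell>\frac52$, the set $\Omega$ is open.

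\textbf{Choice of $E$ and of the open dense sets.} Let $E:=\{x_k\}_{k\in\N}$ be a countable dense subset of $\Omega$. For each $k$ set
\[
U_k:=\Big\{G\in\bar{\mathcal F}:\ |G(x_k)\wedge B(x_k)|>0,\ \ |(B\cdot\nabla)G(x_k)-(G\cdot\nabla)B(x_k)|>0\Big\}.
\]
By the embedding $H^\ell\hookrightarrow C^1$, both quantities depend continuously on $G$ in the $H^\ell$ topology, so $U_k$ is relatively open in $\bar{\mathcal F}$. The set $\bar{\mathcal F}$ is a closed subset of the Hilbert space $H^{\ell}_{div,mean}$, hence a complete metric space. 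Once each $U_k$ is shown to be dense, Baire's theorem makes $\mathcal G:=\bigcap_k U_k$ dense in $\bar{\mathcal F}$, and \eqref{mnjeifwn77578934OLD} holds on $E$ by construction. For \eqref{mnjeifwn77578934}, fix $G\in\mathcal G$. Both strict inequalities hold at each $x_k$ and the quantities are continuous in $x$, so each $x_k$ has an open ball $B_k$ on which they persist. Then $E_G:=\bigcup_k B_k$ is open and contains $E$.

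\textbf{Density of $U_k$.} This is the main step. Let $G_0\in\bar{\mathcal F}$ and $\eta>0$.
\begin{enumerate}
\item First approximate $G_0$ in $H^\ell$ by some $G\in\mathcal F$. This is possible by the definition of the closure.
\item Since $\inf_x|G(x)|>\frac12$ by compactness, any sufficiently small $H^\ell$ (hence $C^0$) perturbation of $G$ stays in $\mathcal F$. This avoids leaving $\bar{\mathcal F}$.
\item Perturb $G$ as $G_a:=G+\sum_{j=1}^3 a_j\gamma^{x_k,N}_j$, where $\gamma^{x_k,N}_j$ are the frequency-$N$ rescalings of the fields $\gamma^{x_k}_j$ (replace $x_i-\bar x_i$ by $N(x_i-\bar x_i)$). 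These are still divergence-free, zero-average Beltrami fields.
\end{enumerate}

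\textbf{The two conditions at $x_k$ are polynomial in $a$.} At $x_k$, both $G_a(x_k)\wedge B(x_k)$ and $(B\cdot\nabla)G_a(x_k)-(G_a\cdot\nabla)B(x_k)$ are affine in $a\in\R^3$. It therefore suffices to show that each of these affine maps is not identically zero. Then each defines a proper algebraic zero set, and almost every small $a$ avoids both.
\begin{itemize}
\item \emph{Wedge condition.} By \eqref{4354367236437} we have $G_a(x_k)=G(x_k)+a$. Since $B(x_k)\neq0$, the map $a\mapsto (G(x_k)+a)\wedge B(x_k)$ is not identically zero.
\item \emph{Bracket condition.} The linear part in $a$ is $N\sum_j a_j\,(B(x_k)\cdot\nabla)\gamma_j^{x_k,1}(x_k)+O(1)|a|$. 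One computes $(B\cdot\nabla)\gamma^{x_k,1}_1(x_k)=B_3(x_k)e_2$, $(B\cdot\nabla)\gamma^{x_k,1}_2(x_k)=B_1(x_k)e_3$ and $(B\cdot\nabla)\gamma^{x_k,1}_3(x_k)=B_2(x_k)e_1$. At least one of these is nonzero because $B(x_k)\neq0$. Choosing $N$ large makes the $O(N)$ term dominate the $O(1)$ contribution of $(\gamma\cdot\nabla)B$, so the linear map is nonzero.
\end{itemize}

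\textbf{Concluding density.} Taking $a$ generic with $|a|$ so small that $\|G_a-G\|_{H^\ell}<\eta$ gives $G_a\in U_k$, which proves that $U_k$ is dense in $\bar{\mathcal F}$. The only delicate point is this nondegeneracy of the linearized bracket condition. That is why I would use the high-frequency ABC fields rather than the unit-frequency ones: the large parameter $N$ makes the transport term $(B\cdot\nabla)\gamma$ dominate and removes any possible cancellation with $(\gamma\cdot\nabla)B$.
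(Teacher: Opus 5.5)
Your proposal is correct and follows essentially the same route as the paper: a Baire argument over a countable dense set of points of $\Omega$, showing that the pointwise conditions define relatively open dense subsets of $\bar{\mathcal F}$ via small ABC-field perturbations, and then thickening to balls to obtain $E_G$. The differences are minor. You treat both conditions jointly with a generic $a\in\R^3$, and you take a large frequency $N$ so that $(B\cdot\nabla)\gamma$ dominates $(\gamma\cdot\nabla)B$. The paper instead handles $\Gamma^1_{x_n}$ and $\Gamma^2_{x_n}$ separately along a fixed direction $k$, and rules out the degenerate case by contradiction, comparing the frequency-$1$ and frequency-$2$ fields.
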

\begin{rem}
Note that the set $\mathcal{F}$ is not empty. For instance $\gamma^{\bar{x}}_j \in \mathcal{F}$, for all $j \in \{1,2,3\}$, and for all $\bar{x} \in \T^3$. We also note that 
$$
\bar{\mathcal{F}} \subseteq \left\{ F \in H^{\ell}(\T^3), \quad \dive F=0,  \quad \int_{\T^3} F = 0, \quad |F(x)| 
\geq  \frac12 \quad \forall x \in \T^3   \right\}.
$$
\end{rem}

\begin{proof}
Let $\{ x_{n} \}_{n \in \N}$ be a dense subset of $\Omega$ and define the sets
$$
\Gamma^1_{x_n} := \{ F \in \mathcal{F} :  \quad |F(x_n) \wedge B(x_n)| > 0 \}.
$$
First we prove that, for any $n$, $\Gamma^1_{x_n}$ is an open dense subset of $\bar{\mathcal{F}}$. 
The fact that it is open is immediate. 
To prove that it is dense,
it suffices to prove that 
$\Gamma^1_{x_n}$ is dense in~$\mathcal{F}$.
 
Let $F \in \mathcal{F}$. If $ |F(x_n) \wedge B(x_n)| > 0$ then we also have $F \in \Gamma^1_{x_n}$ and we are done. Otherwise we have 
\begin{equation}\label{ty7t4583y76574h}
|F(x_n) \wedge B(x_n)| = 0.
\end{equation} 
Let $k \in \R^3$ with $|k|=1$ chosen such that
$
B(x_n) \cdot k=0. 
$
For any $\delta >0$ we consider the perturbed field
\begin{equation}\label{fji48398y4t389}
F_n (x)= F (x)+ \delta e_n(x), \qquad e_n (x):=  \sum_{j=1}^3 k_j  \gamma^{x_n}_j(x).
\end{equation}
Since the vector fields $\gamma^{x_n}_j$ satisfy \eqref{proprietà gamma} for all $j\in\{1,2,3\}$, it immediately follows that 
\begin{equation}\label{divergenza e media}
\dive F_n = 0,\qquad \mbox{and }\qquad \int_{\T^3} F_n = 0.
\end{equation}
Moreover, evaluating at $x = x_n$, from \eqref{4354367236437}-\eqref{ty7t4583y76574h} we deduce
$$
F_n(x_n) \wedge B(x_n) = \delta  k \wedge B(x_n) 
$$
which, thanks to the orthogonality of $k$ and $B(x_n)$, leads to
\begin{equation}\label{dsj748232039}
|F_n(x_n) \wedge B(x_n)|  = \delta  | B(x_n)| >0.
\end{equation}

Since $F\in\mathcal{F}$ we have that
$$
\min_{x\in\T^3}|F(x)| = \frac1{2}+\e_F,
$$ 
for some $\e_F>0$. Then, we use that $|\gamma^{x_n}_j(x)|, |k| \leq 1$, for all $x \in \T^3$, to obtain for all $\delta >0$ sufficiently small the bound
\begin{equation}\label{r437827ty4358274t352}
\min_{x\in\T^3}|F_n(x)| \geq  \frac1{2}+\e_F-3\delta>\frac12.
\end{equation}
From \eqref{divergenza e media}-\eqref{dsj748232039}-\eqref{r437827ty4358274t352} we deduce 
that $F_n \in \Gamma^1_{x_n}$ provided that $\delta >0$ is sufficiently small. Finally, since 
$$
\|F_n - F\|_{H^\ell} = \delta \|e_n\|_{H^\ell}\lesssim_{\ell} \delta \to 0,
$$
as $\delta \to 0$, we conclude that $\Gamma^1_{x_n}$ is dense in $\mathcal{F}$ (and thus in $\bar{\mathcal{F}}$).\\
\\
We now consider the sets
$$
\Gamma^2_{x_n} := \{ F \in \mathcal{F} :  | \left((B \cdot \nabla) F - (F \cdot \nabla)B \right)(x_n)| > 0 \}.
$$
Again, it is clear that $\Gamma^2_{x_n}$ is an open subset of $\bar{\mathcal{F}}$, for all $n \in \N$. Moreover, as for the previous case, we prove that $\Gamma^2_{x_n}$ is dense in $\bar{\mathcal{F}}$, for all $n \in \N$, by showing that it is dense in~$\mathcal{F}$. 

Let $F \in \mathcal{F}$. If 
$$
| \left((B \cdot \nabla) F - (F \cdot \nabla)B \right)(x_n)| > 0,
$$ 
then there is nothing to prove. Otherwise we have 
\begin{equation}\label{ty7t4583y76574h2}
((B \cdot \nabla) F)(x_n) = ((F \cdot \nabla)B )(x_n).
\end{equation} 
Let $\delta >0$ and define the perturbed field $F_n$ as in \eqref{fji48398y4t389}. Exactly as before, it is also clear that \eqref{divergenza e media} and \eqref{r437827ty4358274t352} hold for  $\delta >0$ small. We now focus on the non-degenerate condition: from \eqref{ty7t4583y76574h2} we have that
\begin{equation}\label{dsj7482320392}
\left( (B \cdot \nabla) F_n - (F_n \cdot \nabla)B \right)(x_n) =  
\delta \big( (B \cdot \nabla) e_n - (e_n \cdot \nabla)B \big)(x_n).
\end{equation}
Thus, in order to show that $F_n \in \Gamma^2_{x_n}$ we must show that there exists $k \in \R^3$ such that the right-hand side of \eqref{dsj7482320392} is non-zero. By contradiction,  if \eqref{dsj7482320392} is zero for all $k \in \R^3$, a direct computation gives that 
\begin{equation}\label{frequenza 1}
(\nabla B )(x_n) =
\begin{pmatrix}
0&0& B_2(x_n)\\
B_3(x_n)&0& 0\\
0&B_1(x_n)&0
\end{pmatrix}.
\end{equation}



We then consider the ABC flows with frequency $2$ defined as
$$
\gamma^{\bar{x}}_1(x) := (\cos(2(x_3 - \bar{x}_3)), \sin(2(x_3 - \bar{x}_3)), 0), 
$$
$$
\gamma^{\bar{x}}_2(x) := (0, \cos(2(x_1 - \bar{x}_1)), \sin(2(x_1 - \bar{x}_1))),
$$
$$
\gamma^{\bar{x}}_3(x) := (\sin(2(x_2 - \bar{x}_2)), 0, \cos(2(x_2 - \bar{x}_2))),
$$
and, correspondingly, we define the vector fields $e_n^2$ and the perturbed field $F_n^2$. We can repeat the same computations as above leading to
\begin{equation}\label{contraddizione frequenza 2}
\left( (B \cdot \nabla) F_n^2 - (F_n^2 \cdot \nabla)B \right)(x_n) =  
\delta \big( (B \cdot \nabla) e_n^2 - (e_n^2 \cdot \nabla)B \big)(x_n).
\end{equation}
Assume that also for this choice of the perturbation, for any $k\in\R^3$ the right-hand side of \eqref{contraddizione frequenza 2} is zero. Then, from a direct computation we deduce 
\begin{equation}\label{frequenza 2}
(\nabla B )(x_n) = 2
\begin{pmatrix}
0&0& B_2(x_n)\\
B_3(x_n)&0& 0\\
0&B_1(x_n)&0
\end{pmatrix}.
\end{equation}
By subtracting the equations \eqref{frequenza 1} and \eqref{frequenza 2} we thus get 
$$
\begin{pmatrix}
0&0& B_2(x_n)\\
B_3(x_n)&0& 0\\
0&B_1(x_n)&0
\end{pmatrix}=0,
$$
which contradicts the definition of $\Omega$. Thus, we conclude that there exists some $k\in\R^3$ such that the perturbed field $F_n$ belongs to $\Gamma^2_{x_n}$.\\
\\
Invoking the Baire theorem, we see that the set
$$
\mathcal{G} := \cap_{n \in \N} ( \Gamma^1_{x_n} \cap \Gamma^2_{x_n}),
$$
is dense in $\bar{\mathcal{F}}$. In particular, for any vector field $G \in \mathcal{G}$ we have
\begin{equation}\label{nhfedw6446843}
|G(x_n) \wedge B(x_n)| > 0 , \quad 
| \left((B \cdot \nabla) G - (G \cdot \nabla)B \right)(x_n)| > 0, \quad \forall n \in \N. 
\end{equation}
Letting 
$$E := \bigcup_{n \in \mathbb{N}} \{ x_n \},$$
then \eqref{mnjeifwn77578934OLD} follows.

By continuity of the vector fields involved, for each $n \in \mathbb{N}$ there exists a sufficiently small radius $\sigma_n > 0$ such that the strict inequalities \eqref{nhfedw6446843} hold not only at $x_n$, but for all $x$ in the ball $\{ x \in \Omega : \vert{}x - x_n\vert{} < \sigma_n \}$. We can thus define the set
$$
E_G := \bigcup_{n \in \mathbb{N}} \{ x \in \Omega : \vert{}x - x_n\vert{} < \sigma_n \},
$$
and the \eqref{mnjeifwn77578934} follows. 
\end{proof}

We conclude the manuscript stating and proving a stronger version of Corollary \ref{MainCor}. It is easy to check that 
Corollary \ref{MainCorStronger} implies Corollary \ref{MainCor}.
In the following statement $\varepsilon_1>0$ is the small threshold given by Theorem 
\ref{RealMAINTHM}.

\begin{cor}\label{MainCorStronger}
Let $\ell \geq 4$ be an integer. 
Let~$B:\T^3 \to \R^3$ be a stationary solution to the Euler equations with 
$\|B \|_{H^{\ell +2}} = 1$ and define the set $\Omega := \{x \in \T^3 : B(x)  \neq 0\}$. Let $G \in \mathcal{G}$ and let $E_G$ be given by Lemma \ref{BaireLemma}. Let $K$ a compact subset of $E_G$. 
Then, there exist  $\varepsilon_3 \in (0, \varepsilon_1)$ and a small constant $\mathbf{c}>0$ such that
the following holds\footnote{The quantities 
$\varepsilon_3, \mathbf{c}$  
depends on $B, G, K$.}. Let  $\sigma \in (0, \varepsilon_3)$, $\rho\in (0, \mathbf{c} \sigma)$ and set $y := \rho G$. Let $(u,b)$ be the global solution to the \eqref{eq:mhdMoreStandard} equations with initial datum 
\begin{equation}\label{InitialDatumIn MainTHM}
(u^\mathrm{in}, b^\mathrm{in}) = (\phi_1(y), \sigma B + \phi_2(y)) 
\end{equation}
given by Theorem \ref{RealMAINTHM}, that relaxes to $(0, \sigma B)$. 
Then, $(u,b)$ satisfies condition \eqref{ObvCon1}, and also conditions \eqref{NonTRivialityCond}-\eqref{NonTRivialityCond2} are satisfied for all $(\bar{t}, \bar{x}) \in \{ 0\} \times K$. 
Moreover, $\sigma B$ is the pushforward of $b(t_1)$ by the limit flow $\Psi_{t_1 ,\infty}$ defined in \eqref{pushfryuwerungf}, 
for all $t_1 \geq 0$.  
\end{cor}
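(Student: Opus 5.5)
The plan is to apply Theorem \ref{RealMAINTHM} with the stationary field $\sigma B$ in place of $B$, and then to prove the non-triviality conditions by a perturbative expansion at $t=0$. Since $\|\sigma B\|_{H^{\ell+2}}=\sigma$, we may use $\varepsilon:=\sigma<\varepsilon_3<\varepsilon_1$. We need $\|y\|_{H^\ell}=\rho\|G\|_{H^\ell}\le \sigma/3$, which holds once $\mathbf{c}\le 1/(3\|G\|_{H^\ell})$. The theorem then gives the global solution, the exponential relaxation bound \eqref{Bound:QuantitativeRelaxation}, and the pushforward property via $\Psi_{t_1,\infty}$. The last claim of the corollary is therefore immediate.

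\textbf{Condition \eqref{ObvCon1}.} By Sobolev embedding ($\ell\ge4$) and \eqref{Bound:QuantitativeRelaxation},
$$\Big|\int b\cdot (b\cdot\nabla)u\Big|\lesssim \|b\|_{L^\infty}^2\|\nabla u\|_{L^\infty}\lesssim (\sigma+\sigma\rho)^2 e^{-t/4}\rho,$$
which is integrable in time.

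\textbf{Expansion of the initial data.} Recall that $u^{\rm in}=\pi_1 e^K(y,m(0))$ and $b^{\rm in}=\sigma B+\pi_2e^K(y,m(0))$, where $K$ is now built from $\sigma B$. From \eqref{fmnjsokngfre77565} we have $\|m\|_{\Zeta}\lesssim\sigma^2\rho$. From \eqref{fdnjskbjnfhjskhjdbfgsjk3} we have $\|(e^K-\Id)(y,m(0))\|_{H^\ell}\lesssim\sigma\rho$. Since the operators $K$ and $\mathrm p,\mathrm q$ are linear in $B$, this refines to
$$u^{\rm in}=\rho G+O_{W^{1,\infty}}(\sigma\rho),\qquad b^{\rm in}=\sigma B+O_{W^{1,\infty}}(\sigma\rho).$$
The first estimate also uses $\ell-1\ge 3$ to control the $W^{1,\infty}$ norm.

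\textbf{Condition \eqref{NonTRivialityCond}.} We have
$$b^{\rm in}\wedge u^{\rm in}=\sigma\rho\,(B\wedge G)+O(\sigma\rho^2)+O(\sigma^2\rho).$$
By Lemma \ref{BaireLemma} and the compactness of $K$, there is $\kappa>0$ with $|B\wedge G|\ge\kappa$ on $K$. The error is $\le C\sigma\rho(\rho+\sigma)\le C\sigma\rho(\mathbf{c}+1)\sigma$, which is smaller than $\sigma\rho\kappa/2$ once $\varepsilon_3$ (and hence $\sigma$) is small.

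\textbf{Condition \eqref{NonTRivialityCond2}, the main obstacle.} By the induction equation, $\partial_t b(0)=(b^{\rm in}\cdot\nabla)u^{\rm in}-(u^{\rm in}\cdot\nabla)b^{\rm in}$. Its leading term is
$$\sigma\rho\big((B\cdot\nabla)G-(G\cdot\nabla)B\big),$$
which is bounded below by $\sigma\rho\kappa'$ on $K$, again by Lemma \ref{BaireLemma} and compactness. The difficulty is that the corrections to $u^{\rm in}$ and $b^{\rm in}$ must be controlled in $W^{1,\infty}$ with size $o(\sigma\rho)$ uniformly. Bilinear terms such as $(\tilde b\cdot\nabla)\tilde u$ are $O(\sigma\rho\cdot\sigma)$ or $O(\rho^2\cdot\sigma)$ once we use the $W^{1,\infty}$ errors above. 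Here the structure of $m(0)$ helps: being of size $\sigma^2\rho$ in $H^\ell$, it contributes only $O(\sigma^2\rho)$. The corrections $(e^K-\Id)$ are of size $\sigma\rho$, and they enter the product multiplied by $\sigma B$ or $\rho G$, which gives $O(\sigma^2\rho+\sigma\rho^2)$. Choosing first $\mathbf{c}$ small, then $\varepsilon_3$ small depending on $\kappa,\kappa'$, the total error is at most $\sigma\rho\kappa'/2$. Hence $\partial_tb(0,\bar x)\ne0$ for all $\bar x\in K$.

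Note that the leading term in the expansion of $\partial_t b(0)$ genuinely requires $\pi_2(e^K-\Id)(y,0)$, which is $O(\sigma\rho)$ in $H^{\ell}$, to enter only through products with small factors. I would verify carefully that no term of order exactly $\sigma\rho$ besides the leading one appears. This is guaranteed because every such correction carries an extra factor of $\sigma$ coming from $K$.
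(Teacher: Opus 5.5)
Your proposal is correct and follows the paper's own argument. You expand $(u^{\mathrm{in}},b^{\mathrm{in}})$ around $(\rho G,\sigma B)$, isolate the leading terms $\sigma\rho\, G\wedge B$ and $\sigma\rho\big((B\cdot\nabla)G-(G\cdot\nabla)B\big)$, bound them below on $K$ via Lemma \ref{BaireLemma} and compactness, and control the remainders with \eqref{fdnjskbjnfhjskhjdbfgsjk3}, \eqref{fmnjsokngfre77565} and Sobolev embedding. Your bookkeeping is slightly sharper than the paper's: you use $b^{\mathrm{in}}-\sigma B=O(\sigma\rho)$, so every error is $O(\sigma\rho(\sigma+\rho))$, whereas the paper's cruder remainder bound $C(\rho^2+\sigma\varepsilon_3\rho)$ is what forces $\rho<\frac{c}{2C}\sigma$; in your version $\mathbf{c}$ only needs to ensure $\|y\|_{H^\ell}\le\sigma/3$.
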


\begin{proof}

Note that the validity of \eqref{ObvCon1} is an obvious consequence of \eqref{Bound:QuantitativeRelaxation}. Thus, it only remains to verify that the extra conditions \eqref{NonTRivialityCond}-\eqref{NonTRivialityCond2} are satisfied.

Recalling \eqref{fdmkslnjnkdsf64}, we note that 
$$
u(0, x) =   \pi_1 e^{K}\begin{pmatrix}
y(x) \\ 
m(0, x)   
\end{pmatrix} 
=  y(x) +
 \pi_1 (e^{K} - \Id) \begin{pmatrix}
y(x) \\ 
m(0, x)
\end{pmatrix},
$$
and
$$
b(0, x) = \sigma B(x) +  \pi_2 e^{K}\begin{pmatrix}
y(x) \\ 
m(0, x)   
\end{pmatrix}.
$$
We thus have
\begin{equation}\label{fdsji7g784}  
 u(0, x) \wedge b(0, x) 
= y(x) \wedge \sigma B(x) + r(x) =
\rho \sigma G(x) \wedge  B(x) + r(x),
\end{equation}    
where
\begin{align*}
r(x) & := y \wedge \pi_2 e^{K} \begin{pmatrix}
y(x) \\ 
m(0, x)   
\end{pmatrix}
+
 \pi_1 (e^{K} - \Id) \begin{pmatrix}
y(x) \\ 
m(0, x)
\end{pmatrix}  \wedge \sigma B(x) 
\\ 
& + 
 \pi_1 (e^{K} - \Id) \begin{pmatrix}
y(x) \\ 
m(0, x)
\end{pmatrix}  \wedge
\pi_2 e^{K}\begin{pmatrix}
y(x) \\ 
m(0, x)   
\end{pmatrix}.
\end{align*}
Recalling \eqref{fdnjskbjnfhjskhjdbfgsjk2} and using Sobolev embeddings, we have
\begin{align*}
\left|  \pi_j e^{K}\begin{pmatrix}
y(x) \\ 
m(0, x)
\end{pmatrix} \right| & \lesssim   \|y\|_{H^{\ell}} + \|m(0, \cdot)\|_{H^{\ell}}\\
& \overset{\eqref{fmnjsokngfre77565}}{\lesssim}  \|y\|_{H^{\ell}} + \varepsilon_3^2 \|y\|_{H^{\ell}}  \lesssim \rho  .
\end{align*}
Similarly, we use \eqref{fdnjskbjnfhjskhjdbfgsjk3} to deduce
\begin{align*}
\left|  \pi_j \left( e^{K} - \Id \right) \begin{pmatrix}
y(x) \\ 
m(0, x)
\end{pmatrix}  \right| 
 \lesssim
\varepsilon_3 \rho,
\end{align*}
where the implicit constant depends only on $\ell$. Thus, we have that
$$
|r(x)|\leq C( \rho^2+\sigma \varepsilon_3\rho+\varepsilon_3\rho^2)\leq C(\rho^2+\sigma \varepsilon_3\rho).
$$
On the other hand, by Lemma \ref{BaireLemma} and the compactness of $K$ we have
\begin{equation}\label{fdkspjgre84'0123}
|G(x) \wedge  B(x)| > c, \qquad \forall x \in K,
\end{equation}
for some $c >0$, that depends on $B, G, K$. Then, by the triangle inequality we have that 
\begin{align*}
|u(0,x)\wedge b(0,x)|&\geq  \sigma \rho|G(x) \wedge  B(x)|-|r(x)|\\
&\geq c\sigma \rho- C (\rho^2 + \sigma \varepsilon_3\rho).
\end{align*}
The quantity above is strictly positive if
$$
c\sigma > C (\rho + \sigma \varepsilon_3),
$$
which implies
\begin{equation}
C \frac{\rho}{\sigma}+C\varepsilon_3< c.
\end{equation}
Thus, by taking 
\begin{equation}\label{hfeiwbbf664534}
\varepsilon_3<\frac{c}{2C},\qquad \rho<\frac{c}{2C}\sigma,
\end{equation}
the condition \eqref{NonTRivialityCond} is satisfied (by possibly reducing the value of $\varepsilon_3$, i.e. $\varepsilon_3\leq \min\{\e_1, \frac{c}{2C}\}$ and letting $\mathbf{c} := \min\{ \frac{\varepsilon_1}{3}, \frac{c}{2C}\}$).


We now move to the proof of 
$$\partial_t b \big|_{(t,x) = (0,x)} \neq 0, \qquad \forall \, x \in K.$$ 

Since $\dive b = \dive u =0$, we can rewrite the induction equation as 
$$
\partial_t b   = \curl(u \wedge b). 
$$
Recalling \eqref{fdsji7g784} we have 
\begin{align}\label{fnduwi66345787y8543}  
(\partial_t b)(0,x)  
&= \curl( y \wedge \sigma B ) + \curl r  
\\
&
\overset{\dive B = \dive y =0}{=} - \sigma (y \cdot \nabla)B + \sigma (B \cdot \nabla) y + \curl r
\\
&= - \sigma \rho (G \cdot \nabla)B + \sigma \rho (B \cdot \nabla) G  + \curl r
\end{align}    
Proceeding exactly as before, and using the Sobolev embedding $H^\ell \hookrightarrow W^{1,\infty}$ (since $\ell \ge 4$), we have
\begin{equation}\label{fdkspjgre84'012312}
|\curl r(x)| \leq C( \rho^2+\sigma \varepsilon_3\rho+\varepsilon_3\rho^2)\leq C(\rho^2+\sigma \varepsilon_3\rho).
\end{equation}
On the other hand, from \eqref{mnjeifwn77578934} and the compactness of $K$ we deduce 
\begin{equation}\label{u478937y78t439874t}
|  (G \cdot \nabla)B -  (B \cdot \nabla) G| > c,
\end{equation}
for some $c >0$ that depends also on $B, G, K$. Again, the statement follows from \eqref{fnduwi66345787y8543}-\eqref{fdkspjgre84'012312}-\eqref{u478937y78t439874t}, once we restrict to \eqref{hfeiwbbf664534}.
\end{proof}

\appendix

\section{}
Here we formalise the fact, stated in the introduction, that the non-collinearity condition \eqref{NonTRivialityCond} prevents a trivial evolution of the magnetic lines, namely that the image of each magnetic line remains unchanged at any later time. 
 
Before stating and proving it we need to define the image of a magnetic line.
We recall that we have defined a magnetic line $\gamma$ at time $t$ as an integral curve of the magnetic field $b(t, \cdot)$, namely a global solution of
the ODE
$$
\frac{\de}{\de s} \gamma(s) = b(t, \gamma(s)).
$$
We will denote by $\Gamma$ the image of $\gamma$, which is the following subset of $\T^3$
\begin{equation}\label{DefGammaMaiuscolo}
\Gamma := \{ \gamma(s) : s \in \R \}.
\end{equation}

\begin{prop}\label{NonCollinearity} 
Let $u,b : [0, +\infty) \times \T^3\to \R^3$ divergence-free  vector fields of class $C^1$ such that 
$$
\partial_t b = - (u \cdot  \nabla ) b + (b \cdot  \nabla ) u. 
$$
Suppose that for any magnetic line $\gamma$ at time $t= 0$ we have 
\begin{equation}\label{fmdjnksbfye7664374}
\Psi_{0, t} \Gamma = \Gamma, \quad  \mbox{for all $t \geq 0$},
\end{equation}
where $\Gamma$ is the image of $\gamma$ according to definition \eqref{DefGammaMaiuscolo}. 
Then
\begin{equation}\label{otherwise}
b(t, x) \wedge u(t,x) =0, \quad \mbox{for all $(t, x) \in [0, +\infty) \times \T^3$.}
\end{equation} 
\end{prop}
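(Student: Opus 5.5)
The strategy is to turn the invariance of each magnetic line into tangency of $u$ to the line, and then use the frozen-in property to push this from time $0$ to all later times.

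First we recall the frozen-in identity. Since $b$ solves the induction equation with $u$ of class $C^1$, the flow $\Psi_{0,t}$ of \eqref{DiffPsiDef} satisfies
$$
b(t,\Psi_{0,t}(x)) = (\nabla\Psi_{0,t}(x))\, b(0,x),
$$
as in \eqref{r42789548123}. Consequently $\Psi_{0,t}$ maps magnetic lines at time $0$ onto magnetic lines at time $t$. More precisely, if $\gamma$ is a magnetic line at time $0$, then $s\mapsto \Psi_{0,t}(\gamma(s))$ is a magnetic line at time $t$, because
$$
\tfrac{\de}{\de s}\Psi_{0,t}(\gamma(s)) = \nabla\Psi_{0,t}\, b(0,\gamma(s)) = b(t,\Psi_{0,t}\gamma(s)).
$$
Since $\Psi_{0,t}$ is a diffeomorphism, every magnetic line at time $t$ arises this way.

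Next we fix $(t_0,x_0)$ and let $x=\Psi_{0,t_0}^{-1}(x_0)$, with $\gamma$ the magnetic line at time $0$ through $x$ and image $\Gamma$. By \eqref{fmdjnksbfye7664374}, the curve $t\mapsto \Psi_{0,t}(x)$ stays in $\Gamma$ for all $t\ge0$. Its velocity at $t_0$ is $u(t_0,x_0)$. If $b(t_0,x_0)=0$ the claim \eqref{otherwise} is trivial, so we assume $b(t_0,x_0)\neq0$. Then $b(0,x)\neq0$ as well, because $\nabla\Psi$ is invertible. Hence $\gamma$ is a regular curve, and near $x_0$ the set $\Psi_{0,t_0}\Gamma=\Gamma$ coincides, on a suitable parameter interval, with the image of the magnetic line at time $t_0$ through $x_0$. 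That local image is a $C^1$ embedded arc tangent to $b(t_0,x_0)$.

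The main obstacle is that $\Gamma$ need not be an embedded curve globally: a line on the torus may be dense in a surface, or even in an open set. In that case, lying in $\Gamma$ does not immediately force the velocity to be tangent. To handle this, we would use the fact that the map $(s,t)\mapsto \Psi_{0,t}(\gamma(s))$ is $C^1$ and takes values in the image $\gamma(\R)$. We then argue by contradiction, supposing that $u(t_0,x_0)\wedge b(t_0,x_0)\neq0$. Then the differential of $(s,t)\mapsto\Psi_{0,t}(\gamma(s))$ at $(s_0,t_0)$ has rank $2$, so the image of a small square is a piece of a $2$-dimensional embedded surface. On the other hand, this image is contained in $\gamma(\R)$, which is a countable union of images of compact intervals $\gamma([-N,N])$. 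Each of these is a $C^1$ curve and hence has zero $2$-dimensional Hausdorff measure. This gives a contradiction via a Baire category or measure argument on the surface piece. Therefore $u\wedge b=0$ at $(t_0,x_0)$. Since $(t_0,x_0)$ was arbitrary, \eqref{otherwise} holds everywhere.
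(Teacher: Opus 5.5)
Your argument is correct, and it takes a different route from the paper at the key step.

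\textbf{The paper's route.} The paper follows a single trajectory. For the magnetic line $\gamma$ at time $t$ through $x$, it writes $\Psi_{t,\tau}x=\gamma(\kappa(\tau))$ with $\kappa(t)=0$. It then inverts a component $\gamma_j$ with $b_j(t,x)\neq0$ near $s=0$ and sets $\kappa(\tau)=\gamma_j^{-1}\bigl((\Psi_{t,\tau}x)_j\bigr)$, concluding that $\kappa$ is $C^1$ near $\tau=t$. Differentiating gives $u(t,x)=\kappa'(t)\,b(t,x)$. That identification tacitly needs $\kappa(\tau)\to0$ as $\tau\to t$, that is, the trajectory must stay on the local arc of $\Gamma$ through $x$ rather than move along another strand of $\Gamma$ passing nearby. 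This holds automatically when $\Gamma$ is embedded or a closed orbit. The paper does not justify it when the line is recurrent or dense, which is exactly the obstacle you single out.

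\textbf{Your route.} Your two-parameter map $F(s,t)=\Psi_{0,t}(\gamma(s))$ avoids choosing any $\kappa$. By the frozen-in identity $\partial_sF=b(t,F)$, and by definition $\partial_tF=u(t,F)$. If $u\wedge b\neq0$ at $(t_0,x_0)$, then $F$ has rank $2$ near $(0,t_0)$, so the image of a small rectangle has positive $\mathcal{H}^2$-measure. That image lies in $\gamma(\R)=\bigcup_N\gamma([-N,N])$, which is $\mathcal{H}^2$-null. So the paper's argument is shorter but, as written, complete only in the locally embedded case, while yours handles arbitrary lines. It also uses the hypothesis only at time $0$.

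\textbf{Two small fixes.}
\begin{itemize}
\item Make the last step concrete instead of writing ``Baire or measure''. Either apply the area formula, or project onto the plane spanned by $b(t_0,x_0)$ and $u(t_0,x_0)$. The inverse function theorem then gives an open planar set inside a countable union of $C^1$ arcs, which has Lebesgue measure zero.
\item For $t_0=0$, either apply the area formula on a one-sided rectangle $[-\delta,\delta]\times[0,\delta]$, or move to a nearby $t_1>0$, using that the rank condition is open.
\end{itemize}
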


\begin{proof}
Note that the assumption \eqref{fmdjnksbfye7664374} is equivalent to assuming that for all $t , \tau \geq 0$ and for any magnetic line $\gamma$ at time $t$ we have 
$$
\Psi_{t, \tau} \Gamma := \Gamma,
$$
where $\Gamma$ is the image of $\gamma$.\\
  
Let $t \geq 0$ and $x \in \T^3$. 
We can assume 
\begin{equation}\label{Triv1}
b(t,x) \neq 0,
\end{equation} otherwise the identity \eqref{otherwise} is trivially satisfied.

Let $\gamma$ be the magnetic line at time $t$ that passes through $x$, namely 
$$
\gamma : \R \to \T^3,
$$ 
is the 
(unique) global solution of
\begin{equation}\label{Triv2}
\frac{\de}{\de s} \gamma(s) = b(t, \gamma(s)), \qquad \gamma(0) = x.
\end{equation}
%
%
%

Since $\Psi_{t, \tau} \Gamma = \Gamma$ for all $\tau \geq 0$, there exists 
a function 
$$
\kappa : \tau \in  [0, + \infty) \to \kappa(\tau) \in \R,
$$ 
such that 
\begin{equation}\label{implicitdefinitionofgamma}
\Psi_{t, \tau} x = \gamma(\kappa(\tau)).
\end{equation}
The function $\kappa$ depends on $x$ and $\gamma$, we will not emphasize this in the notations.
Note that $$\kappa(t) = 0.$$ 

Note that by \eqref{Triv1}-\eqref{Triv2} we have
$$
\left(\frac{\de}{\de s} \gamma\right)(0) = b(t, x) \neq 0.
$$
Thus, there exists (at least one) $j \in \{ 1,2,3\}$ such that 
$$
\left(\frac{\de}{\de s} \gamma_j\right)(0) = b_j(t, x) \neq 0.
$$
This means that the function 
$$
s \in \R \to \gamma_j(s),
$$ 
is invertible in a neighborhood of $s =0$ with $C^1$ inverse $\gamma_j^{-1}$. Thus we see from 
\eqref{implicitdefinitionofgamma} that 
$$
\kappa(\tau) = \gamma_j^{-1} \circ(  \Psi_{t, \tau} x)_j ,
$$
from which we deduce that $\kappa(\tau)$ is of class $C^1$ in a neighborhood of $\tau = t$.
In this neighborhood of $\tau = t$ the following computation is  thus allowed  
\begin{equation}\label{fnjwi7459234}
u(\tau, \Psi_{t, \tau} x) = \frac{\de}{\de \tau}  \Psi_{t, \tau} x  = \frac{\de}{\de\tau} \gamma(\kappa(\tau)) =   
b(t, \gamma(\kappa(\tau))) \frac{\de}{\de \tau}  \kappa(\tau). 
\end{equation}
Letting $\tau = t$ in this identity and  $\alpha := (\frac{\de}{\de \tau}  \kappa)\big|_{\tau = t}$ we arrive to
$$
u(t, x) = u(t, \Psi_{t, t} x) \overset{\eqref{fnjwi7459234}}{=}
\alpha  b(t, \gamma(\kappa(t)))  = \alpha  b(t, \gamma(0)) = \alpha b(t, x)  ,
$$
from which the statement follows.
\end{proof}

\subsection*{Acknowledgements}
G.C. is supported by INdAM-GNAMPA and by the project PRIN2022 ``Classical equations of compressible fluid mechanics: existence and properties of non-classical solutions''.


\end{document}